\numberwithin{equation}{section}
\setlist{leftmargin=*,itemsep=3pt}
\newcommand{\E}{\mathbf{E}}
\newcommand{\bE}{\mathbf{E}}
\newcommand{\un}{\mathbbm{1}}
\newcommand{\ind}{\mathbbm{1}}
\newcommand{\PP}{\mathbf{P}}
\newcommand{\bP}{\mathbf{P}}
\newcommand\ent[1]{\left\lfloor #1 \right \rfloor}
\newcommand{\dd}{\mathrm{d}}
\newcommand{\f}{\textsc{f}}
\newcommand{\I}{\mathrm{I}}
\renewcommand{\tilde}{\widetilde}
\newcommand{\bbN}{\mathbb{N}}
\newcommand{\ga}{\alpha}
\newcommand{\gb}{\beta}
\newcommand{\gep}{\varepsilon}       
\newcommand{\gp}{\varphi}
\DeclareMathOperator{\argmin}{{argmin}}
\DeclareMathOperator{\argmax}{{argmax}}
\newcommand{\suptwo}[2]{\sup_{\substack{#1 \\ #2}}} 
\newcommand{\sumtwo}[2]{\sum_{\substack{#1 \\ #2}}} 
\newtheorem{theorem}{Theorem}[section]
\newtheorem{proposition}[theorem]{Proposition}
\newtheorem{remark}[theorem]{Remark}
\newtheorem{example}[theorem]{Example}
\newtheorem{assumption}{Assumption}
\newtheorem{lemma}[theorem]{Lemma}
\newtheorem{corollary}[theorem]{Corollary}
\title[Wetting on a wall or in a well]{Wetting on a wall and wetting in a well:\\
Overview of equilibrium properties}
\author[Q. Berger]{Quentin Berger}
\address{LPSM, Sorbonne Universit\'e, 75005 Paris, France}
\email{quentin.berger@sorbonne-universite.fr}
\address{DMA, \'Ecole Normale Sup\'erieure, Universit\'e PSL, 75005 Paris, France}
\email{qberger@dma.ens.fr}
\author[B. Massouli\'e]{Brune Massouli\'e}
\address{CEREMADE, Universit\'e Paris-Dauphine, UMR 7534, CEREMADE, PSL University,
75016 Paris, France}
\email{brune.massoulie@dauphine.eu}
\begin{document}

\begin{abstract}
We study the wetting model, which considers a random walk constrained to remain above a hard wall, but with additional pinning potential for each contact with the wall.
This model is known to exhibit a \textit{wetting} phase transition, from a localized phase (with trajectories pinned to the wall) to a delocalized phase (with unpinned trajectories).
As a preamble, we take the opportunity to present an overview of the model, collecting and complementing well-known and other folklore results.
Then, we investigate a version with elevated boundary conditions, which has been studied in various contexts both in the physics and the mathematics literature; it can alternatively be seen as a wetting model in a square well.
We complement here existing results, focusing on the equilibrium properties of the model, for a general underlying random walk (in the domain of attraction of a stable law).
First, we compute the free energy and give some properties of the phase diagram; interestingly, we find that, in addition to the wetting transition, a so-called \textit{saturation} phase transition may occur.
Then, in the so-called Cram\'er's region, we find an exact asymptotic equivalent of the partition function, together with a (local) central limit theorem for the fluctuations of the left-most and right-most pinned points, jointly with the number of contacts at the bottom of the well.
\\[3pt]
{\it Keywords:} wetting, pinning, polymers, random walk, large deviations, central limit theorem.\\[3pt]
{\it MSC2020 AMS classification:} Primary: 82B41, 60K35 ; Secondary: 82D60, 60F05.
\end{abstract}

\maketitle

\vspace{-1.7\baselineskip}
\begin{figure}[htbp]
    \centering
    \includegraphics[scale=0.68]{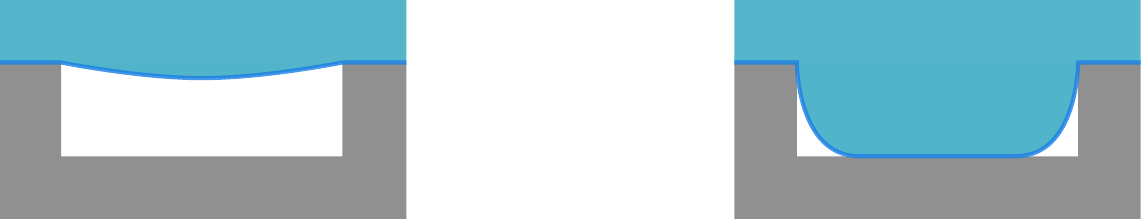}
    \caption{\small Wetting of a square well by a droplet. On the left, the droplet does not reach the bottom of the well (dry phase); on the right the droplet reaches the bottom of the well (wet phase).}
    \label{fig:wettingwell}
 \vspace{-1.6\baselineskip}
\end{figure}

\setcounter{tocdepth}{1}
\tableofcontents

\section{Introduction}

The goal of the present article is to study the random walk wetting model, which has been introduced following the work of \cite{Abr80,Abr81} as an effective model for interfaces in the 2D Ising model, and quickly drew a lot of interest from the physics community, see for instance the seminal paper~\cite{Fish84}.
The model is based on a random walk trajectory which is constrained to remain above a hard wall, with a reward (or penalty) for every point of contact with the wall.
It can be used as an effective model for a one-dimensional interface interacting with a substrate, or a model for the adsorption of polymers on a substrate.
Over the last decades, the wetting model has been widely studied, both in the physics and in the mathematical literature, either in its homogeneous or disordered version.
We refer to~\cite{Giac07,Giac10} for a general overview of the model and its relation to the pinning model.

Here, we study a version of the (homogeneous) wetting model with elevated boundary conditions, in analogy with~\cite{dCD87,Pat98} (for the Solid-On-Solid model), \cite{AdCD89,PV99} (for interfaces in the 2D Ising model) or \cite{BFO09} (for $d$-dimensional Gaussian random walks pinned to a subspace). 
This can also be seen as a model for the wetting of a (square) well or cavity, see~Figure~\ref{fig:wettingwell}.
One motivation of the latter interpretation is the description of a wetting transition for droplets on a grooved surface, see \cite{dCDH11}.
In \cite{dCDH11,LT15}, the authors study this model from a dynamical point of view: in particular, they show that for some region of parameters in the phase diagram, the model exhibits a metastable transition (the dry phase might be stable whereas the equilibrium measure is in the wet phase, or vice versa).
In the present paper, we investigate further the properties of the equilibrium measure.

We consider a general setting where the underlying random walk is in the domain of attraction of some $\alpha$-stable law, with $\alpha\in (0,2]$. 
To our knowledge, the literature mostly focuses on specific cases, such as simple or Gaussian random walks in \cite{LT15}, resp.~\cite{BFO09}, integer or real valued Solid-On-Solid (SOS) models in \cite{Pat98}, resp.~\cite{dCD87}.
This also seems to generalize the setting usually considered for the standard wetting model, see e.g.~\cite[\S1.3]{Giac07} and references therein (authors consider random walks in the domain of attraction of the normal law, \textit{i.e.}\ $\alpha=2$).
The wetting model is known to undergo a phase transition between a delocalized (or unpinned) phase where trajectories wander away from the wall, and a localized (or pinned) phase where trajectories stick the wall.
The remarkable fact noticed by Fisher~\cite{Fish84} (for the simple random walk) is that the critical point and the critical behavior of this model can be understood precisely.
For convenience (and because our setting is a bit more general than in the literature), we provide in Section~\ref{sec:wetting} a complete introduction of the model.

\subsubsection*{Overview of the results}
Our first main result consists in providing an expression for the free energy of the wetting model with elevated boundary condition: it is based on an optimization between the free energy of the wetting model and the large deviation rate functions of the underlying random walk (in analogy with~\cite[Prop.~2.1]{LT15}).
As a byproduct of the proof, we also obtain the location of the left-most and right-most point of contact of the interface with the bottom of the well.
We also provide some properties of the phase diagram. 
We show in particular that if the rate functions are non-trivial (which occurs only in the case where the random walk admits a finite variance, \textit{i.e.}\ $\alpha=2$), then the wetting phase transition is always of first order, in contrast with what happens in the standard wetting model where the phase transition is of second order.
Interestingly, depending on the underlying random walk, another (or two other) phase transition may occur: it corresponds to a \emph{saturation} transition, when the bottom corners of the well become wet.

Our second set of results deals with the so-called Cram\'er regime of the model, when the optimization problem that defines the free energy attains its maximum at points where the rate functions are strictly convex.
In that case, we are able to obtain an exact asymptotic for the partition function, together with sharp trajectory estimates.
In particular, we are able to prove a (local) central limit theorem for the left and right-most points of contact, jointly with the number of contacts of the walk with the bottom of the well.

\subsubsection*{Overview of the article}
Let us now briefly present the organization of the paper.

In Section~\ref{sec:wetting}, we introduce the wetting model in a general setting; we consider either discrete or continuous random walks, in the domain of attraction of a stable law.
Since this setting is not usually the one presented in the literature (as far as we know), we give a complete overview of the model: we explain its relation to the pinning model, give an implicit expression for the free energy and provide the critical behavior of the model.
This section mostly collects well-known facts on the model and could serve as a rather complete overview of the homogeneous model; to complete the overview, we also include some (folklore) \emph{integrable} models where the free energy admits an explicit expression, in Appendix~\ref{sec:examples}.
Readers familiar with the subject may skip this section entirely.

In Section~\ref{sec:wettingwell}, we turn our attention to the wetting model with elevated boundary conditions or \textit{wetting in a (square) well}.
We present our main results on the free energy  (see Theorem~\ref{th:freeenergy}), together with some properties of the phase diagram, in particular regarding the critical curve.
We also make the maximizer(s) of the variational problem that defines the free energy explicit (see Lemma~\ref{lem:psi}, there are cases where the maximizer is not unique!). We then give some consequences on the behavior of path trajectories.
Finally, we state our sharper results in Cram\'er's region (see Theorem~\ref{th:Gaussian}) and we also give some properties of the phase transition.

In Section~\ref{sec:comments}, we discuss several natural questions one is led to consider.
The rest of the paper is devoted to the proofs of the different results.

\section{A review of the (standard) wetting model}
\label{sec:wetting}

\subsection{Wetting of a random walk on a hard wall}

Let $(X_i)_{i \ge 1}$ be i.i.d.\ real valued random variables and let $(S_n)_{n\geq 0}$ be the associated random walk, that is $S_n = \sum_{i=1}^{n} X_i$ for $n\geq 0$ (with $S_0=0$ by convention); we denote by $\PP$ its law.
For $N\in \mathbb N$, the directed random walk $(n,S_n)_{0\leq n\leq N}$ is used to describe an effective interface between two solvents or alternatively the trajectory of a polymer of length $N$ (with $(n,S_n)$ representing the position of the $n$-th monomer), see Figure~\ref{fig:wetting}.

We focus on two specific cases: either the random walk is $\mathbb Z$-valued and aperiodic (discrete case) or the $X_i$'s are have a density with respect to the Lebesgue measure (continuous case).
We denote by~$f(\cdot)$ the density of the law of $X_i$ with respect to $\mu$, where $\mu$ is the counting measure on $\mathbb Z$ in the discrete case and $\mu$ is the Lebesgue measure in the continuous case: $\PP(X_1\in \dd x) = f(x) \mu(\dd x)$.

\smallskip

For $\lambda>0$, we introduce the following family of Gibbs measures $\PP_{N,\lambda}$ on the space of trajectories of $(S_0,\ldots, S_N)$, known as the ($\delta$-pinning) wetting model:
\begin{equation}
\label{eq:PNB}
\dd \PP_{N,\lambda} (s_1, \ldots, s_n) = \frac{1}{Z_{N,\lambda}}  \prod_{i=1}^{N-1} \big( \ind_{\{s_i>0\}} \mu(\dd s_i) + \lambda \delta_0(\dd s_i) \big) 
 \prod_{i=1}^N f(s_i-s_{i-1}) \,\lambda  \delta_0(\dd s_N)
 \,,
\end{equation}
with $\delta_0$ denoting the Dirac mass at $0$ and $s_0=0$ by convention. The quantity $Z_{N,\lambda}$ i the partition function of the model, that normalizes $\PP_{N,\lambda}$ to a probability. 
Let us stress that the measure $\PP_{N,\lambda}$ puts weights only on trajectories that stay above the wall, and gives a weight $\lambda$ to each contact with the wall.
We refer to Figure~\ref{fig:wetting} for an illustration.

\begin{figure}[ht]
    \centering
    \includegraphics[scale=1]{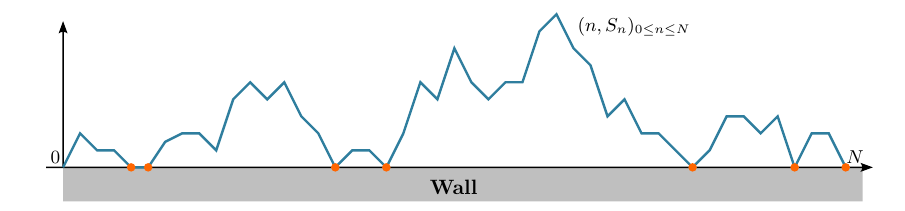}
    \caption{Illustration of the wetting of a random walk on a hard wall --- the random walk trajectory $(n,S_n)_{0\leq n\leq N}$ may represent a directed polymer or some effective interface. The trajectory is subject to a hard wall constraint (\textit{i.e.}\ one cannot have $S_i<0$) and the measure $\PP_{N,\lambda}$ additionally gives weight~$\lambda$ to each contact with the hard wall (represented by dots in the figure).}
    \label{fig:wetting}
\end{figure}

We will see in Section~\ref{sec:renewal} below that the wetting model~\eqref{eq:PNB} reduces to a renewal pinning model, somehow forgetting the underlying random walk path and considering only the instants of return to $0$.
However, it is important in our setting to keep the random walk interpretation, in particular if we want to consider a model with elevated boundary conditions.

Let us observe that, because of the term $\delta_0(\dd s_n)$ in~\eqref{eq:PNB}, the endpoint of the walk is pinned to the wall: this is the \emph{constrained} version of the model.
It is also possible to consider the \emph{free} version of the model, removing the last constraint $\delta_0(\dd s_n)$ in~\eqref{eq:PNB}; results would then be very similar and we refer to~\cite[Ch.~1\&2]{Giac07} for more discussion on the free case (that we do not discuss further).

\begin{remark}
In the discrete setting, which is the one considered originally in~\cite{Fish84} (and in most of the literature), the Gibbs measure $\PP_{N,\lambda}$ can be written as
\begin{equation}
\label{eq:PNB2}
\dd \PP_{N,\lambda}(S) = \frac{1}{Z_{N,\lambda}} \lambda^{H_N(S)}  \un_{\Omega_N^+(S)} \un_{\{S_N = 0\}} \dd \PP(s_1, \ldots, s_N) \,, \quad \text{ with } H_N(S) := \sum_{n=1}^N \un_{\{S_n = 0\}} \,,
\end{equation}
where we have set $\Omega_N^+(S) := \{S_1\geq 0, \ldots, S_N\geq 0\}$; this corresponds to the Gibbs measure considered in~\cite{LT15}.
The assumption that $(S_n)_{n\geq 0}$ is aperiodic is needed here so that the constraint $\{S_N=0\}$ can be verified, at least for $N$ large; if the random walk were periodic of period $d$, then one would simply need to restrict to lengths $N \in d\mathbb N$.
\end{remark}

Our setting is similar to that of~\cite{DGZ05,CGZ06}, where the density $f(\cdot)$ of the $X_i$'s is put in the form $f(x)=e^{-V(x)}$ for some potential $V$; the authors assume there that the increments $X_i$ are centered and in the domain of attraction of the normal law.
Our main assumption is the following (it is similar for instance to \cite[Hyp.~2.1]{CC13}).

\begin{assumption}
\label{hyp:1}
The random walk $(S_n)_{n\geq 0}$ is in the domain of attraction of a strictly stable law with index $\alpha\in (0,2]$ and positivity parameter $\varrho \in (0,1)$.
More precisely, there exists a sequence $(a_n)_{n\geq 1}$ which is regularly varying with index $1/\alpha$ such that $a_n^{-1} S_n$ converges weakly to some (strictly) $\alpha$-stable random variable $\mathcal{Z}$ with positivity parameter $\varrho \in (0,1)$.

In the discrete case, we assume that $(S_n)_{n\geq 0}$ is aperiodic; in the continuous case, we assume that the density $f_n$ of $S_n$ verifies $f_n(0)<+\infty$ for all $n\geq 1$ and is essentially bounded for some~$n$.
\end{assumption}

\noindent
We denote by $f_{\alpha}$ the density of the limiting $\alpha$-stable random variable $\mathcal{Z}$. We stress that $f_{\alpha}(0)>0$ thanks to the fact that $\varrho\in (0,1)$.

\subsection{Renewal representation of the model}
\label{sec:renewal}

As noticed in~\cite{CGZ06}, the wetting model~\eqref{eq:PNB} can be rewritten using a renewal process. 
Let us define, for $n\geq 1$,
\begin{equation}
\label{def:kappan}
 f_n^+(0) := \int_{s_1>0, \ldots, s_{n-1}>0}  \prod_{i=1}^{n} f(s_i-s_{i-1}) \prod_{i=1}^{n-1} \mu(\dd s_i) \,,\quad \text{ with }\  s_0:=s_{n} :=0 \,.
\end{equation}
This corresponds to the density $f_n^+(x)=\frac{1}{\mu(\dd x)}\PP( S_1 >0, \ldots, S_{n-1} >0, S_n \in \dd x)$ at $x=0$.
In particular, in the discrete case, we have $f_n^+(0)=\PP(S_1>0, \ldots, S_{n-1}>0, S_n=0)$.

Let us denote $\kappa:= \sum_{n=1}^{\infty}f_n^+(0)$ and observe that $\kappa \in (0,+\infty)$ holds under Assumption~\ref{hyp:1}, thanks to Lemma~\ref{lem:kappa} below (and formula~\eqref{rel:Kconti} to see that $f_n^+(0)<+\infty$ for all $n$ in the continuous case). Then we define the following probability density on $\mathbb{N}$:
\begin{equation}
\label{def:K1}
K(n) := \kappa^{-1} f_n^+(0) \qquad\text{ for all } \ n\geq 1 \,.
\end{equation}
Now, if we write explicitly the partition function
\[
Z_{N,\lambda} = \int_{(\mathbb{R}_+)^N} \prod_{i=1}^{N-1} \big( \ind_{\{s_i>0\}} \mu(\dd s_i) + \lambda \delta_0(\dd s_i) \big) 
 \prod_{i=1}^N f(s_i-s_{i-1}) \,\lambda  \delta_0(\dd s_N) \,,
\]
then decomposing it with respect to the number and positions of contact with the wall, \textit{i.e.}\ decomposing the integral over the possible sets $I = \{ i\in\{1,\ldots, N\}, s_i=0 \}$, we obtain
\[
Z_{N,\lambda} =  \sum_{k=1}^N \sum_{0=:i_0 < i_1 <\cdots < i_k =N} \lambda^k \prod_{j=1}^k f^+_{i_j-i_{j-1}}(0)  = \sum_{k=1}^N \sum_{0=:i_0 < i_1 <\cdots < i_k =N} (\kappa\lambda)^k \prod_{j=1}^k K(i_j-i_{j-1})  \,.
\]
Introducing a (recurrent) renewal process $\tau =(\tau_i)_{i\geq 0}$ with inter-arrival distribution $K(\cdot)$, \textit{i.e.}\ letting $\tau_0:=0$ and $(\tau_i-\tau_{i-1})_{i\geq 1}$ be i.i.d.\ $\mathbb{N}$-valued random variables with law $\bP(\tau_1=n)=K(n)$, the above partition function can be rewritten as
\[
Z_{N,\lambda} = \bE\Big[ (\kappa\lambda)^{H_N(\tau)} \ind_{\{N\in \tau\}}\Big] \,,\qquad \text{with } H_N(\tau) := \sum_{n=1}^N \ind_{\{n\in \tau\}} \,.
\]
(With some abuse of notation, we also interpret $\tau = \{\tau_i\}_{i\geq 0}$ as a subset of $\mathbb N \cup\{0\}$.)
In this context we can rewrite the Gibbs measure~\eqref{eq:PNB} as
\begin{equation}
\label{def:PNB2}
\frac{\dd \bP_{N,\lambda}}{\dd \bP} (\tau) = \frac{1}{Z_{N,\lambda}} (\kappa\lambda)^{H_N(\tau)} \ind_{\{N\in \tau\}} \,,
\qquad \text{with } H_N(\tau) = \sum_{n=1}^N \ind_{\{n\in \tau\}} \,.
\end{equation}
This is the usual formulation of the homogeneous pinning model, see~\cite[Ch.~2]{Giac07} (write $e^{\beta} =\kappa\lambda$).

Notice here that the wall constraint has been absorbed in the definition of $\tau$ (see the definition~\eqref{def:kappan} of $f_n^+(0)$) and we have reduced to a recurrent renewal $\tau$ at the cost of a change of parameter $\lambda \rightsquigarrow \kappa\lambda$.
This way, one can interpret $\tau$ as the return times of the random walk $(S_n)_{0\leq n\leq N}$ to $0$ conditioned on staying non-negative (due to the definition of $f_n^+(0)$) and having finite excursions away from $0$ (due to the normalization by $\frac1\kappa$ in~\eqref{def:K1}).
Let us also stress that the Gibbs law~\eqref{def:PNB2} describes only the distribution of the renewal process~$\tau$, but this is enough to describe completely the measure~\eqref{eq:PNB} on random walk trajectories: indeed, conditionally on the return times to $0$, the law of the excursions of $(S_n)_{n\geq 0}$ between $\tau_{k-1}$ and $\tau_k$ is unchanged by~\eqref{eq:PNB}.

Analogously to what is observed in~\cite[App.~A]{CGZ06}, under Assumption~\ref{hyp:1} we are able to obtain information on the law of the renewal process $\tau$.
In particular, we have the following results. Lemma~\ref{lem:ladder} is a direct consequence of~\cite{AD99}. Lemma~\ref{lem:kappa} is given in \cite[Prop.~4.1-(4.5)]{CC13} for the discrete case and \cite[Thm.~5.1-(5.2)]{CC13} for the continuous case. For the sake of completeness, we provide (simple) proofs of these results in Appendix~\ref{app:renewal}.

\begin{lemma}
\label{lem:ladder}
Let $\bar T_1 := \min\{ n\geq 1 , S_n \leq 0\}$ and $\bar H_1 = -S_{\bar T_1}$ be the first (weak) ladder epoch and ladder height of the random walk. Then, we have
\[
\kappa:=\sum_{n=1}^{\infty} f_n^+(0) = 
\begin{cases}
\bP(\bar H_1=0) & \ \ \text{ in the discrete case}, \\
 f_{\bar H_1} (0) & \ \ \text{ in the continuous case} ,
\end{cases}
\]
where $f_{\bar H_1}$ is the density of $\bar H_1$ w.r.t.\ the Lebesgue measure in the continuous case.
\end{lemma}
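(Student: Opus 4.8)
The plan is to prove both identities by decomposing $\kappa=\sum_{n\ge 1} f_n^+(0)$ according to the first weak descending ladder epoch $\bar T_1=\min\{n\ge 1: S_n\le 0\}$. The starting observation is that the quantity $f_n^+(\cdot)$ of~\eqref{def:kappan} extends verbatim to arguments $x\le 0$, and that for such $x$
\[
\bP\big(\bar T_1=n,\ S_{\bar T_1}\in\dd x\big)=\bP\big(S_1>0,\dots,S_{n-1}>0,\ S_n\in\dd x\big)=f_n^+(x)\,\mu(\dd x),
\]
since, given $S_n\in\dd x$ with $x\le 0$, the constraint $S_n\le 0$ defining $\bar T_1=n$ is automatic. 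Under Assumption~\ref{hyp:1} the positivity parameter satisfies $\varrho\in(0,1)$, so the walk oscillates (Rogozin's criterion) and $\bar T_1<\infty$ a.s.; in particular $\bar H_1=-S_{\bar T_1}\ge 0$ is well defined.

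For the discrete case, the events $A_n:=\{S_1>0,\dots,S_{n-1}>0,\ S_n=0\}$, $n\ge1$, are pairwise disjoint: if $n<m$ and $\omega\in A_n\cap A_m$, then $S_n(\omega)=0$ (from $A_n$) while $S_n(\omega)>0$ (from $A_m$, as $n\le m-1$), a contradiction. Moreover $\bigcup_{n\ge1}A_n=\{S_{\bar T_1}=0\}=\{\bar H_1=0\}$, since on $A_n$ one has $\bar T_1=n$ and $S_{\bar T_1}=0$, while conversely $\{\bar H_1=0\}$ lies in $A_{\bar T_1}$. Summing, $\kappa=\sum_{n\ge1}\bP(A_n)=\bP(\bar H_1=0)$, which is the claimed formula.

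For the continuous case one runs the same decomposition at the level of densities. For any Borel $A\subseteq(-\infty,0)$ the events $\{S_1>0,\dots,S_{n-1}>0,\ S_n\in A\}$ are pairwise disjoint with union $\{S_{\bar T_1}\in A\}$, so
\[
\bP(-\bar H_1\in A)=\sum_{n\ge1}\int_A f_n^+(x)\,\dd x=\int_A\Big(\sum_{n\ge1}f_n^+(x)\Big)\dd x,
\]
the interchange being legitimate by positivity (Tonelli). Hence $y\mapsto\sum_{n\ge1}f_n^+(-y)$ is a version of the density $f_{\bar H_1}$ on $(0,\infty)$. It remains to identify its boundary value at $0$ with $\kappa=\sum_{n\ge1}f_n^+(0)$: this is where Assumption~\ref{hyp:1} enters, since the essential boundedness of $f_n$ for some $n$ together with $f_n(0)<\infty$ for all $n$ makes each $f_n^+$ continuous on a neighbourhood of $0$, and the summable bound on $f_n^+(0)$ underlying Lemma~\ref{lem:kappa} (cf.~formula~\eqref{rel:Kconti}) lets one pass the limit $y\downarrow 0$ inside the sum; thus $f_{\bar H_1}(0):=\lim_{y\downarrow0}f_{\bar H_1}(y)=\sum_{n\ge1}f_n^+(0)=\kappa$.

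The combinatorial and measure-theoretic steps are immediate; the only point requiring genuine care is the last one, namely the regularity statement in the continuous case that turns the a.e.-defined density $f_{\bar H_1}$ into a function with an unambiguous value $\kappa$ at the boundary point $0$. As noted before the statement, both identities can alternatively be read off from the Wiener--Hopf factorisation and the description of the ladder structure in~\cite{AD99}; we find the direct argument above more transparent and record it for completeness.
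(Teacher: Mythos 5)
Your proof is correct and follows essentially the same route as the paper's: both identify $f_n^+$ on the non-positive half-line with the (defective) density of $S_{\bar T_1}$ on $\{\bar T_1=n\}$ and sum over $n$, the discrete case being an exact event decomposition and the continuous case its density-level analogue. The only divergence is at the very end of the continuous case, where the paper simply takes $\kappa(x)=\sum_{n\geq1}f_n^+(x)$ as the canonical version of $f_{\bar H_1}$ (so the identity at $x=0$ is immediate by definition), whereas you additionally attempt to identify $\kappa$ with the boundary limit $\lim_{y\downarrow0}f_{\bar H_1}(y)$ --- a regularity point you only sketch and which the paper's argument does not require.
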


\begin{lemma}
\label{lem:kappa}
Suppose that Assumption~\ref{hyp:1} holds and let $f^+_n(0)$ be defined as in~\eqref{def:kappan}.
Then, 
\[
f_n^+(0) \sim  \frac{c_0}{n a_n} \qquad \text{ as }\ n\to\infty \,,
\] 
with $c_0 := f_{\alpha}(0) \bP(\bar H_1 >0)$, where we recall that $f_{\alpha}$ is the density of the limiting $\alpha$-stable random variable $\mathcal Z$;
observe also that $\bP(\bar H_1 >0)=1$ in the continuous case.

\noindent
Note that if $\sigma^2:=\bE[X_1^2]<+\infty$ ($\bE[X_1]=0$), then $f_n^+(0)\sim c_1 n^{-3/2}$ with $c_1 := \frac{1}{\sigma\sqrt{2\pi}} \bP(\bar H_1 >0)$.
\end{lemma}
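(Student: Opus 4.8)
The plan is to reduce the statement to the local limit theorem for $S_n$ at the origin, by means of a Sparre--Andersen/Wiener--Hopf identity, and then to extract the asymptotics by an elementary convolution (Tauberian) argument; the discrete and continuous cases run in parallel but differ in one point.

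First, since $\{S_1>0,\dots,S_{n-1}>0,S_n=0\}=\{\bar T_1=n,\bar H_1=0\}$, in the discrete case $G^+(z):=\sum_{n\ge 1}z^n f_n^+(0)=\bE[z^{\bar T_1}\ind_{\{\bar H_1=0\}}]$ for $|z|<1$. I would start from the classical bivariate identity $1-\bE[z^{\bar T_1}u^{S_{\bar T_1}}]=\exp\big(-\sum_{n\ge 1}\tfrac{z^n}{n}\bE[u^{S_n}\ind_{\{S_n\le 0\}}]\big)$ (valid for $|z|<1$ and $u\ge 1$) and let $u\to\infty$, so that dominated convergence yields
\[
\sum_{n\ge 1}z^n f_n^+(0)=1-\exp\Big(-\sum_{n\ge 1}\frac{z^n}{n}\,\PP(S_n=0)\Big),
\]
equivalently, differentiating and identifying coefficients, the recursion $n f_n^+(0)=\PP(S_n=0)-\sum_{m=1}^{n-1}\PP(S_m=0)f_{n-m}^+(0)$. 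In the continuous case one extracts a density at $0$ instead: taking $u=e^{\lambda}$ in the same identity, multiplying by $\lambda$, letting $\lambda\to\infty$, and using $\lambda\,\bE[e^{\lambda S_n}\ind_{\{S_n\le 0\}}]\to f_n(0)$ together with the linearization of the outer exponential (its exponent tends to $0$), one obtains the simpler identity $n f_n^+(0)=f_n(0)$.

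Next, under Assumption~\ref{hyp:1}, Gnedenko's local limit theorem in the discrete aperiodic case (resp.\ Stone's local limit theorem for densities in the continuous case, using $f_n(0)<\infty$ and essential boundedness of some $f_n$) gives $a_n\PP(S_n=0)\to f_\alpha(0)$, resp.\ $a_n f_n(0)\to f_\alpha(0)$; in particular $p_n:=\PP(S_n=0)$ (resp.\ $f_n(0)$) is regularly varying of index $-1/\alpha$. In the continuous case this already finishes the proof, since $f_n^+(0)=f_n(0)/n\sim f_\alpha(0)/(n a_n)$ and here $\PP(\bar H_1>0)=1$. In the discrete case, the recursion first gives the a priori bound $f_n^+(0)\le p_n/n$. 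Splitting the convolution $\sum_{m=1}^{n-1}p_m f_{n-m}^+(0)$ at $m=n/2$: the sum over $m>n/2$ equals $\sum_{1\le k<n/2}p_{n-k}f_k^+(0)$, which tends to $\kappa\,p_n(1+o(1))$ by regular variation of $p$ (so $p_{n-k}/p_n\to 1$ for fixed $k$ and stays bounded) and $\sum_k f_k^+(0)=\kappa<\infty$; the sum over $m\le n/2$ is, using $f_{n-m}^+(0)\le p_{n-m}/(n-m)$, at most $\tfrac{C}{n}p_n\sum_{m\le n/2}p_m=o(p_n)$ since $n^{-1}\sum_{m\le n}p_m\to 0$. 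Hence $n f_n^+(0)=(1-\kappa)p_n+o(p_n)$; and since $\bar T_1<\infty$ almost surely under Assumption~\ref{hyp:1} and $\kappa=\PP(\bar H_1=0)$ by Lemma~\ref{lem:ladder}, we have $1-\kappa=\PP(\bar H_1>0)$, whence $f_n^+(0)\sim \PP(\bar H_1>0)f_\alpha(0)/(n a_n)=c_0/(n a_n)$. The finite-variance statement is then the case $\alpha=2$, $a_n=\sigma\sqrt n$, $f_\alpha(0)=1/\sqrt{2\pi}$.

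The only genuine work lies in (i) justifying the two limit passages ($u\to\infty$, resp.\ $\lambda\to\infty$) inside the infinite sums, which requires the uniform control $f_n(0)=O(1/a_n)$ afforded by the local limit theorem, and (ii) the convolution estimate above --- but there the a priori bound $f_n^+(0)\le\PP(S_n=0)/n$, which drops out of the recursion for free, is exactly what makes the argument elementary, so that no Tauberian theorem for oscillating coefficients is needed. The local limit theorem itself is quoted from the literature, and Assumption~\ref{hyp:1} is precisely what makes it available.
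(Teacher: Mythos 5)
Your proof is correct, and its asymptotic half coincides with the paper's own argument: in the discrete case the paper also reduces the problem to a convolution of $f_k^+(0)$ against $\bP(S_{n-k}=0)$, splits it at $n/2$, and uses dominated convergence plus regular variation of $\bP(S_n=0)$ on one piece and the a priori bound $f_k^+(0)\le \bP(S_k=0)/k$ together with a Ces\`aro argument on the other; in the continuous case it likewise ends at $f_n^+(0)=\tfrac1n f_n(0)$ followed by the local limit theorem for densities. Where you genuinely differ is the source of the key identity. The paper simply quotes the Alili--Doney relation (and \cite{CGZ06} in the continuous case), whereas you rederive everything from the Spitzer--Baxter generating-function identity, arriving at the renewal recursion $n f_n^+(0)=\bP(S_n=0)-\sum_{m=1}^{n-1}\bP(S_m=0)f^+_{n-m}(0)$ rather than a closed form; I checked this recursion on examples and it is exact, and it feeds into the same split-convolution analysis. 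What this buys is self-containedness; what it costs is the two Abelian limit passages. Note that in the discrete case no local limit theorem is needed there (for fixed $|z|<1$ each term of $\sum_n\tfrac{z^n}{n}\bE[u^{S_n};S_n\le0]$ is bounded by $|z|^n/n$, so dominated convergence is immediate), so your closing remark overstates the difficulty on that side; conversely, in the continuous case the limits $\lambda\,\bE[e^{\lambda S_n};S_n\le0]\to f_n(0)$ and $\lambda\,\bE[z^{\bar T_1}e^{\lambda S_{\bar T_1}}]\to\sum_n z^n f_n^+(0)$ require the relevant densities to be (at least approximately) continuous at $0$, which Assumption~\ref{hyp:1} does not literally provide; this is the same regularity point the paper outsources to \cite[App.~A.2]{CGZ06}, so it is a caveat to flag rather than a gap. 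Finally, for your conclusion to be a true asymptotic equivalence one should note that $1-\kappa=\bP(\bar H_1>0)\ge\bP(X_1<0)>0$ under $\varrho\in(0,1)$, which you use implicitly.
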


\noindent

Let us now highlight one consequence of Lemma~\ref{lem:kappa} on the inter-arrival probability distribution of $\tau$.
Since the normalizing sequence $a_n$ in Assumption~\ref{hyp:1} is regularly varying with index $1/\alpha$, we find that there exists a  function $L(\cdot)$ slowly varying at infinity\footnote{A function $L(\cdot)$ is said to be slowly varying at infinity if for any $a>0$, $\lim_{x\to\infty} L(ax)/L(x) =1$.} such that
\begin{equation}
\label{def:K}
K(n) = \bP(\tau_1=n) = L(n) n^{-(1+\frac{1}{\alpha})}\,.
\end{equation}

To summarize, we have rewritten the Gibbs measure~\eqref{eq:PNB} in terms of the standard homogeneous pinning model, see~\eqref{def:PNB2}, with underlying (recurrent) renewal $\tau$ whose inter-arrival distribution~$K(\cdot)$.
The relation~\eqref{def:K} is often the underlying assumption when studying the pinning model (with a wider range for the parameter $\frac{1}{\alpha}$, which is here restricted to $[\frac12,+\infty)$).
We refer to~\cite[Ch.~2]{Giac07} for a complete overview of the (homogeneous) pinning model, but we collect (and complement) below some of the results.

\subsection{Free energy and phase transition}

An important physical quantity of the wetting (or pinning) model is the \textit{free energy}, defined by
\[
\f(\lambda) := \lim_{N\to\infty} \frac1N \log Z_{N,\lambda} \,.
\]
The fact that the limit exist follows easily once one realizes that the sequence $(\log Z_{N,\lambda})_{N\ge 1}$ is super-additive.
It is standard to see that the free energy $\f(\lambda)$ verifies:

(i) $\f(\lambda)\geq 0$, since we have $Z_{N,\lambda} \geq \kappa\lambda \bP(\tau_1=N)$ and~\eqref{def:K}, 
and $\f(\lambda) =0$ for all $\kappa\lambda \leq 1$;

(ii) $\lambda \mapsto\f(\lambda)$ is non-decreasing, since $\lambda \mapsto \log Z_{N,\lambda}$ is non-decreasing for any $N\geq 1$.

(iii) $\beta \mapsto \f(e^{\beta})$ is convex, since $\beta \mapsto \log Z_{N,e^{\beta}}$ is convex for any $N\geq 1$.

\noindent
Hence, we can define a critical point
\[
\lambda_c  = \sup \{ \lambda, \f(\lambda) =0\}= \inf\{\lambda, \f(\lambda) >0 \} \,.
\]
Let us stress that from convexity arguments, we obtain that whenever $\f'(\lambda)$ exists (from Theorem~\ref{prop:pinning} below, this is the case for all $\lambda >0$ except possibly at $\lambda=\lambda_c$), we have
\[
\f'(\lambda) = \lim_{N\to\infty} \frac{\partial}{\partial \lambda}
 \frac1N \log Z_{N,\lambda} = \lambda^{-1} \lim_{N\to\infty}  \bE_{N,\lambda} \Big[ \frac1N \sum_{i=1}^N \ind_{\{n\in \tau\}}\Big] \,.
\]
Hence, $\f'(\lambda)$ is ($\lambda^{-1}$ times) the limiting density of contacts under the Gibbs measure~\eqref{def:PNB2}.
This shows that $\lambda_c$ marks the transition between a \emph{delocalized} phase for $\lambda <\lambda_c$ (with $\f'(\lambda)=0$, zero density of contact) and a \emph{localized} phase (with $\f'(\lambda)>0$, positive density of contacts)

\smallskip
We now collect a number of results on the free energy: they show that the wetting model (or the pinning model when considering the definition~\eqref{def:PNB2}) is solvable.
We state the results with our notation, but we refer to~\cite[Ch.~2]{Giac07} for the general context of the pinning model.
Define the Laplace transform of~$\tau_1$ (recall that $\sum_{n=1}^{\infty}K(n)=1$): for any $\vartheta\geq 0$,
\begin{equation}
\label{def:LaplaceK}
\mathcal{K}(\vartheta) := \sum_{n=1}^{\infty} K(n) e^{- \vartheta n} = \bE\big[ e^{-\vartheta \tau_1} \big] \,.
\end{equation}
Let us note that $\mathcal{K}:(0,+\infty) \to (0,1)$ is decreasing and analytic.

\begin{theorem}[\cite{Giac07}, Thm.~2.1]
\label{prop:pinning}
The free energy is characterized by the following relation: 
\begin{equation}
\label{def:freeenergy}
\f(\lambda) \textit{ is the solution of }  \mathcal{K}(\vartheta) = (\kappa \lambda)^{-1} \text{ if a solution exists, and } \f(\lambda) =0 \text{ otherwise.}
\end{equation}
In particular, the critical point is given by $\lambda_c = 1/\kappa$, and the implicit function theorem shows that $\lambda\mapsto\f(\lambda)$ is analytic on $(\lambda_c,+\infty)$.
Additionally, we have
\begin{equation}
\label{eq:criticF}
\f(\lambda_c+u) \sim \tilde L(u) u^{ \min(1,\alpha)} \qquad \text{ as } u\downarrow 0\,,
\end{equation}
where $\tilde L$ is some slowly varying function (which depends explicitly on $L,\alpha$ in~\eqref{def:K}).
\end{theorem}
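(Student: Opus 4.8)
The plan is to reduce the statement to the classical renewal-theoretic analysis of the homogeneous pinning model, which is legitimate since Section~\ref{sec:renewal} has already rewritten $Z_{N,\lambda}$ as $\bE[(\kappa\lambda)^{H_N(\tau)}\ind_{\{N\in\tau\}}]$ for the recurrent renewal $\tau$ with $K(n)=L(n)n^{-(1+1/\alpha)}$. First I would introduce, for a candidate free energy value $\vartheta\ge 0$, the tilted (defective or proper) renewal with inter-arrival law $\widetilde K_\vartheta(n):=K(n)e^{-\vartheta n}/\mathcal K(\vartheta)$ when $\mathcal K(\vartheta)\le (\kappa\lambda)^{-1}$ is solvable; equivalently, note the elementary identity
\begin{equation*}
\kappa\lambda\, e^{-\vartheta N} Z_{N,\lambda} \;=\; \sum_{k\ge 1}\;\sumtwo{0=i_0<i_1<\cdots<i_k=N}{}\;\prod_{j=1}^{k}\big(\kappa\lambda\, K(i_j-i_{j-1})e^{-\vartheta(i_j-i_{j-1})}\big),
\end{equation*}
so that if $\vartheta$ solves $\mathcal K(\vartheta)=(\kappa\lambda)^{-1}$ then $e^{-\vartheta N}Z_{N,\lambda}$ is exactly the Green's function (expected number of visits to $N$) of a \emph{proper} renewal, hence bounded above and, by the renewal theorem, bounded below by a positive constant for $N$ large along the support; this gives $\f(\lambda)=\vartheta$. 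When no solution exists, i.e. $(\kappa\lambda)^{-1}>\mathcal K(0^+)=1$ is impossible so this is the regime $\kappa\lambda\le 1$, one shows $\f(\lambda)=0$ directly: the upper bound $Z_{N,\lambda}\le \sum_k (\kappa\lambda)^k\sum_{\cdots}\prod K(\cdot)\le$ Green's function of a defective/critical renewal grows subexponentially (polynomially, in fact), while $\f(\lambda)\ge 0$ was already observed. This yields \eqref{def:freeenergy} and, reading off $\mathcal K(0^+)=1$, the value $\lambda_c=1/\kappa$.

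Next, analyticity on $(\lambda_c,+\infty)$ follows from the implicit function theorem applied to $G(\vartheta,\lambda):=\mathcal K(\vartheta)-(\kappa\lambda)^{-1}$: for $\lambda>\lambda_c$ the solution $\vartheta=\f(\lambda)$ is strictly positive, $\mathcal K$ is analytic on $(0,+\infty)$ by \eqref{def:LaplaceK}, and $\partial_\vartheta \mathcal K(\vartheta)=-\bE[\tau_1 e^{-\vartheta\tau_1}]<0$ there, so the hypotheses of the analytic implicit function theorem hold and $\f$ is real-analytic near each such $\lambda$.

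For the critical behavior \eqref{eq:criticF}, I would set $u\downarrow 0$, write $\lambda=\lambda_c+u$, so $(\kappa\lambda)^{-1}=1-\kappa u+o(u)$ and hence $1-\mathcal K(\f(\lambda))\sim \kappa u$ as $u\downarrow 0$ with $\f(\lambda)\downarrow 0$. Everything then reduces to the Tauberian/Abelian asymptotics of $1-\mathcal K(\vartheta)=\sum_n K(n)(1-e^{-\vartheta n})$ as $\vartheta\downarrow 0$ under $K(n)=L(n)n^{-(1+1/\alpha)}$: for $1/\alpha\in(0,1)$, i.e. $\alpha>1$, one has $1-\mathcal K(\vartheta)\sim C\, \vartheta^{1/\alpha}\widehat L(1/\vartheta)$ for an explicit slowly varying $\widehat L$ (Karamata); for $1/\alpha=1$, i.e. $\alpha=1$, $\mathbb E[\tau_1]$ may be infinite or finite with a slowly varying correction and $1-\mathcal K(\vartheta)\sim \vartheta\, \ell(1/\vartheta)$; for $1/\alpha>1$, i.e. $\alpha<1$, $\mathbb E[\tau_1]<\infty$ and $1-\mathcal K(\vartheta)\sim \mathbb E[\tau_1]\,\vartheta$. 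Inverting $1-\mathcal K(\f)\sim\kappa u$ then gives $\f\sim \tilde L(u)\, u^{\min(1,\alpha)}$ with $\tilde L$ built explicitly from $L,\alpha$ and $\kappa$ via the de Bruijn conjugate of the slowly varying factor. The main obstacle is purely bookkeeping: tracking the slowly varying function through Karamata's Tauberian theorem and the asymptotic inversion, especially in the boundary case $\alpha=1$ where several sub-regimes (finite vs. infinite mean) must be distinguished; the exponential-growth-rate part of the argument is soft and standard renewal theory.
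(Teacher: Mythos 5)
Your argument follows essentially the same route as the paper: the tilted-renewal identity $e^{-\vartheta N}Z_{N,\lambda}=\tilde\bP_\lambda(N\in\tilde\tau)$ is exactly \eqref{eq:identityZNlambda}, the renewal-theorem argument for the exponential rate and the treatment of the regime with no solution are the standard ones, and the asymptotics of $1-\mathcal{K}(\vartheta)$ followed by inversion are precisely what the paper carries out in Appendix~\ref{app:criticF} (Lemma~\ref{lem:mathcalK} and the proof of Proposition~\ref{prop:criticF}); the theorem itself is quoted from \cite{Giac07}.

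One point needs fixing: your last sentence does not follow from your own (correct) intermediate asymptotics. With $K(n)=L(n)n^{-(1+1/\alpha)}$ you obtain, for $\alpha>1$, $1-\mathcal{K}(\vartheta)\sim C\,\vartheta^{1/\alpha}\widehat L(1/\vartheta)$, and inverting $1-\mathcal{K}(\f)\sim\kappa u$ gives $\f(\lambda_c+u)\sim \tilde L(u)\,u^{\alpha}$; for $\alpha<1$ the renewal has finite mean and you get $\f(\lambda_c+u)\sim cu$. In both cases the exponent is $\max(1,\alpha)$, not $\min(1,\alpha)$, which is confirmed by the paper's own Proposition~\ref{prop:criticF} (exponent $1$ in the transient case, exponent $\alpha$ for $\alpha\in(1,2]$). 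The $\min$ in \eqref{eq:criticF} appears to be a typo in the statement; your computation is the right one, but you should state the conclusion it actually yields rather than echo the misprint. Two minor nits: a nonnegative solution of $\mathcal{K}(\vartheta)=(\kappa\lambda)^{-1}$ exists precisely when $\kappa\lambda\geq 1$ (at $\kappa\lambda=1$ the solution is $\vartheta=0$ and the tilted renewal may be null recurrent, so its mass function is not bounded below by a positive constant but only decays polynomially, which still gives zero exponential rate); and your displayed identity carries a spurious factor $\kappa\lambda$ on the left-hand side.
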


\smallskip
In the context of the wetting model, where $K(n) := \frac1\kappa f_n^+(0)$ with some explicit expressions for~$\kappa$ and $f_n^+(0)$ from Lemma~\ref{lem:ladder}-\ref{lem:kappa}, one is able to describe the critical behavior of $\f$ more explicitly than in~\eqref{eq:criticF}.
The proof is identical to that of~\cite[Thm.~2.1]{Giac07}, making some inversion formulas explicit;  we refer to Appendix~\ref{app:criticF} for details.

\begin{proposition}
\label{prop:criticF}
Suppose Assumption~\ref{hyp:1} holds and choose the normalization $(a_n)_{n\geq 1}$ as follows:  

--- if $\alpha \in (0,2)$, let $(a_n)_{n\geq 1}$ be such that $\bP(|X_1|>a_n)\sim \frac1n$ as $n\to\infty$;

--- if $\alpha=2$, let $(a_n)_{n\geq 1}$ be such that $\sigma^2(a_n) a_n^{-2} \sim \frac1n$, where $\sigma^2(x) := \bE[ (X_1)^2 \ind_{\{|X_1|\leq x\}}]$.

\noindent
Then, we have the following asymptotic behaviors, as $u\downarrow 0$:
\begin{itemize}
\item If $\sum_{n=1}^{\infty} \frac{1}{a_n} <+\infty$, \textit{i.e.}\ if the random walk $(S_n)_{n\geq 0}$ is transient (in particular if $\alpha<1$), then 
\begin{equation}
\label{eq:criticF2}
\f(\lambda_c+u) \sim c_1 u \qquad \text{ with } c_1 := \kappa^2 \Big/ \sum_{n=1}^{\infty} n f_n^+(0) \,.
\end{equation}

\item If $\alpha=1$ and $\sum_{n=1}^{\infty} \frac{1}{a_n} =+\infty$, letting $c_0:=f_{\alpha}(0) \bP(\bar H_1 >0)$ as in Lemma~\ref{lem:kappa}, we have
\begin{equation}
\label{eq:criticF3}
\f(\lambda_c+u) \sim \frac{\kappa^2}{c_0 } \, \frac{u}{\mu(1/u)}  \qquad \text{ with } \mu(x) := \int_0^x \frac{\dd s}{s^2 \bP(|X_1|>s)}  \,.
\end{equation}

\item If $\alpha \in (1,2)$, then 
\begin{equation}
\label{eq:criticF4}
\f(\lambda_c+u) \sim \bP\big(|X_1|>c_2/u \big) \qquad  \text{ with } c_2 :=  \alpha c_0 \Gamma (\tfrac{\alpha-1}{\alpha}) /\kappa^2 \,.
\end{equation}

\item If $\alpha =2$, then 
\begin{equation}
\label{eq:criticF5}
\f(\lambda_c+u) \sim c_3  u^2 \sigma^{2}(1/u) \qquad  \text{ with }   c_3:= \tfrac12 \kappa^4 /\bP(\bar H_1>0)^{2}\,.
\end{equation}
\end{itemize}
\end{proposition}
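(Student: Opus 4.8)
The plan is to read the whole statement off the implicit characterization of the free energy in Theorem~\ref{prop:pinning}, combined with the precise first-order asymptotics $f_n^+(0)\sim c_0/(na_n)$ of Lemma~\ref{lem:kappa}. Since $\lambda_c=1/\kappa$, for small $u>0$ the equation $\mathcal K(\vartheta)=(\kappa(\lambda_c+u))^{-1}=(1+\kappa u)^{-1}<1$ has a unique solution $\vartheta=\f(\lambda_c+u)$, which decreases to $0$ as $u\downarrow0$. Writing $\Phi(\vartheta):=1-\mathcal K(\vartheta)=\bE[1-e^{-\vartheta\tau_1}]$, this solution satisfies $\Phi(\f(\lambda_c+u))=\kappa u/(1+\kappa u)\sim\kappa u$. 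Hence the whole proposition reduces to (i) determining the behaviour of $\Phi(\vartheta)$ as $\vartheta\downarrow0$, and (ii) inverting the relation $\Phi(\f)\sim\kappa u$; and the four cases in the statement are precisely the four cases arising in step~(i).

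For step~(i) I would first split according to whether $m:=\bE[\tau_1]=\kappa^{-1}\sum_{n\ge1}nf_n^+(0)$ is finite; by Lemma~\ref{lem:kappa} (since $nf_n^+(0)\sim c_0/a_n$) this holds if and only if $\sum_n1/a_n<\infty$, i.e.\ if and only if $(S_n)$ is transient, which forces $\alpha\le1$. If $m<\infty$, dominated convergence (using $0\le(1-e^{-\vartheta n})/\vartheta\uparrow n$) gives $\Phi(\vartheta)\sim m\vartheta$, and inversion is immediate: $\f(\lambda_c+u)\sim\kappa u/m=\kappa^2u/\sum_n nf_n^+(0)$, which is \eqref{eq:criticF2}. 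If $m=+\infty$, I would first pass to the tail of $\tau_1$: since $f_n^+(0)\sim c_0/(na_n)$ is regularly varying of index $-(1+1/\alpha)$, Karamata's theorem for tail sums yields $\bP(\tau_1>n)=\kappa^{-1}\sum_{m>n}f_m^+(0)\sim\alpha c_0/(\kappa a_n)$. Then the summation-by-parts identity
\[
\Phi(\vartheta)=(1-e^{-\vartheta})\sum_{n\ge0}e^{-\vartheta n}\bP(\tau_1>n)
\]
together with Karamata's Tauberian theorem for power series (applied with $x=e^{-\vartheta}$, and $1-e^{-\vartheta}\sim\vartheta$) gives, when $\alpha\in(1,2]$ so that the exponent $(\alpha-1)/\alpha$ is positive,
\[
\Phi(\vartheta)\sim\frac{\alpha c_0\,\Gamma\!\left(\tfrac{\alpha-1}{\alpha}\right)}{\kappa}\,\frac{1}{a_{1/\vartheta}}\qquad\text{as }\vartheta\downarrow0,
\]
with the convention $\Gamma(\tfrac12)=\sqrt\pi$ for $\alpha=2$; whereas in the remaining boundary regime $\alpha=1$ with $\sum_n1/a_n=+\infty$ (where $\bP(\tau_1>n)$ is regularly varying of index exactly $-1$ and the Tauberian statement has to be taken in its de Haan-class form), the same identity gives $\Phi(\vartheta)\sim\tfrac{c_0}{\kappa}\,\vartheta\int_1^{1/\vartheta}\tfrac{\dd t}{a_t}$.

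For step~(ii), when $\alpha\in(1,2)$ I invert $\Phi(\f)\sim\kappa u$ into $a_{1/\f}\sim c_2/u$ with $c_2:=\alpha c_0\Gamma(\tfrac{\alpha-1}{\alpha})/\kappa^2$; since $a$ is regularly varying and increasing, this gives $1/\f\sim a^{-1}(c_2/u)$, and because the normalization $\bP(|X_1|>a_n)\sim1/n$ means precisely $\bP(|X_1|>x)\sim1/a^{-1}(x)$, we obtain $\f(\lambda_c+u)\sim\bP(|X_1|>c_2/u)$, i.e.\ \eqref{eq:criticF4}. For $\alpha=2$ the same inversion, combined with the normalization $a_n^2\sim n\sigma^2(a_n)$ (which turns $a_{1/\f}\sim c_2/u$ into $\f\sim\sigma^2(1/u)/v^2$ for $v\asymp1/u$) and with the fact that under this normalization the limit $\mathcal Z$ is a standard Gaussian, so that $f_{\alpha}(0)=1/\sqrt{2\pi}$ and $c_0=\bP(\bar H_1>0)/\sqrt{2\pi}$, produces $\f(\lambda_c+u)\sim c_3u^2\sigma^2(1/u)$ with $c_3=\tfrac12\kappa^4/\bP(\bar H_1>0)^2$, i.e.\ \eqref{eq:criticF5}. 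Finally, for $\alpha=1$ with $\sum_n1/a_n=+\infty$, a change of variables together with $\bP(|X_1|>s)\sim1/a^{-1}(s)$ identifies $\int_1^{1/\vartheta}\tfrac{\dd t}{a_t}$ with the slowly varying function $\mu(1/\vartheta)$ up to asymptotic equivalence, whence $\f(\lambda_c+u)\sim\tfrac{\kappa^2}{c_0}\tfrac{u}{\mu(1/u)}$, i.e.\ \eqref{eq:criticF3}.

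All the probabilistic content is already provided by the renewal representation of Section~\ref{sec:renewal} and by Lemmas~\ref{lem:ladder}--\ref{lem:kappa}; what remains is analysis, and the main obstacle, though routine, is the careful bookkeeping of slowly varying functions when inverting $\Phi(\f)\sim\kappa u$: keeping track of de Bruijn conjugates, matching the slowly varying parts appearing respectively in $a_n$, in $\bP(|X_1|>\cdot)$, and in $\mu$, and replacing Karamata's Tauberian theorem by its de Haan counterpart in the boundary case $\alpha=1$. These verifications mirror the proof of \cite[Thm.~2.1]{Giac07} and are deferred to Appendix~\ref{app:criticF}.
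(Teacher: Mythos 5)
Your proposal is correct and follows essentially the same route as the paper: characterize $\f(\lambda_c+u)$ via $1-\mathcal K(\f)=\kappa u/(1+\kappa u)$, derive the small-$\vartheta$ asymptotics of $1-\mathcal K(\vartheta)$ from $f_n^+(0)\sim c_0/(na_n)$ in the same four regimes, and invert using the chosen normalization of $(a_n)$; all constants check out. The only (cosmetic) difference is that you obtain the asymptotics of $1-\mathcal K$ via summation by parts and Karamata's Abelian theorem (de Haan form at $\alpha=1$), where the paper uses a Riemann-sum approximation for $\alpha\in(1,2]$ and direct two-sided bounds for $\alpha=1$; the delicate slowly-varying inversion in the $\alpha=1$ case, which you defer to de Bruijn conjugates, is handled in the paper by an auxiliary lemma cited from the literature.
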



\subsection{Sharp asymptotic of the partition function and some path properties}

Let us stress that the sharp asymptotic of the partition function are known, in the delocalized ($\lambda<\lambda_c$), critical ($\lambda=\lambda_c$) and localized ($\lambda>\lambda_c$) regimes, as can be found in Theorem~2.2 of~\cite{Giac07}.
All these asymptotic behaviors are derived from the following representation:
\begin{equation}
\label{eq:identityZNlambda}
Z_{N,\lambda} = e^{N\f(\lambda)} \tilde \bP_{\lambda} \big(N\in \tilde \tau \big) \,,
\end{equation}
where $\tilde \bP_{\lambda}$ is the law of a  renewal $\tilde \tau$ with inter-arrival distribution given by 
\begin{equation}
\label{def:tildeP}
\tilde \bP_{\lambda}( \tilde \tau_1= n) =  \kappa\lambda  K(n) e^{-\f(\lambda) n} \qquad \text{ for } n\geq 1 \,.
\end{equation}
Indeed, writing $Z_{N,\lambda}(A) := \bE\big[ (\kappa \lambda )^{H_N(\tau)} \ind_{\{N\in \tau\}} \ind_A \big]$ for any event~$A$, we have for $0=:i_0<i_1<\cdots <i_k =N$,
\[
\begin{split}
Z_{N,\lambda}\big( \tau\cap[0,N] = \{i_1,\ldots, i_k\} \big) 
& = (\kappa \lambda)^k \prod_{j=1}^k
K(i_j-i_{j-1})  = e^{N\f(\lambda)}\prod_{j=1}^k \tilde \bP_{\lambda}(\tilde \tau_1=i_j-i_{j-1}) \\
&= e^{\f(\lambda)N} \tilde \bP_{\lambda}\big( \tau\cap[0,N] = \{i_1,\ldots, i_k\} \big) \,.
\end{split}
\]
Summing over $k\geq 1$ and $i_1,\ldots, i_{k-1}$, we obtain~\eqref{eq:identityZNlambda}.
Let us stress that in general, since we have $\bP_{N,\lambda}(A) = \frac{1}{Z_{N,\lambda}}Z_{N,\lambda}(A)$, we obtain the following representation for the pinning (or wetting) measure $\bP_{N,\lambda}$ (see also \cite[Rem.~2.8]{Giac07}): 
\begin{equation}
\label{pinningrepresentation}
\bP_{N,\lambda}\big( \tau\cap [0,N] \in \cdot \big)
= \tilde \bP_{\lambda}\big( \tilde \tau\cap [0,N] \in \cdot \, \big| \, N\in \tilde \tau\big) \,.
\end{equation}
Note that, in view of~\eqref{def:freeenergy}, $\tilde \tau$ defined by~\eqref{def:tildeP} is recurrent if $\lambda \geq \lambda_c$ and transient if $\lambda<\lambda_c$.
In the super-critical case $\lambda>\lambda_c$, we have that $\tilde \bP_{\lambda}( \tilde \tau_1= j)$ decays exponentially fast (with exponential decay rate $\f(\lambda)>0$), so $\tilde \tau_1$ is positive recurrent.
Therefore, using the renewal theorem, we get from~\eqref{eq:identityZNlambda} that
\begin{equation}
\label{eq:asympZ}
Z_{N,\lambda} \sim \frac{1}{m_{\lambda}}\, e^{\f(\lambda)N} \quad \text{ as } N\to\infty \,,
\quad \text{ with } m_{\lambda} := \tilde \bE_{\lambda}[\tilde \tau_1] = \kappa\lambda \sum_{n=1}^{\infty} n K(n) e^{-n \f(\lambda)}\,.
\end{equation}
We refer to~\cite[Thm.~2.2]{Giac07} for the corresponding results in the critical ($\lambda=\lambda_c$) and sub-critical ($\lambda<\lambda_c$) cases: we have
\[
\begin{split}
\text{ if $\lambda<\lambda_c$} \qquad &
Z_{N,\lambda}  \sim \frac{\kappa \lambda}{(1-\kappa\lambda)^2} K(N) \,, \\
\text{ if $\lambda=\lambda_c$} \qquad & Z_{N,\lambda}  \sim \begin{cases}
\bE[ \min(\tau_1,N)]^{-1} & \text{ if } \alpha \in (0,1] \,,  \\
\frac{\alpha}{\pi} \sin(\pi \alpha) N^{\alpha-1}/L(N)  & \text{ if } \alpha \in (1,2] \,.
\end{cases}
\end{split}
\]

To complement the above results, let us also give an application of~\eqref{pinningrepresentation} in the super-critical case.
Indeed, it gives that for any $k\in \{0,\ldots, N\}$, 
\begin{equation}
\label{pinningcontact}
\bP_{N,\lambda}\big(H_N (\tau)=k \big) =  \tilde \bP_{\lambda}\big( \big|\tilde \tau\cap [0,n] \big|  =k \, \big| \, n\in \tilde \tau\big) = \frac{1}{\tilde \bP_{\lambda}(n\in \tau)} \tilde \bP_{\lambda}\big(\tilde \tau_k= n \big) \,.
\end{equation}
Indeed, using the local central limit theorem for $\tilde \tau_k$, see e.g.~\cite[Ch.~9, \S50]{GK54}, and the renewal theorem, one directly obtains the following result.
\begin{proposition}
\label{prop:Hlocal}
For any $\lambda>\lambda_c$, we have the following local limit theorem for $H_N=H_N(\tau)$ under $\bP_{N,\lambda}$:
\[
\sup_{k\geq 0} \Big| \sqrt{N}\, \frac{1}{m_{\lambda}}\bP_{N,\lambda}\big(H_N=k \big) - \phi_{\sigma_{\lambda}}\Big( \frac{N-k m_{\lambda}}{\sqrt{N}}\Big) \Big| \xrightarrow{N\to\infty} 0\,.
\]
where $\phi_{\sigma}(t) = \frac{1}{\sqrt{2\pi} \sigma} e^{-\frac{t^2}{2\sigma^2}} $ is the density of $\mathcal{N}(0,\sigma^2)$, and $m_{\lambda}:= \tilde \bE_{\lambda}[\tilde \tau_1]$, $\sigma_{\lambda} = \tilde{\mathbf{V}}\mathbf{ar}_{\lambda}(\tilde \tau_1)$.
\end{proposition}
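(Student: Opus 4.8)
The plan is to start from the exact identity~\eqref{pinningcontact}, which reduces the statement to a local limit theorem for the renewal $\tilde \tau$ under $\tilde \bP_\lambda$. Since $\lambda>\lambda_c$, the inter-arrival law $\tilde \bP_\lambda(\tilde\tau_1=n) = \kappa\lambda K(n)e^{-\f(\lambda)n}$ decays exponentially, so $\tilde\tau_1$ has finite mean $m_\lambda = \tilde\bE_\lambda[\tilde\tau_1]$ and finite variance $\tilde{\mathbf{V}}\mathbf{ar}_\lambda(\tilde\tau_1)$; call the latter $v_\lambda$ (note the statement should read $\sigma_\lambda^2 = \tilde{\mathbf{V}}\mathbf{ar}_\lambda(\tilde\tau_1)$). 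First I would record, for each fixed $\lambda>\lambda_c$, that by the classical local CLT for lattice i.i.d.\ sums (e.g.~\cite[Ch.~9, \S50]{GK54}),
\[
\sup_{n\in\bbN}\Big| \sqrt{k}\,\tilde\bP_\lambda(\tilde\tau_k=n) - \phi_{\sigma_\lambda\sqrt{k}}(n-km_\lambda)\Big| \xrightarrow{k\to\infty} 0,
\]
and that by the renewal theorem $\tilde\bP_\lambda(n\in\tilde\tau)\to 1/m_\lambda$ as $n\to\infty$ (using positive recurrence of $\tilde\tau$).

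The next step is to combine these two inputs in~\eqref{pinningcontact}. Writing $k=k(N)$, the dominant contribution comes from $k$ of order $N/m_\lambda$, i.e.\ $|k - N/m_\lambda| = O(\sqrt N)$; for such $k$ one has $km_\lambda = N + O(\sqrt N)$, so $(N - km_\lambda)/\sqrt k$ and $(N-km_\lambda)/\sqrt N$ differ by $o(1)$ uniformly, and $\sqrt k / \sqrt{N/m_\lambda}\to 1$. Plugging $n=N$ into the local CLT for $\tilde\tau_k$, dividing by $\tilde\bP_\lambda(N\in\tilde\tau)\sim 1/m_\lambda$, and using continuity and uniform continuity of the Gaussian density $\phi_{\sigma_\lambda}$ together with the scaling identity $\phi_{\sigma_\lambda\sqrt{N/m_\lambda}}(t) = \sqrt{m_\lambda/N}\,\phi_{\sigma_\lambda/\sqrt{m_\lambda}}(t/\sqrt N)$, one matches $\sqrt N\, m_\lambda^{-1}\bP_{N,\lambda}(H_N=k)$ with $\phi_{\sigma_\lambda}((N-km_\lambda)/\sqrt N)$ up to $o(1)$, with the constant $\sigma_\lambda$ in the statement being $\sqrt{v_\lambda/m_\lambda}$ after this rescaling; I would keep careful track of which variance normalization the statement intends and state it consistently.

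The remaining point is uniformity over \emph{all} $k\ge 0$, which is what makes it a genuine sup-bound. For $k$ with $|k-N/m_\lambda|$ large (say $\ge A\sqrt N$), both sides are small: the Gaussian tail is $o(1)$ as $A\to\infty$ uniformly, while for $\bP_{N,\lambda}(H_N=k)$ one uses a moderate/large deviation estimate for $\tilde\tau_k$ — since $\tilde\tau_1$ has an exponential moment, $\tilde\bP_\lambda(\tilde\tau_k=N)$ decays like $e^{-ck^{-1}(N-km_\lambda)^2}$ in the moderate range and exponentially in the large-deviation range, uniformly in $k$ (this is standard, e.g.\ via a Cramér change of measure or directly from sub-Gaussian concentration of $\tilde\tau_k$). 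Also the degenerate ranges $k=0$ (where $\bP_{N,\lambda}(H_N=0)=0$ since $N\in\tau$) and $k$ close to $N$ (where $\tilde\tau_k\ge k$ forces $\tilde\bP_\lambda(\tilde\tau_k=N)=0$ for $k>N$) are handled trivially. Splitting $\sup_{k\ge0}$ into these regimes and taking $N\to\infty$ then $A\to\infty$ gives the claim.

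The main obstacle is precisely this uniformity: the local CLT for $\tilde\tau_k$ is naturally stated for fixed "central" ranges, so the genuine work is supplying a uniform (over $k$ and $n$) two-sided bound — matching Gaussian behavior in the bulk and suitable exponential decay in the tails — which is where the exponential integrability of $\tilde\tau_1$ (guaranteed by $\lambda>\lambda_c$, i.e.\ $\f(\lambda)>0$ in~\eqref{def:tildeP}) is essential. Everything else is bookkeeping: converting between the $\sqrt k$ and $\sqrt N$ normalizations, and absorbing the renewal-theorem error $\tilde\bP_\lambda(N\in\tilde\tau) - 1/m_\lambda = o(1)$.
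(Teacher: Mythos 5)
Your proposal is correct and follows exactly the route the paper takes (and merely sketches): the identity~\eqref{pinningcontact}, the Gnedenko local CLT for $\tilde\tau_k$ under $\tilde\bP_\lambda$, the renewal theorem for $\tilde\bP_\lambda(N\in\tilde\tau)\to 1/m_\lambda$, and exponential-moment tail bounds to get uniformity in $k$ — the last point being a genuine detail the paper omits. Your remark about the variance normalization is well taken (the matching constant is $\sigma_\lambda^2=\tilde{\mathbf{V}}\mathbf{ar}_\lambda(\tilde\tau_1)/m_\lambda$, consistent with the corollary stated after the proposition); just note that your displayed LCLT and scaling identity each carry a stray factor of $\sqrt{k}$ (resp.\ $\sqrt{m_\lambda}$) that should be cleaned up in the final bookkeeping.
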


As a direct consequence of a Riemann sum approximation, we obtain that under $\bP_{N,\lambda}$, the rescaled number of contacts $\frac{1}{\sqrt{N}} (H_N- \frac{1}{m_{\lambda}} N)$ converges in distribution to $\mathcal{N}(0,\sigma_\lambda^2/m_{\lambda}^2)$.


\section{Wetting with elevated boundary conditions}
\label{sec:wettingwell}

\subsection{Wetting in a square well}

We now turn to the wetting model with elevated boundary condition, also seen as a model of wetting of a square well.
We will focus here on the discrete case, so from now on $(S_n)_{n\geq 0}$ is an aperiodic centered random walk on $\mathbb Z$; the continuous case can be treated analogously.
The model consists in lowering the hard wall to a depth~$-\lfloor aN\rfloor$, where $a>0$ is a parameter tuning the depth of the well, see Figure~\ref{fig:ex_puits}.
\begin{equation}
\label{eq:PNwell}
\frac{\dd \PP_{N,\lambda}^a}{\dd\PP} (S) = \frac{1}{Z_{N,\lambda}^a} \lambda^{H_N(S)}  \un_{\Omega_{N,a}^+(S)} \un_{\{S_N = 0 \}} \,, \quad \text{ with }\  H_N^a(S) := \sum_{n=1}^N \un_{\{ S_n = - \lfloor aN \rfloor \}} \,,
\end{equation}
with $Z_{N,\lambda}^a$ the partition function of the model and $\Omega_{N,a}^+(S) = \{S_1 \ge -\ent{aN}, ..., S_N \ge -\ent{aN}\}$.

The definition~\eqref{eq:PNwell} is equivalent to considering the wetting model~\eqref{eq:PNB2} with elevated boundary conditions $S_0 =S_N = \lfloor aN \rfloor$.
One could also consider non-symmetric boundary conditions $S_0 = \lfloor aN \rfloor$, $S_N = \lfloor bN \rfloor$ with $a,b>0$, as done for instance in~\cite{Pat98,PV99,BFO09}. We chose for simplicity to focus on the symmetric case $a=b$; we comment below how some of our results would be modified.

\begin{figure}[t]
    \centering
    \includegraphics[scale=0.9]{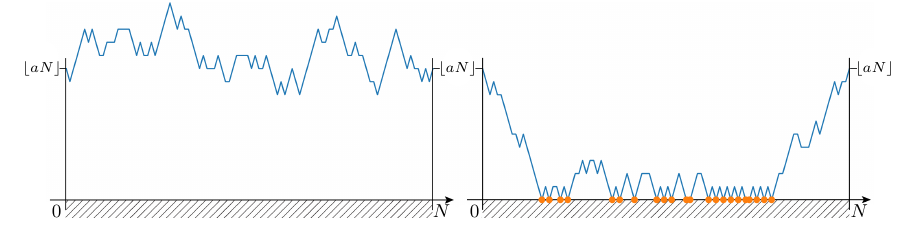}
    \caption{Wetting of a lazy random walk in a rectangular well of depth $\ent{aN}$.
On the left, the system is in the delocalized regime; on the right it is in the localized regime.}
    \label{fig:ex_puits}
\end{figure}

\subsection{First results: free energy, left-most and right-most points of contact}
\label{sec:results}

Here again, one may define the free energy of the wetting model with elevated boundary condition:
\begin{equation}
\label{def:freeenergy2}
\f(\lambda, a) = \lim_{N\to\infty} \frac1N \log Z_{N,\lambda}^a \,.
\end{equation}
For simplicity, we keep the notation $\f(\lambda) = \f(\lambda,0)$ for the free energy of the original wetting model.
Theorem~\ref{th:freeenergy} below shows that the free energy exists and makes it explicit in terms of $\f(\lambda)$ and of the left and right large deviation rate functions for $S_n$.

\subsubsection{Free energy and large deviations}
First, let us define the (left and right) log-moment generating functions of~$X_1$: for $t\geq 0$, let
\begin{equation}
\label{def:Lambda}
\Lambda_+(t) := \log \bE[e^{tX_1}] \,,  
\qquad \Lambda_-(t) := \log \bE[e^{-tX_1}] \,.
\end{equation}
Let us also define the Fenchel--Legendre transforms of $\Lambda_{\pm}$: for $x\geq 0$,
\begin{equation}
\label{def:rateI}
\I_+(x) := \sup_{t\geq 0} \{ tx - \Lambda_+(t)\}\,,
\qquad \I_-(x) := \sup_{t\geq 0} \{ tx - \Lambda_-(t)\}\,.
\end{equation}
Let also $\Lambda(t) := \Lambda_+(t) \ind_{\{t\geq 0\}} + \Lambda_-(-t)\ind_{\{t<0\}}$ and $\I(x) := \I_+(x) \ind_{\{x\geq 0\}} + \I_-(-x)\ind_{\{x<0\}}$.
By Cram\'er's theorem, $\I_+,\I_-$ are the (upward and downward) large deviation rate functions for the random walk $(S_n)_{n\geq 0}$, see e.g.~\cite[Thm.~2.2.3]{DZ09} or Section~\ref{sec:LD} below,~\eqref{eq:CramerLDP}.
Our first result is to obtain the value of the free energy in terms of $\mathrm{I}_+,\mathrm{I}_-$, in analogy with~\cite[Prop.~2.1]{LT15}.

\begin{theorem}
\label{th:freeenergy}
The free energy $\f(\lambda,a)$ defined in~\eqref{def:freeenergy2} exists and $\f(\lambda,a) = \max\{\psi(\lambda, a), 0\}$, where
\begin{equation}
\label{def:psi}
\begin{split}
\psi(\lambda, a) & :=  \sup_{0\leq u\leq v\leq 1} g_{\lambda,a}(u,v),\\
\text{with } \quad g_{\lambda,a}(u,v) & := (v-u) \f(\lambda) - u \,\I_-\Big(\frac{a}{u}\Big) - (1-v)\, \I_+\Big(\frac{a}{1-v}\Big)\,.
\end{split}
\end{equation}
\end{theorem}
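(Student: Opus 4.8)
The plan is to prove matching upper and lower bounds for $\frac1N \log Z_{N,\lambda}^a$. The heuristic is clear: a trajectory constrained to start and end at height $\lfloor aN\rfloor$ above the bottom of the well must first descend from the top corners to the bottom, spend some time localized (or not) on the bottom of the well, then climb back up. If the left-most contact with the bottom occurs at time $\approx uN$ and the right-most at time $\approx vN$, the walk pays a large-deviation cost $uN\,\I_-(a/u)$ for the descent over $uN$ steps (a downward shift of $\lfloor aN\rfloor$), a cost $(1-v)N\,\I_+(a/(1-v))$ for the final ascent, and in between it behaves like the standard wetting model over $\approx (v-u)N$ steps, contributing $e^{(v-u)N\f(\lambda)}$. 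Optimizing over $0\le u\le v\le 1$ gives $\psi(\lambda,a)$, and of course $\f(\lambda,a)\ge 0$ since the empty-contact strategy (the walk simply makes an excursion of length $N$ from top corner to top corner staying above the lowered wall) already costs only $o(N)$ in exponential scale when $\kappa\lambda\le 1$... more precisely one always has $Z_{N,\lambda}^a\ge$ (a non-exponentially-small term), so $\f(\lambda,a)\ge 0$.

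\textbf{Lower bound.} First I would decompose $Z_{N,\lambda}^a$ according to the first time $\sigma$ the walk touches level $-\lfloor aN\rfloor$ and the last such time $\eta$. Restricting the sum to $\sigma=\lfloor uN\rfloor$, $\eta=\lfloor vN\rfloor$ for fixed $u\le v$, one gets a product of three partition functions: a descent partition function $\PP(S_1\ge -\lfloor aN\rfloor,\dots,S_{\sigma-1}>-\lfloor aN\rfloor, S_\sigma=-\lfloor aN\rfloor)$; the wetting partition function $Z^{(\mathrm{wet})}_{\eta-\sigma,\lambda}$ for the walk reflected to sit on the bottom of the well (here the wall constraint $S_n\ge -\lfloor aN\rfloor$ is automatically weaker than the wetting wall near the bottom only over the middle stretch — one must check that requiring the middle excursions to stay in $[-\lfloor aN\rfloor, \lfloor aN\rfloor]$ rather than just $\ge -\lfloor aN\rfloor$ costs nothing exponentially, which follows since typical wetting excursions have length $O(1)$ and height $o(N)$); and an ascent partition function symmetric to the descent. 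For the descent piece I would invoke Cramér's theorem / Mogulskii-type estimates: $\frac1N\log \PP(S_{\lfloor uN\rfloor}= -\lfloor aN\rfloor,\ S_i > -\lfloor aN\rfloor \text{ for } i<\lfloor uN\rfloor) \to -u\,\I_-(a/u)$, using that the cheapest way to reach $-aN$ in $uN$ steps is a straight-line tilt and that the staying-above constraint is satisfied with probability bounded below along such a tilted path (a standard argument: under the tilted measure the increments have positive drift toward the target, so the last-passage decomposition gives the no-constraint rate). The middle piece contributes $e^{(v-u)N\f(\lambda)+o(N)}$ by the definition of $\f(\lambda)$ and~\eqref{def:PNB2}. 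Taking logs, dividing by $N$, letting $N\to\infty$ and then optimizing over $u,v$ yields $\liminf \frac1N\log Z_{N,\lambda}^a \ge \psi(\lambda,a)$; comparison with the zero-contact strategy gives $\ge 0$ as well, hence $\ge \max\{\psi,0\}$.

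\textbf{Upper bound.} Conversely, I would write $Z_{N,\lambda}^a = \sum_{0\le k\le \ell\le N} Z_{N,\lambda}^a(\sigma=k,\eta=\ell) + Z_{N,\lambda}^a(\text{no contact with bottom})$, where the no-contact term is bounded by $\PP(\Omega^+_{N,a})\le 1 = e^{o(N)}$ (giving the $0$ in the max). For each $(k,\ell)$ the same factorization applies as an exact identity, and each of the three factors is bounded above: the descent factor by $\PP(S_k=-\lfloor aN\rfloor)\le e^{-k\,\I_-(\lfloor aN\rfloor/k)}$ via the exponential Chebyshev/Markov bound $\PP(S_k \le -m)\le e^{-k\Lambda_-^*(m/k)}$; the ascent factor likewise; the middle factor by $Z^{(\mathrm{wet})}_{\ell-k,\lambda}\le C e^{(\ell-k)\f(\lambda)}$ from~\eqref{eq:asympZ} (super-critical) or the corresponding sub/critical asymptotics, uniformly. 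Since there are only $O(N^2)$ terms, $\frac1N\log Z_{N,\lambda}^a \le \max_{k\le \ell}\{\frac1N\log(\text{product})\} + o(N)/N$, and the discrete maximum converges to $\sup_{0\le u\le v\le 1} g_{\lambda,a}(u,v) = \psi(\lambda,a)$ by continuity of $g_{\lambda,a}$ (here one uses lower semicontinuity / continuity properties of $x\mapsto x\,\I_\pm(a/x)$ on $(0,1]$ and that the boundary cases $u=0$ or $v=1$ are handled by the convention $u\,\I_-(a/u)\to +\infty$ as $u\to 0^+$ unless $a=0$, so they do not spoil the sup). Combining, $\f(\lambda,a) = \max\{\psi(\lambda,a),0\}$, and along the way the optimizing $(u^*,v^*)$ identifies the scaling limit of the left-most and right-most bottom-contact points.

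\textbf{Main obstacle.} The delicate point is the interface between the middle wetting block and the two large-deviation excursions: one must show that imposing the genuine well constraint (excursions confined to the strip $[-\lfloor aN\rfloor, +\lfloor aN\rfloor]$, and the descent/ascent paths confined to stay inside the well) changes neither exponential rate. For the descent this is the statement that the Cramér rate is unaffected by a one-sided barrier at the far end of the tilted trajectory — provable by an exponential-tilting argument showing the tilted walk stays on the correct side with probability bounded away from $0$, or alternatively by a soft sub-additivity / reflection argument. For the middle block one needs that a ceiling at height $+\lfloor aN\rfloor = +a N$ is irrelevant: this is true because under the wetting Gibbs measure the maximal excursion height up to time $n$ is $O(\log n)$ (sub-exponential), so the probability of ever exceeding height $aN$ is super-exponentially small; making this precise for a general stable-domain random walk requires a tail bound on $\max_{i\le n} S_i$ under the wetting measure, which can be extracted from~\eqref{def:K} and the renewal representation. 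Getting these two "the constraint is free" lemmas clean, with uniformity in $(u,v)$ (equivalently in $(k,\ell)$), is where the real work lies; the rest is bookkeeping with Cramér's theorem and the known wetting asymptotics.
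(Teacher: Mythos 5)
Your overall route is the same as the paper's: split $Z^a_{N,\lambda}$ into the no‑contact part (which is $e^{o(N)}$ from above and polynomially small from below, yielding the $0$ in the max) and the part with at least one contact; factor the latter over the first and last contact times $(\ell,r)$ into descent $\times$ wetting block $\times$ ascent; bound each factor from above by the exponential Chebyshev inequality and by $Z_{n,\lambda}\le e^{n\f(\lambda)}$; and match with a constrained local large‑deviation lower bound. One remark before the real issue: the ``main obstacle'' you single out for the middle block does not exist, because the well has no lid. The constraint $\Omega^+_{N,a}$ is one‑sided ($S_n\ge -\lfloor aN\rfloor$ only), so conditionally on $L_N=\ell$, $R_N=r$ the middle stretch is \emph{exactly} the standard wetting partition function $Z_{r-\ell,\lambda}$ after a vertical shift: the factorization~\eqref{eq:decompcheckZ} is an identity, and no ``confinement to the strip $[-\lfloor aN\rfloor,\lfloor aN\rfloor]$ costs nothing'' lemma is needed. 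Your proposed excursion‑height estimate would be relevant for a two‑walled well, but that is not the model.

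The genuine gap is in your lower bound for the descent and ascent factors. You prove $\liminf\frac1n\log Q_\pm(n,x_n)\ge -\I_\pm(x)$ by tilting toward the target slope and arguing that the tilted walk satisfies the positivity constraint with non‑vanishing probability. This works only inside Cramér's region, i.e.\ when $t_0^\mp>0$ and the slope $a/u$ (resp.\ $a/(1-v)$) lies strictly below $\rho_\mp$. The theorem is stated for general walks in the domain of attraction of a stable law, where one may well have $t_0^\pm=0$ (so $\I_\pm\equiv 0$ and there is no exponential tilt at all — yet one must still show that the walk can climb to height $xn$ while staying positive at sub‑exponential cost, which is not automatic for heavy‑tailed increments), or the relevant slope may sit at or beyond $\rho_\pm$, where $\I_\pm$ is affine and the straight‑line strategy is no longer optimal. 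The paper covers these cases with a different, more hands‑on construction (Lemma~\ref{lem:Q-LDP}): force a prescribed number of steps to take a fixed cheap value $k_0$ with $\bP(X_1=k_0)\ge e^{-\gep k_0}$, glue on an unconstrained block controlled by the local CLT or the local LDP, and handle the positivity constraint by subtracting the probability of dipping below a mesoscopic level, which is shown to be exponentially negligible. Your sketch must be supplemented by such an argument to reach the stated generality; restricted to walks satisfying Cramér's condition with optimal slopes in $(0,\rho_\mp)$, it is complete.
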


\begin{remark}
For the wetting model~\eqref{eq:PNB2} with non-symmetric elevated boundary conditions $S_0 = \lfloor aN \rfloor$, $S_N = \lfloor bN \rfloor$, Theorem~\ref{th:freeenergy} is easily adapted and one obtains that the free energy is $\max\{\psi(\lambda, a,b), 0\}$ with $\psi(\lambda, a,b) :=  \sup_{0\leq u\leq v\leq 1} \big\{ (v-u) \f(\lambda) - u \,\I_-(\frac{a}{u}) - (1-v)\, \I_+(\frac{b}{1-v}) \big\}$.
\end{remark}

Then, for any $a\geq 0$, we can define the critical point 
\[
\lambda_c(a):= \inf\{\lambda, \f(\lambda,a)>0\} = \inf\{\lambda, \psi(\lambda,a)>0\} \,,
\]
and one easily sees that $a\mapsto \lambda_c(a)$ is non-decreasing.
The point $\lambda_c(a)$ marks a \emph{localization} or \emph{wetting} phase transition, from a delocalized phase $\f(\lambda,a)=0$ (with zero density of contact) to a localized phase $\f(\lambda,a)>0$ (positive density of contact).

Note also that if $\mathrm{I}_{+}=\mathrm{I}_-=0$, then $\lambda_c(a) =\lambda_c(0)=\lambda_c$ for all $a\geq 0$, which is in particular the case if Assumption~\ref{hyp:1} holds with $\alpha \in (0,2)$ and $\varrho \in (1-\frac{1}{\alpha},\frac{1}{\alpha})$ (this excludes the totally asymmetric case if $\alpha\in (1,2)$).

\subsubsection{About the free energy and the phase diagram}
\label{sec:transition}

Let us now make more explicit the expressions of the free energy $\f(\lambda,a)$. 
We also give an expression for the critical line separating the localized phase $\mathcal{L}= \{ (\lambda, a), \f(\lambda,a) >0\}$ from the delocalized phase $\mathcal{D}= \{ (\lambda, a), \f(\lambda,a) =0\}$:
\[
a_c(\lambda) = \inf\{a \geq 0 : \f(\lambda,a) = 0\}
\quad \text{and} \quad
\lambda_c(a) = \sup\{\lambda \geq  0 : \f(\lambda,a) = 0\} \,.
\]
We refer to Figure~\ref{fig:PhaseDiag} for an illustration of the phase diagram.

Before we state the results, let us introduce the radii of convergence for $\Lambda_+,\Lambda_-$:
\begin{equation}
\label{t0pm}
t_0^+ := \sup\{t\geq 0, \Lambda_+(t)<+\infty\}
\quad\text{and } \quad
t_0^- := \sup\{t\geq 0, \Lambda_-(t)<+\infty\} \,.
\end{equation}
In the case where $t_0^+=t_0^- =0$, then we trivially have that $\I_+=\I_-\equiv 0$ and $\f(\lambda,a) =\f(\lambda)$ for any $a>0$. 
Our next results therefore have some interest only if $t_0^+>0$ or $t_0^->0$.
We stress that~$\Lambda_{+}$ is increasing and analytic on $[0,t_0^{+})$ so it is in fact invertible with analytic inverse $\Lambda_+^{-1}$ on that interval. 
We extend this definition on $\mathbb{R}_+$ by letting $\Lambda_{+}^{-1}$ be the left-continuous inverse of $\Lambda_{+}$; in particular, $\Lambda_+^{-1}(x) = t_0^+$ for all $x\geq \Lambda_+(t_0^+)$ and $\Lambda_+^{-1}\equiv 0$ if $t_0^+=0$. Similar notation holds for $\Lambda_-$.

\begin{theorem}
 \label{th:formuleG}
For any $\lambda>0$, we have
\begin{equation} \label{eq:formuleG}
    \f(\lambda,a) = \max\{ \psi(\lambda,a),0 \} =
\max\Big\{  \f(\lambda) - a  \Lambda_+^{-1} (\f(\lambda)) -a \Lambda_-^{-1} (\f(\lambda))  , 0 \Big\} 
\end{equation}
In particular, we have the following formula for $a_c(\lambda)$: if $\lambda>\lambda_c$,
\begin{equation}
\label{eq:acritique}
a_c(\lambda) =\frac{\f(\lambda)}{\Lambda_+^{-1}(\textsc{f}(\lambda)) + \Lambda_-^{-1}(\textsc{f}(\lambda))} \,.
\end{equation} 
\end{theorem}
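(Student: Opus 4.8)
\emph{Setup and strategy.} By Theorem~\ref{th:freeenergy} we already have $\f(\lambda,a)=\max\{\psi(\lambda,a),0\}$ with $\psi(\lambda,a)=\sup_{0\le u\le v\le 1}g_{\lambda,a}(u,v)$, so the plan is simply to evaluate this two–dimensional supremum and identify it with $\f(\lambda)-a\Lambda_+^{-1}(\f(\lambda))-a\Lambda_-^{-1}(\f(\lambda))$. The useful remark is that $g_{\lambda,a}$ essentially decouples: writing $(v-u)\f(\lambda)=v\f(\lambda)-u\f(\lambda)$ and using $u\,\I_-(a/u)=\sup_{t\ge 0}\{ta-u\Lambda_-(t)\}$ together with the analogous identity for $\I_+$, the $u$- and $v$-parts separate and each is controlled by a single Fenchel--Legendre inequality with a cleverly chosen slope, namely $t_\pm:=\Lambda_\pm^{-1}(\f(\lambda))$.

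\emph{Upper bound.} For every $u\ge 0$ and $v\le 1$, the Fenchel inequality $\I_-(y)\ge t_- y-\Lambda_-(t_-)$ together with $\Lambda_-(\Lambda_-^{-1}(\f(\lambda)))\le \f(\lambda)$ (which holds also when $\f(\lambda)$ lies beyond the range of $\Lambda_-$, by the very definition of the left-continuous inverse) yields $u\,\I_-(a/u)\ge a t_- - u\f(\lambda)$, and symmetrically $(1-v)\,\I_+(a/(1-v))\ge a t_+ -(1-v)\f(\lambda)$. Substituting into $g_{\lambda,a}(u,v)$ and using that the resulting coefficients of $\f(\lambda)$ telescope, $(v-u)+u+(1-v)=1$, we get $g_{\lambda,a}(u,v)\le \f(\lambda)-a(t_++t_-)$ \emph{for every} admissible pair — in particular no appeal to the constraint $u\le v$ is needed. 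Hence $\psi(\lambda,a)\le \f(\lambda)-a\Lambda_+^{-1}(\f(\lambda))-a\Lambda_-^{-1}(\f(\lambda))$, which after taking $\max$ with $0$ gives the ``$\le$'' half of~\eqref{eq:formuleG}.

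\emph{Lower bound.} Since $\f(\lambda,a)\ge 0$ always (the fully delocalized configuration costs nothing to leading order), it suffices to show $\psi(\lambda,a)\ge \Delta$ whenever $\Delta:=\f(\lambda)-a(t_++t_-)>0$. In the non-degenerate situation where $\f(\lambda)$ lies in the range of both $\Lambda_+$ and $\Lambda_-$ — so that $\Lambda_\pm(t_\pm)=\f(\lambda)$ with $\Lambda_\pm'(t_\pm)\in(0,\infty)$ — I would exhibit an admissible pair at which both Fenchel inequalities from the upper bound become equalities: take $v^*:=1-a/\Lambda_+'(t_+)$ and $u^*:=a/\Lambda_-'(t_-)$, which forces the suprema over $t$ defining $(1-v^*)\I_+(a/(1-v^*))$ and $u^*\I_-(a/u^*)$ to be attained at $t_+$ and $t_-$ respectively; a one-line computation then gives $g_{\lambda,a}(u^*,v^*)=\f(\lambda)-a(t_++t_-)=\Delta$. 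It remains to check feasibility $0\le u^*\le v^*\le 1$, i.e.\ $a/\Lambda_+'(t_+)+a/\Lambda_-'(t_-)\le 1$: this follows from $\Delta>0$ together with the elementary convexity bound $\Lambda_\pm(t_\pm)\le t_\pm\Lambda_\pm'(t_\pm)$, since then $a t_\pm<\f(\lambda)=\Lambda_\pm(t_\pm)\le t_\pm\Lambda_\pm'(t_\pm)$, hence $a/\Lambda_\pm'(t_\pm)<t_\pm/(t_++t_-)$, and the two estimates add up to $1$. I expect this feasibility step — and above all its degenerate variants — to be the main (if minor) obstacle: when $t_0^+=0$, or when $\f(\lambda)$ lies beyond the range of $\Lambda_+$ (so $\Lambda_+^{-1}(\f(\lambda))=t_0^+$ and $\Lambda_+'(t_0^+)$ is effectively infinite), one instead lets $v\uparrow 1$ and uses $\lim_{v\uparrow 1}(1-v)\,\I_+(a/(1-v))=a\, t_0^+=a\Lambda_+^{-1}(\f(\lambda))$; symmetrically one lets $u\downarrow 0$ when $t_0^-=0$ or $\f(\lambda)$ is beyond the range of $\Lambda_-$. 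In each such case the same arithmetic yields limiting value $\Delta$ for $g_{\lambda,a}$ with the active endpoint admissible, so $\psi(\lambda,a)\ge\Delta$ throughout.

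\emph{Critical depth.} Formula~\eqref{eq:acritique} is then immediate. For $\lambda>\lambda_c$ we have $\f(\lambda)>0$, and by~\eqref{eq:formuleG} the map $a\mapsto\f(\lambda,a)=\max\{\f(\lambda)-a(\Lambda_+^{-1}(\f(\lambda))+\Lambda_-^{-1}(\f(\lambda))),0\}$ is continuous and non-increasing. If $t_0^+>0$ or $t_0^->0$ the denominator $\Lambda_+^{-1}(\f(\lambda))+\Lambda_-^{-1}(\f(\lambda))$ is strictly positive, and $\f(\lambda,a)$ vanishes exactly for $a\ge \f(\lambda)/(\Lambda_+^{-1}(\f(\lambda))+\Lambda_-^{-1}(\f(\lambda)))$, which is~\eqref{eq:acritique}; if $t_0^+=t_0^-=0$ then $\I_\pm\equiv 0$, $\f(\lambda,a)=\f(\lambda)>0$ for all $a$, and $a_c(\lambda)=+\infty$.
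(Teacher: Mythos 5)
Your proof is correct, and it reaches \eqref{eq:formuleG} by a genuinely different route from the paper's. Both arguments rest on the same decoupling $g_{\lambda,a}(u,v)=\f(\lambda)-g^-_{\lambda,a}(u)-g^+_{\lambda,a}(1-v)$, but the paper then minimizes $w\mapsto w\f(\lambda)+w\,\I_\pm(a/w)$ by computing its derivative and running a case analysis over whether $t_0^\pm$ and $\rho_\pm$ are finite (Lemma~\ref{lem:g+}), and it needs the separate Lemma~\ref{lem:w+w-} to guarantee that the unconstrained minimizers are feasible when $\lambda\ge\lambda_c(a)$. You instead invoke convex duality directly: the Fenchel inequality with the single well-chosen slope $t_\pm=\Lambda_\pm^{-1}(\f(\lambda))$, combined with $\Lambda_\pm(t_\pm)\le\f(\lambda)$, gives the upper bound in two lines with no case distinction (and, as you note, without using $u\le v$), while your feasibility check $a/\Lambda_+'(t_+)+a/\Lambda_-'(t_-)<1$ under $\Delta:=\f(\lambda)-a(t_++t_-)>0$ replaces Lemma~\ref{lem:w+w-} by an elementary convexity estimate; it also yields, by contraposition, that the right-hand side of \eqref{eq:formuleG} is $\le 0$ whenever $\lambda<\lambda_c(a)$, a point the paper leaves implicit. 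What your route does not deliver is the complete description of the maximizer set $U^*\times V^*$ of Lemma~\ref{lem:psi}, which the paper needs later for Theorem~\ref{th:leftrightpoints} but which is not part of the present statement. Two small repairs: (i) in the feasibility chain, the displayed inequality should start from $a(t_++t_-)<\f(\lambda)=\Lambda_\pm(t_\pm)\le t_\pm\Lambda_\pm'(t_\pm)$ rather than from $at_\pm<\f(\lambda)$ --- only the former gives $a/\Lambda_\pm'(t_\pm)<t_\pm/(t_++t_-)$, whereas the latter only gives $a/\Lambda_\pm'(t_\pm)<1$; (ii) the limit $\lim_{w\downarrow 0}w\,\I_+(a/w)=a\,t_0^+$ that you use in the degenerate cases is exactly the identity $\lim_{x\to\infty}\I_+(x)/x=t_0^+$ established inside the paper's proof of Lemma~\ref{lem:g+}, so those cases close as you anticipate.
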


\begin{center}
\begin{figure}[t]
\includegraphics[scale=0.45]{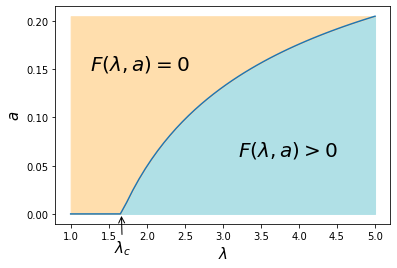}
\qquad
\includegraphics[scale=0.45]{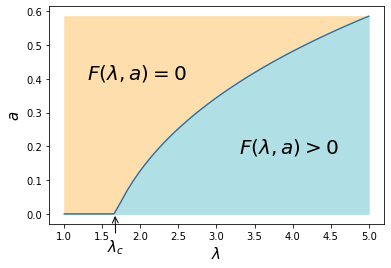}
\vspace{-0.5\baselineskip}
\caption{Phase diagram for the wetting in a square well for two integrable models (see Appendix~\ref{sec:examples}): the symmetric lazy random walk (on the left) and of the symmetric geometric random walk of Section~\ref{ex:geomRW} (on the right). We refer to Sections~\ref{ex:lazyRW} and~\ref{ex:geomRW} for explicit formulas derived from Theorem~\ref{th:formuleG} (with the parameter $\gamma=0.4$ in the figures above).}
\label{fig:PhaseDiag}
\end{figure}
\end{center}

\noindent
Let us now give some properties of $\f(\lambda,a)$.
\begin{itemize}
\item Similarly as for the free energy of the (standard) wetting model, the function $\lambda \mapsto \f(\lambda,a)$ is non-negative and non-decreasing and $\beta \mapsto \f(e^{\gb},a)$ is convex, for any $a\geq 0$.
\item The function $a\mapsto \f(\lambda,a)$ is affine by parts.
\end{itemize}

\noindent
Now, define $\lambda_+,\lambda_->0$ as follows:
\begin{equation}
\label{def:lambda+-}
\begin{cases}
\f(\lambda_+)= \Lambda_+(t_0^+) &\quad \text{ if } t_0^+ \in (0,+\infty)\,, \\
\f(\lambda_-)= \Lambda_-(t_0^-) &\quad \text{ if } t_0^- \in (0,+\infty)\,,
\end{cases}
\end{equation}
and $\lambda_{+}= +\infty$, resp.\ $\lambda_-=+\infty$, if $t_0^{+} =0$ or $t_0^+=+\infty$, resp.\ if $t_0^{-} =0$ or $t_0^-=+\infty$.
Naturally we also have $\lambda_{\pm}=+\infty$ if $\Lambda_{\pm}(t_0^{\pm})=+\infty$.
Then, the function $\lambda \mapsto \Lambda_+^{-1}(\f(\lambda))$ is analytic on $(0,\lambda_+)$ and constant (equal to $t_0^+$) on $[\lambda_+,+\infty)$; and similarly for $\lambda \mapsto \Lambda_-^{-1}(\f(\lambda))$. 
Let us therefore make two observations:
\begin{itemize}
\item If $\lambda > \lambda_+,\lambda_-$, then we have that $\f(\lambda)= \max\{\f(\lambda)-a(t_0^++t_0^-),0\}$ and $a_c(\lambda) = \frac{1}{t_0^++t_0^-} \f(\lambda)$.

\item The functions $\lambda \mapsto \f(\lambda,a)$ is analytic except at $\lambda_c(a)$ and at $\lambda_{+}$, $\lambda_{-}$; the function $\lambda \mapsto a_c(\lambda)$ is analytic on $(\lambda_c,+\infty)$, except at $\lambda_+,\lambda_-$.
\end{itemize}
The last observation shows that in addition to the \emph{localization} phase transition at $\lambda_c(a)$, there might be another phase transition (or two others), at $\lambda = \lambda_{+},\lambda_-$, provided that $\lambda_{+},\lambda_->\lambda_c(a)$.
The phase transition at $\lambda_{\pm}$ is a \emph{saturation} phase transition, in the sense that the left or right-most point of contact becomes degenerate when $\lambda>\lambda_-$ or $\lambda>\lambda_+$, see Section~\ref{sec:leftright} (and Figure~\ref{fig:lempsi}).
This second phase transition appears to be absent from other wetting models considered so far in the literature, and essentially comes from the fact that the rate function $\I_{\pm}$ itself may have a (saturation) phase transition at some $\rho_{\pm}$, see Section~\ref{sec:LD} below.


\begin{example}
\label{ex:almostgeom}
Consider a random walk with symmetric increments whose distribution is given by $\bP(X_1=x) = c_{\theta} (1+|x|)^{-\theta} e^{-|x|}$ for $x\in \mathbb{Z}$, where $\theta \in \mathbb{R}$ is a parameter and~$c_{\theta}$ is a constant that normalizes $\bP$ to a probability.
Then, we clearly have that $t_0^+=t_0^- =1$, and $\Lambda_{\pm}(1) =+\infty$ if $\theta \leq 1$, $\Lambda_{\pm}(1) <+\infty$ if $\theta >1$.
Therefore, the critical point $\lambda_{\pm}$ is finite if and only if $\theta>1$.
\end{example}

\subsubsection{Maximizers of the variational problem~\eqref{def:psi}}
\label{sec:maximizers}

As mentioned above, $\Lambda_+$ is increasing and analytic on $[0,t_0^+)$, so $\Lambda_{+}^{-1}$ and $\Lambda_+'$ are well defined on that interval: we extend their definition on~$\mathbb{R}_+$ by considering respectively the left-continuous inverse and the left-derivative of $\Lambda_{+}$. In the case $t_0^+=0$, we set $\Lambda_+'\circ \Lambda_+^{-1} \equiv +\infty$. Similar notation holds for $\Lambda_-$.
We also denote
\begin{equation}
\label{def:rho}
\rho_+ := \Lambda_+'(t_0^+)=\lim_{t\uparrow t_0^+} \Lambda'_+(t),
\qquad 
\rho_- :=\Lambda_-'(t_0^-)= \lim_{t\uparrow t_0^-} \Lambda'_-(t) \,,
\end{equation}
where by convention we set $\rho_{\pm}=+\infty$ if $t_0^{\pm}=0$.
Then, for any $\lambda\geq \lambda_c$ and $a\geq 0$, we define
\begin{equation}
\label{def:w*}
w_+^* := \frac{a}{\Lambda_+'\circ \Lambda_+^{-1} (\f(\lambda))} \in \Big[ \frac{a}{\rho_+},+\infty \Big)  \,,\qquad
w_-^* := \frac{a}{\Lambda_-'\circ \Lambda_-^{-1} (\f(\lambda))}  \in \Big[ \frac{a}{\rho_-},+\infty \Big)\,.
\end{equation}
Note that $w_+^*> \frac{a}{\rho_+}$ if $\lambda <\lambda_+$ and $w_+^*=\frac{a}{\rho_+}$ if $\lambda\geq \lambda_+$, and similarly for $w_-^*$.
We then have the following result. 

\begin{lemma}
\label{lem:psi}
If $w_+^*+w_-^* \leq 1$, and in particular if $\lambda\geq \lambda_c(a)$, we have
\[
\sup_{0\leq u\leq v\leq 1}\big\{g_{\lambda,a}(u,v) \big\} = \f(\lambda)- a \Lambda_-^{-1}(\f(\lambda)) -a  \Lambda_+^{-1}(\f(\lambda)) \,, 
\]
and the the supremum is attained on $U^*\times V^* = U^*_{\lambda,a} \times V^*_{\lambda,a}$, with 
\[
U^* = \begin{cases}
\{w_-^*\} & \text{ if } \lambda <\lambda_-\,, \\
[0,\frac{a}{\rho_-}] & \text{ if } \lambda =\lambda_- \,, \\
\{0\} & \text{ if } \lambda >\lambda_- \,,
\end{cases}
\qquad\qquad
V^* = \begin{cases}
\{1-w_+^*\} & \text{ if }\lambda <\lambda_+\,, \\
[1-\frac{a}{\rho_+},1] & \text{ if } \lambda=\lambda_+ \,, \\
\{1\} & \text{ if } \lambda >\lambda_+ \,.
\end{cases}
\]
\end{lemma}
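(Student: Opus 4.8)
The plan is to analyze the function $g_{\lambda,a}(u,v)$ on the domain $\{0\le u\le v\le 1\}$ by decoupling the $u$-dependence from the $v$-dependence, since $g_{\lambda,a}(u,v) = (v-u)\f(\lambda) - u\,\I_-(a/u) - (1-v)\,\I_+(a/(1-v))$ splits (up to the shared constraint $u\le v$) into a piece $-u\f(\lambda) - u\,\I_-(a/u)$ depending only on $u$ and a piece $v\f(\lambda) - (1-v)\,\I_+(a/(1-v))$ depending only on $v$. First I would show that, under the hypothesis $w_+^* + w_-^* \le 1$, the constraint $u\le v$ is not binding at the optimum: maximizing the two pieces separately yields an optimal $u^*\in U^*$ and $v^*\in V^*$ with $u^* \le w_-^* $ and $v^* \ge 1-w_+^*$, hence $u^*\le v^*$ since $w_-^* + w_+^* \le 1$; so the unconstrained maximizers are admissible. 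This reduces the two-variable problem to two independent one-variable optimizations.

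Next I would carry out the one-variable optimization for, say, the $v$-piece $h(v) := v\f(\lambda) - (1-v)\,\I_+(a/(1-v))$ on $v\in[0,1]$. Substituting $w = 1-v \in [0,1]$, this is $\f(\lambda) - w\f(\lambda) - w\,\I_+(a/w) = \f(\lambda) - w\big(\f(\lambda) + \I_+(a/w)\big)$, so I want to minimize $w\mapsto w\f(\lambda) + w\,\I_+(a/w)$ over $w\in[0,1]$. Using the variational formula $\I_+(x) = \sup_{t\ge 0}\{tx - \Lambda_+(t)\}$, one gets $w\,\I_+(a/w) = \sup_{t\ge 0}\{ta - w\Lambda_+(t)\}$, so the quantity to minimize is $\sup_{t\ge 0}\{ta - w(\Lambda_+(t) - \f(\lambda))\}$; a standard minimax exchange (justified by convexity in $w$ and the sup structure, or by direct computation) shows the minimizer is governed by the equation $\Lambda_+(t) = \f(\lambda)$, i.e.\ $t = \Lambda_+^{-1}(\f(\lambda))$, with optimal $w = w_+^* = a/\big(\Lambda_+'\circ\Lambda_+^{-1}(\f(\lambda))\big)$ whenever this lies in $[0,1]$, and the boundary value $w=\frac{a}{\rho_+}$ (or the degenerate cases) otherwise. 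Computing the value at this minimizer gives $w_+^*\,\I_+(a/w_+^*) = t_+ a - w_+^*\Lambda_+(t_+)$ with $t_+ = \Lambda_+^{-1}(\f(\lambda))$, and since $\Lambda_+(t_+) = \f(\lambda)$ (in the analytic regime $\lambda<\lambda_+$), the $w_+^*\f(\lambda)$ and $-w_+^*\Lambda_+(t_+)$ terms cancel, leaving the optimal value of the $v$-piece equal to $\f(\lambda) - a\,\Lambda_+^{-1}(\f(\lambda))$. The case $\lambda\ge\lambda_+$ (where $\Lambda_+^{-1}(\f(\lambda)) = t_0^+$ is constant and the minimum is at the boundary $w=\frac{a}{\rho_+}$, or $w$ is free when $\rho_+=+\infty$) must be checked separately and gives the same closed-form answer by left-continuity. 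The identical analysis for the $u$-piece gives the optimal value $-a\,\Lambda_-^{-1}(\f(\lambda))$ for that term (there is no additive $\f(\lambda)$ constant there), and the sets $U^*$ and $V^*$ follow from identifying the minimizers in $w$ in the three regimes $\lambda<\lambda_\mp$, $\lambda=\lambda_\mp$, $\lambda>\lambda_\mp$.

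Finally I would note that adding the two optimal values yields $\sup g_{\lambda,a} = \f(\lambda) - a\,\Lambda_-^{-1}(\f(\lambda)) - a\,\Lambda_+^{-1}(\f(\lambda))$, as claimed, and that the claim "$\lambda\ge\lambda_c(a)$ implies $w_+^*+w_-^*\le 1$" follows because $\lambda_c(a)$ is by definition (and by Theorem~\ref{th:formuleG}) exactly the threshold at which $\f(\lambda) - a\Lambda_+^{-1}(\f(\lambda)) - a\Lambda_-^{-1}(\f(\lambda)) = 0$, and one checks that this expression being nonnegative is equivalent to $w_+^* + w_-^* \le 1$ via the convexity relation $\Lambda_\pm(t) \ge t\Lambda_\pm'(t) - $ something — more precisely, monotonicity of $\lambda\mapsto w_\pm^*$ together with the value at $\lambda_c(a)$. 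The main obstacle I anticipate is handling the boundary and degenerate regimes cleanly: the left-continuous inverse $\Lambda_\pm^{-1}$, the convention $\Lambda_+'\circ\Lambda_+^{-1}\equiv+\infty$ when $t_0^+=0$, the saturation case $\lambda\ge\lambda_\pm$ where the minimizer in $w$ sits at an endpoint or is non-unique (producing the intervals $[0,\frac{a}{\rho_-}]$ and $[1-\frac{a}{\rho_+},1]$), and the possibility $\rho_\pm=+\infty$ — all of these require care to verify that the closed-form value and the description of $U^*,V^*$ remain correct, essentially an exercise in convex analysis of $\Lambda_\pm$ and its Legendre transform near the edge of the domain.
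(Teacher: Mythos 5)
Your overall strategy coincides with the paper's: rewrite $g_{\lambda,a}(u,v)=\f(\lambda)-g^-_{\lambda,a}(u)-g^+_{\lambda,a}(1-v)$, observe that under $w_+^*+w_-^*\le 1$ the constraint $u\le v$ is not binding, and reduce to two independent one-variable convex minimizations of $w\mapsto w\f(\lambda)+w\,\I_\pm(a/w)$. The only methodological difference is in that one-variable step: the paper differentiates directly, using the identity $\I_+(x)-x\I_+'(x)=-\Lambda_+\circ(\Lambda_+')^{-1}(x)$ to read off the sign of $\partial_w g^+_{\lambda,a}$, whereas you propose writing $w\,\I_+(a/w)=\sup_{t\ge0}\{ta-w\Lambda_+(t)\}$ and exchanging $\inf_w$ and $\sup_t$. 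That duality route does produce the value $a\,\Lambda_+^{-1}(\f(\lambda))$ (interpreted with the left-continuous inverse) and is arguably cleaner for the saturation regime $\lambda\ge\lambda_+$, where the set $\{t:\Lambda_+(t)\le\f(\lambda)\}$ directly yields the plateau of minimizers; but the minimax exchange on the non-compact domains $w\ge0$, $t\ge0$ needs an actual justification (e.g.\ Sion's theorem after a compactification, or falling back on the direct computation you mention), and the identification of the \emph{full} minimizer sets $W^*_\pm$ in the three regimes is more transparent from the monotonicity of the derivative than from the saddle-point condition. This part of your proposal is a correct, mildly different route.

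The genuine gap is in your last paragraph, on the claim that $\lambda\ge\lambda_c(a)$ implies $w_+^*+w_-^*\le1$. Invoking Theorem~\ref{th:formuleG} here is circular: that theorem is deduced \emph{from} Lemma~\ref{lem:psi} together with precisely this implication. Moreover, the asserted equivalence between $\f(\lambda)-a\Lambda_+^{-1}(\f(\lambda))-a\Lambda_-^{-1}(\f(\lambda))\ge0$ and $w_+^*+w_-^*\le1$ is false; convexity ($\Lambda_\pm(t)\le t\Lambda_\pm'(t)$) only gives $a\,\Lambda_\pm^{-1}(\f(\lambda))\ge \f(\lambda)\,w_\pm^*$, hence one implication, and even that requires knowing $\f(\lambda)>0$ and that $\psi(\lambda,a)$ is bounded above by the unconstrained value. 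The paper closes this gap with a separate argument (Lemma~\ref{lem:w+w-}): one argues by contraposition that if $w_+^*+w_-^*\ge1$, then because $g^\pm_{\lambda,a}$ are decreasing up to $w_\pm^*$ the constrained supremum is attained on the diagonal $u=v$, where $g_{\lambda,a}(u,u)=-u\I_-(a/u)-(1-u)\I_+(a/(1-u))<0$, so $\psi(\lambda,a)<0$ and $\lambda<\lambda_c(a)$. You would need to supply an argument of this kind (or an equally non-circular one) to complete the "in particular" clause of the statement.
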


\noindent
We show in Lemma~\ref{lem:w+w-} below that $w_+^*+w_-^*<1$ whenever $\lambda_c\geq \lambda_c(a)$; this proves in particular the first claim of the lemma.

\begin{remark}
As far as the wetting model~\eqref{eq:PNB2} with non-symmetric elevated boundary conditions $S_0 = \lfloor aN \rfloor$, $S_N = \lfloor bN \rfloor$ is concerned, one could also adapt Lemma~\ref{lem:psi} (and Theorem~\ref{th:leftrightpoints} below): one simply need to replace $a$ by $b$ in the definition of $w_+^*$.
\end{remark}

\begin{remark}
In the case where $\Lambda_+(t_0^{+}),\Lambda_-(t_0^-)=+\infty$, then $\lambda_+,\lambda_- =+\infty$ and the supremum in~\eqref{def:psi} is uniquely attained.
In fact, the only case when the supremum is not uniquely attained is if $\lambda=\lambda_-$ (and $\rho_-<+\infty$) or $\lambda =\lambda_+$ (and $\rho_+<+\infty$).
Considering Example~\ref{ex:almostgeom} as an illustration, we have $\lambda_{\pm} <+\infty$ if and only if $\theta>1$ and we additionally have $\rho_{\pm}<+\infty$ if and only if $\theta>2$.
\end{remark}

\subsubsection{Left and right-most pinned points}
\label{sec:leftright}
We can extract from Theorem~\ref{th:freeenergy} and Lemma~\ref{lem:psi} some properties of the path, in analogy with~\cite[Thm.~2.2]{LT15}.
Let us define the left-most and right-most points of contact between the walk and the bottom of the well:
\[
\begin{split}
L_N  = L_N^{a} & := \min\{ 0\leq n \leq N, \lfloor aN \rfloor +S_n =0  \}\,,\\
R_N  = R_N^{a} & := \max\{ 0\leq n \leq N, \lfloor aN \rfloor +S_n =0 \}\,.
\end{split}
\]
By convention, we set $L_N,R_N =+\infty$ if $H_N^a(S)=0$, \textit{i.e.}\ if there is no contact with the bottom of the well.
Note that we focus on the left and right-most point of contact, but physically, an important quantity to consider is the contact angles between the interface and the bottom of the well (or substrate); we refer to Remark~\ref{rem:angles} below for some comments.
We prove the following convergence of $(L_N,R_N)$; we let $\mathrm{dist}(z,A) := \inf_{z'\in A} \|z-z'\|$.
\begin{theorem}
\label{th:leftrightpoints}
In the super-critical case, that is for $\lambda >\lambda_c(a)$, we have that for any $\gep>0$,
\begin{equation}
\label{eq:leftright}
\lim_{n\to\infty}\bP_{N,\lambda}^a\bigg( \mathrm{dist}\Big( \tfrac1N ( L_N, R_N ), U^*\times V^* \Big) >\gep  \bigg) =0 \,,
\end{equation}
where $U^*\times V^*$ is defined in Lemma~\ref{lem:psi} (and are the maximizers of the variational problem~\eqref{def:psi}). 

\noindent
In the subcritical case, that is for $\lambda< \lambda_c(a)$, if we assume that $\lim_{n\to\infty} \frac{1}{n}a_n =+\infty$ (so the bottom of the well is much further than the typical fluctuations of the random walk), then we have 
\begin{equation}
\label{eq:nocontact}
\lim_{n\to\infty} \bP_{N,\lambda}^a( H_N^a(S) =0)  =1 \,.
\end{equation}
\end{theorem}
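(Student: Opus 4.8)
\emph{Supercritical case.} The plan is to bootstrap from the free energy asymptotics already established in Theorem~\ref{th:freeenergy}. Fix $\lambda>\lambda_c(a)$, so that $\f(\lambda,a)=\psi(\lambda,a)>0$. Decompose the partition function $Z_{N,\lambda}^a$ according to the values of $L_N=n_1$ and $R_N=n_2$ with $0\le n_1\le n_2\le N$: conditioning on the first and last contact with the bottom of the well, one gets (by the Markov property for $(S_n)$, together with the renewal representation of Section~\ref{sec:renewal} for the middle portion spent near the bottom) a factorization of the form
\[
Z_{N,\lambda}^a\big(L_N=n_1,\,R_N=n_2\big)
= \Phi^-_{n_1}\cdot Z_{n_2-n_1,\lambda}\cdot \Phi^+_{N-n_2}\,,
\]
where $\Phi^-_{n_1}$ is the (sub-)probability that the walk goes from height $\ent{aN}$ down to $0$ in $n_1$ steps while staying $\ge 0$, and similarly $\Phi^+_{N-n_2}$ for the ascent on the right; the middle factor is exactly the standard wetting partition function of Section~\ref{sec:wetting}. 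Taking $n_1=\ent{uN}$, $n_2=\ent{vN}$ and using Cram\'er's theorem for the descent/ascent probabilities ($\frac1N\log\Phi^-_{\ent{uN}}\to -u\,\I_-(a/u)$, and analogously on the right), together with $\frac1N\log Z_{\ent{(v-u)N},\lambda}\to (v-u)\f(\lambda)$ from the existence of the free energy, one obtains
\[
\tfrac1N\log Z_{N,\lambda}^a\big(L_N\approx uN,\,R_N\approx vN\big)\longrightarrow g_{\lambda,a}(u,v)\,.
\]
Since $\f(\lambda,a)=\sup_{u\le v}g_{\lambda,a}(u,v)$ is attained exactly on $U^*\times V^*$ (Lemma~\ref{lem:psi}), and since $g_{\lambda,a}$ is continuous and strictly smaller than $\psi(\lambda,a)$ off any neighborhood of $U^*\times V^*$, a union bound over the $O(N^2)$ choices of $(n_1,n_2)$ outside the $\gep$-neighborhood shows that
\[
\bP_{N,\lambda}^a\Big(\mathrm{dist}\big(\tfrac1N(L_N,R_N),\,U^*\times V^*\big)>\gep\Big)
\le \frac{N^2\,e^{N(\sup_{\mathrm{dist}>\gep}g_{\lambda,a}+o(1))}}{Z_{N,\lambda}^a}
\le N^2 e^{-cN}\,,
\]
for some $c=c(\gep)>0$, using $Z_{N,\lambda}^a\ge e^{N(\f(\lambda,a)-o(1))}$ in the denominator. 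This proves~\eqref{eq:leftright}.

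\emph{Subcritical case.} Here $\lambda<\lambda_c(a)$ forces $\psi(\lambda,a)<0$. The event $\{H_N^a(S)\ge 1\}$ is exactly $\{L_N<\infty\}$, and on this event the same descent–middle–ascent decomposition gives
\[
Z_{N,\lambda}^a\big(H_N^a\ge 1\big)
= \sum_{0\le n_1\le n_2\le N}\Phi^-_{n_1}\,Z_{n_2-n_1,\lambda}\,\Phi^+_{N-n_2}
\le e^{N(\psi(\lambda,a)+o(1))} = e^{-cN}
\]
for some $c>0$, again by the variational bound from Theorem~\ref{th:freeenergy} and the large-deviation upper bounds for the descent/ascent. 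For the denominator one only needs a crude lower bound on $Z_{N,\lambda}^a$ coming from trajectories that never touch the bottom of the well: the probability that a free random walk bridge of length $N$ from $0$ to $0$ stays above $-\ent{aN}$ is $1-o(1)$ by the assumption $a_n/n\to\infty$ (the bottom is far beyond the $a_N$-scale fluctuations) together with standard bridge estimates, so $Z_{N,\lambda}^a\ge c'\,\bP(S_N=0)\ge N^{-C}$, whence $\bP_{N,\lambda}^a(H_N^a\ge 1)\le N^{C}e^{-cN}\to 0$.

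\emph{Main obstacle.} The delicate point is making the descent/ascent asymptotics $\frac1N\log\Phi^-_{\ent{uN}}(0\to\ent{aN},\text{staying}\ge 0)\to -u\,\I_-(a/u)$ uniform in $u$ over compact subsets of $(0,1]$, and in particular handling the boundary behavior as $u\downarrow 0$ (where $\I_-(a/u)\to\infty$ if $t_0^-<\infty$, or the constraint becomes vacuous) and the interplay between the Cram\'er tilt and the positivity constraint $\{S_k\ge 0\,\forall k\}$ on the descent. Cram\'er's theorem gives the free (unconstrained) rate $u\,\I_-(a/u)$; one must argue that imposing $S_k\ge 0$ throughout the descent and $S_{n_1}=0$ at the end costs only a subexponential factor — this follows because under the optimal exponential tilt the walk has a deterministic negative drift and the constraint $S_k\ge \ent{aN}-\,(\text{drift})k\cdot(1+o(1))\ge 0$ is satisfied with probability bounded below by a polynomial (a Gnedenko-type local limit / ballot-type estimate for the tilted walk). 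Once this uniformity is in hand, the Laplace/union-bound argument above is routine and both cases follow.
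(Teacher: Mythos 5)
Your supercritical argument is essentially the paper's: the same descent--middle--ascent factorization ($\Phi^\pm_n$ are the paper's $Q_\pm(n,\lfloor aN\rfloor)$ in~\eqref{eq:decompcheckZ}), the same union bound over $O(N^2)$ pairs against the lower bound $Z_{N,\lambda}^a\ge e^{N(\psi(\lambda,a)-o(1))}$. One simplification you are missing: for the \emph{numerator} you do not need any uniform local large deviation or ballot-type estimate, because the non-asymptotic Chernoff bound $Q_+(n,x)\le \bP(S_n\ge x)\le e^{-n\I_+(x/n)}$ (see~\eqref{eq:boundLDP}) already gives $\check Z_{\ell,r}\le e^{Ng_{\lambda,a}(\ell/N,r/N)}$ for every $(\ell,r)$ simultaneously. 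The positivity-constrained local lower bound (your ``main obstacle'') is only needed at a single fixed $(u,v)$, to get the denominator lower bound; this is the paper's Lemma~\ref{lem:Q-LDP}, and your tilting sketch for it is a viable route. So the supercritical half is fine.

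The subcritical half has a genuine gap. You assert that $\lambda<\lambda_c(a)$ forces $\psi(\lambda,a)<0$ and then conclude $Z^a_{N,\lambda}(H_N^a\ge1)\le e^{-cN}$. But the hypothesis of this part of the theorem is $a_n/n\to+\infty$, which (since $a_n$ is regularly varying with index $1/\alpha$) forces $\alpha\le 1<2$; under Assumption~\ref{hyp:1} with $\varrho\in(0,1)$ both tails of $X_1$ are then polynomial, so $t_0^+=t_0^-=0$, $\I_+=\I_-\equiv0$, and $\psi(\lambda,a)=\f(\lambda)=0$ for all $\lambda\le\lambda_c(a)=\lambda_c$. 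In other words, in exactly the regime the statement covers, there is no negative exponential rate: $\check Z_{N,\lambda}^a$ decays only polynomially, and the comparison with $\bar Z_{N,\lambda}^a\sim f_\alpha(0)/a_N$ must be done at polynomial scale. The missing ingredients are (i) the sharp subcritical asymptotics of the standard wetting partition function, $Z_{n,\lambda}\sim C_\lambda K(n)$, which combined with $f_n^+(0)\le C\,\bP(S_n=0)$ yields $\check Z_{N,\lambda}^a\le C\,\bP\big(S_N=0,\ \min_{k\le N}S_k\le-\lfloor aN\rfloor\big)$ (the paper's~\eqref{eq:checkZbound}), and (ii) the invariance-principle estimate showing this last probability is $o(1/a_N)=o(\bP(S_N=0))$ when $aN\gg a_N$ (the paper's~\eqref{eq:LLTnegligible}). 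Without these, your bound $N^Ce^{-cN}$ has $c=0$ and proves nothing.
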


In the case where $\Lambda_+(t_0^+),\Lambda_-(t_0^-)=+\infty$, Lemma~\ref{lem:psi} shows that the supremum in~\eqref{def:psi} is uniquely attained, and from Theorem~\ref{th:leftrightpoints} we get that the left and right-most points of contact are located at $(u^*,v^*) =(w_-^*,1-w_+^*)$ with $\frac{a}{\rho_-}<u^*<v^*<1-\frac{1}{\rho_+}$, for any $a>0$, $\lambda>\lambda_c(a)$.
The most unexpected case occurs when $\Lambda_+(t_0^+)<+\infty$ and/or $\Lambda_-(t_0^-) <+\infty$.
Let us focus on the left-most point for simplicity. Recalling the critical value $\lambda_-$ defined in~\eqref{def:lambda+-}, Lemma~\ref{lem:psi}-Theorem~\ref{th:leftrightpoints} show that the left-most point is located at $u^*=w_-^* > \frac{a}{\rho_+}$ for $\lambda<\lambda_-$ and then drops (or \emph{saturates}) to $u^*=0$ for $\lambda>\lambda_-$, either continuously if $\rho_-=+\infty$ or discontinuously if $\rho_-<+\infty$; at $\lambda=\lambda_-$, the left-most point is located in an interval $[0,\frac{a}{\rho_-}]$.
We refer to Figure~\ref{fig:lempsi} for an illustration.

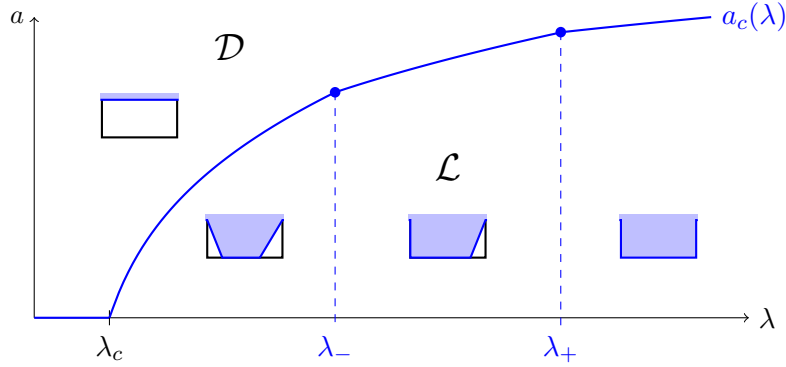
\begin{figure}
\begin{tikzpicture}
  \draw[->] (0, 0) -- (9.5, 0) node[right] {$\lambda$};
  \draw[->] (0, 0) -- (0, 4) node[left] {\small $a$};
  \draw[scale=1, domain=0:1, thick, smooth, variable=\x, blue] plot ({\x}, {0});
  \draw[-] (1,0.1) -- (1,-0.1) node[below] {$\lambda_c$};
  \draw[scale=1, domain=1:4, thick, smooth, variable=\x, blue] plot ({\x}, {3*(\x-1)/(2*sqrt(\x)-1)});
  \node[circle,fill=blue, inner sep=0pt,minimum size=4pt] (b) at (4,3) {};
  \draw[-,dashed, blue] (4,3) -- (4,-0.1) node[below] {$\lambda_-$};
  \draw[scale=1, domain=4:7, thick, smooth, variable=\x, blue] plot ({\x}, {1+2*(\x-1)/(2*sqrt(\x)-1)});
  \node[circle,fill=blue, inner sep=0pt,minimum size=4pt] (b) at (7,3.8) {};
  \draw[-,dashed, blue] (7,3.8) -- (7,-0.1) node[below] {$\lambda_+$};
  \draw[scale=1, domain=7:9, thick, smooth, variable=\x, blue] plot ({\x}, {2.4+(\x-1)/(2*sqrt(\x)-1)}) node[right] {$a_c(\lambda)$};
  \draw[] (2.6,3.6) node[] {\Large $\mathcal{D}$};
  \draw[] (5.5,2) node[] {\Large $\mathcal{L}$};
    \draw[-,thick] (0.9,2.9) -- (0.9,2.4) -- (1.9,2.4) -- (1.9,2.9);
  \fill[blue!25] (0.88,2.9) -- (0.88,2.98) -- (1.92,2.98) --  (1.92,2.9);
  \draw[-,thick,blue] (0.88,2.9) -- (1.92,2.9);
  \draw[-,thick] (2.3,1.3) -- (2.3,0.8) -- (3.3,0.8) -- (3.3,1.3);
  \fill[blue!25] (2.28,1.3) -- (2.28,1.38) -- (3.32,1.38) -- (3.32,1.3);
  \draw[-,thick,blue,fill=blue!25] (2.28,1.3) -- (2.3,1.3) -- (2.5,0.8) -- (3,0.8) -- (3.3,1.3) -- (3.32,1.3);
  \draw[-,thick] (5,1.3) -- (5,0.8) -- (6,0.8) -- (6,1.3);
  \fill[blue!25] (4.98,1.3) -- (4.98,1.38) -- (6.02,1.38) -- (6.02,1.3);
  \draw[-,thick,blue,fill=blue!25] (4.98,1.3) -- (5,1.3) -- (5,0.8) -- (5.8,0.8) -- (6,1.3) -- (6.02,1.3);
  \fill[blue!25] (7.78,1.3) -- (7.78,1.38) -- (8.82,1.38) -- (8.82,1.3);
  \draw[-,thick,blue,fill=blue!25] (7.78,1.3) -- (7.8,1.3) -- (7.8,0.8) -- (8.8,0.8) -- (8.8,1.3) -- (8.82,1.3);
\end{tikzpicture}
\caption{Illustration of the phase diagram in the case where $\lambda_c<\lambda_-<\lambda_+<+\infty$.
The critical curve $a_c(\lambda)$ separates a localized phase $\mathcal{L}=\{(\lambda,a), \f(\lambda,a)>0\}$ and a delocalized phase $\mathcal{D} = \{(\lambda,a), \f(\lambda,a)=0\}$.
We have also represented a typical configuration, depending on the range of parameters: according to Lemma~\ref{lem:psi}, the left-most point of contact is either at $w_-^*>\frac{a}{\rho_-}$ for $\lambda<\lambda_-$, in $[0,\frac{a}{\rho_-}]$ for $\lambda =\lambda_-$ or at $0$ for $\lambda>\lambda_-$ (and similarly for the right-most point of contact).
Also, in view of Theorem~\ref{th:formuleG}, the formula for $\f(\lambda,a)$ is different in the regions $(\lambda_c,\lambda_-)$, $(\lambda_-,\lambda_+)$ and $(\lambda_+,+\infty)$.
}
\label{fig:lempsi}
\end{figure}

Let us mention that, as far as the critical case $\lambda=\lambda_c(a)$ is concerned, one needs some extra assumption to obtain a result analogous to Theorem~\ref{th:leftrightpoints}. 
For instance, if $t_0^+=t_0^-=0$ we have $\lambda_c(a) =0$: then we can easily show that~\eqref{eq:nocontact} holds also at $\lambda=\lambda_c(a)$.
On the other hand, if one have $\lambda_c(a) < \min\{\lambda_+,\lambda_-\}$, \textit{i.e.}\ if one is inside the so-called Cram\'er's region, we prove in Theorem~\ref{th:Gaussian} below that~\eqref{eq:leftright} holds at $\lambda=\lambda_c(a)$, with a unique maximizer $(u^*,v^*)$ verifying $0<u^*<v^*<1$.

\begin{remark}
\label{rem:angles}
One could improve Theorem~\ref{th:leftrightpoints} to obtain in the super-critical case the convergence of the full trajectory $(\frac1N S_{\lfloor tN \rfloor})_{t\in [0,1]}$, as done for instance in~\cite{BFO09}.
If $\lambda>\lambda_c(a)$ and if the maximizers $U^*,V^*$ are reduced to one point $w_-^*,w_+^*$, one can easily show that the trajectory converges under~$\bP_{N,\lambda}^a$ to three line segments joining the points $(0,0)$, $(w_-^*,-a)$, $(w_+^*,-a)$, $(1,0)$; we refer to Figure~\ref{fig:lempsi} for an illustration.

Some interesting quantities considered for instance in~\cite{Pat98,PV99} are the contact angles $\theta_-,\theta_+$ between the bottom of the well and the first and last segment of the trajectory.
Since we have $\tan \theta_- = a/w_-^*$, $\tan \theta_+ = a/w_+^*$, then in view of the formula~\eqref{def:w*} for $w_+^*,w_-^*$, we observe \emph{the contact angles $\theta_-,\theta_+$ do not depend on $a$ but only on $\lambda$} (this remains true for non-symmetric elevated boundary conditions) --- note that in the case of a symmetric random walk one additionally has $\theta_+=\theta_-$.
This was already noticed in~\cite{Pat98,PV99} for the pinning of interfaces in the 2D Ising model, the angle verifying the so-called Herring--Young equation.
\end{remark}

\subsection{Cram\'er's region and Gaussian fluctuations}

We now wish to obtain more precise estimates on the partition function and on the left and right-most pinned point, sharpening in particular Theorem~\ref{th:leftrightpoints}.
For that, we are going to assume that Cram\'er's condition holds, which ensures in particular that the rate functions $\mathrm{I}_+,\mathrm{I}_-$ are non-trivial.

\begin{assumption}[Cram\'er's condition]
\label{hyp:Cramer}
There is some $t_0>0$ such that $\bE[e^{t X_1}] <+\infty$ for $|t|<t_0$.
In other words, $t_0^+>0$ \emph{and} $t_0^->0$.
\end{assumption}

Under that assumption, we call \emph{Cram\'er's region} the interval $(-\rho_-,\rho_+)$, where $\rho_-,\rho_+ \in(0,+\infty]$ are defined in~\eqref{def:rho}.
In particular, one can show that the rate function~$\I$ is strictly convex inside Cram\'er's region $(-\rho_-,\rho_+)$; we refer to Section~\ref{sec:LD} for more comments.
In the super-critical and critical case $\lambda \geq \lambda_c(a)$, Assumption~\ref{hyp:Cramer} allows us to obtain inside Cram\'er's region a sharp estimate on the partition function and on the distribution of $L_N,R_N$ under $\bP_{N,\lambda}^a$.

More precisely, we will assume that $\lambda_c(a)\leq \lambda < \min\{\lambda_+,\lambda_-\}$, which ensures: first that $\lambda \geq \lambda_c(a)$ thanks to Lemma~\ref{lem:w+w-} below; second that  $w_+^*,w_-^*$ defined in~\eqref{def:w*} verify $w_+^*> \frac{a}{\rho_+}$, $w_-^*>\frac{a}{\rho_-}$.
In that case, we set
\begin{equation}
\label{eq:formulauv*}
u^* = u_{\lambda,a}^* = w_-^* = \frac{a}{\Lambda_-'\circ \Lambda_-^{-1} (\f(\lambda))},
\qquad
v^*=v_{\lambda,a}^* =1-w_+^* =1- \frac{a}{\Lambda_+'\circ \Lambda_+^{-1} (\f(\lambda))} \,,
\end{equation}
which verify $\frac{a}{\rho_-}<u^*<v^*<1-\frac{a}{\rho_+}$; hence $\frac{a}{u^*}$ and $\frac{a}{1-v^*}$ are both inside Cram\'ers's region.

\begin{theorem}
\label{th:Gaussian}
Suppose that Assumption~\ref{hyp:Cramer} holds, that $a>0$ and that $\lambda_c(a) \leq \lambda < \min\{\lambda_+,\lambda_-\}$.
Let $(u^*,v^*)$ be defined as in~\eqref{eq:formulauv*}.
\begin{itemize}
\item[(i)] There are some constants $c_0=c_0(a,\lambda)>0$, $c_1=c_1(a,\lambda)>0$ (explicit in the proof)
such that
\begin{equation}
\label{eq:asympZN}
Z_{N,\lambda}^a \sim c_1 e^{\f(\lambda, a) N + c_0 \{aN\} } \qquad \text{ as } N\to+\infty \,,
\end{equation}
where $\{aN\} =  aN- \lfloor aN \rfloor$ is the fractional part of $aN$.

\item[(ii)] There are some constants $\sigma_1=\sigma_1(a,\lambda)>0$, $\sigma_2=\sigma_2(a,\lambda)>0$ (explicit in the proof, see~\eqref{def:sigmas}) such that uniformly for $\delta_{\ell}:=|\frac1N \ell- u^*|, \delta_r:=|\frac1N r-v^*| \leq \gep_N$ with $\gep_N\to 0$, we have as $N\to\infty$
\begin{equation}
\label{eq:TCLlocalZ}
\bP_{N,\lambda}^a\big( L_N = \ell, R_N =r \big) = \frac{1+o(1)}{2\pi N \sigma_1 \sigma_2} e^{- \frac{(\ell-u^*N)^2}{2\sigma_1^2 N} (1+ \delta_{\ell} O(1) )} e^{- \frac{(r-v^*N)^2}{2\sigma_2^2 N } (1+ \delta_r O(1) )}  \,.
\end{equation}
As a consequence, as $N\to\infty$, under $\bP_{N,\lambda}^a$ we have that $(\frac{L_N-Nu^*}{\sqrt{N}}, \frac{R_N-Nv^*}{\sqrt{N}})$ converges weakly to $(Z_1,Z_2)$, where  $Z_1\sim \mathcal{N}(0,\sigma_1^2)$ and $Z_2\sim \mathcal{N}(0,\sigma_2^2)$ are independent.
\end{itemize}
\end{theorem}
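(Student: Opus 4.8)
The plan is to derive both claims from a decomposition of $Z_{N,\lambda}^a$ over the first and last contact of the interface with the bottom of the well. Pass to the shifted walk $\tilde S_n := S_n + \lfloor aN\rfloor$, which starts and ends at $A_N := \lfloor aN\rfloor$, stays non-negative, and realizes a contact precisely when $\tilde S_n = 0$. On $[0,L_N]$ the trajectory is an excursion from height $A_N$ down to $0$ that stays positive, on $[R_N,N]$ it is an excursion from $0$ up to $A_N$ that stays positive, and on $[L_N,R_N]$ it is exactly a constrained standard wetting configuration of length $R_N-L_N$. Writing $Z_{j,\lambda}$ for the standard constrained wetting partition function of Section~\ref{sec:wetting} (with $Z_{0,\lambda}=1$) and setting
\[
q^\downarrow_N(\ell) := \bP\big(S_j>-A_N\ \text{for}\ 1\le j<\ell,\ S_\ell=-A_N\big),\qquad q^\uparrow_N(m) := \bP\big(S_j>0\ \text{for}\ 1\le j<m,\ S_m=A_N\big),
\]
decomposing the middle piece over its internal contacts gives
\[
Z_{N,\lambda}^a = \lambda\!\!\!\sum_{\substack{\ell,m\ge1\\ \ell+m\le N}}\!\!\! q^\downarrow_N(\ell)\,q^\uparrow_N(m)\,Z_{N-\ell-m,\lambda}\ +\ (\text{contact-free term}),
\]
and an elementary estimate shows the contact-free contribution is $O(N^{-1/2})=o(1)$, hence negligible in every regime below (where $Z_{N,\lambda}^a\gg1$).

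Next I would replace the three factors by sharp asymptotics. For the bulk factor, note $\lambda\ge\lambda_c(a)>\lambda_c$: indeed, under Assumption~\ref{hyp:Cramer} one has $\sigma^2<\infty$ and $\Lambda_\pm'(0)=0$, so $\Lambda_\pm^{-1}(\f(\lambda))\sim(2\f(\lambda)/\sigma^2)^{1/2}\gg\f(\lambda)$ as $\lambda\downarrow\lambda_c$, whence $\psi(\lambda,a)<0$ just above $\lambda_c$ and thus $\lambda_c(a)>\lambda_c$; consequently \eqref{eq:asympZ} gives $Z_{j,\lambda}\sim m_\lambda^{-1}e^{\f(\lambda)j}$ as $j\to\infty$. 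For the descent (resp.\ ascent) factor I would tilt exponentially by $t_-:=\Lambda_-^{-1}(\f(\lambda))$ (resp.\ $t_+:=\Lambda_+^{-1}(\f(\lambda))$), which is well defined, positive, and strictly below $t_0^-$ (resp.\ $t_0^+$) precisely because $\lambda_c<\lambda<\min\{\lambda_+,\lambda_-\}$. Under the tilt the increments have finite variance $\Lambda_-''(t_-)$ (resp.\ $\Lambda_+''(t_+)$) and mean $-\Lambda_-'(t_-)<0$ (resp.\ $\Lambda_+'(t_+)>0$), and $q^\downarrow_N(\ell)=e^{-t_-A_N+\ell\Lambda_-(t_-)}\,\bP^{(t_-)}(S_j>-A_N\ \forall j<\ell,\ S_\ell=-A_N)$. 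A local CLT for the tilted walk together with a ladder-height estimate for it conditioned to stay positive — the tilted analogue of Lemmas~\ref{lem:ladder}--\ref{lem:kappa}, i.e.\ a local large deviation estimate in the presence of a wall — yields, uniformly for $\ell/N$ near $u^*$ (where $a/u^*=\Lambda_-'(t_-)<\rho_-$ lies strictly inside Cramér's region, so $\I_-$ is analytic and strictly convex there),
\[
q^\downarrow_N(\ell)\sim\frac{c^\downarrow(\ell/N)}{\sqrt N}\,e^{-\ell\,\I_-(A_N/\ell)},\qquad q^\uparrow_N(m)\sim\frac{c^\uparrow(m/N)}{\sqrt N}\,e^{-m\,\I_+(A_N/m)}\ \ (m/N\ \text{near}\ 1-v^*),
\]
with constants explicit in terms of $t_\pm$ and the tilted ladder heights.

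Inserting these, the summand becomes $\tfrac{\lambda c^\downarrow c^\uparrow}{m_\lambda N}e^{F_N(\ell,m)}(1+o(1))$ with $F_N(\ell,m):=\f(\lambda)(N-\ell-m)-\ell\,\I_-(A_N/\ell)-m\,\I_+(A_N/m)$, and $N^{-1}F_N(\lfloor uN\rfloor,\lfloor wN\rfloor)\to g_{\lambda,a}(u,1-w)$; by Lemma~\ref{lem:psi} the continuous maximum of $F_N$ is $N\f(\lambda,a)+o(N)$, attained near $(u^*N,(1-v^*)N)$. Because $\I_\pm$ is strictly convex on Cramér's region — this is exactly where Assumption~\ref{hyp:Cramer} together with $\lambda<\min\{\lambda_+,\lambda_-\}$ is used — $F_N$ is strictly concave near its maximum with diagonal Hessian whose entries are $\asymp1/N$, so a two-dimensional Laplace (Gaussian-sum) approximation gives $\sum_{\ell,m}e^{F_N(\ell,m)}\sim e^{F_N^*}\,2\pi N/\sqrt{\det(-N\nabla^2F_N)}$; expanding $A_N=aN-\{aN\}$ inside $F_N^*$ produces $e^{c_0\{aN\}}$ with $c_0=t_-+t_+$, and $c_1$ is read off from the Hessian determinant and the prefactors, which proves (i). For (ii) write $\bP_{N,\lambda}^a(L_N=\ell,R_N=r)=\lambda\,q^\downarrow_N(\ell)\,q^\uparrow_N(N-r)\,Z_{r-\ell,\lambda}/Z_{N,\lambda}^a$, insert the same asymptotics, and Taylor-expand $F_N$ about $(u^*N,v^*N)$: the fractional-part factors cancel against $Z_{N,\lambda}^a$, the diagonal Hessian splits the exponent into two decoupled Gaussians (whence independence in the limit) with $\sigma_1^2=(u^*)^3\Lambda_-''(t_-)/a^2$ and $\sigma_2^2=(1-v^*)^3\Lambda_+''(t_+)/a^2$, and the displacement of the optimizer from $(u^*N,v^*N)$ together with the cubic remainder get absorbed into the $o(1)$, $\delta_\ell O(1)$, $\delta_r O(1)$ corrections in \eqref{eq:TCLlocalZ}; a Riemann-sum argument then yields the joint weak convergence. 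The borderline case $\lambda=\lambda_c(a)$ needs no separate treatment since $\lambda_c(a)>\lambda_c$ still makes \eqref{eq:asympZ} applicable.

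I expect the main obstacle to be the sharp, uniform local large deviation estimate with a wall for $q^\downarrow_N$ and $q^\uparrow_N$: one must control the tilted walk's probability of staying positive and landing on $-A_N$ at time $\ell$ not only pointwise but uniformly over a window $|\ell-u^*N|\lesssim\sqrt N\log N$ (so that the Gaussian sum captures the full mass), while carefully tracking the integer parts $\lfloor aN\rfloor$; this needs a local-CLT-with-barrier input for the tilted walk (which itself satisfies Cramér's condition), in the spirit of but genuinely strengthening Lemmas~\ref{lem:ladder}--\ref{lem:kappa}. Everything else — the Laplace method for a slowly varying, strictly concave two-dimensional lattice sum, and the bookkeeping of the constants $c_0,c_1,\sigma_1,\sigma_2$ — is routine.
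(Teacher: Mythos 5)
Your proposal follows essentially the same route as the paper: decompose $Z_{N,\lambda}^a$ over the first and last contact, establish a sharp local large--deviation estimate with a wall for the descent/ascent factors via exponential tilting and a uniform local CLT (this is exactly the paper's Proposition~\ref{prop:Qnx} together with Lemma~\ref{lem_prob}), and then run a two-dimensional Laplace/Gaussian-sum argument around the unique Cram\'er-region maximizer $(u^*,v^*)$, using the a priori localization and the crude upper bound to discard the sum away from it. Your constants match the paper's (note $c_0=t_-+t_+=\I_-'(a/u^*)+\I_+'(a/(1-v^*))$ and $\sigma_1^2=(u^*)^3\Lambda_-''(t_-)/a^2=(u^*)^3/(a^2\I_-''(a/u^*))$), and the step you single out as the main obstacle is precisely the technical core of the paper's proof.
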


This result is analogous to Theorem~1.5 in~\cite{BFO09}, which has been obtained in the context of the pinning or wetting of a $d$-dimensional \emph{Gaussian} random walk on a subspace $M$; see also Remark~4.1 in~\cite{FO10}, which considers the pinning version of the model, under the assumption that the random walk has all finite exponential moments (\textit{i.e.}\ $t_0^+=t_0^- =+\infty$). Here we provide a local limit theorem for $L_N,R_N$ (and the number of contacts $H_N(S)$), under much milder conditions.

\smallskip
Notice that under $\bP_{N,\lambda}^a$, conditionally on $L_N=\ell,R_N=r$, the number of contacts $H_N = H_N(S)$ with the bottom of the well as the same distribution as under $\bP_{r-\ell,\lambda}$.
In particular, combining Theorem~\ref{th:Gaussian} with Proposition~\ref{prop:Hlocal}, we prove the following corollary.

\begin{corollary}
\label{cor:HN}
Suppose that Assumption~\ref{hyp:Cramer} holds, that $a>0$ and that $\lambda_c(a) \leq \lambda < \min\{\lambda_+,\lambda_-\}$.
Then, as $N\to\infty$, we have the following joint convergence in distribution under $\bP_{N,\lambda}^a$:
\begin{equation}
\label{eq:convLRH}
\Big(\frac{L_N-u^*N}{\sqrt{N}}, \frac{R_N-v^*N}{\sqrt{N}} ,\frac{H_N- m_{\lambda}^{-1}(v^*-u^*) N}{\sqrt{N}}  \Big)
 \xrightarrow[N\to\infty]{\bP_{N,\lambda}^a} (Z_1,Z_2,Z_3) \quad \text{ as } N\to\infty\,,
\end{equation}
where $(Z_1,Z_2, Z_3)$ is a Gaussian vector of density
\[
\frac{1}{(2\pi)^{3/2} \sigma_1 \sigma_2 \sigma_{3}} e^{-\frac12 Q(z_1,z_2,z_3)}\quad
\text{ with }\  Q(z_1,z_2,z_3) = \frac{z_1^2}{\sigma_1^2} + \frac{z_2^2}{\sigma_2^2} + \frac{(z_3- m_{\lambda}^{-1}(z_2-z_1))^2}{\sigma_{3}^2}\,,
\] 
for some constants $\sigma_1,\sigma_2,\sigma_{3}$ explicit in the proof.
In particular, $Z_3$ is Gaussian, but not independent of $(Z_1,Z_2)$.
(We actually obtain a local version of this convergence in distribution, see~\eqref{eq:localLRH}.)
\end{corollary}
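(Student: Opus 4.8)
\textbf{Proof plan for Corollary~\ref{cor:HN}.}
The plan is to combine the local limit theorem of Theorem~\ref{th:Gaussian}(ii) for the pair $(L_N,R_N)$ with the local limit theorem of Proposition~\ref{prop:Hlocal} for the number of contacts, using the key structural observation already recorded right before the statement: under $\bP_{N,\lambda}^a$, conditionally on $\{L_N=\ell, R_N=r\}$, the variable $H_N$ has exactly the law of $H_{r-\ell}(\tau)$ under $\bP_{r-\ell,\lambda}$ (the walk makes one excursion from the top into the well down to $-\lfloor aN\rfloor$, stays pinned near the bottom on the time interval $[\ell,r]$ exactly as in the standard wetting model of length $r-\ell$, and makes one excursion back up). This conditional independence is what lets us multiply the two local limit theorems.

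First I would write, for integers $\ell<r$ and $k\ge 0$,
\begin{equation}
\label{eq:localLRH}
\bP_{N,\lambda}^a\big(L_N=\ell, R_N=r, H_N=k\big)
= \bP_{N,\lambda}^a\big(L_N=\ell, R_N=r\big)\,\bP_{r-\ell,\lambda}\big(H_{r-\ell}=k\big)\,.
\end{equation}
For the first factor I plug in~\eqref{eq:TCLlocalZ}; for the second I plug in Proposition~\ref{prop:Hlocal}, which gives
$\bP_{r-\ell,\lambda}(H_{r-\ell}=k) = \frac{m_\lambda}{\sqrt{r-\ell}}\big(\phi_{\sigma_\lambda}\big(\frac{(r-\ell)-k m_\lambda}{\sqrt{r-\ell}}\big)+o(1)\big)$, uniformly in $k$. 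On the scaling window $\ell = u^*N + z_1\sqrt N$, $r = v^*N + z_2\sqrt N$, $k = m_\lambda^{-1}(v^*-u^*)N + z_3\sqrt N$ with $z_1,z_2,z_3$ in compact sets, I would expand: $r-\ell = (v^*-u^*)N + (z_2-z_1)\sqrt N$, so $\sqrt{r-\ell}\sim \sqrt{(v^*-u^*)N}$ and the Gaussian argument becomes $\frac{(r-\ell)-k m_\lambda}{\sqrt{r-\ell}} = \frac{(z_2-z_1)\sqrt N - m_\lambda z_3\sqrt N + o(\sqrt N)}{\sqrt{(v^*-u^*)N}} = \frac{m_\lambda(m_\lambda^{-1}(z_2-z_1) - z_3)}{\sqrt{v^*-u^*}} + o(1)$. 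Hence the third factor contributes, up to constants absorbed into $\sigma_3$, a Gaussian weight $\exp\big(-\frac{(z_3 - m_\lambda^{-1}(z_2-z_1))^2}{2\sigma_3^2}\big)$ with $\sigma_3^2 := (v^*-u^*)\sigma_\lambda^2 m_\lambda^{-2}$ (the $m_\lambda^{-2}$ coming from rescaling $H_N$ rather than $\tilde\tau_\bullet$, exactly as in the final sentence after Proposition~\ref{prop:Hlocal}). Multiplying by the first two Gaussian factors from~\eqref{eq:TCLlocalZ} and the prefactor $\frac{1}{2\pi N\sigma_1\sigma_2}\cdot\frac{m_\lambda}{\sqrt{(v^*-u^*)N}}$ yields the density $\frac{1}{(2\pi)^{3/2}\sigma_1\sigma_2\sigma_3}e^{-\frac12 Q(z_1,z_2,z_3)}$ with $Q$ as stated, once one checks the constant $\frac{m_\lambda}{\sqrt{v^*-u^*}} = \frac{1}{\sigma_\lambda}\cdot\frac{\sigma_\lambda m_\lambda}{\sqrt{v^*-u^*}} = \frac{1}{\sigma_3}$ matches. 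This is the local version~\eqref{eq:localLRH}, and summing (Riemann sum / Scheffé) over lattice points in boxes gives the claimed weak convergence~\eqref{eq:convLRH}.

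The main obstacle is uniformity of the error terms across the three-dimensional scaling window, and in particular making sure the $o(1)$ in Proposition~\ref{prop:Hlocal} is uniform when its length parameter $r-\ell$ is itself random but concentrated: since $r-\ell\sim (v^*-u^*)N\to\infty$ uniformly on the window, and Proposition~\ref{prop:Hlocal} is a sup-norm statement in $k$, this is fine, but one must be slightly careful that $k$ ranges over $[0,r-\ell]$ and that values of $k$ outside an $O(\sqrt N)$ window contribute negligibly (Gaussian tails), which also controls the normalization. A second, minor point is that the error in~\eqref{eq:TCLlocalZ} has the form $\delta_\ell O(1), \delta_r O(1)$ in the exponent; on the scaling window $\delta_\ell,\delta_r = O(N^{-1/2})$, so these corrections are $o(1)$ and harmless. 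Everything else — the conditional factorization~\eqref{eq:localLRH}, the Taylor expansions, the constant bookkeeping — is routine, and I would relegate the precise identification of $\sigma_3$ (and a remark that $Z_3$ is Gaussian with variance $\sigma_3^2 + m_\lambda^{-2}(\sigma_1^2+\sigma_2^2)$ but correlated with $Z_1,Z_2$ through the term $m_\lambda^{-1}(Z_2-Z_1)$) to the end of the proof.
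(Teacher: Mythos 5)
Your proposal is correct and follows essentially the same route as the paper: the conditional factorization $\bP_{N,\lambda}^a(H_N=k\mid L_N=\ell,R_N=r)=\bP_{r-\ell,\lambda}(H_{r-\ell}=k)$, the combination of~\eqref{eq:TCLlocalZ} with Proposition~\ref{prop:Hlocal}, the expansion on the $\sqrt N$-window yielding $\sigma_3^2=(v^*-u^*)\sigma_\lambda^2 m_\lambda^{-2}$, and the Riemann-sum conclusion all match the paper's proof, including the uniformity considerations you flag.
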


\noindent
As a consequence, the density of contact $\frac1N H_N(S)$ converges in $\bP_{N,\lambda}^a$-probability to $\frac{1}{m_{\lambda}}(v^*-u^*)$, with $v^*-u^*>0$, even at the critical point $\lambda=\lambda_c(a)$.

\begin{remark}
Obtaining detailed results when $\lambda\geq min\{\lambda_+,\lambda_-\}$, \textit{i.e.}\ at the border or outside Cram\'er's region is left open. 
It would actually require further assumptions on the underlying random walk, in particular to obtain sharp local large deviation estimates, in the spirit of Theorem~\ref{th:limloc} below.
We do not pursue the investigation further, but the situation at $\lambda=\lambda_{\pm}$ when $\rho_{\pm}<+\infty$ (hence the variational problem has no unique maximizer) is particularly intriguing.
\end{remark}

\subsection{About the phase transitions}

In this section, we also focus on the case where Cram\'er's condition is satisfied, \textit{i.e.}\ both $t_0^+,t_0^->0$.
This is only to simplify the statements, but the results could be adapted to the case where only one of $t_0^+,t_0^-$ is non-zero (up to a factor $1/2$ in some constants).
On the other hand, the case $t_0^+=t_0^-=0$ is trivial, since $\f(\lambda,a)=\f(\lambda)$ for all $a\geq 0$.

\subsubsection{About the localization (or wetting) phase transition}

As far as the critical behavior near the \textit{localization} transition (at $\lambda_c(a)$) is concerned, we have the following result.
It shows in particular that, as soon as $a>0$, the phase transition in $\lambda$ is of first order. 
Note we have already observed that $a\mapsto \f(\lambda, a)$ is affine on $(0,a_c(\lambda))$.

\begin{proposition}
\label{th:DL_F}
Suppose that Assumption~\ref{hyp:Cramer} holds.
For any $a>0$ with $\lambda_c(a)<+\infty$, we have 
\[
\f(\lambda_c(a) + u,a) \sim C_a u \qquad \text{ as } u\downarrow 0\,.
\]
for some \emph{explicit} constant $C_a>0$ given in~\eqref{eq:Ca}.
The constant $C_a$ verifies $C_a\sim \sqrt{8 c_3} \, a$ as $a\downarrow 0$, with $c_3$ defined in~\eqref{eq:criticF5}.
\end{proposition}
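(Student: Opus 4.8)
The plan is to differentiate the explicit formula for $\f(\lambda,a)$ from Theorem~\ref{th:formuleG} at the critical point $\lambda_c(a)$. Recall from~\eqref{eq:formuleG} that, for $\lambda$ in a right-neighborhood of $\lambda_c(a)$ and below $\min\{\lambda_+,\lambda_-\}$, we have $\f(\lambda,a) = \f(\lambda) - a\Lambda_+^{-1}(\f(\lambda)) - a\Lambda_-^{-1}(\f(\lambda))$, whereas $\f(\lambda_c(a),a)=0$, so that $\f(\lambda_c(a))= a\Lambda_+^{-1}(\f(\lambda_c(a))) + a\Lambda_-^{-1}(\f(\lambda_c(a)))$. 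Since $\lambda\mapsto \f(\lambda,a)$ is convex in $\beta=\log\lambda$ and vanishes identically on $[0,\lambda_c(a)]$, its right-derivative at $\lambda_c(a)$ — if positive — forces the first-order behavior $\f(\lambda_c(a)+u,a)\sim C_a u$; the content is to show this right-derivative is a strictly positive (explicit) constant.

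The key computation: write $F:=\f(\lambda)$ as a function of $\lambda$, which by Theorem~\ref{prop:pinning} is analytic on $(\lambda_c,+\infty)$ with $\f(\lambda_c)=0$ and, by the critical behavior~\eqref{eq:criticF5}, $\f(\lambda_c+u)\sim c_3 u^2\sigma^2(1/u)$; in the Cramér regime $\sigma^2<\infty$, so $\f(\lambda_c+u)\sim \tfrac12\kappa^4\sigma^2\bP(\bar H_1>0)^{-2}u^2$, giving $\f'(\lambda_c^+)=0$ and a genuine quadratic vanishing. Differentiating the composite $\lambda\mapsto a\Lambda_\pm^{-1}(\f(\lambda))$ requires the chain rule; near $\f=0$ one has $\Lambda_\pm^{-1}(x)\sim \sqrt{2x/\sigma^2}$ (since $\Lambda_\pm(t)=\tfrac12\sigma^2 t^2 + o(t^2)$ as $t\downarrow 0$, using $\bE[X_1]=0$, $\bE[X_1^2]=\sigma^2$), so $\Lambda_\pm^{-1}(\f(\lambda))\sim \sigma^{-1}\sqrt{2\f(\lambda)}$. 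Hence at the critical point where $\f(\lambda_c(a))$ may itself be small (when $a$ is small, $\lambda_c(a)$ is close to $\lambda_c$), or more robustly, I differentiate: with $G(\lambda):=\f(\lambda)-a\Lambda_+^{-1}(\f(\lambda))-a\Lambda_-^{-1}(\f(\lambda))$ one gets $G'(\lambda)=\f'(\lambda)\big(1 - a(\Lambda_+^{-1})'(\f(\lambda)) - a(\Lambda_-^{-1})'(\f(\lambda))\big)$, and $(\Lambda_\pm^{-1})'(x) = 1/\Lambda_\pm'(\Lambda_\pm^{-1}(x))$. Evaluating at $\lambda=\lambda_c(a)$ and using the defining relation $\f(\lambda_c(a)) = a\Lambda_+^{-1}(\f(\lambda_c(a)))+a\Lambda_-^{-1}(\f(\lambda_c(a)))$ to express things, plus $\f'(\lambda_c(a)^+)>0$ when $\lambda_c(a)>\lambda_c$ (which holds as soon as $a>0$, since $\lambda_c(a)>\lambda_c$ by monotonicity and strict positivity of $\Lambda_\pm^{-1}$ on $(0,\infty)$, so $\f'$ there is the genuine analytic derivative, strictly positive by strict convexity of $\f$ past $\lambda_c$), yields
\[
C_a = \lambda_c(a)\cdot G'(\lambda_c(a)) = \lambda_c(a)\,\f'(\lambda_c(a))\Big(1 - \frac{a}{\Lambda_+'(\Lambda_+^{-1}(\f(\lambda_c(a))))} - \frac{a}{\Lambda_-'(\Lambda_-^{-1}(\f(\lambda_c(a))))}\Big),
\]
which I would package as equation~\eqref{eq:Ca}. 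One must check the parenthesized factor is positive: it equals $1 - w_+^* - w_-^*$ evaluated at $\lambda=\lambda_c(a)$, and by Lemma~\ref{lem:w+w-} (quoted in the text) $w_+^*+w_-^*<1$ strictly for $\lambda\ge\lambda_c$, hence also here — so $C_a>0$.

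Finally, the small-$a$ asymptotics: as $a\downarrow 0$, $\lambda_c(a)\downarrow \lambda_c$, so $\f(\lambda_c(a))\downarrow 0$, and I expand everything to leading order. Using $\Lambda_\pm^{-1}(x)\sim\sigma^{-1}\sqrt{2x}$ and $\Lambda_\pm'(\Lambda_\pm^{-1}(x))\sim\sigma^2\cdot\sigma^{-1}\sqrt{2x}=\sigma\sqrt{2x}$ as $x\downarrow 0$, the defining relation $\f(\lambda_c(a))=a(\Lambda_+^{-1}+\Lambda_-^{-1})(\f(\lambda_c(a)))$ becomes $\f(\lambda_c(a))\sim 2a\sigma^{-1}\sqrt{2\f(\lambda_c(a))}$, i.e. $\sqrt{\f(\lambda_c(a))}\sim 2a\sigma^{-1}\sqrt 2$, so $\f(\lambda_c(a))\sim 8a^2/\sigma^2$. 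Then the parenthesized factor $1-w_+^*-w_-^*\to 1$, and $\lambda_c(a)\f'(\lambda_c(a))$: here I use~\eqref{eq:criticF5} inverted, namely $\f(\lambda_c+u)\sim \tfrac12\kappa^4\bP(\bar H_1>0)^{-2}\sigma^2 u^2 = c_3\sigma^2 u^2$ — wait, comparing with the stated $C_a\sim\sqrt{8c_3}\,a$, the bookkeeping should be: from $\f\sim 8a^2/\sigma^2$ and $\f\sim c_3\sigma^2 (\lambda_c(a)-\lambda_c)^2\cdot(\text{slowly varying})$ one solves for $\lambda_c(a)-\lambda_c$, then $\f'(\lambda_c(a))\sim 2c_3\sigma^2(\lambda_c(a)-\lambda_c)$, and $C_a\sim\lambda_c\cdot 2c_3\sigma^2(\lambda_c(a)-\lambda_c)$; substituting $(\lambda_c(a)-\lambda_c)^2\sim \f(\lambda_c(a))/(c_3\sigma^2)\sim 8a^2/(c_3\sigma^4)$ gives $\lambda_c(a)-\lambda_c\sim (2\sqrt 2\,a)/(\sqrt{c_3}\,\sigma^2)$ and hence $C_a\sim 2c_3\sigma^2\lambda_c\cdot(2\sqrt 2 a)/(\sqrt{c_3}\sigma^2) = 4\sqrt 2\sqrt{c_3}\,\lambda_c a = \sqrt{8c_3}\cdot(2\lambda_c)\, a$; I will recheck the normalization of $c_3$ in~\eqref{eq:criticF5} and the factor $\lambda_c$ versus $\log$-derivative conventions to land exactly on $\sqrt{8c_3}\,a$. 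The main obstacle is precisely this constant-chasing through the inversion formulas — making sure the slowly varying functions in~\eqref{eq:criticF5} are trivial under Cramér's condition (they are, since $\sigma^2(1/u)\to\sigma^2$), that the left/right-derivative conventions for $\Lambda_\pm$ at $0$ are handled (no issue since $0<t_0^\pm$), and that the positivity of $1-w_+^*-w_-^*$ at $\lambda_c(a)$ is invoked correctly from Lemma~\ref{lem:w+w-}.
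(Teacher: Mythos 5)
Your overall strategy is exactly the paper's: differentiate the explicit formula $\f(\lambda,a)=\f(\lambda)-a\Lambda_+^{-1}(\f(\lambda))-a\Lambda_-^{-1}(\f(\lambda))$ from Theorem~\ref{th:formuleG} at $\lambda_c(a)$ from the right, identify $C_a$ as that one-sided derivative (positive by Lemma~\ref{lem:w+w-}), and then chase constants via $\f(\lambda_c(a))\sim 8a^2/\sigma^2$ and the expansions $\Lambda_\pm^{-1}(x)\sim\sqrt{2x/\sigma^2}$, $\Lambda_\pm'\circ\Lambda_\pm^{-1}(x)\sim\sqrt{2\sigma^2x}$. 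However, the discrepancy of $2\lambda_c$ that you flag at the end is not a normalization issue to be "rechecked"; it is produced by two concrete errors in your computation.

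First, the spurious factor $\lambda_c(a)$. The statement asserts $\f(\lambda_c(a)+u,a)\sim C_a u$ where $u$ is an increment of $\lambda$ itself, so $C_a$ is the right $\lambda$-derivative $G'(\lambda_c(a))$ and nothing more; writing $C_a=\lambda_c(a)\,G'(\lambda_c(a))$ conflates the $\lambda$-derivative with the $\beta=\log\lambda$ derivative. (Convexity of $\beta\mapsto\f(e^\beta,a)$ is not needed here anyway: $\f(\cdot,a)$ is given by an explicit analytic formula to the right of $\lambda_c(a)$, and a one-sided Taylor expansion suffices.) Second, your claim that the parenthesized factor $1-w_+^*-w_-^*$ tends to $1$ as $a\downarrow 0$ is false: since $\f(\lambda_c(a))\sim 8a^2/\sigma^2$ and $\Lambda_\pm'\circ\Lambda_\pm^{-1}(x)\sim\sqrt{2\sigma^2x}$, each term $a/\Lambda_\pm'\circ\Lambda_\pm^{-1}(\f(\lambda_c(a)))$ tends to $\tfrac14$, so the factor tends to $\tfrac12$. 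Correcting both points gives $C_a\sim\tfrac12\f'(\lambda_c(a))\sim\tfrac12\cdot 2c_3\sigma^2(\lambda_c(a)-\lambda_c)\sim\tfrac12\cdot 4\sqrt2\sqrt{c_3}\,a=\sqrt{8c_3}\,a$, as stated. One further small gap: your formula for $C_a$ implicitly assumes $\lambda_c(a)<\min\{\lambda_+,\lambda_-\}$; in general $\frac{\partial}{\partial\lambda}\Lambda_\pm^{-1}(\f(\lambda))$ vanishes for $\lambda>\lambda_\pm$, so the corresponding term must carry an indicator $\ind_{\{\lambda_c(a)<\lambda_\pm\}}$, as in~\eqref{eq:Ca}.
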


\subsubsection{About the saturation phase transition}

Let us also give some information on the saturation transition(s) (at $\lambda_{+},\lambda_-$): we show that it is of first order if $\rho_{\pm}<+\infty$ and of second order if $\rho_{\pm}= \infty$.

Assume that $\lambda_c(a)<\lambda_-\leq\lambda_+ <+\infty$; the case $\lambda_+\leq \lambda_-$ is symmetric and the case where $\lambda_+=+\infty$ is simpler. 
We introduce the excess free energies as follows: recalling Theorem~\ref{th:formuleG} (and the fact that $\Lambda_{\pm}^{-1}(\f(\lambda_{\pm})) = t_0^{\pm}$), define for $\lambda_c(a)<\lambda$
\begin{equation}
\label{def:fexcess}
\begin{split}
\f^{\ast}(\lambda,a) & := \f(\lambda,a)- \big(\f(\lambda) - a \Lambda_+^{-1}(\f(\lambda)) -a  t_0^- \big) \,, \\
\f^{\ast\ast}(\lambda,a) & := \f(\lambda,a)- \big(\f(\lambda) - a  t_0^+-at_0^- \big) \,.
\end{split}
\end{equation}
In particular, we have that $\f^{\ast}(\lambda,a) =0$ for $\lambda \in (\lambda_-,\lambda_+)$ and $\f^{\ast\ast}(\lambda,a) =0$ for $\lambda \in (\lambda_+,+\infty)$.
In the case where $\lambda_-=\lambda_+$ then $\f^{\ast}=\f^{\ast\ast}$.
We then have the following result.

\begin{proposition}
\label{prop:saturation}
Suppose that Assumption~\ref{hyp:Cramer} holds.
If $\lambda_-<\lambda_+ <+\infty$, then as $u\downarrow0$ we have
\[
\f^{\ast}(\lambda_{-} -u ,a) = (1+o(1)) \frac{a \f'(\lambda_-)}{\rho_-}  \, u \,,\qquad
\f^{\ast\ast}(\lambda_{+} -u ,a) = (1+o(1))   \frac{a \f'(\lambda_+)}{\rho_+}  \, u  \,.
\]
If $\lambda_-=\lambda_+<+\infty$, then
$\f^{\ast}(\lambda_{\pm} -u ,a) = (1+o(1))  a  \f'(\lambda_{\pm}) \big( \frac{1}{\rho_-}  + \frac{1}{\rho_+}\big) u$ as $u \downarrow 0$.
\end{proposition}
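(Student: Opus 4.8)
The plan is to differentiate the explicit formula for $\f(\lambda,a)$ from Theorem~\ref{th:formuleG} near the saturation values $\lambda_\pm$, reducing everything to the analytic behavior of $\lambda\mapsto \Lambda_\pm^{-1}(\f(\lambda))$ near the point where it stops being analytic. Concretely, recall that by Theorem~\ref{th:formuleG}, for $\lambda>\lambda_c(a)$ one has $\f(\lambda,a)=\f(\lambda)-a\Lambda_+^{-1}(\f(\lambda))-a\Lambda_-^{-1}(\f(\lambda))$, so the excess free energies defined in~\eqref{def:fexcess} are
\[
\f^\ast(\lambda,a)=a\big(t_0^- -\Lambda_-^{-1}(\f(\lambda))\big),\qquad
\f^{\ast\ast}(\lambda,a)=a\big(t_0^+-\Lambda_+^{-1}(\f(\lambda))\big)+a\big(t_0^- -\Lambda_-^{-1}(\f(\lambda))\big),
\]
valid for $\lambda$ slightly below $\lambda_-$, resp.\ $\lambda_+$ (using that $\Lambda_-^{-1}(\f(\lambda))=t_0^-$ once $\lambda\geq\lambda_-$, and similarly for $+$). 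Thus the whole statement is about the one-sided behavior of $t_0^\pm-\Lambda_\pm^{-1}(\f(\lambda))$ as $\lambda\uparrow\lambda_\pm$.

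First I would record that $\lambda\mapsto\f(\lambda)$ is analytic on $(\lambda_c,+\infty)$ with $\f'(\lambda)>0$ there (Theorem~\ref{prop:pinning}), so it suffices to understand $h(y):=t_0^- -\Lambda_-^{-1}(y)$ as $y\uparrow y_-:=\Lambda_-(t_0^-)=\f(\lambda_-)$, and then compose with $\f$ and use the chain rule, which contributes the factor $\f'(\lambda_-)$. Here the key input is the definition of $\rho_-=\Lambda_-'(t_0^-)=\lim_{t\uparrow t_0^-}\Lambda_-'(t)$ from~\eqref{def:rho}. If $\rho_-<+\infty$, then $\Lambda_-$ has a finite nonzero derivative at $t_0^-$, so by the inverse function theorem $\Lambda_-^{-1}$ is differentiable at $y_-$ with $(\Lambda_-^{-1})'(y_-)=1/\rho_-$, hence $h(y)=\frac{1}{\rho_-}(y_- -y)+o(y_- -y)$; plugging $y=\f(\lambda_- -u)=y_- -\f'(\lambda_-)u+o(u)$ gives $\f^\ast(\lambda_- -u,a)=a\,h(\f(\lambda_- -u))=(1+o(1))\frac{a\f'(\lambda_-)}{\rho_-}u$, which is the claimed first-order behavior. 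If instead $\rho_-=+\infty$ one would get $h(y)=o(y_- -y)$, consistent with the announced second-order (the statement as written treats the $\rho_\pm<\infty$ case, with the $\rho_\pm=\infty$ case being the degenerate $1/\rho_\pm=0$ reading). The same argument applied to the $+$ side gives the $\f^{\ast\ast}$ asymptotics in the regime $\lambda_-<\lambda_+$, where near $\lambda_+$ the $-$ contribution $a(t_0^- -\Lambda_-^{-1}(\f(\lambda)))$ has already vanished (since $\lambda>\lambda_-$) and only the $+$ term survives; in the coincident case $\lambda_-=\lambda_+$ both terms are active and their linear parts add, producing $a\f'(\lambda_\pm)(\frac{1}{\rho_-}+\frac{1}{\rho_+})u$.

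The only genuinely delicate point is justifying that $\Lambda_-^{-1}$ is differentiable at the \emph{endpoint} $y_-=\Lambda_-(t_0^-)$ with one-sided derivative $1/\rho_-$ when $\rho_-<+\infty$ and $t_0^-<+\infty$; this is not quite the interior inverse function theorem, but follows because $\Lambda_-$ is convex and $C^\infty$ on $[0,t_0^-)$, increasing, with $\Lambda_-'(t)\uparrow\rho_-$ as $t\uparrow t_0^-$, so $\Lambda_-(t_0^-)-\Lambda_-(t)=\int_t^{t_0^-}\Lambda_-'(s)\,\dd s=\rho_-(t_0^- -t)(1+o(1))$ as $t\uparrow t_0^-$, which inverts to $t_0^- -\Lambda_-^{-1}(y)=\frac{1}{\rho_-}(y_- -y)(1+o(1))$. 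One must also check that $\lambda_-<+\infty$ together with Assumption~\ref{hyp:Cramer} indeed forces $t_0^-\in(0,+\infty)$ and $\Lambda_-(t_0^-)<+\infty$ (otherwise, by~\eqref{def:lambda+-}, $\lambda_-=+\infty$), and that $\f(\lambda_-)=y_-$ with $\f'(\lambda_-)>0$ — but $\lambda_-\geq\lambda_c(a)>\lambda_c$ in the regime considered, where $\f$ is analytic with positive derivative, so this is fine. I expect the composition bookkeeping (tracking which of the two terms in $\f(\lambda,a)$ is frozen on which side of $\lambda_\pm$, and the case split $\lambda_-<\lambda_+$ versus $\lambda_-=\lambda_+$) to be the main place where care is needed, but no new idea beyond the one-sided inverse function estimate above.
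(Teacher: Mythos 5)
Your proposal is correct and follows essentially the same route as the paper: both reduce the statement, via the formulas of Theorem~\ref{th:formuleG}, to the one-sided expansion $t_0^\pm-\Lambda_\pm^{-1}(\f(\lambda_\pm-u))=(1+o(1))\frac{\f'(\lambda_\pm)}{\rho_\pm}u$, which the paper obtains by the same Taylor expansion (see~\eqref{eq:derivt0}). Your extra justification of the one-sided derivative of $\Lambda_-^{-1}$ at the endpoint $\Lambda_-(t_0^-)$ via $\Lambda_-(t_0^-)-\Lambda_-(t)=\int_t^{t_0^-}\Lambda_-'(s)\,\dd s$ fills in a detail the paper leaves implicit, but it is not a different argument.
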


\noindent
In particular, we find that $\f^{\ast}(\lambda_{\pm} -u ,a) = o(u)$ if $\rho_{\pm}=+\infty$.

\subsubsection{About the critical curve}

Let us now provide some information about the critical curve, in particular close to $\lambda_c$ and $\lambda_-, \lambda_+$ (again, assume $\lambda_-\leq \lambda_+ <+\infty$).
To study $a_c(\lambda)$ as~$\lambda$ crosses the values $\lambda_-,\lambda_+$, we define similarly as above (recalling the formulas of  Theorem~\ref{th:formuleG}):
\begin{equation}
\label{def:acexcess}
a_c^{\ast}(\lambda) = a_c(\lambda) - \frac{\f(\lambda)}{ \Lambda_+^{-1} (\f(\lambda)) + t_0^-} \,,\qquad
a_c^{\ast\ast}(\lambda) = a_c(\lambda) - \frac{\f(\lambda)}{t_0^+ + t_0^-}  \,,
\end{equation}
so that $a_c^{\ast}(\lambda) =0$ for $\lambda \in (\lambda_-,\lambda_+)$ and $a_c^{\ast\ast}(\lambda) =0$ for $\lambda \in(\lambda_+,+\infty)$.
Also, $a_c^{\ast}=a_c^{\ast\ast}$ in the case where $\lambda_-=\lambda_+$.

\begin{proposition}
\label{prop:curve}
Suppose that Assumption~\ref{hyp:Cramer} holds.

\smallskip
(i) We have 
\[
a_c(\lambda_c+u)\sim \frac{\sigma^2}{2\sqrt{2}} \sqrt{c_3}  u \qquad \text{ as } u\downarrow 0\,,
\]
where $\sigma^2$ is the variance of $X_1$ and $c_3$ is defined in~\eqref{eq:criticF5}.

\smallskip
(ii) If $\lambda_-<\lambda_+ <+\infty$, then as $u\downarrow 0$ we have
\[
a_c^{\ast}(\lambda_{-} -u)  =(1+o(1))  \frac{ \f(\lambda_-)\f'(\lambda_-)}{(\Lambda_+^{-1}(\f(\lambda_-))+t_0^-)^2\rho_-}  \, u  \,,\quad
a_c^{\ast\ast}(\lambda_{+} -u ) =(1+o(1))   \frac{\f(\lambda_+)\f'(\lambda_+)}{(t_0^-+t_0^+)^2\rho_+}  \, u \,.
\]
If $\lambda_-=\lambda_+ <+\infty$, then
$a_c^{\ast}(\lambda_{\pm} -u ) =(1+o(1))  \frac{ \f(\lambda_{\pm})\f'(\lambda_{\pm})}{(t_0^++t_0^-)^2} \big( \frac{1}{\rho_-}  + \frac{1}{\rho_-} \big) u$ as $ u \downarrow 0$.
\end{proposition}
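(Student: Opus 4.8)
The plan is to deduce all three asymptotics directly from the closed formula $a_c(\lambda)=\f(\lambda)\big/\big(\Lambda_+^{-1}(\f(\lambda))+\Lambda_-^{-1}(\f(\lambda))\big)$ of Theorem~\ref{th:formuleG} (valid for $\lambda>\lambda_c$), together with the definitions~\eqref{def:acexcess} of the excess curves, by plugging in the relevant asymptotics of $\f$ and elementary expansions of $\Lambda_\pm^{-1}$. The inputs on $\f$ are~\eqref{eq:criticF5} near $\lambda_c$ and the analyticity of $\f$ on $(\lambda_c,+\infty)$ from Theorem~\ref{prop:pinning}; since $\lambda_\pm>\lambda_c$ in the regimes of part~(ii), the latter gives $\f(\lambda_\pm-u)=\f(\lambda_\pm)-\f'(\lambda_\pm)u+o(u)$ with $\f'(\lambda_\pm)>0$.

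For part~(i): under Assumption~\ref{hyp:Cramer} and since $X_1$ is centred, $\Lambda_\pm(0)=\Lambda_\pm'(0)=0$ and $\Lambda_\pm''(0)=\sigma^2$, so $\Lambda_\pm(t)\sim\tfrac{\sigma^2}{2}t^2$ and therefore $\Lambda_\pm^{-1}(s)\sim\sqrt{2s/\sigma^2}$ as $s\downarrow0$. Since $\f(\lambda_c+u)\downarrow0$ as $u\downarrow0$, substituting into the formula for $a_c$ gives
\[
a_c(\lambda_c+u)\ \sim\ \frac{\f(\lambda_c+u)}{2\sqrt{2\f(\lambda_c+u)/\sigma^2}}\ =\ \frac{\sigma}{2\sqrt2}\,\sqrt{\f(\lambda_c+u)}\,.
\]
It then remains to insert~\eqref{eq:criticF5} and to note that under Cram\'er's condition $\sigma^2(1/u)\to\bE[X_1^2]=\sigma^2$ as $u\downarrow0$, so $\f(\lambda_c+u)\sim c_3\sigma^2 u^2$ and $\sqrt{\f(\lambda_c+u)}\sim\sigma\sqrt{c_3}\,u$, which yields $a_c(\lambda_c+u)\sim\tfrac{\sigma^2}{2\sqrt2}\sqrt{c_3}\,u$.

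For part~(ii) the key elementary fact I would record first is that, when $\lambda_\pm<+\infty$ (so $\Lambda_\pm(t_0^\pm)=\f(\lambda_\pm)<+\infty$), the left-continuous inverse $\Lambda_\pm^{-1}$ is left-differentiable at $\f(\lambda_\pm)$ with left-derivative $1/\rho_\pm$, with the convention $1/\rho_\pm=0$ when $\rho_\pm=+\infty$: indeed $(\Lambda_\pm^{-1})'(y)=1/\Lambda_\pm'(\Lambda_\pm^{-1}(y))\to1/\rho_\pm$ as $y\uparrow\f(\lambda_\pm)$, and integrating, $t_0^\pm-\Lambda_\pm^{-1}(y)=\tfrac1{\rho_\pm}(\f(\lambda_\pm)-y)+o(\f(\lambda_\pm)-y)$; combined with the expansion of $\f$ this gives $t_0^\pm-\Lambda_\pm^{-1}(\f(\lambda_\pm-u))=\tfrac{\f'(\lambda_\pm)}{\rho_\pm}u+o(u)$. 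I would then expand~\eqref{def:acexcess} over a common denominator. Assuming $\lambda_c<\lambda_-<\lambda_+<+\infty$, for $\lambda<\lambda_-$ near $\lambda_-$ one has
\[
a_c^{\ast}(\lambda)=\f(\lambda)\,\frac{t_0^- - \Lambda_-^{-1}(\f(\lambda))}{\big(\Lambda_+^{-1}(\f(\lambda))+\Lambda_-^{-1}(\f(\lambda))\big)\big(\Lambda_+^{-1}(\f(\lambda))+t_0^-\big)}\,,
\]
and as $\lambda\uparrow\lambda_-$ every factor except $t_0^- - \Lambda_-^{-1}(\f(\lambda))$ tends to a strictly positive finite limit (note $\Lambda_+^{-1}(\f(\lambda_-))\le t_0^+<\infty$ since $\lambda_-\le\lambda_+$), which gives the first asymptotic. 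For $\lambda\in(\lambda_-,\lambda_+)$ one has $\Lambda_-^{-1}(\f(\lambda))=t_0^-$ (the inverse is saturated), so $a_c^{\ast\ast}(\lambda)=\f(\lambda)\,\big(t_0^+-\Lambda_+^{-1}(\f(\lambda))\big)\big/\big[(\Lambda_+^{-1}(\f(\lambda))+t_0^-)(t_0^+ + t_0^-)\big]$ and the same argument near $\lambda_+$ gives the second. In the degenerate case $\lambda_-=\lambda_+$, both $\Lambda_\pm^{-1}(\f(\lambda))$ approach $t_0^\pm$ from below at once, so the numerator of $a_c^{\ast}=a_c^{\ast\ast}$ becomes $\big(t_0^+-\Lambda_+^{-1}(\f(\lambda))\big)+\big(t_0^--\Lambda_-^{-1}(\f(\lambda))\big)\sim\f'(\lambda_\pm)\big(\tfrac1{\rho_-}+\tfrac1{\rho_+}\big)u$ while the denominator tends to $(t_0^+ + t_0^-)^2$, producing the last formula.

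I do not expect a genuine obstruction here: the substantive content is already contained in Theorem~\ref{th:formuleG} and Proposition~\ref{prop:criticF}, and what is left is careful asymptotic expansion. The two places requiring some care are: (1) justifying the one-sided differentiability of the left-continuous inverse $\Lambda_\pm^{-1}$ at the boundary point $\f(\lambda_\pm)$, and in particular checking that $t_0^\pm-\Lambda_\pm^{-1}(\f(\lambda_\pm-u))=o(u)$ when $\rho_\pm=+\infty$, consistently with the saturation transition being of second order in that case (Proposition~\ref{prop:saturation}); and (2) keeping track of which of $\Lambda_+^{-1}(\f(\lambda))$, $\Lambda_-^{-1}(\f(\lambda))$ is already saturated at $t_0^\pm$ on each of $(\lambda_c,\lambda_-)$, $(\lambda_-,\lambda_+)$, $(\lambda_+,+\infty)$, so that the correct one of $a_c^{\ast}$, $a_c^{\ast\ast}$ is the excess curve being expanded.
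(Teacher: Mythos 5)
Your proposal is correct and follows essentially the same route as the paper: part (i) via the closed formula~\eqref{eq:acritique} combined with $\Lambda_\pm^{-1}(s)\sim\sqrt{2s/\sigma^2}$ and~\eqref{eq:criticF5}, and part (ii) via the same factorization of $a_c^{\ast}$, $a_c^{\ast\ast}$ over a common denominator together with the expansion $t_0^\pm-\Lambda_\pm^{-1}(\f(\lambda_\pm-u))\sim\f'(\lambda_\pm)u/\rho_\pm$, which is exactly the paper's~\eqref{eq:derivt0}.
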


\noindent
In particular, we find that $\lambda\mapsto a_c(\lambda)$ remains differentiable at $\lambda_{\pm}$ if $\rho_{\pm} =+\infty$; if $\rho_{\pm}<+\infty$, the left and right derivative at $\lambda_{\pm}$ differ.


\section{Some related models and open questions}
\label{sec:comments}


\subsection{Wetting on a tilted wall or in a convex well}

Let $(X_i)_{i\geq 1}$ be i.i.d.\ real random variables, with a density $f(\cdot)$ with respect to the Lebesgue measure.
For a given function $\varphi:[0,1] \to \mathbb R$ we define for $N\geq 1$ the function $\varphi_N: \{0,\ldots N\} \to \mathbb{R}$ by setting $\varphi_N(i) = \frac1N \varphi(\frac{i}{N})$.
Then, we introduce the following Gibbs measure, analogously to~\eqref{eq:PNB}:
\[
 \dd \bP_{N,\lambda}^{\varphi} (s_1,\ldots, s_n)= \frac{1}{Z_{N,\lambda}^{\varphi}} \prod_{i=1}^{N-1} \big( \ind_{\{s_i > \varphi_N(i)\}} \mu(\dd s_i)  + \lambda \delta_{\varphi_N(i)}(\dd s_i) \big) \prod_{i=1}^N f(s_i-s_{i-1})  \lambda\delta_{\varphi_N(N)}(\dd s_N) \,.
\]
This corresponds to a wetting model on a wall of shape $\varphi$; note that we consider only the continuous model so that there is no restriction on the function $\varphi$.
Two natural examples are the following
\begin{itemize}
\item[(i)] If $\varphi(x) =\mu x$ for some $\mu\in \mathbb R$, it corresponds to a wetting model on a wall with slope $\mu$;

\item[(ii)] If $\varphi(0)=\varphi(1)=0$ and $\varphi$ is convex, this corresponds to a wetting model in a convex well.
\end{itemize}
We then define the free energy $\f(\lambda, \varphi) = \lim_{N\to\infty} \frac1N \log Z_{N,\lambda}^{\varphi}$ (one has to show that it exits) and one would like to obtain some explicit expression for it.

\subsubsection*{(i) For a wall of slope $\mu$.}
If $\varphi(x)=\mu x$, let us denote $\f(\lambda,\varphi) =: \f_{\mu}(\lambda)$.
In that case, in the Cram\'er region, that is if $\mu \in (\rho_-,\rho_+)$, then we should have that
\[
\f_{\mu}(\lambda) = \f^{(\mu)}(\lambda) - \I(\mu) \,,
\]
where $\f^{(\mu)}(\lambda)$ is the free energy of the wetting model with a wall of slope $0$ but with underlying random walk with i.i.d.\ increments $\tilde X_i = X_i^{(\mu)}-\mu$, where $X_1^{(\mu)}$ has the tilted law $\bP^{(\mu)}(\dd x) = e^{t_{\mu} x - \Lambda(t_\mu)} \bP(\dd x)$ with $t_{\mu}$ chosen so that $\bE^{(\mu)}[X_1^{(\mu)}] =\mu$.
For instance, in the case of a (standard) Gaussian random walk, since $\I(\mu) = \frac12 \mu^2$ and $\bP^{(\mu)}\sim \mathcal{N}(\mu,1)$, we should get $\f_{\mu}(\lambda) =   \f(\lambda) -\frac12 \mu^2$.

\subsubsection*{(ii) For a convex well.}
In the case where $\varphi(0)=\varphi(1)=0$ and $\varphi$ is convex, then we should get the following.
For $0\leq u\leq v\leq 1$, set
\[
g_{\lambda, \varphi}(u,v) := \int_u^v \f_{\varphi'(t)}(\lambda) \dd t   - u\, \I_-\Big( \frac{|\varphi(u)|}{u}\Big) - (1-v)\, \I_+\Big( \frac{|\varphi(v)|}{1-v}\Big)  \,,
\]
where $\f_{\varphi'(t)}(\lambda)$ is the free energy of the wetting model on a wall of slope $\varphi'(t)$.
Then, setting $\psi(\lambda, \varphi) = \sup_{0\leq u \leq v\leq 1} g_{\lambda, \varphi}(u,v)$, we expect that
\begin{equation*}
\f(\lambda,\varphi) = \max\{\psi(\lambda, \varphi),0\}\,,
\end{equation*}
generalizing the formula found in Theorem~\ref{th:freeenergy}.
For instance, the formulas should slightly simplify for a (standard) Gaussian random walk: we should get $g_{\lambda,\varphi}(u,v) = (v-u) \f(\lambda) - E_{u,v}(\varphi)$, with $E_{u,v}(\varphi) :=  \frac12 \int_u^v \varphi'(t)^2 \dd t + \frac{1}{2u} \varphi(u)^2+ \frac{1}{2(1-v)} \varphi(v)^2$.

\subsection{Wetting on a random walk}

Another model that seems natural to consider but has not been studied in the literature (to our knowledge) is the wetting model in the case where the wall is random (but quenched), given by the realization of another random random walk $(Y_n)_{n \geq 0}$.

More precisely, let $Y=(Y_n)_{n \geq 0}$ be a random walk and consider, for a fixed (\textit{i.e.}\ quenched) realization of $Y$, the Gibbs measure
\[
 \dd \bP_{N,\lambda}^{Y} (s_1,\ldots, s_n)= \frac{1}{Z_{N,\lambda}^{Y}} \prod_{i=1}^{N-1} \big( \ind_{\{s_i > Y_i\}} \mu(\dd s_i)  + \lambda \delta_{Y_i}(\dd s_i) \big) \prod_{i=1}^N f(s_i-s_{i-1})  \lambda \delta_{Y_N}(\dd s_N) \,.
\]
It corresponds to a wetting model of the random walk $(S_n)_{n\geq 0}$ above the (quenched) wall $(Y_n)_{n\geq 0}$.
In the discrete case, this can be rewritten and
\begin{equation}
\label{def:wettingRW}
\frac{\dd \bP_{N,\lambda}^{Y} }{\dd \bP}(S) =  \frac{1}{Z_{N,\lambda}^{Y}} \lambda^{\sum_{n=1}^N \ind_{\{S_n=Y_n\}}} \ind_{\{ S_i\geq Y_i \text{ for all } i \in\{ 1,\ldots, N\} \}} \,.
\end{equation}

By Kingman's super-additive ergodic theorem, one can show that the \emph{quenched} free energy
exists and is a.s.\ constant, \textit{i.e.}\ almost surely does not depend on the specific realization of $Y$: 
\[
\f^{\rm que}(\lambda) := \lim_{N\to\infty} \frac1N \log Z_{N,\lambda}^Y = \lim_{N\to\infty} \frac1N \bE_Y\big[\log Z_{N,\lambda}^Y\big]  \quad  \bP_Y\text{-a.s.} \text{ and in } L^1(\bP_Y)\,.
\]
Additionally, one may define the \emph{quenched} critical point
$\lambda_c^{\rm que} := \inf\{\lambda, \f^{\rm que}(\lambda) >0\}$.

Let us observe that the \emph{annealed} model, with partition function $\bE_Y\big[Z_{N,\lambda}^{Y} \big]$, corresponds to the usual wetting model of Section~\ref{sec:wetting}, with underlying random walk $(X_n-Y_n)_{n\geq 0}$. 
Hence its critical point $\lambda_c^{\rm ann}$ (and the critical behavior of the annealed free energy) is explicit, see Theorem~\ref{th:freeenergy}.
Furthermore, applying Jensen's inequality $\bE_Y[\log Z_{N,\lambda}^Y] \leq \log \bE_Y[Z_{N,\lambda}^Y]$, one obtains that the annealed free energy dominates the quenched one, and in particular $\lambda_c^{\rm que} \geq \lambda_c^{\rm ann}$.
The question of then to know whether one has the strict inequality $\lambda_c^{\rm que}>\lambda_c^{\rm ann}$ or not.

\subsubsection*{Quenched vs.\ annealed critical point}
A variant of~\eqref{def:wettingRW}, where the constraint that $S_i\geq Y_i$ for all~$1\leq i\leq N$ is removed, is known as the Random Walk Pinning Model and has been studied in several instances, see~\cite{BT10,BS10,BS11}.
For this model, the question of the strict inequality between the quenched and annealed critical point has some consequences in several interacting stochastic systems, see~\cite{BGdH11}.
The above references mostly consider the case of the simple symmetric random walk on $\mathbb Z^d$: they show that quenched and annealed critical points are equal (to $1$) in dimension $d=1,2$ and that they differ in dimension $d\geq 3$.
The case of symmetric random walks on $\mathbb Z$ with Assumption~\ref{hyp:1} is also (partially) treated in~\cite{BGdH11}: they prove that the quenched and annealed critical points are equal if $\alpha\geq 1$ and different if $\alpha<\frac12$.

The question of the strict inequality between the quenched and annealed critical point for the wetting on a random walk~\eqref{def:wettingRW} is however slightly different and remains open. 
Abiding by Harris' predictions~\cite{H74} for disorder relevance in physical systems\footnote{The prediction is made for a disorder which is i.i.d., but it has been confirmed in the random walk pinning model, see~\cite{BT10,BS10,BS11} and also~\cite{AB18a}.}, since the annealed model has a critical exponent $\nu =\min(1,\alpha)$, which is smaller than $2$ in the case $\alpha<2$, disorder should then be relevant, meaning that $\lambda_c^{\rm que}>\lambda_c^{\rm ann}$.
The case $\alpha=2$ (of random walks in the Gaussian domain of attraction) is \emph{marginal} for the question of disorder relevance and there is no clear prediction; one might expect that in analogy with the pinning models \cite{BL18,GLT10}, one has $\lambda_c^{\rm que}>\lambda_c^{\rm ann}$ also in that case, but this is a questionable conjecture here.

\subsection{Wetting in higher dimension}
 
Another natural model to consider is the wetting of a square well in dimension $d\geq 2$.
For some compact domain $B \subset \mathbb{R}^d$ (of connected interior), we could consider the wetting of an interface in a well of shape $B$.
Let $B_N := (N B) \cap \mathbb{Z}^d$ and consider the following model of wetting in a well of depth $a N$:
\[
\dd \bP_{B_N,\lambda}^a := \frac{1}{Z_{B_N,\lambda}^a} e^{- H_{B_N}^0(\phi)} \prod_{x\in B_N} \big( \mu(\dd \phi_x) \ind_{\{\phi_x > -\lfloor aN\rfloor \}} + \lambda \delta_{-\lfloor aN \rfloor}(\dd \phi_x) \big)\,,
\]
where $H_{B_N}^0(\phi)$ is a Hamiltonian associated with a random interface $\phi: B_N \to \mathbb{R}^d$ (or $\mathbb Z^d$), with zero boundary condition (that is such that $\phi_x \equiv 0$ on $B_N^c$).

A natural choice for the underlying interface model is a gradient model, \textit{i.e.}\ a Hamiltonian
\[
H_{B_N}^0(\phi) := \sumtwo{ x,y \in B_N}{x\sim y} V(\phi_x-\phi_y) +  \sumtwo{ x \in B_N, y\notin B_N}{x\sim y} V(\phi_x) \,,
\]
for some potential $V(\cdot)$; we refer to~\cite{Fun05} for an overview.
The choice $V(x) = \frac12 x^2$ corresponds to the massless Gaussian Free Field (GFF); the choice $V(x) =|x|$ corresponds to the Solid-On-Solid (SOS) model.
The wetting of the GFF and SOS has been considered in~\cite{CV00}, where it is shown that $\lambda_c >1$ for the GFF in dimension $d=2$ or for the SOS model in any dimension $d\geq 2$; on the other hand, one has that $\lambda_c =1$ for the GFF in dimension $d\geq 3$, see~\cite{BDZ00,GL18}.

A first goal would be to obtain a formula for the free energy of the model, defined as
\[
\f_B(\lambda, a) := \lim_{N\to\infty} \frac{1}{N^d} \log Z_{B_N,\lambda}^a \,.
\]
In the spirit of Theorem~\ref{th:freeenergy}, we expect that one has $\f_B(\lambda, a) = \max\{\psi_B(\lambda,a),0\}$, with
\begin{equation}
\label{def:psiD}
\psi_B(\lambda,a) := \sup_{D \subset B} \big\{ \f(\lambda) |D| - I_B(a,D)  \big\} \,.
\end{equation}
Here, $\f(\lambda) := \lim_{N\to\infty} \frac{1}{|D_N|} \log Z_{D_N,\lambda}^{a=0}$ is the free energy per unit volume of the (usual) wetting model and $I_B(a,D)$ is the rate function for the large deviations of the interface:
\[
I_B(a,D)  := \lim_{N\to\infty} - \frac{1}{N^d} \log \bP_{B_N}\big( \phi_x \leq -aN \ \forall x\in D_N \big) \,,
\]
where $\bP_{B_N}$ is the law of the underlying interface, that is $\dd \bP_{B_N} (\phi) := \frac{1}{Z_{B_N}} e^{ -H_{B_N}^0 (\phi)} \prod_{x\in B_N} \mu(\dd \phi_x)$.
The main difficulty in proving~\eqref{def:psiD} should come from the large number of choices for the possible microscopic ``wetted'' regions; in analogy with~\cite{BI97}, a coarse graining of these regions might be necessary.

We stress that for strictly convex potentials $V(\cdot)$ with bounded second derivative (in particular for the GFF), \cite{DGI00} proves a strong large deviation principle for the rescaled surface $\frac1N \phi$, with speed~$N^d$ and rate function $\Sigma (u) = \int_{B} \sigma(\nabla u(\theta)) \dd \theta$, where $\sigma(\cdot)$ is the surface tension of the model.
In particular, for the GFF one has $\sigma(v) = \frac12 \|v\|^2$, so one obtains that the above rate function is $I_B(a,D) = d a^2 \mathrm{Cap}_B(D)$, where $\mathrm{Cap}_B(D)$ is the Newtonian capacity of $D$ with respect to $B$, namely
\[
\mathrm{Cap}_B(D) := \sup \Big\{ \frac{1}{2d} \int_{D} \|\nabla f(x)\|^2 \dd x \,, f \in \mathcal{C}_c^{\infty}, f(x)\geq 1 \text{ for } x\in D,  f(x)=0 \text{ for } x\in B^c\Big\} \,.
\]
In general, if $V(x) = c_0 \|x\|^p$ for some $p\geq 1$ (in particular for the SOS model if $p=1$), one should also obtain that $I_B(a,D) = 2d c_0 a^p \mathrm{Cap}_B^{(p)}(D)$, where $\mathrm{Cap}_B^{(p)}(D)$ is the $p$-capacity of $D$ with respect to $B$, namely 
\[
\mathrm{Cap}_B^{(p)}(D) := \sup \Big\{ \frac{1}{2d} \int_{D} \|\nabla f(x)\|^p \dd x \,, f \in \mathcal{C}_c^{\infty}, f(x)\geq 1 \text{ for } x\in D,  f(x)=0 \text{ for } x\in B^c\Big\} \,.
\]

\begin{remark}
One could also consider other type of interfaces, for instance with Laplacian interactions (also known as the \emph{membrane} model), where the Hamiltonian is $H_{B_N}^0(\phi) = \sum_{x\in D_N} V(\Delta \phi_x)$, with $\Delta$ the discrete Laplacian.
This is already an interesting question in dimension $d=1$, where the wetting model has been considered in~\cite{CD08} (it is shown that $\lambda_c>1$).
The $\delta$-pinning of the membrane model has also been considered in~\cite{BCK16,Schweiger22} in dimension $d\geq 4$, but many open questions remain, especially in dimension $d=2,3$ where it is not known whether the critical point is strictly positive or not.
We refer to the above references for more details on this model.
\end{remark}

\section{Large deviations: notation and preliminary observations}
\label{sec:LD}

Recall that the rate functions $\mathrm{I}_+,\mathrm{I}_-$ defined in~\eqref{def:rateI} are  (upward and downward) large deviation rate functions for $(S_n)_{n\geq 0}$, see~\cite[Thm.~2.2.3]{DZ09}, namely
\begin{equation}
\label{eq:CramerLDP}
\lim_{n\to\infty} \frac1n \log \bP(S_n \geq x n) = -\I_+(x) \,,
\qquad \lim_{n\to\infty} \frac1n \log \bP(S_n \leq -x n) = -\I_-(x)\,.
\end{equation}
Note that the remark after~\cite[Thm.~2.2.5]{DZ09} (or simply using Chernov's exponential inequality) tells that we have the following useful upper bounds: for any $n\geq 0$ and $x\geq 0$
\begin{equation}
\label{eq:boundLDP}
 \bP(S_n \geq x n) \leq e^{- n \I_+(x)}\,,
 \quad  \bP(S_n \leq - x n) \leq e^{-n \I_-(x)} \,.
\end{equation}

\subsection{Some remarks on the rate functions}
\label{sec:LDrem}

Let us recall that for $t\geq 0$ we have defined  $\Lambda_+(t)= \log \bE[e^{tX_1}]$ and $\Lambda_-(t)= \log \bE[e^{-tX_1}]$ and their respective radius of convergence:
\[
t_0^+ := \sup\{t\geq 0, \Lambda_+(t)<+\infty\} \in [0,+\infty] \,,\quad 
t_0^- := \sup\{t\geq 0, \Lambda_-(t)<+\infty\} \in [0,+\infty] \,.
\]
Some standard properties of the log-moment generating functions $\Lambda_{\pm}$ and of the rate functions~$\I_{\pm}$ are given in \cite[Lem.~2.2.5]{DZ09}.
Let us summarize some of the properties here for~$\Lambda_+$ and $\I_+$ (similar statements hold for $\Lambda_-$ and $\I_-$):
\begin{itemize}
\item $\Lambda_{+}$ is non-decreasing and convex and $\I_{+}$ is a convex non-decreasing rate function;
\item If $t_0^+=0$ then $\I_+(x) =0$ for all $x\geq 0$;
\item If $t_0^+>0$, then $\Lambda_+$ is differentiable and strictly convex on $[0,t_0^+)$ and $\I_+$ is strictly increasing on $\{x\geq 0, \I_+(x)<+\infty\}$.
Additionally, $\Lambda_+'(s)=y$ implies that $\I_+(y) = sy-\Lambda_+(s)$.
\end{itemize}
Lets us recall the definition of $\rho_+,\rho_-$, the limiting slope of $\Lambda$ when approaching the radius of convergence, as in~\eqref{def:rho}:
\[
\rho_+ := \lim_{t\uparrow t_0^+} \Lambda'_+(t) \,, \qquad \rho_- := \lim_{t\uparrow t_0^-} \Lambda'_-(t) \,.
\]
Let us also introduce the extremal points of the support of the random walk increments:
\begin{equation}
\label{def:xpm}
\bar x_+ := \sup\{ x\in\mathbb{N}, \bP(X_1=x)>0\}\,,
\qquad 
\bar x_- := \sup\{x\in \mathbb{N} , \bP(X_1=-x)>0 \} \,.
\end{equation}
There are mostly four cases that we need to consider (we focus on the ``$+$'' case but similar considerations hold for the ``$-$'' case): we summarize them in Table~\ref{table:LDP} below.
%

\begin{center}
\begin{table}[htbp]
\renewcommand{\arraystretch}{1.3}
\begin{tabular}{|c|c|c|c|} 
\hline
\phantom{----}$t_0^+ = +\infty$\phantom{----} &   \phantom{----}$t_0^+ < +\infty$\phantom{----}&   \phantom{----}$t_0^+ < +\infty$\phantom{----} &\phantom{----}$t_0^+ = +\infty$\phantom{----} \\ 
$\bar x_+ =+\infty$ &  $\bar x_+ =+\infty$
& $\bar x_+ = + \infty$ & $\bar x_+ = \rho_+ < +\infty$ \\
\hline
\multicolumn{2}{|c|}{$\rho_+ =+\infty$} 
& \multicolumn{2}{|c|}{$\rho_+ < + \infty$}  \\
\hline
\multicolumn{2}{|c|}{$\mathrm{I}_+$ is strictly convex on $\mathbb R_+$} &  \multicolumn{2}{|c|}{$\I_+$ is strictly convex and finite on $[0,\rho_+]$ and}\\ 
\multicolumn{2}{|c|}{and $\lim_{x\to\infty}\mathrm{I}_+(x)=+\infty$} & is affine on $(\rho_+,+\infty)$ & is infinite on $(\rho_+,+\infty)$ \\
\hline
\multicolumn{2}{|c|}{
\begin{tikzpicture}
  \draw[->] (0, 0) -- (3, 0) node[right] {$x$};
  \draw[->] (0, 0) -- (0, 2) node[left] {\small $\I_+(x)$};
  \draw[scale=1, domain=0:2.5, thick, smooth, variable=\x, blue] plot ({\x}, {\x*\x/3});
  \phantom{ \draw[-] (1.5,-0.1) -- (1.5,0) node[below] {$\rho_+$};}
  \phantom{ \draw[-] (0,0) -- (0,2.5) node[below] {$\rho_+$};}
\end{tikzpicture}
}
& 
\begin{tikzpicture}
  \draw[->] (0, 0) -- (3, 0) node[right] {$x$};
  \draw[->] (0, 0) -- (0, 2) node[left] {\small $\I_+(x)$};
  \draw[scale=1, domain=0:1.5, thick, smooth, variable=\x, blue] plot ({\x}, {\x*\x/3});
  \draw[scale=1, domain=1.5:2.8, thick, smooth, variable=\x, blue] plot ({\x}, {\x-0.75});
  \draw[-] (1.5,-0.1) -- (1.5,0) node[below] {$\rho_+$};
  \node[circle,fill=blue, inner sep=0pt,minimum size=4pt] (b) at (1.5,0.75) {};
\end{tikzpicture}
&
\begin{tikzpicture}
  \draw[->] (0, 0) -- (3, 0) node[right] {$x$};
  \draw[->] (0, 0) -- (0, 2) node[left] {\small $\I_+(x)$};
  \draw[scale=1, domain=0:1.5, thick, smooth, variable=\x, blue] plot ({\x}, {\x*\x/3});
  \draw[-] (1.5,-0.1) -- (1.5,0) node[below] {$\rho_+=\bar x_+$};
  \node[circle,fill=blue, inner sep=0pt,minimum size=4pt] (b) at (1.5,0.75) {};
  \draw[thick, blue] (1.5,2.1) -- (2.7,2.1) node[right]{$+\infty$};
\end{tikzpicture}
\\
\hline
\end{tabular}
\medskip
\caption{Summary of the different possibilities if $t_0^+>0$. In all cases, $\I_+$ is strictly convex on $[0, \rho_+)$. Similar statements hold true for $\I_-$.}
\label{table:LDP}
\end{table}
\vspace{-1\baselineskip}
\end{center}

\noindent
Let us introduce, for $t \in [0,t_0^+)$, the tilted measure $\bP_t$ as
\begin{equation}
\label{def:tiltedPt0}
\frac{\dd \bP_t}{\dd \bP} (x) = \frac{e^{tx}}{\bE[e^{tX_1}]}  = e^{tx-\Lambda_+(x)} \,.
\end{equation}
Notice that $\Lambda_+$ is analytic on $[0,t_0^+)$ and that $\Lambda'_+(t) = \bE_t[X_1]$ is strictly increasing from $[0,t_0^+)$ to $[0,\rho_+)$ (Assumption~\ref{hyp:1} implies that $\bE[X_1]=0$ whenever $X_1$ has a finite expectation) and that $\Lambda''(t)= \mathbf{V}\mathrm{ar}_{t}(X_1)>0$ (otherwise $X_1$ would be a.s.\ constant).

For all $x\in [0,\rho_+)$, the supremum in the definition~\eqref{def:rateI} is attained at $t_x  := (\Lambda_+')^{-1}(x)$, such that $\Lambda_+'(t_x) =x$.
We therefore get, after some classical calculations
\begin{equation}
\label{formulasI+}
\begin{split}
&\I_+(x)   =  x t_x - \Lambda_+\left(t_x\right) = x (\Lambda_+')^{-1}(x) - \Lambda_+\left((\Lambda_+')^{-1}(x)\right) \\
&\I_+'(x) = t_x = (\Lambda_+')^{-1}(x) \,, \qquad \I_+''(x) = \frac{1}{\Lambda''((\Lambda_+')^{-1}(x))} = \frac{1}{\mathbf{V}\mathrm{ar}_{t_x}(X_1)}<+\infty \,.
\end{split}
\end{equation}
Note also that if $\rho_+ < \infty$ and $t_0^+ < \infty $, then $\I_+$ is defined (and affine) on $[\rho_+,+\infty)$, see Table~\ref{table:LDP}: we have $\I_+(x) = x t_0^+ - \Lambda_+(t_0^+)$.
In that case, $\I_+$ is differentiable on $\mathbb{R}_+$, with $\I_+'(x) = t_0^+ = (\Lambda_+')^{-1} (\rho_+)$ for $x\geq \rho_+$,

To summarize, we have that $\I_+$ is differentiable on $[0,\bar x_+)$ and left-differentiable at $\bar x_+$ with
\begin{equation}
\label{derivI}
\I_+'(x) = 
\begin{cases}
(\Lambda_+')^{-1}( x) & \quad \text{ if } x \in (0,\rho_+) \,, \\
 t_0^+  & \quad \text{ if } x \geq \rho_+ \,,
 \end{cases}
\end{equation}
which is continuous in the case where $\rho_+<+\infty$.
As another useful formula that derives from the above, we have
\begin{equation}
\label{I-xI'}
\I_+(x)-x \I_+'(x) = -
\begin{cases}
\Lambda_+\circ (\Lambda_+')^{-1}( x) & \quad \text{ if } x \in (0,\rho_+) \,, \\
\Lambda_+(t_0^+)  & \quad \text{ if } x \geq \rho_+ \,.
 \end{cases}
\end{equation}

\subsection{A local large deviations result}

We complement here Cram\'er's large deviations~\eqref{eq:CramerLDP} with a local version; we were not able to find a reference for it, so we prove it in Appendix~\ref{sec:LDPequal}.

\begin{lemma}
\label{lem:LDPequal}
Suppose that Assumption~\ref{hyp:1} holds.
For any any sequence of non-negative integers $(x_n)_{n\geq 1}$ such that $\lim_{n\to\infty} \frac1n x_n =x <\bar x_+$, we have the following local large deviation behavior:
\[
\lim_{n\to\infty} \frac1n \log \bP(S_n = x_n ) = -\I_+(x) \qquad \text{ and }\qquad
\lim_{n\to\infty} \frac1n \log \bP(S_n = - x_n) = -\I_-(x) \,.
\]
\end{lemma}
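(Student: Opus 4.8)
The plan is to prove the local large deviation bound by combining a standard exponential upper bound with a matching lower bound obtained via a change of measure (exponential tilting). By symmetry (swapping $X_1$ with $-X_1$) it suffices to treat the first assertion. Throughout we may assume $t_0^+>0$, since if $t_0^+=0$ then $\I_+\equiv 0$ and the statement reduces to $\frac1n \log \bP(S_n=x_n)\to 0$, which follows from $\bP(S_n=x_n)\le 1$ together with the lower bound argument below (which does not use $t_0^+>0$ in an essential way once one works directly with $\bP$). Also, when $x\in[\rho_+,\bar x_+)$ one has to use the affine regime of $\I_+$ described in Table~\ref{table:LDP}; we first do the core case $x\in(0,\rho_+)$ and then indicate the adjustments.

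\textbf{Upper bound.} For $x\in(0,\rho_+)$ and any $t\in[0,t_0^+)$, Markov's inequality applied to $e^{tS_n}$ on the event $\{S_n=x_n\}$ gives $\bP(S_n=x_n)\le e^{-tx_n}\bE[e^{tS_n}] = e^{-tx_n+n\Lambda_+(t)}$, hence $\frac1n\log\bP(S_n=x_n)\le -t\,\frac{x_n}{n}+\Lambda_+(t)$. Letting $n\to\infty$ and then optimizing over $t$ (using $\frac{x_n}{n}\to x$ and continuity of $\Lambda_+$ on $[0,t_0^+)$) yields $\limsup_n \frac1n\log\bP(S_n=x_n)\le -\sup_{t\ge0}\{tx-\Lambda_+(t)\} = -\I_+(x)$. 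This is exactly~\eqref{eq:boundLDP} in local form and requires no new idea.

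\textbf{Lower bound.} Fix $x\in(0,\rho_+)$ and let $t_x=(\Lambda_+')^{-1}(x)\in[0,t_0^+)$ be the tilt with $\Lambda_+'(t_x)=x$, and consider the tilted law $\bP_{t_x}$ from~\eqref{def:tiltedPt0}, under which the increments have mean $x$ and finite variance $\Lambda_+''(t_x)=\mathbf{V}\mathrm{ar}_{t_x}(X_1)>0$. Writing $\bP$ in terms of $\bP_{t_x}$ on the cylinder event $\{S_n=x_n\}$ one gets the identity
\[
\bP(S_n=x_n) = e^{-t_x x_n + n\Lambda_+(t_x)}\,\bP_{t_x}(S_n=x_n).
\]
Since $x_n-nx = o(n)$ and $\Lambda_+(t_x) = t_x x - \I_+(x)$ by~\eqref{formulasI+}, the prefactor is $e^{-n\I_+(x)+o(n)}$. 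It therefore remains to show $\frac1n\log\bP_{t_x}(S_n=x_n)\to 0$, i.e.\ that the tilted walk charges its own (shifted) mean on the lattice scale at most sub-exponentially. Because $x_n$ is integer-valued and $\lim x_n/n=x=\bE_{t_x}[X_1]$, this is a local central limit statement for the aperiodic lattice walk $(S_n)$ under $\bP_{t_x}$; here aperiodicity of $(S_n)$ under $\bP$ (Assumption~\ref{hyp:1}) is inherited by $\bP_{t_x}$ since tilting preserves the support, so $\bP_{t_x}(S_n=x_n)\sim (2\pi n\Lambda_+''(t_x))^{-1/2}$ by the local CLT (e.g.\ \cite[Ch.~9,\S50]{GK54}) whenever $x_n$ stays within $O(\sqrt n)$ of $nx$. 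If $x_n$ does \emph{not} stay within $O(\sqrt n)$, one still gets a sub-exponential lower bound: either invoke a moderate/large deviation local estimate for $\bP_{t_x}$, or, more elementarily, bound $\bP_{t_x}(S_n=x_n)$ from below by a further (small) re-tilt of $\bP_{t_x}$ to re-center at $x_n/n$ — legitimate since $x_n/n\to x$ lies in the interior of the (tilted) domain — and apply the local CLT after that re-tilt; the resulting exponential cost is $e^{o(n)}$ because the re-tilt parameter tends to $0$. Combining the two bounds gives the claim for $x\in(0,\rho_+)$, and the edge case $x=0$ is immediate from the ordinary local CLT for $(S_n)$ itself.

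\textbf{The case $x\in[\rho_+,\bar x_+)$ and $t_0^+<\infty$.} Here $\I_+$ is affine with slope $t_0^+$ (Table~\ref{table:LDP}), and $\rho_+<\bar x_+$ forces $t_0^+<\infty$; the upper bound above still applies with $t\uparrow t_0^+$. For the lower bound one tilts by $\bP_{t_0^+}$ (which is well defined since $\Lambda_+(t_0^+)<\infty$ as $\rho_+<\infty$ implies finiteness of $\Lambda_+$ at the endpoint in the relevant subcase), obtaining $\bP(S_n=x_n)=e^{-t_0^+ x_n+n\Lambda_+(t_0^+)}\bP_{t_0^+}(S_n=x_n)$ with prefactor $e^{-n\I_+(x)+o(n)}$ by~\eqref{I-xI'}. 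Under $\bP_{t_0^+}$ the increments have mean $\rho_+\le x$ and one only needs $\bP_{t_0^+}(S_n=x_n)\ge e^{o(n)}$; since $x<\bar x_+$, the target $x_n/n$ stays in the interior of the support, so a local deviation estimate (or a further harmless re-tilt, now possibly a genuine tilt but with bounded parameter since $x<\bar x_+$) again suffices. The main obstacle in the whole argument is precisely this last point: controlling $\bP_{t}(S_n=x_n)$ from below when $x_n/n$ drifts away from the tilted mean at speed slower than exponential, for which I would cite or adapt the standard sharp large deviation / local CLT machinery for lattice random walks rather than reprove it.
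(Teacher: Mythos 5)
Your route (exponential tilting plus a local CLT for the tilted walk) is the classical one and differs from the paper's proof, which never tilts: the paper lower-bounds $\bP(S_n=x_n)$ by an \emph{interval} probability $\bP(S_{(1-\gep)n}\in[(1-\gep^2)x_n,x_n])$, handled directly by the integrated Cram\'er bound~\eqref{eq:CramerLDP}, times a cheap ``bridge'' over the remaining $\gep n$ steps built from $O(\gep^2 n)$ forced minimal steps and the local CLT, then lets $\gep\downarrow 0$. That construction costs only $e^{o_\gep(1)n}$ and, crucially, works uniformly for all $x<\bar x_+$, including $t_0^+=0$ and the affine regime of $\I_+$. For $x\in(0,\rho_+)$ your argument is sound in substance — the drift of $x_n/n$ away from $x$ at a rate between $\sqrt n/n$ and $o(1)$ is exactly handled by tilting at $t_{x_n/n}$ and invoking a local CLT uniform in the tilt, which is the machinery the paper itself deploys in Proposition~\ref{prop:Qnx} via Theorem~\ref{th:limloc}.

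However, there is a genuine gap in your treatment of $x\in[\rho_+,\bar x_+)$. This case occurs precisely when $t_0^+<\infty$, $\rho_+<\infty$ and $\bar x_+=+\infty$ (third column of Table~\ref{table:LDP}; e.g.\ Example~\ref{ex:almostgeom} with $\theta>2$). After tilting to $t_0^+$, the law $\bP_{t_0^+}$ has \emph{no} positive exponential moments: $\bE_{t_0^+}[e^{sX_1}]=e^{\Lambda_+(t_0^++s)-\Lambda_+(t_0^+)}=+\infty$ for every $s>0$. So the ``further harmless re-tilt with bounded parameter'' you invoke to re-center at $x>\rho_+$ simply does not exist, and there is no off-the-shelf ``local deviation estimate'' either — under $\bP_{t_0^+}$ the event $\{S_n= x_n\}$ with $x_n\approx xn>\rho_+ n$ is a heavy-tailed large deviation realized by one big jump, which requires a different argument (e.g.\ forcing a single increment of size $\approx(x-\rho_+)n$, using that $\limsup_k\frac1k\log(\bP(X_1=k)e^{t_0^+k})=0$). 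A second, smaller gap: you dismiss $t_0^+=0$ by claiming the lower-bound argument ``does not use $t_0^+>0$ in an essential way,'' but it does — no tilt $t_x$ with $\Lambda_+'(t_x)=x>0$ exists, so the change of measure cannot even be set up; this case needs a separate elementary construction like the paper's (force $n_0\approx x_n/k_0$ steps equal to a well-chosen $k_0$ with $\bP(X_1=k_0)\ge e^{-\gep k_0}$, then apply the local CLT to the remainder).
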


\noindent
Note that if $x=\bar x_+ <+\infty$, then we have $\bP(S_n = \bar x_+ n) =  \bP(X_1=\bar x_+)^n$, which also gives $\lim_{n\to\infty} \frac1n \log \bP(S_n = x_n ) = -\I_+(x)$ since one can easily check that $\I_+(\bar x_+)= -\log \bP(X_1= \bar x_+)$.
However, the result cannot hold for any sequence $x_n$ such that $\lim_{n\to\infty} \frac1n x_n =\bar x_+$; for instance $\bP(S_n = x_n) =0$ for any $x_n>\bar x_+ n$.


\section{Optimizers of $\psi(\lambda,a)$, formula for the free energy and phase transitions}
\label{sec:transi}

In this section, we prove Lemma~\ref{lem:psi} that gives the location of the maximizers of $\psi(\lambda,a)$. We then derive from it the formulas for the free energy $\f(\lambda)$ and the critical line $a_c(\lambda)$ given in Theorem~\ref{th:formuleG}, and we deduce the critical behavior of $\f(\lambda,a)$ and $a_c(\lambda)$  of Proposition~\ref{th:DL_F}.

Before that, let us make one simple observation on the limitation of the depth of the well.

\begin{lemma}
\label{lem:depth}
Recall the definition~\eqref{def:xpm} of $\bar x_+,\bar x_-$; notice also that $ \bar x_{\pm} =\sup\{ x\,, \mathrm{I}_{\pm}(x) < +\infty\}$. Let us define 
\begin{equation}
\label{def:abar}
\bar a := \Big(\frac{1}{\bar x_-} +\frac{1}{\bar x_+} \Big)^{-1}  = \frac{\bar x_+ \bar x_-}{\bar x_+ + \bar x_-} \in (0,+\infty]\,,
\end{equation}
with the convention $\frac{1}{\infty} =0$  and $\frac{1}{0} = +\infty$ in cases where $\bar x_+$ and/or $\bar x_-$ are infinite.
Then, if $a>\bar a$, the walk cannot reach the bottom of the well, \textit{i.e.}\ $\bP( H_N^a(S) \geq 1) =0$.
\end{lemma}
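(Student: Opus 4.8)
The plan is to show that any trajectory touching the bottom of the well would require the walk to descend a depth $\lfloor aN\rfloor$ over some number of steps $k$, and then climb back up $\lfloor aN\rfloor$ over the remaining $N-k$ steps, and that each of these moves is geometrically impossible when $a>\bar a$ because of the bound on the increments. Concretely, suppose $H_N^a(S)\geq 1$, so there is some $n\in\{1,\dots,N\}$ with $S_n = -\lfloor aN\rfloor$. The descent from $S_0=0$ to $S_n=-\lfloor aN\rfloor$ uses $n$ increments, each at least $-\bar x_-$ (by definition of $\bar x_-$ in~\eqref{def:xpm}, since $\bP(X_1=-x)=0$ for $x>\bar x_-$; here if $\bar x_-=+\infty$ there is no constraint and the term $1/\bar x_-$ is $0$). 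Hence $-\lfloor aN\rfloor = S_n \geq -n\,\bar x_-$, i.e.\ $n \geq \lfloor aN\rfloor/\bar x_-$. Symmetrically, the ascent from $S_n=-\lfloor aN\rfloor$ back to $S_N=0$ uses $N-n$ increments, each at most $\bar x_+$, so $\lfloor aN\rfloor = S_N - S_n \leq (N-n)\,\bar x_+$, i.e.\ $N-n \geq \lfloor aN\rfloor/\bar x_+$.

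Adding these two inequalities gives
\[
N \;=\; n + (N-n) \;\geq\; \lfloor aN\rfloor \Big( \frac{1}{\bar x_-} + \frac{1}{\bar x_+}\Big) \;=\; \frac{\lfloor aN\rfloor}{\bar a}\,,
\]
that is $\lfloor aN\rfloor \leq \bar a\, N$. If $a > \bar a$, then for $N$ large enough (in fact for all $N$ once one checks the floor carefully, since $\lfloor aN\rfloor > \bar a N$ would follow from $aN - 1 \geq \bar a N$, i.e.\ $N \geq 1/(a-\bar a)$) this is a contradiction, so $\bP(H_N^a(S)\geq 1)=0$. To get the statement for \emph{all} $N$ and not just large $N$, one notes that if the event $\{H_N^a(S)\geq 1\}$ had positive probability for some $N$, the same descent/ascent argument applied to that trajectory still yields $\lfloor aN\rfloor \leq \bar a N$; but when $a>\bar a$ one may also simply observe that the measure $\PP_{N,\lambda}^a$ is only of interest for $N$ large (the constraint $S_N=0$ with elevated boundary already forces this), and for small $N$ the claim can be checked directly or absorbed into the convention; the clean way is to state the inequality $\lfloor aN\rfloor \leq \bar a N$ and note it fails for all $N > 1/(a-\bar a)$, which covers the asymptotic regime relevant to the free energy.

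There is no real obstacle here; the only point requiring a little care is the boundary/degenerate bookkeeping: handling $\bar x_+ = +\infty$ or $\bar x_- = +\infty$ via the stated conventions $1/\infty = 0$ (in which case $\bar a = \bar x_\mp$ or $\bar a = +\infty$ and the lemma is vacuous or reduces to a one-sided bound), and being careful that the increments are bounded \emph{almost surely} by $\bar x_\pm$ so that the deterministic inequalities above hold $\PP$-a.s.\ on the event in question. The remark that $\bar x_\pm = \sup\{x : \mathrm I_\pm(x)<+\infty\}$ is consistent with Table~\ref{table:LDP} and with~\eqref{formulasI+}, and is not needed for the proof itself — it is just the translation of the support bound into the language of rate functions.
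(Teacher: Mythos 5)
Your proof is correct and is essentially the paper's own argument: the descent to $-\lfloor aN\rfloor$ forces $L_N \geq \lfloor aN\rfloor/\bar x_-$, the ascent back to $S_N=0$ forces $N-R_N \geq \lfloor aN\rfloor/\bar x_+$, and summing against $L_N + (N-R_N)\leq N$ yields the contradiction. Your caveat about the floor is a fair catch — the paper passes from $\lfloor aN\rfloor(\tfrac{1}{\bar x_-}+\tfrac{1}{\bar x_+})\leq N$ to $a\leq\bar a$ without comment, whereas strictly speaking the argument only excludes contacts once $N>1/(a-\bar a)$ — but this is immaterial for the asymptotic (free-energy) statements in which the lemma is invoked.
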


\begin{proof}
If the walk has a contact with the bottom of the well, \textit{i.e.} $H_N^a(S)\geq 1$, then by definition of $\bar x_+$, $\bar x_-$, the left-most contact point $\bP$-a.s.\ verifies $L_N \geq \frac{\lfloor aN \rfloor}{\bar x_-}$ and the right-most one $N-R_N \geq \frac{\lfloor aN \rfloor}{\bar x_+}$.
Since we must have $L_N + N-R_N \leq N$, this implies that $a (\frac{1}{\bar x_-} + \frac{1}{\bar x_+}) \leq 1$, that is $a\leq \bar a$.
\end{proof}

\begin{remark}
\label{rem:abar}
In the case $a=\bar a$, since $H_N(S) \leq R_N-L_N+1$, the same reasoning gives that
$H_N(S) \leq N- \lfloor \bar{a} N \rfloor (\frac{1}{\bar x_-} + \frac{1}{\bar x_+})  +1 \leq \frac{1}{\bar x_-} + \frac{1}{\bar x_+}+1$, where we have used that $\lfloor \bar{a} N \rfloor \geq \bar{a} N -1$ with $\bar a (\frac{1}{\bar x_-} + \frac{1}{\bar x_+})=1$.
This shows that in the case $a=\bar a$, the number of contacts is bounded by $\bar a^{-1} +1$.
\end{remark}

\noindent
\begin{remark}
\label{rem:psi-abar}
The reasoning of Lemma~\ref{lem:depth} also translates into the fact that we have $\psi(\lambda, a) = -\infty$ for any $a>\bar a$.
Indeed, having $u\mathrm{I}_-(\frac{a}{u})+(1-v)\mathrm{I}_+(\frac{a}{1-v}) <+\infty$ with $u\leq v$ means that $\frac{a}{u} \leq \bar x_-$ and $\frac{a}{1-u} \leq \frac{a}{1-v} \leq \bar x_+$, since $\I_{\pm}(x)=+\infty$ if $x>\bar x_{\pm}$.
We therefore end up with $\frac{1}{a} = \frac{u}{a} + \frac{1-u}{a} \geq \frac{1}{\bar x_-} + \frac{1}{\bar x_+}$, which implies that $a\leq \bar a$.

In the case $a=\bar a <+\infty$, the only non-zero term in the supremum~\eqref{def:psi} is at $u_0 = \frac{\bar x_-}{\bar a}$ and $v_0=1-\frac{\bar x_+}{\bar a} =u_0$, so that
\begin{equation}
\label{eq:caseabar}
\psi(\lambda,\bar a) =g_{\lambda,\bar a} (u_0,v_0) = - \frac{\bar a}{\bar x_-} \I_-(\bar x_-) - \frac{\bar a}{\bar x_+} \I_+(\bar x_+)  \in (-\infty,0) \,.
\end{equation}
\end{remark}

\subsection{Optimizers of $\psi(\lambda,a)$: proof of Lemma~\ref{lem:psi}}

Let us fix $\lambda, a\leq \bar a$ and recall the definition of $g_{\lambda,a}$ in~\eqref{def:psi}: for $(u,v)\in \mathcal D=\{(u,v)\,,\, 0\leq u \leq v \leq 1\}$,
\begin{equation}
\label{def:g}
g_{\lambda, a} (u,v) :=  (v-u) \f(\lambda) - u \,\I_-\Big(\frac{a}{u}\Big) - (1-v)\, \I_+\Big(\frac{a}{1-v}\Big) \,,
\end{equation}
so that $\psi(\lambda, a):= \sup_{(u,v)\in \mathcal D} g_{\lambda,a}(u,v)$.
Notice that since $\mathrm{I}_{\pm}(x) = +\infty$ if $x >\bar x_{\pm}$, then if there is a maximum for $g_{\lambda,a}$ it must be reached for $(u,v)\in \mathcal{D}_a = \{\frac{a}{\bar{x}_-} \le u \le v \le 1 - \frac{a}{\bar{x}_+} \}$. 
 
Let us first deal with the degenerate cases.
Recall the definition~\eqref{t0pm} of $t_0^{+},t_0^-$.

\medskip
\noindent
\textit{Case $t_0^+=t_0^-=0$.~} 
In that case, we have $\I_+=\I_-\equiv 0$, so $g_{\lambda,a}(u,v)= (v-u) \f(\lambda)$, which is clearly maximized for $(u^*,v^*)=(0,1)$.
Note that the value of $(u^*,v^*)$ matches the formula for $U^*\times V^*$ since $w_-^*=w_+^*=0$ in that case.

\medskip
\noindent
\textit{Case $a=\bar a <+\infty$.~}
Since $\frac{\bar{a}}{\bar{x}_-} = 1-\frac{\bar{a}}{\bar{x}_+}$, then as noticed in Remark~\ref{rem:psi-abar}, the supremum in~\eqref{def:psi} is attained at the unique value $(u_0,v_0)$ and  $\psi(\lambda, \bar a)=g_{\lambda,\bar a} (u_0,v_0)<0$, see~\eqref{eq:caseabar}.  This in turn shows that $\lambda < \lambda_c(a)$, and in particular $\textsc{f}(\lambda,\bar a)=0$.
Note that the value $(u_0,v_0)$ does not match here the formula for $U^*\times V^*$ since $w_-^*+ w_+^*>1$ in that case (note that we have $t_0^+ =+\infty$ if $\bar x_+<+\infty$, see Table~\ref{table:LDP}, so in view of the formula~\eqref{def:w*} we have $w_+^*> \frac{\bar a}{\rho_+} = \frac{\bar a}{\bar x_+}$).


\medskip
\noindent
\textit{Case $a<\bar a$.~}
In that case we have $\psi(\lambda, a)>-\infty$.
Note that we can write 
\begin{equation}
\label{rewriting}
g_{\lambda,a}(u,v) = \f(\lambda) - g_{\lambda,a}^-(u)- g_{\lambda,a}^+(1-v) \,,
\end{equation}
with 
\[
g_{\lambda,a}^{-}(w_1) = w_1\f(\lambda) + w_1 \I_{-}\Big(\frac{a}{w_1}\Big)\,,\qquad
g_{\lambda,a}^+(w_2) = w_2\f(\lambda) + w_2 \I_+\Big(\frac{a}{w_2}\Big)\,.
\]
As a first (and central) step, we therefore identify the minimizers of $g_{\lambda,a}^{+}(\cdot)$, resp.\ $g_{\lambda,a}^-(\cdot)$.

\begin{lemma}
\label{lem:g+}
We have
$\inf_{w\geq 0} g_{\lambda,a}^+(w) = a \Lambda_+^{-1}(\f(\lambda))$,
and the infimum is attained on $W^*_+$, with
\[
W^*_+:=
\begin{cases}
\{w_+^*\}  & \text{ if } \f(\lambda)<\Lambda_+(t_0^+)\,, \\
[0,\frac{a}{\rho_+}] & \text{ if } \f(\lambda)=\Lambda_+(t_0^+) \,, \\
\{0\} & \text{ if } \f(\lambda)>\Lambda_+(t_0^+) \,,
\end{cases}
\quad \text{ with } w_+^* := \frac{a}{\Lambda_+'\circ \Lambda_+^{-1} (\f(\lambda))} \in \Big[\frac{a}{\rho_+},+\infty \Big) \,.
\]
\end{lemma}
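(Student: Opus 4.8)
The plan is to reduce everything to the Fenchel--Legendre identity $\I_+=\Lambda_+^\star$ of~\eqref{def:rateI}, which for $w>0$ gives $w\,\I_+(a/w)=\sup_{t\geq 0}\{at-w\Lambda_+(t)\}$ and hence
\[
g_{\lambda,a}^+(w)=\sup_{t\geq 0}\Big\{at+w\big(\f(\lambda)-\Lambda_+(t)\big)\Big\}\qquad (w>0)\,.
\]
From this I would extract two things at once. First, $g_{\lambda,a}^+$ is a supremum of affine functions of $w$, hence convex on $(0,+\infty)$; extending it to $w=0$ and $w=+\infty$ by the limits $\lim_{w\downarrow 0}$, $\lim_{w\to\infty}$ (which exist in $(-\infty,+\infty]$ by convexity), the minimizing set $W_+^*$ is automatically a closed subinterval of $[0,+\infty]$, so only its endpoints and the minimal value remain to be identified. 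Second, by the max--min inequality $\inf_{w>0}g_{\lambda,a}^+(w)\geq\sup_{t\geq 0}\inf_{w>0}\{at+w(\f(\lambda)-\Lambda_+(t))\}$; the inner infimum equals $at$ if $\Lambda_+(t)\leq\f(\lambda)$ and $-\infty$ otherwise, so the right-hand side is $a\sup\{t\geq 0:\Lambda_+(t)\leq\f(\lambda)\}=a\,\Lambda_+^{-1}(\f(\lambda))$, using that $\Lambda_+$ is continuous and strictly increasing on $[0,t_0^+)$, equals $+\infty$ on $(t_0^+,+\infty)$, and the convention defining the left-continuous inverse $\Lambda_+^{-1}$. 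Passing to the limit $w\downarrow 0$ gives the lower bound $\inf_{w\geq 0}g_{\lambda,a}^+\geq a\Lambda_+^{-1}(\f(\lambda))$.

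For the matching upper bound and the precise shape of $W_+^*$, I would differentiate using the explicit form of $\I_+$ from Table~\ref{table:LDP} together with~\eqref{formulasI+}--\eqref{I-xI'}. On $\{w:a/w\in(0,\rho_+)\}=(a/\rho_+,+\infty)$ the function $\I_+$ is $C^1$, and~\eqref{I-xI'} gives
\[
(g_{\lambda,a}^+)'(w)=\f(\lambda)+\I_+(a/w)-\tfrac aw\,\I_+'(a/w)=\f(\lambda)-\Lambda_+\big((\Lambda_+')^{-1}(a/w)\big)\,,
\]
where $w\mapsto\Lambda_+((\Lambda_+')^{-1}(a/w))$ is continuous and strictly decreasing, from $\Lambda_+(t_0^+)$ as $w\downarrow a/\rho_+$ down to $\Lambda_+(0)=0$ as $w\to\infty$. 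When $\rho_+<+\infty$, on the remaining interval $[0,a/\rho_+)$ (where $a/w>\rho_+$) the quantity $\I_+(a/w)$ is either identically $+\infty$ (the case $\bar x_+=\rho_+$, which by Table~\ref{table:LDP} forces $\Lambda_+(t_0^+)=+\infty$, so we are then necessarily in the first regime below) or equal to $\tfrac aw t_0^+-\Lambda_+(t_0^+)$ with $\Lambda_+(t_0^+)<+\infty$ (the case $\bar x_+=+\infty$), in which case $g_{\lambda,a}^+$ is affine there with slope $\f(\lambda)-\Lambda_+(t_0^+)$ and value $a t_0^+$ at $w=0$. It then remains to split according to the sign of $\f(\lambda)-\Lambda_+(t_0^+)$: if $\f(\lambda)<\Lambda_+(t_0^+)$, then (using strict convexity of $\Lambda_+$ on $[0,t_0^+)$) $(g_{\lambda,a}^+)'$ vanishes exactly at $w_+^*=a/(\Lambda_+'\circ\Lambda_+^{-1}(\f(\lambda)))\in(a/\rho_+,+\infty)$ and changes sign there, while~\eqref{formulasI+} gives $\I_+(a/w_+^*)=\tfrac a{w_+^*}\Lambda_+^{-1}(\f(\lambda))-\f(\lambda)$ and hence $g_{\lambda,a}^+(w_+^*)=a\Lambda_+^{-1}(\f(\lambda))$, so $W_+^*=\{w_+^*\}$; if $\f(\lambda)=\Lambda_+(t_0^+)$ (forcing $\rho_+<+\infty$), then $\Lambda_+^{-1}(\f(\lambda))=t_0^+$, $g_{\lambda,a}^+\equiv at_0^+$ on $[0,a/\rho_+]$ and $(g_{\lambda,a}^+)'>0$ on $(a/\rho_+,+\infty)$, so $W_+^*=[0,a/\rho_+]$; if $\f(\lambda)>\Lambda_+(t_0^+)$ (forcing $\rho_+<+\infty$ and $\bar x_+=+\infty$), then $\Lambda_+^{-1}(\f(\lambda))=t_0^+$ and $g_{\lambda,a}^+$ is strictly increasing on $[0,+\infty)$ with $g_{\lambda,a}^+(0)=at_0^+$, so $W_+^*=\{0\}$. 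In every case $\inf_{w\geq 0}g_{\lambda,a}^+=a\Lambda_+^{-1}(\f(\lambda))$, attained exactly on $W_+^*$.

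I expect the genuinely delicate part to be not the one-variable analysis but the bookkeeping around Table~\ref{table:LDP}: keeping straight which of $t_0^+$, $\rho_+$, $\bar x_+$ are finite and when $\Lambda_+(t_0^+)$ is finite, handling the boundary conventions ($g_{\lambda,a}^+(0)$ as a limit, and the degenerate subcase $\f(\lambda)=0$ where $w_+^*=a/\bE[X_1]=+\infty$, attained only in the limit), and verifying that the cut-off $a/\rho_+$ and the point $w_+^*=a/(\Lambda_+'\circ\Lambda_+^{-1}(\f(\lambda)))$ line up with the three regimes exactly as stated --- in particular that $w_+^*>a/\rho_+$ if and only if $\f(\lambda)<\Lambda_+(t_0^+)$.
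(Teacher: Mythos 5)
Your argument is correct in substance and reaches the right conclusion in every case, and its first half is a genuinely different (and arguably cleaner) route than the paper's. The paper proves the lemma by pure calculus: it computes $(g_{\lambda,a}^+)'$ via \eqref{I-xI'} exactly as you do, and then runs through four separate cases ($\rho_+=+\infty$ or not, $t_0^+=+\infty$ or not), each time re-deriving the minimal value from \eqref{formulasI+} or from the limit $\lim_{w\downarrow 0}w\I_+(a/w)=at_0^+$. Your duality step --- writing $g_{\lambda,a}^+(w)=\sup_{t\ge 0}\{at+w(\f(\lambda)-\Lambda_+(t))\}$ and applying max--min --- is not in the paper; it gives convexity of $g_{\lambda,a}^+$ and the lower bound $a\Lambda_+^{-1}(\f(\lambda))$ in one stroke, and the complementary bound $g_{\lambda,a}^+(w)\le at_0^++w\f(\lambda)$ (obtained by dropping $-w\Lambda_+(t)\le 0$) pins down the boundary value $\lim_{w\downarrow 0}g_{\lambda,a}^+(w)=at_0^+$ without the paper's computation $\tfrac{1}{\Lambda_+'(y)}\I_+(\Lambda_+'(y))=y-\Lambda_+(y)/\Lambda_+'(y)\to t_0^+$. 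The second half (the sign analysis of the derivative) is essentially the paper's argument, reorganized by the sign of $\f(\lambda)-\Lambda_+(t_0^+)$ rather than by the finiteness of $\rho_+$ and $t_0^+$.

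One bookkeeping slip to fix: neither $\f(\lambda)=\Lambda_+(t_0^+)$ nor $\f(\lambda)>\Lambda_+(t_0^+)$ forces $\rho_+<+\infty$. The configuration $t_0^+<+\infty$, $\Lambda_+(t_0^+)<+\infty$, $\rho_+=+\infty$ is precisely the second column of Table~\ref{table:LDP} and does occur (Example~\ref{ex:almostgeom} with $\theta\in(1,2]$). In that configuration your interval $[0,a/\rho_+)$ is empty, so the value $at_0^+$ at $w=0$ cannot be read off an affine piece; instead $g_{\lambda,a}^+$ is strictly increasing on all of $(0,+\infty)$ and one must identify $\lim_{w\downarrow 0}w\I_+(a/w)=at_0^+$ directly --- which your dual upper bound already supplies, or see the paper's argument around \eqref{ming+2}. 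The stated $W_+^*$ is unaffected, since $[0,a/\rho_+]=\{0\}$ and $w_+^*=0$ when $\rho_+=+\infty$ and $\f(\lambda)\ge\Lambda_+(t_0^+)$.
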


\begin{proof}
Recalling~ \eqref{formulasI+}-\eqref{derivI} and~\eqref{I-xI'}, we have that for any $w > \frac{a}{\bar x_+}$
\begin{equation}
\label{derivg+}
\frac{\partial}{\partial w} g_{\lambda,a}^+(w) = \f(\lambda) + \I_+\Big(\frac{a}{w}\Big) - \frac{a}{w} \I_+'\Big(\frac{a}{w}\Big)
= \f(\lambda) - \begin{cases}
\Lambda_+\circ (\Lambda_+')^{-1}\big(\frac{a}{w}\big) & \text{ if } \frac{a}{w}< \rho_+ \,, \\
\Lambda_+(t_0^+) & \text{ if } \frac{a}{w}\geq \rho_+
\,.
\end{cases}
\end{equation}
Let us now consider different cases; we refer to Table~\ref{table:LDP} for an overview, see also~\eqref{I-xI'}.

\medskip
\noindent
\textbullet\ If $\rho_+=+\infty$, then 
\[
\frac{\partial}{\partial w} g_{\lambda,a}^+(w) = \f(\lambda) -\Lambda_+\circ (\Lambda_+')^{-1} \Big( \frac{a}{w} \Big)  \qquad  \text{ for all } w>0\,.
\]

\smallskip
$\ast$ If $t_0^+ =+\infty$, then $\Lambda_+$ and $\Lambda_+'$ are increasing bijections from $(0,+\infty)$ to $(0,+\infty)$. 
From~\eqref{derivg+} we get that $\frac{\partial}{\partial w} g_{\lambda,a}^+(w)>0$ for $\frac{a}{w}< \Lambda_+'\circ \Lambda_+^{-1} (\f(\lambda))$ and $\frac{\partial}{\partial w} g_{\lambda,a}^+<0$ for $\frac{a}{w} > \Lambda_+'\circ \Lambda_+^{-1} (\f(\lambda))$.
Letting 
$w_+^*:= \frac{a}{\Lambda_+'\circ \Lambda_+^{-1} (\f(\lambda))} ,$ 
this shows that $g_{\lambda,a}^+$ is decreasing on $[0,w_+^*]$ and increasing on $[w_+^*,+\infty)$.
We therefore get that $g_{\lambda,a}^+$ has a unique minimum at $w_+^* \in (0,+\infty)$.

Using the formulas~\eqref{formulasI+} for $\I_+(\frac{a}{w_+^*})$ (and the fact that $a/w_+^* <\rho_+ =+\infty$), the minimal value of $g_{\lambda,a}^+$ is then
\begin{equation}
\label{ming+1}
\inf_{w \geq 0} g_{\lambda,a}^+(w) = w_+^* \f(\lambda) + w_+^* \Big( \frac{a}{w_+^*} (\Lambda_+')^{-1} \Big(\frac{a}{w_+^*}\Big) - \Lambda_+\circ (\Lambda_+')^{-1}\Big(\frac{a}{w_+^*}\Big) \Big)  =  a \Lambda_+^{-1}(\f(\lambda)) \,.
\end{equation}
where we have also used the formula for $w_+^*$ together with the fact that $\Lambda_+,\Lambda_+'$ are proper bijections.

\smallskip
$\ast$ If $t_0^+<+\infty$, then $\Lambda_+$ is an increasing bijections from $(0,t_0^+)$ to $(0,x\Lambda_+(t_0^+))$ and $\Lambda_+'$ is an increasing bijection from $(0,t_0^+)$ to $(0,+\infty)$ (recall $\rho_+=+\infty$).
In view of~\eqref{derivg+}, we distinguish two subcases.

If $\f(\lambda)<\Lambda_+(t_0^+)$, then $\Lambda_+\circ (\Lambda_+')^{-1}(x) = \f(\lambda)$ if and only if $x=\Lambda_+'\circ \Lambda_+^{-1} (\f(\lambda))< \Lambda_+'(t_0^+)=+\infty$: we then get that $w_+^*:= \frac{a}{\Lambda_+'\circ \Lambda_+^{-1} (\f(\lambda))} >0$ is the unique minimizer of $g_{\lambda,a}^+$.

If $\f(\lambda)\geq \Lambda_+(t_0^+)$, then $\f(\lambda) > \Lambda_+\circ (\Lambda_+')^{-1}(x)$ for any $x<+\infty$ (since $(\Lambda_+')^{-1}(x)<t_0^+$): this shows that $g_{\lambda,a}^+$ is increasing on $(0,+\infty)$ so it attains its unique minimum at $w_+^* =0$. Note that it also corresponds to the formula $w_+^*:=\frac{a}{\Lambda_+'\circ \Lambda_+^{-1} (\f(\lambda))}$ since $\Lambda_+^{-1}(\f(\lambda)) =t_0^+$ and $\Lambda'_+(t_0^+) = \rho_+=+\infty$.

The minimal value of $g_{\lambda,a}^+$ is then 
\begin{equation}
\label{ming+2}
\inf_{w \geq 0} g_{\lambda,a}^+(w) = 
\left\{
\begin{array}{ll}
 a \Lambda_+^{-1}(\f(\lambda))  &\  \text{ if } \f(\lambda) <\Lambda_+(t_0^+) \\
 \lim\limits_{w\downarrow 0} w \I_+\big(\frac{a}{w} \big)  = a t_0^+ & \  \text{ if } \f(\lambda) \geq \Lambda_+(t_0^+) 
\end{array}
\right\}
=  a \Lambda_+^{-1}(\f(\lambda))  \,.
\end{equation}
Here, we have used in the second line that $\liminf_{x\to \infty} \frac{1}{x} \I_+(x) =t_0^+$, which follows from the fact that  $\frac{1}{\Lambda'_+(y)} \I_+(\Lambda'_+(y)) = y - \frac{\Lambda_+(y)}{\Lambda'_+(y)}$, which goes to $t_0^+$ as $y\uparrow t_0^+$ (indeed, $\frac{\Lambda_+(y)}{\Lambda_+'(y)}$ goes to $0$ since we are only concerned with the case $\Lambda_+(t_0) \leq \f(\lambda)<+\infty$ and $\Lambda_+'(t_0^+) = \rho_+=+\infty$).

\medskip
\noindent
\textbullet\ If $\rho_+<+\infty$, then we again have two subcases.

\smallskip
$\ast$ If $t_0^+=+\infty$. Then $\rho_+ =\bar x_+<+\infty$, and  $g_{\lambda,a}^+(w) = +\infty$ for $w < a/\bar{x}_+$.
On the other hand, $\Lambda_+'$ an increasing bijection from $[0,\bar{x}_+]$ to $[0,\rho_+]$ and $\Lambda_+$ an increasing bijection from $(0,+\infty)$ to $(0,+\infty)$.
From~\eqref{derivg+}, we get as above that $g_{\lambda,a}^+$ is decreasing on $[0,w_+^*]$ and increasing on $[w_+^*,+\infty)$, with $w_+^*:= \frac{a}{\Lambda_+'\circ \Lambda_+^{-1} (\f(\lambda))} \in [\frac{a}{\bar{x}_+},+\infty)$.

As in~\eqref{ming+1}, we have that the minimal value of $g_{\lambda,a}^+$ is
\begin{equation}
\label{ming+3}
\inf_{w \geq 0} g_{\lambda,a}^+(w)  =  a \Lambda_+^{-1}(\f(\lambda)) \,.
\end{equation}

\smallskip
$\ast$ If $t_0^+<+\infty$. Then $\rho_+ <\bar x_+=+\infty$.
Then in view of~\eqref{derivg+}, we distinguish as above in several subcases.

First, if $\f(\lambda)<\Lambda_+(t_0^+)$, then $\Lambda_+\circ (\Lambda_+')^{-1}(x) = \f(\lambda)$ if and only if $x=\Lambda_+'\circ \Lambda_+^{-1} (\f(\lambda))< \Lambda_+'(t_0^+)=+\infty$: we then get that $w_+^*:= \frac{a}{\Lambda_+'\circ \Lambda_+^{-1} (\f(\lambda))} > \frac{a}{\rho_+}$ is the unique minimizer of $g_{\lambda,a}^+$.

Second, if $\f(\lambda) = \Lambda_+(t_0^+)$, then $g_{\lambda,a}^+$ is constant on $[0,w_+^*]$ and then increasing, where $w_+^* = \frac{a}{\rho_+} = \frac{a}{\Lambda_+'\circ \Lambda_+^{-1} (\f(\lambda))} >0$.
In that case the minimum of $g_{\lambda,a}^+$ is attained on the whole interval $[0,\frac{a}{\rho_+}]$.

Third, if $\f(\lambda) > \Lambda_+(t_0^+)$, then $g_{\lambda,a}^+$ is increasing on $(0,+\infty)$ so it attains its unique minimum at $w_+^* =0$. Note that \textit{it does not} corresponds to the formula $w_+^*:= \frac{a}{\Lambda_+'\circ \Lambda_+^{-1} (\f(\lambda))}$ since $\Lambda_+^{-1}(\f(\lambda)) =t_0^+$ and $\Lambda'_+(t_0^+) = \rho_+<+\infty$.

The minimal value of $g_{\lambda,a}^+$ is then
\begin{equation}
\label{ming+4}
\inf_{w \geq 0} g_{\lambda,a}^+(w) = 
\left\{
\begin{array}{ll}
 a \Lambda_+^{-1}(\f(\lambda))  &\  \text{ if } \f(\lambda) <\Lambda_+(t_0^+) \\
 \lim\limits_{w\downarrow 0} w \I_+\big(\frac{a}{w} \big)  = a t_0^+ & \  \text{ if } \f(\lambda) \geq \Lambda_+(t_0^+) 
\end{array}
\right\}
=  a \Lambda_+^{-1}(\f(\lambda))  \,.
\end{equation}
Here, we have used that $ \lim_{x\to \infty} \frac{1}{x} \I_+(x) = \I'_+(\rho_+) =t_0^+$, since $\I_+(x)$ is affine on $[\rho_+,+\infty)$.
\end{proof}

We can now use Lemma~\ref{lem:g+} into~\eqref{rewriting}, to conclude the proof of Lemma~\ref{lem:psi}. Recall the definition~\eqref{def:w*} of $w_+^*$, $w_-^*$.
Now, in the case where $w_+^*+w_-^* \leq 1$, then thanks to Lemma~\ref{lem:g+} (since $w\mapsto g_{\lambda,a}^\pm(w)$ is increasing for $w<w_{\pm}$), we obtain that 
\[
\sup_{0\leq u \leq v \leq 1} g_{\lambda,a}(u,v) = \f(\lambda)- a \Lambda_-^{-1}(\f(\lambda)) - a \Lambda_+^{-1}(\f(\lambda)) \,, 
\]
with the supremum attained on $U^*\times V^*$, with 
\[
U^* = \begin{cases}
\{w_-^*\} & \text{ if } \f(\lambda)<\Lambda_-(t_0^-)\,, \\
[0,\frac{a}{\rho_-}] & \text{ if } \f(\lambda)=\Lambda_-(t_0^-) \,, \\
\{0\} & \text{ if } \f(\lambda)>\Lambda_-(t_0^-) \,,
\end{cases}
\qquad
V^* = \begin{cases}
\{1-w_+^*\} & \text{ if } \f(\lambda)<\Lambda_+(t_0^+)\,, \\
[1-\frac{a}{\rho_+},1] & \text{ if } \f(\lambda)=\Lambda_+(t_0^+) \,, \\
\{1\} & \text{ if } \f(\lambda)>\Lambda_+(t_0^+) \,.
\end{cases}
\]

The last part of Lemma~\ref{lem:psi} tells that the condition $w_+^*+w_-^*\leq 1$ is ensured by having $\lambda \geq \lambda_c(a)$.
This is a consequence of the following lemma.
\begin{lemma}
\label{lem:w+w-}
If $\lambda\geq \lambda_c(a)$, then we have $w_+^*+w_-^*<1$.
\end{lemma}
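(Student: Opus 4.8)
The plan is to recognise that $1-w_+^*-w_-^*$ is, up to the degenerate cases, a one‑sided derivative of the ``profile function'' $F(y):=y-a\,\Lambda_+^{-1}(y)-a\,\Lambda_-^{-1}(y)$, $y\ge 0$, at the point $y=\f(\lambda)$, and then to exploit convexity of $F$. First I would dispose of the trivial case $t_0^+=t_0^-=0$: then $\Lambda_\pm'\circ\Lambda_\pm^{-1}\equiv+\infty$ by convention, so $w_+^*=w_-^*=0$ and there is nothing to prove. So assume henceforth that $t_0^+>0$ or $t_0^->0$.

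\textbf{Step 1 (an a priori bound $\psi\le F$).} Using the rewriting~\eqref{rewriting}, $g_{\lambda,a}(u,v)=\f(\lambda)-g^-_{\lambda,a}(u)-g^+_{\lambda,a}(1-v)$, and simply dropping the constraint $u\le v$, one gets for every $\lambda$
\[
\psi(\lambda,a)=\sup_{0\le u\le v\le 1}g_{\lambda,a}(u,v)\ \le\ \f(\lambda)-\inf_{w_1\ge 0}g^-_{\lambda,a}(w_1)-\inf_{w_2\ge 0}g^+_{\lambda,a}(w_2)\ =\ F(\f(\lambda)),
\]
the last equality being Lemma~\ref{lem:g+} and its mirror image. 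Next I would pass from $\lambda\ge\lambda_c(a)$ to $F(\f(\lambda))\ge 0$: since $\lambda\mapsto\f(\lambda,a)$ is non-decreasing with $\lambda_c(a)=\inf\{\lambda:\f(\lambda,a)>0\}$, we have $\psi(\lambda,a)=\f(\lambda,a)>0$ for $\lambda>\lambda_c(a)$; and $\lambda\mapsto\psi(\lambda,a)$ is continuous because $\psi(\lambda,a)=\f(\lambda)-\Psi(\f(\lambda))$, where $\Psi(y):=\inf\{(w_1+w_2)y+w_1\I_-(a/w_1)+w_2\I_+(a/w_2):\,w_1,w_2\ge0,\ w_1+w_2\le1\}$ is an infimum of affine functions of $y$ (the bracket not involving $\lambda$), hence a finite concave, hence continuous, function of $y\in[0,\infty)$, and $\f(\cdot)$ is continuous. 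Letting $\lambda\downarrow\lambda_c(a)$ gives $\psi(\lambda_c(a),a)\ge 0$, so in all cases $\psi(\lambda,a)\ge0$ for $\lambda\ge\lambda_c(a)$, whence $F(\f(\lambda))\ge0$. I would also record here that $F(0)=0$ and, since $\Lambda_\pm'(0)=\bE[X_1]=0$ whenever $t_0^\pm>0$, the right derivative $F'(0^+)=-\infty$; thus $F<0$ on a right neighbourhood of $0$, and combining this with Step 1 shows $\lambda_c(a)>\lambda_c$, so that in fact $\f(\lambda)>0$ for every $\lambda\ge\lambda_c(a)$.

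\textbf{Step 2 (convexity of $F$ and conclusion in Cram\'er's region).} Since $\Lambda_\pm$ are increasing, convex, with $\Lambda_\pm(0)=0$ and strictly convex on $[0,t_0^\pm)$, their left-continuous inverses $\Lambda_\pm^{-1}$ are concave on $[0,\infty)$ and strictly concave on $[0,\Lambda_\pm(t_0^\pm))$; hence $F$ is convex, and strictly convex on $[0,y_0)$, where $y_0>0$ is the smallest among $\Lambda_+(t_0^+)$ and $\Lambda_-(t_0^-)$ (ignoring a side with $t_0^\pm=0$). Moreover, on $[0,y_0)$, differentiating and comparing with~\eqref{def:w*} gives $F'(y)=1-\tfrac{a}{\Lambda_+'(\Lambda_+^{-1}(y))}-\tfrac{a}{\Lambda_-'(\Lambda_-^{-1}(y))}=1-(w_+^*+w_-^*)\big|_{\f(\lambda)=y}$. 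Now set $y^*:=\inf\{y>0:F(y)\ge0\}$; by Step 1 and $F'(0^+)=-\infty$, $y^*\in(0,\infty)$, $F(y^*)=0$ by continuity, $F\le 0$ on $[0,y^*]$ (it lies below the zero chord joining $(0,0)$ and $(y^*,0)$), and $\f(\lambda)\ge y^*$. In Cram\'er's region $\lambda<\min\{\lambda_+,\lambda_-\}$ we have $\f(\lambda)<y_0$, hence also $y^*<y_0$: then $F$ is strictly convex and $C^1$ on $(0,y^*]$, $F'$ is strictly increasing, and $\int_0^{y^*}F'=F(y^*)-F(0^+)=0$; were $F'(y^*)\le0$ we would get $F'<0$ on $(0,y^*)$, contradicting $\int_0^{y^*}F'=0$, so $F'(y^*)>0$. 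By convexity $F'(\f(\lambda))\ge F'(y^*)>0$, i.e. $w_+^*+w_-^*=1-F'(\f(\lambda))<1$.

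\textbf{Expected main obstacle.} The genuinely delicate point is the \emph{saturation} regime $\lambda\ge\min\{\lambda_+,\lambda_-\}$, i.e. $\f(\lambda)\ge y_0$: there $F$ has a kink where one term $-a\Lambda_\pm^{-1}$ becomes affine, the identity $F'(\f(\lambda))=1-w_+^*-w_-^*$ must be read with one-sided derivatives, and --- unless $\rho_\pm=+\infty$ --- the ``formula value'' $w_\pm^*$ of~\eqref{def:w*} no longer equals the actual minimiser of $g^\pm_{\lambda,a}$ (which drops to $0$). The way I would try to handle it is to use that $y\mapsto w_+^*(y)+w_-^*(y)$ is non-increasing (as $\Lambda_\pm'\circ\Lambda_\pm^{-1}$ is non-decreasing), so it suffices to prove $w_+^*+w_-^*<1$ at $\lambda=\lambda_c(a)$, i.e. at $y^*$; when $y^*\ge y_0$ one reruns Step 2 on the affine branch of $F$, controlling the saturated contribution via $w_\pm^*=a/\rho_\pm\le a/\bar x_\pm$ together with the constraint $a<\bar a$ from Lemma~\ref{lem:depth}. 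This is exactly the case that is the most technical and that is not needed for Theorem~\ref{th:Gaussian} (where one is inside Cram\'er's region); in the Cram\'er region, Step 2 already gives the full statement and, via Lemma~\ref{lem:psi}, the formula for $\f(\lambda,a)$.
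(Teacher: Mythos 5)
Your Step~2 argument in Cram\'er's region is correct and is a genuinely different route from the paper's: you encode $1-w_+^*-w_-^*$ as the derivative of the convex profile $F(y)=y-a\Lambda_+^{-1}(y)-a\Lambda_-^{-1}(y)$ and conclude from $F(0)=0$, $F'(0^+)=-\infty$, $F(\f(\lambda))\ge \psi(\lambda,a)\ge 0$ and strict convexity. The paper instead argues by contradiction: if $w_+^*+w_-^*\ge 1$, then for any $u<v$ one of $u<w_-^*$ or $1-v<w_+^*$ must hold, and the monotonicity of $g^{\pm}_{\lambda,a}$ on $[0,w_{\pm}^*]$ (from Lemma~\ref{lem:g+}) pushes the infimum of $g^-_{\lambda,a}(u)+g^+_{\lambda,a}(1-v)$ onto the diagonal $u=v$, where it is strictly positive; hence $\psi(\lambda,a)<0$ and $\lambda<\lambda_c(a)$. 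Both mechanisms are equivalent where $F$ is differentiable, and your version is arguably cleaner there.

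The genuine gap is the saturation regime $\f(\lambda)\ge \min\{\Lambda_+(t_0^+),\Lambda_-(t_0^-)\}$, which the statement covers and which you only sketch. There the identity $F'=1-w_+^*-w_-^*$ breaks at the kink (the saturated branch of $F$ contributes derivative $0$ while $w_{\pm}^*=a/\rho_{\pm}$ may be positive), and your proposed repair rests on the inequality $w_{\pm}^*=a/\rho_{\pm}\le a/\bar x_{\pm}$, which is \emph{reversed}: since $\rho_{\pm}=\lim_{t\uparrow t_0^{\pm}}\bE_t[X_1]\le \bar x_{\pm}$, one has $a/\rho_{\pm}\ge a/\bar x_{\pm}$. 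Worse, the only way to have $\rho_{\pm}<+\infty$ with $t_0^{\pm}<+\infty$ is $\bar x_{\pm}=+\infty$ (Table~\ref{table:LDP}), so $\bar a=+\infty$ and the constraint $a<\bar a$ puts no upper bound whatsoever on $a/\rho_++a/\rho_-$; your Step~2 run ``on the affine branch'' therefore cannot close. This is exactly the case where the paper's own argument is most delicate (it invokes the decrease of $g^{\pm}_{\lambda,a}$ on $[0,w_{\pm}^*]$, a property established in Lemma~\ref{lem:g+} only for $\f(\lambda)<\Lambda_{\pm}(t_0^{\pm})$), so you have correctly located the crux — but you have not resolved it, and the route you indicate for doing so does not work.
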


\begin{proof}
First of all, this is trivial in the case where $t_0^+=t_0^-=0$ since in that case we have $w_+^*=w_-^*=0$ as noticed above.
Also, if $a=\bar a<+\infty$, we have that $\psi(\lambda,a)<0$ for any $\lambda\geq 0$ (see~\eqref{eq:caseabar}), so $\lambda_c(a)=+\infty$ and this case has to be excluded.

Let us now focus on the case where $a<\bar a$ and $t_0^++t_0^->0$, so in particular one of $\I_+$ or $\I_-$ is non-degenerate.
Let us assume that $w_+^*+w_-^* \geq 1$.
Then we show that
\begin{equation}
\label{infimumg+g-}
\inf_{(u,v)\in \mathcal{D}} \big\{ g_{\lambda,a}^-(u) + g_{\lambda,a}^+(1-v) \big\} = \inf_{u\in [0,1]} \big\{ g_{\lambda,a}^-(u) + g_{\lambda,a}^+(1-u) \big\} \,,
\end{equation}
namely the infimum in~\eqref{def:psi} is attained on the diagonal $\{(u,u), u\in [0,1]\}$.
In the end, this shows that 
\[
\psi(\lambda,a) = - \inf_{u\in [0,1]} \Big\{  u \,\I_-\Big(\frac{a}{u}\Big) + (1-u)\, \I_+\Big(\frac{a}{1-u}\Big)\Big\} <0\,,
\]
because at least one of $\I_+,\I_-$ is non-degenerate.
Since $\lambda_c(a) = \sup_{\lambda \geq 0} \{\psi(\lambda,a) <0 \}$ and $\lambda \mapsto \psi(\lambda,a)$ is continuous and non-decreasing, this proves that $\lambda<\lambda_c(a)$, which is a contradiction.

It therefore remains to show that if $w_+^*+w_-^* \geq 1$, then~\eqref{infimumg+g-} holds.
Let $(u,v) \in \mathcal{D}$ with $u<v$. Then necessarily, either $u< w_-^*$ or $1-v> w_+^*$; let us assume $u\neq w_-^*$, the other case is treated analogously.
If $u<w_-^*$, then recalling that $g_{\lambda,a}^-$ is decreasing on $[0,w_+^*]$, we have
\[
g_{\lambda,a}^-(u) + g_{\lambda,a}^+(1-v) >  g_{\lambda,a}^-(u') + g_{\lambda,a}^+(1-v) \,,
\]
with $u'=\min\{w_+^*,1-v\}$.
This shows~\eqref{infimumg+g-} and concludes the proof.
\end{proof}

As a corollary from the proof of Lemma~\ref{lem:psi}, let us extract the following lemma for future use.
It is a direct consequence from the rewriting~\eqref{rewriting} and the computation of the derivative of $g_{\lambda,a}^{\pm}$, see~\eqref{derivg+}.

\begin{lemma}
\label{lem:maximizer}
Assume that $\lambda\geq \lambda_c(a)$ and that
$\f(\lambda)< \min\{\Lambda_+(t_0^+), \Lambda_-(t_0^-)\}$, so that $w_+^*,w_-^*$ verifies $w_+^*> \frac{a}{\rho_+}$, $w_-^*>\frac{a}{\rho_-}$ and $w_+^*+w_-^*<1$. Let $(u^*,v^*) = (w_-^*,1-w+^*) $ be the unique maximizer in~\eqref{def:psi}, see Lemma~\ref{lem:psi}.
Then we have that
\[
 \f(\lambda) + \I_-\Big(\frac{a}{u^*}\Big) - \frac{a}{u^*} \I_-'\Big(\frac{a}{u^*}\Big) =0 
 \quad \text{ and } \quad
 \f(\lambda) + \I_+\Big(\frac{a}{ 1-v^*}\Big) - \frac{a}{1- v^*} \I_+'\Big(\frac{a}{1- v^*}\Big)=0 \,.
\]
\end{lemma}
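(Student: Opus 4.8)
The plan is to read the statement off from the analysis already performed in the proof of Lemma~\ref{lem:psi}, so that almost nothing new is needed. First I would record the structural facts about $w_+^*,w_-^*$ under the present hypotheses. Since $\lambda\geq\lambda_c(a)$ (which forces $\lambda_c(a)<+\infty$, hence $a<\bar a$), Lemma~\ref{lem:w+w-} gives $w_+^*+w_-^*<1$, so we are in the non-degenerate regime treated in Lemma~\ref{lem:g+}. The assumption $\f(\lambda)<\Lambda_+(t_0^+)$ puts us in the first subcase there, in which $w_+^*=a/\big(\Lambda_+'\circ\Lambda_+^{-1}(\f(\lambda))\big)$ is the \emph{unique} minimizer of $g_{\lambda,a}^+$ on $[0,+\infty)$; moreover $\Lambda_+^{-1}(\f(\lambda))<t_0^+$, and the strict monotonicity of $\Lambda_+'$ then gives $\Lambda_+'\big(\Lambda_+^{-1}(\f(\lambda))\big)<\rho_+$, that is $w_+^*>a/\rho_+$. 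The symmetric statements for $w_-^*$ follow from $\f(\lambda)<\Lambda_-(t_0^-)$. By Lemma~\ref{lem:psi}, $(u^*,v^*)=(w_-^*,1-w_+^*)$ is then the unique maximizer of $g_{\lambda,a}$ on $\mathcal D$, as asserted.

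Next I would use the decomposition~\eqref{rewriting}, $g_{\lambda,a}(u,v)=\f(\lambda)-g_{\lambda,a}^-(u)-g_{\lambda,a}^+(1-v)$: maximizing $g_{\lambda,a}$ over $\mathcal D$ amounts to minimizing $g_{\lambda,a}^-$ in $u$ and $g_{\lambda,a}^+$ in $v$ separately, the constraint $u\leq v$ being inactive at the optimum precisely because $w_-^*+w_+^*<1$. Since $w_+^*>a/\rho_+\geq 0$, the point $w_+^*$ lies in the interior of the interval on which $g_{\lambda,a}^+$ is finite and differentiable, so as a minimizer it is a critical point: $\frac{\partial}{\partial w}g_{\lambda,a}^+(w)\big|_{w=w_+^*}=0$. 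Substituting $w=w_+^*=1-v^*$ into the derivative formula~\eqref{derivg+},
\[
\frac{\partial}{\partial w}g_{\lambda,a}^+(w)=\f(\lambda)+\I_+\Big(\frac{a}{w}\Big)-\frac{a}{w}\,\I_+'\Big(\frac{a}{w}\Big),
\]
yields exactly the second identity of the lemma; running the same argument for $g_{\lambda,a}^-$ at its interior minimizer $w_-^*=u^*>a/\rho_-$ gives the first identity. Equivalently, one may simply plug in the explicit value of $w_\pm^*$: since $a/w_+^*=\Lambda_+'\big(\Lambda_+^{-1}(\f(\lambda))\big)\in(0,\rho_+)$, formulas~\eqref{derivI}--\eqref{I-xI'} give $\I_+'(a/w_+^*)=\Lambda_+^{-1}(\f(\lambda))$ and $\I_+(a/w_+^*)-(a/w_+^*)\I_+'(a/w_+^*)=-\f(\lambda)$, so the bracket vanishes.

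I do not expect any real obstacle here: the only thing that genuinely needs checking is that $w_+^*$ and $w_-^*$ are \emph{interior} critical points rather than boundary minima, and this is exactly the role of the hypotheses $\f(\lambda)<\Lambda_\pm(t_0^\pm)$, which exclude the degenerate subcases of Lemma~\ref{lem:g+} where the minimizer is $0$ (or an interval containing $0$). Everything else is immediate from the rewriting~\eqref{rewriting} and the derivative computation~\eqref{derivg+} already established in the proof of Lemma~\ref{lem:psi}.
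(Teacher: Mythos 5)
Your proposal is correct and follows essentially the same route as the paper, which presents Lemma~\ref{lem:maximizer} precisely as a direct consequence of the rewriting~\eqref{rewriting} and the derivative computation~\eqref{derivg+}: the hypotheses $\f(\lambda)<\Lambda_\pm(t_0^\pm)$ place $w_\pm^*$ in the interior, so the first-order condition for the minimizers of $g_{\lambda,a}^{\pm}$ gives the stated identities.
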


\subsection{Formula for the free energy and phase transitions}

In this section, we derive a formula for $\psi(\lambda, a)$ and the critical point (Theorem~\ref{th:formuleG}), and we deduce the properties of the phase transitions and the critical curve (Propositions~\ref{th:DL_F}, \ref{prop:saturation} and \ref{prop:curve}).

\subsubsection{Formula for the free energy: proof of Theorem~\ref{th:formuleG}}

The formula~\eqref{eq:formuleG} is a direct corollary of the proof of Lemma~\ref{lem:psi}. 
For $\lambda< \lambda_c(a)$, we have that $\f(\lambda, a)=0$.
On the other hand, for $\lambda\geq \lambda_c(a)$, we have from Lemma~\ref{lem:w+w-} that $w_+^*+w_-^*<1$ so we can apply Lemma~\ref{lem:psi} to get that
\[
\psi(\lambda,a) = \f(\lambda)- a \Lambda_-^{-1}(\f(\lambda)) - a \Lambda_+^{-1}(\f(\lambda)) \,.
\]
This concludes the proof, since $\f(\lambda,a) = \psi(\lambda,a)$ for $\lambda\geq \lambda_c(a)$.
The formula~\eqref{eq:acritique} for $a_c(\lambda)$ directly follows from~\eqref{eq:formuleG}, since the critical line is characterized by $\psi(\lambda,a)=0$.
\qed

%

\subsubsection{Localization phase transition: proof of Proposition~\ref{th:DL_F}}

Let $a >0$ be such that $\lambda_c(a)<+\infty$, \textit{i.e.}\ $a<\bar a$; we have $\textsc{f}(\lambda_c(a),a) = 0$.
Looking at the expression~\eqref{eq:formuleG}, we can compute the derivative of $\f(\lambda,a)$ at a given $\lambda > \lambda_c(a)$.
Recalling that we defined $\lambda_+$ such that $\f(\lambda_+)=\Lambda_+(t_0^+)$, we have
\[
\frac{\partial}{\partial \lambda} \Lambda_+^{-1}(\f(\lambda)) = 
\begin{cases}
 \frac{\f'(\lambda)}{\Lambda_+' \circ \Lambda_-^{-1}(\f(\lambda))} & \text{ if } \lambda <\lambda_+ \,, \\
 0 & \text{ if } \lambda >\lambda_+ \,.
\end{cases}
\]
Therefore, by taking the limit $\lambda \downarrow \lambda_c(a) >0$, $\f(\cdot, a)$ has a right derivative at $\lambda_c(a)$, given by:
\begin{equation}
\label{eq:Ca}
C_a:= \lim_{\lambda\downarrow \lambda_c(a)} \frac{\partial \f(\lambda,a)}{\partial \lambda} = \textsc{f}'(\lambda_c(a)) \bigg(1 - a \Big( \frac{\ind_{\{\lambda_c(a)<\lambda_-\}}}{\Lambda_-' \circ \Lambda_-^{-1}(\textsc{f}(\lambda_c(a)))} + \frac{\ind_{\{\lambda_c(a)<\lambda_+\}}}{\Lambda_+' \circ \Lambda_+^{-1}(\textsc{f}(\lambda_c(a)))} \Big) \bigg)   \,.
\end{equation}
We therefore get, as $u\downarrow 0$,
\begin{align*}
\textsc{f}(\lambda_c(a) + u, a) &= \textsc{f}(\lambda_c(a) , a) +  C_a \, u  + o(u) \sim C_a u  \,.
\end{align*}
This concludes the first part of the proof. 
We deal with the behavior of $C_a$ as $a\downarrow 0$ below (we deduce it from the behavior of $a_c(\lambda)$ as $\lambda \downarrow \lambda_c$). \qed

\subsubsection{Saturation phase transition: proof of Proposition~\ref{prop:saturation}}

Assume that $\lambda_-< \lambda_+<+\infty$ and recall the definition~\eqref{def:fexcess} of the excess free energies $\f^{\ast},\f^{\ast\ast}$. Thanks to the formulas of Theorem~\ref{th:formuleG}, we have that
\[
\f^{\ast}(\lambda,a) = a \big(t_0^- -\Lambda_-^{-1}(\f(\lambda))\big) \,, 
\qquad
\f^{\ast\ast}(\lambda,a) = a \big( t_0^+-\Lambda_+^{-1}(\f(\lambda)) \big) \,.
\]
The statement simply follows once one observes that, by a Taylor expansion, since  $t_0^-=\Lambda_-^{-1}(\f(\lambda_-))$
\begin{equation}
\label{eq:derivt0}
t_0^- - \Lambda_-^{-1}(\f(\lambda_--u)) = (1+o(1)) \frac{\f'(\lambda_-)}{\Lambda_-'(\Lambda_-^{-1}(\f(\lambda_-)))} u =  (1+o(1)) \frac{\f'(\lambda_-)}{\rho_-} u \quad \text{ as } u\downarrow 0\,.
\end{equation}
This gives the behavior of $\f^{\ast}(\lambda_--u,a)$ as $u\downarrow 0$.

When $\lambda_-=\lambda_+$, we have
$\f^{\ast}(\lambda,a) = \f^{\ast\ast}(\lambda,a) = a (t_0^-\Lambda_-^{-1}(\f(\lambda))) +  a (t_0^+ -\Lambda_+^{-1}(\f(\lambda)))$
so the result follows in an identical manner.\qed

\subsubsection{Critical curve: proof of Proposition~\ref{prop:curve}}
There are two parts in the statement.

\smallskip
\noindent
{\it Behavior as $\lambda\downarrow \lambda_c$.}
We use the formula~\eqref{eq:acritique}, together with the fact that under Assumption~\ref{hyp:Cramer} we have $\Lambda_{\pm}(x) \sim \frac12 \sigma^2 x^2$ as $x\downarrow 0$, where $\sigma^2 := \bE[X_1^2]$ is the variance of $X_1$; hence $\Lambda_{\pm}^{-1}(x)\sim \sqrt{ 2 x/\sigma^2}$.
Using that $\f(\lambda)\downarrow 0$ as $\lambda \downarrow \lambda_c$, we get that
\begin{equation}
\label{eq:comportac}
a_c(\lambda) \sim \frac{\sigma}{2\sqrt{2}} \f(\lambda)^{1/2}  \,.
\end{equation}
The conclusion follows by applying~\eqref{eq:criticF5}, using also the definition of $c_3$.

\smallskip
\noindent
{\it Behavior as $\lambda\uparrow \lambda_-,\lambda_+$.}
Assume $\lambda_-<\lambda_+ <\infty$.
Similarly to the proof of Proposition~\ref{prop:saturation}, recalling the definition~\eqref{def:acexcess} of $a_c^{\ast}(\lambda)$ we have from the formulas of Theorem~\ref{th:formuleG} that
\[
a_c^*(\lambda) = \frac{\f(\lambda)}{(\Lambda_+^{-1}(\f(\lambda)) +  \Lambda_-^{-1}(\f(\lambda))) (\Lambda_+^{-1}(\f(\lambda)) + t_0^- )} \big( t_0^- - \Lambda_-^{-1}(\f(\lambda))\big) \,.
\]
The conclusion follows exactly as above, using~\eqref{eq:derivt0}; similarly for $a_c^{\ast\ast}(\lambda)$.
The case where $\lambda_-=\lambda_+<+\infty$ is treated identically.\qed

\subsubsection*{Behavior of the constant $C_a$ in~\eqref{eq:Ca} as $a\downarrow 0$}
Let us now conclude the proof of Proposition~\ref{th:DL_F}, regarding the behavior of the constant $C_a$ as $a\downarrow 0$.

First of all, notice that $\lambda_c(a)\downarrow \lambda_c$ as $a\downarrow 0$, so $\f(\lambda_c(a))\downarrow 0$ as $a\downarrow 0$.
In particular, we get that $\lambda_c(a)<\lambda_{+},\lambda_-$ for $a$ sufficiently small so we will use the formula~\eqref{eq:Ca} without the indicator functions.

From the asymptotic $a_c(\lambda) \sim \frac{\sigma^2}{2\sqrt{2}} \sqrt{c_3} (\lambda-\lambda_c)$ as $\lambda\downarrow \lambda_c$, we get that
$\lambda_c(a) -\lambda_c \sim  \frac{2\sqrt{2}}{\sigma^2 \sqrt{c_3}} a$ as $a\downarrow 0$.
Therefore, using~\eqref{eq:comportac}, we get that 
\[
\f(\lambda_c(a)) \sim \frac{8 a^2}{\sigma^2} \qquad \text{ as } a\downarrow 0 \,.
\]
Now, under Cram\'er's condition, we have that $\Lambda_{\pm}'\circ \Lambda_{\pm}^{-1}(x) \sim \Lambda_{\pm}'(\sqrt{ 2 x/\sigma^2} ) \sim \sqrt{2\sigma^2 x}$ as $x\downarrow 0$,  so that from the behavior of $\f(\lambda_c(a))$ that we just obtained, we get
\[
\lim_{a\downarrow 0} \frac{a}{\Lambda_{\pm}'\circ \Lambda_{\pm}^{-1}(\f(\lambda_c(a))) } = \frac14 \,.
\] 
In view of~\eqref{eq:Ca}, this gives that $C_a \sim \frac12 \f'(\lambda_c(a))$ as $a\downarrow 0$.

Now, using that $\f'(\lambda_c(a))\sim  2 c_3 \sigma^2 (\lambda_c(a)-\lambda_c)$ thanks to~\eqref{eq:criticF5} (and convexity) and the fact that $\lambda_c(a) -\lambda_c \sim  \frac{2\sqrt{2}}{\sigma^2 \sqrt{c_3}} a$ as seen above, we end up with $C_a \sim 2\sqrt{2 c_3} \, a$ as $a\downarrow 0$, which is what is claimed in Proposition~\ref{th:DL_F}.

\section{Proof of Theorems~\ref{th:freeenergy} and~\ref{th:leftrightpoints}}
\label{sec:proofs1}

Recall the definition~\eqref{def:psi} of $\psi(\lambda, a)$.
Note that when $a=0$, we have $\psi(\lambda,0)= \f(\lambda)$ since $\I_{\pm}(0)=0$ (the supremum in~\eqref{def:psi} is attained for $u=0,v=1$), we therefore focus on the case $a>0$.

\subsection{Some preliminary notation}
For an event $A$, we denote
\begin{equation} \label{eq:ZofA}
Z_{N,\lambda}^a (A) = \E \left[ \lambda^{H_N(S)} \un_{\Omega_N^+(S)} \un_{\{S_N = 0\}} \un_A \right] \,, 
\end{equation}
so that $\PP_{N,\lambda}^a(A) = \frac{Z_{N,\lambda}^a(A)}{Z_{N,\lambda}^a}$.
We also define, using the same notation as in~\cite{LT15},
\begin{equation}
\label{def:barcheckZ}
\bar Z_{N,\lambda}^a  := Z_{N,\lambda}^a\big( H_n^a(S)=0 \big)\,, \qquad
\check Z_{N,\lambda}^a := Z_{N,\lambda}^a\big( H_n^a(S) \geq 1 \big)\,,
\end{equation}
so that $Z_{N,\lambda}^a = \bar Z_{N,\lambda}^a+ \check Z_{N,\lambda}^a$.
(The notation is chosen so that the superscripts $\bar{~}$ and $\check{~}$ mimic the shape of random walk trajectories in both cases.)
Let us also define
\begin{equation}
\label{def:Zlr}
\check Z_{\ell,r} := Z_{N,\lambda}^a\big( L_N = \ell, R_N =r\big) \,,
\end{equation}
so in particular $\check Z_{N,\lambda}^a = \sum_{0\leq \ell \leq r \leq N} \check Z_{\ell,r}$.

Let us stress that from Lemma~\ref{lem:depth}, we have that $\check Z_{N,\lambda}^a$ whenever $a>\bar a$. For this reason, we will focus on the case $a\leq \bar a$.

\subsection{Proof of Theorem~\ref{th:freeenergy}}
 
We have that $Z_{N,\lambda}^a =\bar Z_{N,\lambda}^a + \check Z_{N,\lambda}^a$, so in particular $Z_{N,\lambda}^a $
\[
\max \{\bar Z_{N,\lambda}^a , \check Z_{N,\lambda}^a\} \leq Z_{N,\lambda}^a \leq 2 \max \{\bar Z_{N,\lambda}^a , \check Z_{N,\lambda}^a\} \,,
\]
and therefore
\[
\f(\lambda, a) = \max \Big\{ \lim_{N\to\infty} \frac1N \log \bar Z_{N,\lambda}^a,   \lim_{N\to\infty} \frac1N \log \check Z_{N,\lambda}^a\Big\}.
\]

We now prove the following two lemmas.
\begin{lemma}
\label{lem:barZ}
For any $\lambda>0$ and $a>0$, we have
\[
\lim_{N\to\infty} \frac1N \log \bar Z_{N,\lambda}^a =0 \,.
\]
For later purposes, we also give a more precise statement under some further assumption: 
if one has $\lim_{N\to\infty} \frac{1}{N}a_N = 0$, then
\begin{equation}
\label{asympZbar}
\bar Z_{N,\lambda}^a = \bP \Big( \min_{0\leq n\leq N} S_n > -\lfloor aN \rfloor, S_N = 0 \Big) \sim \bP(S_N=0) \sim \frac{f_{\alpha}(0)}{a_N} \qquad \text{ as $N\to\infty$}\,,
\end{equation}
where $f_{\alpha}$ is the density of the limiting $\alpha$-stable law.
\end{lemma}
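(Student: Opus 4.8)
The plan is to first recast $\bar Z_{N,\lambda}^a$ as a pure random-walk probability and then treat the coarse and the refined claims separately. On the event $\{H_N^a(S)=0\}$ the pinning weight equals $1$, and combined with the hard-wall constraint $\Omega_{N,a}^+(S)$ this event is exactly $\{S_n>-\lfloor aN\rfloor$ for all $1\le n\le N\}$; since $S_0=0>-\lfloor aN\rfloor$ once $N$ is large enough that $\lfloor aN\rfloor\ge1$, one gets $\bar Z_{N,\lambda}^a=\bP\big(\min_{0\le n\le N}S_n>-\lfloor aN\rfloor,\ S_N=0\big)$, which is the first identity in~\eqref{asympZbar}. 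This immediately yields the upper bound $\bar Z_{N,\lambda}^a\le\bP(S_N=0)$; by the local limit theorem under Assumption~\ref{hyp:1} (Gnedenko's theorem; see the context of Lemma~\ref{lem:LDPequal}), $\bP(S_N=0)\sim f_\alpha(0)/a_N$ is regularly varying, so $\limsup_N\frac1N\log\bar Z_{N,\lambda}^a\le0$. For the matching lower bound I would simply keep the trajectories made of a single positive excursion: $\{S_1>0,\dots,S_{N-1}>0,S_N=0\}\subseteq\{\min_{0\le n\le N}S_n>-\lfloor aN\rfloor,\ S_N=0\}$, hence $\bar Z_{N,\lambda}^a\ge f_N^+(0)\sim c_0/(Na_N)$ by Lemma~\ref{lem:kappa}; as $Na_N$ is regularly varying, $\liminf_N\frac1N\log\bar Z_{N,\lambda}^a\ge0$, and the two bounds give $\lim_N\frac1N\log\bar Z_{N,\lambda}^a=0$.

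For the refined statement, assume $a_N/N\to0$ and write $M:=\lfloor aN\rfloor$; in view of the upper bound it suffices to show $\bP\big(\min_{0\le n\le N}S_n\le-M,\ S_N=0\big)=o\big(\bP(S_N=0)\big)=o(1/a_N)$. I would bound this by $\bP(E_1,S_N=0)+\bP(E_2,S_N=0)$ with $E_1=\{\min_{0\le n\le N/2}S_n\le-M\}$ and $E_2=\{\exists\,N/2\le n\le N:\ S_n\le-M\}$. Since $E_1\in\mathcal F_{N/2}$, conditioning at time $N/2$ and using the uniform local bound $\sup_z\bP(S_m=z)\le C/a_m$ (again a consequence of the local limit theorem under Assumption~\ref{hyp:1}) gives $\bP(E_1,S_N=0)\le\frac{C}{a_{N/2}}\,\bP(E_1)$. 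For $E_2$, I would reverse time over $[0,N]$: the walk $\hat S_j:=S_N-S_{N-j}$ has the same joint law as $(S_j)_{0\le j\le N}$ because the $X_i$ are i.i.d., while $\hat S_N=S_N$ and $S_n=-\hat S_{N-n}$ whenever $S_N=0$, so $E_2\cap\{S_N=0\}=\{\max_{0\le j\le N/2}\hat S_j\ge M\}\cap\{\hat S_N=0\}$ and therefore $\bP(E_2,S_N=0)=\bP(\max_{0\le j\le N/2}S_j\ge M,\ S_N=0)\le\frac{C}{a_{N/2}}\,\bP(\max_{0\le j\le N/2}S_j\ge M)$ by the same conditioning. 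Finally, by Etemadi's maximal inequality $\bP(\max_{0\le n\le N/2}|S_n|\ge M)\le3\max_{0\le n\le N/2}\bP(|S_n|\ge M/3)$, and since $(a_n)$ is regularly varying of index $1/\alpha$ with $a_N=o(N)$ one has $\sup_{n\le N/2}a_n\le Ca_{N/2}=o(N)=o(M)$, so the threshold $M/(3a_n)\to\infty$ uniformly in $n\le N/2$ while $\{S_n/a_n\}$ is tight; hence that maximum is $o(1)$. Using $a_{N/2}^{-1}\le C'a_N^{-1}$ by regular variation, we conclude $\bP\big(\min_{0\le n\le N}S_n\le-M,\ S_N=0\big)=o(1/a_N)$, so $\bar Z_{N,\lambda}^a=\bP(S_N=0)\big(1-o(1)\big)\sim f_\alpha(0)/a_N$.

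The only genuinely delicate point is the contribution of $E_2$, i.e.\ of a late drop to level $-M$ followed by a short return to $0$: the crude bound $\bP(S_{N-n}=\cdot)\le C/a_{N-n}$ degenerates as $n\uparrow N$, so one cannot just split at $N/2$ naively. The time-reversal step is exactly what resolves this, converting a ``late and short'' return into an ``early and long'' excursion for the (equidistributed) reversed walk, after which only the local limit theorem, tightness of $S_n/a_n$ and regular variation of $(a_n)$ are needed; an alternative would be to invoke a scaling limit for the random-walk bridge, but the elementary split seems more self-contained. Note that the refined asymptotics genuinely uses $a_N=o(N)$: when $\alpha<1$ the fluctuations $a_N$ dominate the well depth $\lfloor aN\rfloor$ and $\bar Z_{N,\lambda}^a$ is no longer comparable to $\bP(S_N=0)$, although the coarse statement $\frac1N\log\bar Z_{N,\lambda}^a\to0$ still holds via the excursion lower bound. (One also checks trivially that for small $N$, where $\lfloor aN\rfloor=0$, and for $a>\bar a$, where $\check Z_{N,\lambda}^a=0$ by Lemma~\ref{lem:depth} so $\bar Z_{N,\lambda}^a=Z_{N,\lambda}^a$, nothing in the limit is affected.)
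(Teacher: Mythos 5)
Your proof is correct and follows essentially the same route as the paper: the same identity $\bar Z_{N,\lambda}^a=\bP(\min_{0\le n\le N}S_n>-\lfloor aN\rfloor,\,S_N=0)$, the same single-excursion lower bound via $f_N^+(0)\sim c_0/(Na_N)$ for the crude exponential statement, and for the refined claim the same reduction to showing $\bP(S_N=0,\ \min_n S_n\le-\lfloor aN\rfloor)=o(1/a_N)$ by splitting the crossing time at $N/2$ and using the uniform local-limit bound $\sup_z\bP(S_m=z)\le C/a_m$. Two minor differences are worth noting. First, the paper disposes of the late-crossing term ($T_a>N/2$) with the words ``treated identically''; your explicit time reversal $\hat S_j=S_N-S_{N-j}$ is exactly the step that makes this legitimate, since the naive bound $C/a_{N-k}$ degenerates as $k\uparrow N$ — you correctly flag this as the delicate point. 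Second, to show $\bP(\min_{0\le k\le N/2}S_k\le-\lfloor aN\rfloor)\to0$ the paper invokes the functional convergence of $a_N^{-1}S_{\lfloor tN\rfloor}$ to a stable L\'evy process, whereas you use Etemadi's maximal inequality together with tightness of $\{S_n/a_n\}$ and regular variation of $(a_n)$; your version is more elementary and equally valid (the finitely many small $n$ for which the regular-variation comparison of $a_n$ with $a_{N/2}$ is not yet effective are handled trivially since $\lfloor aN\rfloor\to\infty$). No gaps.
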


\begin{lemma}
\label{lem:checkZ}
For any $\lambda > 0$ and $a< \bar a$, we have 
\[
\lim_{N\to\infty} \frac1N \log \check Z_{N,\lambda}^a = \psi(\lambda, a)  \,.
\]
If $a=\bar a <+\infty$, we have $\limsup_{N\to\infty} \frac1N \log \check Z_{N,\lambda}^a \leq  \psi(\lambda, \bar a) <0$.
\end{lemma}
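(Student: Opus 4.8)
\smallskip
\noindent\emph{Proof plan.}
The idea is to decompose $\check Z_{N,\lambda}^a$ over the positions $\ell=L_N$, $r=R_N$ of the extreme contacts with the bottom of the well, i.e.\ to use $\check Z_{N,\lambda}^a=\sum_{0\le \ell\le r\le N}\check Z_{\ell,r}$ and, writing $h:=\lfloor aN\rfloor$, to establish by the Markov property the factorisation
\[
\check Z_{\ell,r}=\lambda\,\phi^-_\ell(h)\,Z_{r-\ell,\lambda}\,\phi^+_{N-r}(h),
\]
where $\phi^-_\ell(h):=\PP(S_i>-h,\ 0<i<\ell,\ S_\ell=-h)$ and $\phi^+_m(h):=\PP(S_i>-h,\ 0<i<m,\ S_m=0\mid S_0=-h)$ are the ``descent'' and ``ascent'' excursion probabilities, and $Z_{r-\ell,\lambda}$ is the standard wetting partition function~\eqref{eq:PNB2}: it appears because, once shifted so that the bottom of the well lies at level $0$, the piece of trajectory strictly between the extreme contacts is exactly a constrained wetting trajectory of length $r-\ell$ (with $Z_{0,\lambda}:=1$). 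Recall that by super-additivity of $(\log Z_{n,\lambda})_n$ one has $Z_{n,\lambda}\le e^{n\f(\lambda)}$ for every $n$, and $\tfrac1n\log Z_{n,\lambda}\to\f(\lambda)$; this will turn the middle factor into the term $(v-u)\f(\lambda)$ of $g_{\lambda,a}$.

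For the upper bound I would apply Chernov's inequality~\eqref{eq:boundLDP} to get $\phi^-_\ell(h)\le\PP(S_\ell\le-h)\le e^{-\ell\I_-(h/\ell)}$ and $\phi^+_{N-r}(h)\le e^{-(N-r)\I_+(h/(N-r))}$, and note that $\check Z_{\ell,r}=0$ unless $\bar x_-\ell\ge h$ and $\bar x_+(N-r)\ge h$, which confines $(\ell/N,r/N)$ to a compact set shrinking onto $\mathcal D_a$. Since each $\I_\pm$ is convex and finite on $[0,\bar x_\pm]$ --- hence continuous there, lower semicontinuity and monotonicity forbidding any jump at the endpoint --- the function $g_{\lambda,a}$ is uniformly continuous on the compact set $\mathcal D_a$; combining this with $h/\ell=a/(\ell/N)+O(1/N)$ and bounding the at most $N^2$ summands yields $\tfrac1N\log\check Z_{N,\lambda}^a\le\tfrac{2\log N}{N}+\max_{\mathcal D_a}g_{\lambda,a}+o(1)=\psi(\lambda,a)+o(1)$. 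When $a=\bar a$ the set $\mathcal D_{\bar a}$ is a single point and $\max_{\mathcal D_{\bar a}}g_{\lambda,\bar a}=\psi(\lambda,\bar a)<0$ by~\eqref{eq:caseabar}, which gives the claimed strict bound.

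For the lower bound I would fix $(u,v)$ with $a/\bar x_-<u<v<1-a/\bar x_+$ (such points being dense in $\mathcal D_a$ when $a<\bar a$), take $\ell=\lfloor uN\rfloor$, $r=\lfloor vN\rfloor$ and use $\check Z_{N,\lambda}^a\ge\check Z_{\ell,r}$. As $\tfrac1N\log Z_{r-\ell,\lambda}\to(v-u)\f(\lambda)$, everything then reduces to the matching large-deviation \emph{lower} bounds $\liminf_N\tfrac1N\log\phi^-_{\lfloor uN\rfloor}(\lfloor aN\rfloor)\ge-u\,\I_-(a/u)$ and $\liminf_N\tfrac1N\log\phi^+_{\lfloor(1-v)N\rfloor}(\lfloor aN\rfloor)\ge-(1-v)\,\I_+(a/(1-v))$; taking the supremum over such $(u,v)$ and using continuity of $g_{\lambda,a}$ on $\mathcal D_a$ then yields $\liminf_N\tfrac1N\log\check Z_{N,\lambda}^a\ge\psi(\lambda,a)$, which completes the proof.

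The hard part is precisely this restricted lower bound: in contrast with Lemma~\ref{lem:LDPequal}, the excursion is forced to stay above the floor, and one must check this constraint costs only a polynomial factor. I would prove it by exponential tilting. Tilting the increments to mean $-a/u$ (saturating at $t_0^-$, i.e.\ to mean $-\rho_-$, in the one-big-jump regime $a/u\ge\rho_-$) has cost exactly $e^{-\ell\I_-(a/u)(1+o(1))}$, using $h-(a/u)\ell=O(1)$ and continuity of $\I_-$; reversing time, $\phi^-_\ell(h)$ becomes, up to this factor, the probability that a walk with increments $-X_1$ and positive drift, started at $0$, stays strictly positive up to time $\ell$ and is at height $h\sim(a/u)\ell$ --- its typical position --- at time $\ell$. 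A local limit theorem for the tilted walk gives order $\ell^{-1/2}$ for hitting $h$, and a ballot-type argument handles positivity: a positive-drift walk stays positive forever with positive probability, and conditioning on its value at time $\ell$ alters this by at most a bounded factor (in the one-big-jump regime one additionally forces the required large increment at the first step, where no positivity constraint is yet active). I expect the uniformity of this estimate near the boundary $u=a/\bar x_-$, where $\I_-'$ may blow up and the tilt saturates, together with the one-big-jump bookkeeping, to be the most delicate points; the remaining steps are routine.
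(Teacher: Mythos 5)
Your skeleton coincides with the paper's: the same decomposition $\check Z_{N,\lambda}^a=\sum_{\ell\le r}\check Z_{\ell,r}$ with the factorisation into a descent piece, a standard wetting partition function $Z_{r-\ell,\lambda}$ and an ascent piece (the paper's~\eqref{eq:decompcheckZ}, with $Q_\pm$ playing the role of your $\phi^\pm$); the same Chernov/super-additivity upper bound term by term; the same treatment of $a=\bar a$ via~\eqref{eq:caseabar}; and the same reduction of the lower bound to the constrained local large-deviation estimates~\eqref{eq:Q-LDP}. The one genuine divergence is how you prove that last estimate (the paper's Lemma~\ref{lem:Q-LDP}): you propose exponential tilting plus a local limit theorem for the tilted walk and a ballot-type argument for the positivity constraint, with separate one-big-jump bookkeeping when $a/u\ge\rho_-$. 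The paper instead gives a more elementary surgery in Appendix~\ref{sec:LDPequal}: prepend $\gep n$ minimal steps to create a linear buffer, apply the \emph{unconstrained} local LDP (Lemma~\ref{lem:LDPequal}), and show by a stopping-time decomposition that the event of dipping below the buffer is exponentially negligible because $\I_+$ is increasing. The paper's route avoids the tilted LLT and the saturated regime entirely (it never needs $a/u<\rho_-$), at the price of only a log-scale estimate; your route is essentially the paper's own Proposition~\ref{prop:Qnx} (used later for sharp asymptotics), but there it is only carried out \emph{inside} Cramér's region, so the one-big-jump extension you sketch is genuinely extra work whose uniformity you rightly flag as delicate. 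One small imprecision: when $\bar x_-=+\infty$ and $t_0^-=+\infty$ (e.g.\ Gaussian tails), $u\,\I_-(a/u)\to+\infty$ as $u\downarrow 0$, so $g_{\lambda,a}$ is only upper semicontinuous (with values in $[-\infty,\infty)$) on your compact set $\mathcal D_a$, not uniformly continuous; upper semicontinuity is all the upper bound needs, so this does not affect the conclusion.
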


\noindent
These two lemmas readily conclude the proof.
Note in particular that when $\lambda\leq \lambda_c$ or $a=\bar a$, we have $\psi(\lambda, a) \leq 0$, so $\f(\lambda,a)=0$.
\qed

\begin{proof}[Proof of Lemma~\ref{lem:barZ}]
First of all, let us prove a general bound:
\[
1\geq \bar Z_{N,\lambda}^a  \geq \bP( S_1> 0, \ldots, S_{N-1} >0, S_N =0 ) \sim \frac{c_0}{N a_N} \qquad \text{ as $N\to\infty$}.
\]
The lower bound is obvious and the last asymptotic come from Lemma~\ref{lem:kappa}. 
This readily shows that $\lim_{N\to\infty}\frac1N \log \bar Z_{N,\lambda}^a =0$.

As far as the more precise asymptotic~\eqref{asympZbar} is concerned, note that the last part is simply the local central limit theorem, see \emph{e.g.}\ \cite[Ch.~9 \S50]{GK54}.
Let us therefore focus on the first asymptotic, for which  we need to show that
\begin{equation}
\label{eq:LLTnegligible}
\bP(S_N=0  , \min_{0\leq n\leq N} S_n \leq -\lfloor aN \rfloor ) = o(1/a_N) \qquad\text{ as $N\to\infty$}.
\end{equation}
Decomposing according to the instant $T_{a}:= \min\{ n, S_n \leq - \lfloor aN \rfloor\}$, we have that the above probability is equal to 
\[
\bP\big(S_N=0  , T_a\leq N \big) = \bP\big(S_N=0  , T_a\leq N/2 \big) + \bP\big(S_N=0  ,  N/2 < T_a \le N\big) \,.
\]
We treat only the first term; the second one is treated identically.
We have
\[
\begin{split}
\bP\big(S_N=0  , T_a\leq N/2 \big)& = \sum_{k=1}^{N/2} \sum_{x\geq \lfloor aN \rfloor} \bP(T_a=k, S_k=-x) \bP(S_{N-k} =x) \\
&\leq \frac{C}{a_N} \sum_{k=1}^{N/2} \sum_{x\geq \lfloor aN \rfloor} \bP(T_a=k, S_k=-x) = \frac{C}{a_N} \bP\big( \min_{1\leq k\leq N/2}S_k  \leq - \lfloor aN \rfloor \big) \,,
\end{split}
\]
where we have used the local central limit theorem to get that $\bP(S_{n-k} =x)\leq C/a_{N-k} \leq C'/a_N$ uniformly for $k\leq N/2$.
Now, since $(a_N^{-1} S_{\lfloor tN \rfloor})_{t\in [0,1]}$ converges in distribution to an $\alpha$-stable Lévy process (for the Skorokhod topology), we get that 
\[
\lim_{N\to\infty} \bP\Big( \min_{0\leq k\leq N/2} S_k \leq -\lfloor aN \rfloor  \Big) =0 \,, 
\]
provided that $\lim_{N\to\infty} \frac{\lfloor aN \rfloor}{a_N} =+\infty$.
This concludes the proof of~\eqref{eq:LLTnegligible}, since we assumed that $\lim_{N\to\infty} \frac1N a_N = 0$.
\end{proof}

\begin{proof}[Proof of Lemma~\ref{lem:checkZ}]
As far as $\check Z_{N,\lambda}^a$ is concerned, we have
\begin{equation}
\label{eq:checkZsum}
\check Z_{N,\lambda}^a = \sum_{0< \ell \leq r < N} \check Z_{\ell,r} \,,
\end{equation}
and in particular $\max_{0< \ell \leq r < N}\{\check Z_{\ell,r}\} \leq \check Z_{N,\lambda}^a \leq N^2 \max_{0< \ell \leq r < N}\{\check Z_{\ell,r}\}$,
so we focus on $\check Z_{\ell,r}$.

For $n\in \bbN$ and $x\in \bbN$, we define
\begin{equation}
\label{eq:defq}
\begin{split}
Q_+(n,x) &= \bP\big( S_1>0, \ldots, S_{n-1}>0 , S_n=x\big) \,, \\
Q_-(n,x) &= \bP\big( S_1<0, \ldots, S_n<0 , S_{n-1}=-x\big)\,,
\end{split}
\end{equation}
so we can write
\begin{equation}
\label{eq:decompcheckZ}
\check Z_{\ell,r} = Q_-(\ell, \lfloor aN \rfloor) \,Z_{r-\ell,\lambda} \, Q_+(N-r, \lfloor aN \rfloor) \,,
\end{equation}
where $Z_{r-\ell,\lambda}=Z_{r-\ell,\lambda}^0$ is the partition function of the original wetting model (from Section~\ref{sec:wetting}), of length $r-\ell$. 

\smallskip
\noindent
{\it Upper bound.}
As far as the upper bound is concerned, we can use~\eqref{eq:boundLDP} to obtain
\[
Q_+(n,x) \leq \bP( S_n\geq x) \leq  e^{- n \I_+( \frac{x}{n})}\,,
\qquad 
Q_-(n,x) \leq \bP( S_n\geq x) \leq  e^{- n \I_-( \frac{x}{n})} \,.
\]
Together with~\eqref{eq:identityZNlambda}, we therefore get that, for any $0<\ell\leq r <N$,
\begin{equation}
\label{eq:uppercheckZ}
\begin{split}
\check Z_{\ell,r}& \leq  \exp\bigg( - \ell \I_-\Big( \frac{\lfloor aN \rfloor }{\ell} \Big) +  (r-\ell)\f(\lambda) - (N-r) \I_+\Big( \frac{\lfloor aN \rfloor }{N-r}\Big)\bigg) \\
&\leq  \exp\bigg(- N u_{\ell} \I_- \Big(\frac{a}{u_{\ell}}\Big) +  N(v_r-u_\ell)\f(\lambda) - N(1-v_r) \I_+\Big( \frac{a}{1-v_r}\Big) \bigg)  \leq  e^{ N\psi(\lambda, a) }\,,
\end{split}
\end{equation}
where we have set $u_{\ell} = \frac{\ell}{N}$ and $v_{r} = \frac{r}{N}$ and used that $\I_{\pm}$ is non-decreasing, with $\lfloor aN \rfloor \geq aN$.
The last inequality comes from the fact that $u_{\ell}, v_r$ is a subset of $0\leq u\leq v\leq 1$, together with the definition~\eqref{def:psi} of $\psi(\lambda,a)$.
We have therefore proven that
\[
\check Z_{N,\lambda}^a \leq N^2 e^{ N\psi(\lambda, a) }\,,
\]
so $\limsup_{N\to\infty} \frac1N \log \check Z_{N,\lambda}^a \leq \psi(\lambda, a)$.


\smallskip
\noindent
{\it Lower bound.} 
For the lower bound, we only consider the case $a<\bar a$.
Let us fix $(u,v)$ such that $\frac{a}{\bar x_-}< u < v <1 -\frac{a}{\bar x_+}$.
We define $\ell_N:= \lfloor u N \rfloor$ and $r_N:= \lfloor v N\rfloor$. Since $ \check Z_{N,\lambda}^a \geq \check Z_{\ell_N, r_N}$, we simply need to show that 
\begin{equation*}
\label{eq:liminfcheckZ}
\liminf_{N\to\infty} \frac1N \log \check Z_{\ell_N, r_N} \geq g_{\lambda,a}(u,v) :=  (v-u) \f(\lambda) - u \,\I_-\Big(\frac{a}{u}\Big) - (1-v)\, \I_+\Big(\frac{a}{1-v}\Big)\,,
\end{equation*}
to ensure that 
\[
\liminf_{N\to\infty} \frac1N \log \check Z_{N,\lambda}^a \geq \sup_{\frac{a}{\bar x_-}< u < v <1 -\frac{a}{\bar x_+}} g_{\lambda,a}(u,v) = \psi(\lambda, a) \,, 
\]
which concludes the lower bound.

Using the same decomposition as in~\eqref{eq:decompcheckZ} and the definition~\eqref{def:freeenergy} of the free energy of the standard wetting model, we therefore simply have to prove that
\begin{equation}
\label{eq:Q-LDP}
\begin{split}
\liminf_{N\to\infty} \frac1N \log Q_-(\ell_N, \lfloor aN \rfloor) &\geq -  u\I_-\Big( \frac{a}{u} \Big) \,,\\
\liminf_{N\to\infty} \frac1N \log Q_+(N-r_N, \lfloor aN \rfloor) & \geq -  (1-v)\I_+\Big( \frac{a}{1-v} \Big) \,.
\end{split}
\end{equation}

We therefore rely on the following lemma, whose proof is similar to that of Lemma~\ref{lem:LDPequal} and postponed to Appendix~\ref{sec:LDPequal}.
\begin{lemma}
\label{lem:Q-LDP}
Suppose that Assumption~\ref{hyp:1} holds.
For any $0\leq x < \bar x_+$, for any sequence of integers $(x_n)_{n\geq 0}$ such that $\lim_{n\to\infty} \frac1n x_n =x$, we have
\[
\liminf_{n\to\infty} \frac1n \log Q_+(n, x_n) \geq - \I_+(x) \,.
\]
A similar statement holds for $Q_-$.
\end{lemma}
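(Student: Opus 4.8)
The plan is to prove Lemma~\ref{lem:Q-LDP} by exponential tilting, exactly paralleling the proof of Lemma~\ref{lem:LDPequal}; the only genuinely new ingredient is the hard-wall event $\{S_1>0,\dots,S_{n-1}>0\}$, which we must show contributes only sub-exponentially.

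\emph{Step 1: reduction by tilting.} Fix $0\le x<\bar x_+$. If $t_0^+=0$ then $\I_+\equiv0$ and $Q_+(n,x_n)\ge\bP(S_1>0,\dots,S_{n-1}>0,S_n=x_n)$, a degenerate case we fold into the case $m=0$ below; so assume $t_0^+>0$ and set $t^*=t^*(x)\in(0,t_0^+]$ equal to $(\Lambda_+')^{-1}(x)$ if $x<\rho_+$ and to $t_0^+$ if $\rho_+\le x<\bar x_+$ (the latter situation forces $\rho_+,t_0^+,\Lambda_+(t_0^+)<+\infty$, see Table~\ref{table:LDP}). In both cases $\Lambda_+(t^*)<+\infty$, so the tilted law $\bP_{t^*}$ of~\eqref{def:tiltedPt0} is a probability measure whose i.i.d.\ increments have finite mean $m:=\Lambda_+'(t^*)=\min(x,\rho_+)\ge0$. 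Using $\tfrac{\dd\bP}{\dd\bP_{t^*}}=e^{-t^*S_n+n\Lambda_+(t^*)}$ along the path and $e^{-t^*S_n}=e^{-t^*x_n}$ on $\{S_n=x_n\}$ gives
\[
Q_+(n,x_n)=e^{\,n\Lambda_+(t^*)-t^*x_n}\;\bP_{t^*}\big(S_1>0,\dots,S_{n-1}>0,\,S_n=x_n\big)\,.
\]
Since $x_n/n\to x$, the prefactor has rate $\Lambda_+(t^*)-t^*x=-\I_+(x)$ (by~\eqref{formulasI+} if $x<\rho_+$, and by the expression of $\I_+$ on $[\rho_+,+\infty)$ recalled just after~\eqref{formulasI+} if $x\ge\rho_+$). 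Hence it remains to prove the purely probabilistic estimate
\begin{equation}\label{eq:plan-reduced}
\liminf_{n\to\infty}\tfrac1n\log\bP_{t^*}\big(S_1>0,\dots,S_{n-1}>0,\,S_n=x_n\big)\ \ge\ 0\,.
\end{equation}

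\emph{Step 2: the tilted walk stays positive at no exponential cost.} Write $\bar\bP:=\bP_{t^*}$, an aperiodic $\mathbb Z$-valued walk with drift $m\ge0$ and $x_n/n\to x\ge m$. We bound~\eqref{eq:plan-reduced} from below by a two-block decomposition at time $\ent{\delta n}$, for a small fixed $\delta>0$: on the first block the walk stays positive and reaches a level $z_n$ with $z_n/n\to m\delta$, and on the second block of length $n'=n-\ent{\delta n}$ it goes from $z_n$ to $x_n$ while staying positive. When $m>0$ the first-block probability is $e^{-o(n)}$ — combine $\bar\bP(S_i>0\ \forall i\ge1)>0$, a standard consequence of positive drift, with a local/moderate-deviation lower bound for the landing point $z_n\asymp m\delta n$ — and on the second block, starting from the high level $z_n\asymp n$, staying positive costs only a bounded factor, so one is left with $\bar\bP_{z_n}(S_{n'}=x_n)\ge e^{-o(n)}$ with $(x_n-z_n)/n'\to(x-m\delta)/(1-\delta)$; if $x<\rho_+$ (so $m=x$, finite tilted variance) we pick $\delta$ so that this ratio equals $m$ and invoke the local CLT, while if $x\ge\rho_+$ (so $m=\rho_+<x$) the excess $(x-\rho_+)n'$ is produced by one — or, in spiky distributions, $O(\log n)$ — big jumps placed early, at sub-exponential total cost, exactly as in the proof of Lemma~\ref{lem:LDPequal}. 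When $m=0$ (which also covers $t_0^+=0$; here $\bar\bP=\bP$ and $x_n=o(n)$) the walk lives near the wall and both blocks are controlled by the local limit theorem for the walk conditioned to stay positive: the landmark case $z_n=0$ is precisely Lemma~\ref{lem:kappa}, and its $o(n)$-level refinement follows from the same estimates. Assembling the two blocks yields~\eqref{eq:plan-reduced}, hence Lemma~\ref{lem:Q-LDP}; the statement for $Q_-$ is obtained symmetrically with $\Lambda_-,\I_-,\rho_-$.

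\emph{Main obstacle.} Relative to the unconstrained Lemma~\ref{lem:LDPequal}, the entire difficulty is showing that conditioning on the hard wall $\{S_i>0\}$ is sub-exponentially cheap under the tilt. This is painless when $0<x<\rho_+$, where the tilted walk has strictly positive drift and finite variance, so a finite-variance local CLT plus the elementary positive-drift estimate do the job; it is more delicate at $x=0$ (walk pinned near the wall, requiring the fine conditioned-to-stay-positive asymptotics of Lemma~\ref{lem:kappa} and their moderate-deviation version) and at $x\ge\rho_+$ (tilted walk with no positive exponential moment and possibly infinite variance, where reaching the exact endpoint $x_n$ may need several big jumps that must be threaded through the positivity constraint). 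As in Lemma~\ref{lem:LDPequal}, the latter regime is where care is needed; but since only a lower bound on the exponential scale is sought, it is enough to exhibit \emph{one} realization of the target event of probability $e^{-o(n)}$, and the constructions above (a finite-variance bridge near the mean, or a bounded-in-$\log n$ number of big jumps inserted early so that the walk remains positive afterwards) supply it.
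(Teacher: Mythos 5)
Your strategy (tilt by $t^*$, then show the hard--wall constraint costs only $e^{-o(n)}$ under the tilted law) is a legitimate alternative to what the paper does, and it is essentially complete in the Cram\'er bulk $0<x<\rho_+$, where the tilted walk has positive drift and two-sided exponential moments so that both the local CLT and the ``never dip to $-z_n$'' estimate are routine. The paper's proof never tilts: it prepends $\gep n$ deterministic minimal positive jumps $k_0'=\min\{k\geq 1,\bP(X_1=k)>0\}$, at exponential cost $\gep \log \bP(X_1=k_0')$ which vanishes as $\gep\downarrow 0$; this lifts the walk to height $\gep k_0' n$, converts the wall into a ``stay above $-\gep k_0' n$'' constraint that is killed by the crude bound~\eqref{eq:boundLDP} and strict monotonicity of $\I_+$, and then the unconstrained Lemma~\ref{lem:LDPequal} finishes the job. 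The case $x=0$ is reduced to $x>0$ by splitting at time $n/2$ and using duality. The payoff of the paper's route is that all the delicate local analysis (including the $x\geq\rho_+$ and $t_0^+=0$ regimes) is quarantined inside the already-proved Lemma~\ref{lem:LDPequal}, so no tilted heavy-tailed walks ever appear.

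Precisely in the regimes your own sketch flags as delicate, there are concrete gaps. First, your reduction of $t_0^+=0$ to ``$m=0$, $x_n=o(n)$'' is wrong: $t_0^+=0$ forces $\bar x_+=+\infty$, so $x$ may be any positive number, and the target $x_n\sim xn$ is then a genuine upward deviation of a centered walk; the ``walk lives near the wall'' picture does not apply, and as written this case is not covered (it can be repaired with the same multi-jump device as below, or as in the paper's first display for this lemma). Second, for $x\geq\rho_+$ the claim that the excess $(x-\rho_+)n$ is produced by ``one, or $O(\log n)$, big jumps'' is not correct in general: since one only knows $\limsup_k \frac1k\log\bP_{t_0^+}(X_1=k)=0$, one must use $\Theta(n/k)$ jumps of a single well-chosen size $k$ with $\bP_{t_0^+}(X_1=k)\geq e^{-\gep k}$ (total cost $e^{-\gep(x-\rho_+)n}$), and one still has to place them so the walk stays positive \emph{and} lands exactly at $x_n$ with the remaining steps --- none of which is spelled out. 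Third, at $x=0$ (with $t_0^+>0$) you invoke an ``$o(n)$-level refinement'' of Lemma~\ref{lem:kappa}, i.e.\ a conditioned-to-stay-positive local estimate at height $x_n=o(n)$ (possibly as large as $n/\log n$); this is not established anywhere in the paper and is not a trivial consequence of $f_n^+(0)\sim c_0/(na_n)$. Each gap is fixable, but as it stands the proposal is a plan rather than a proof outside the interval $(0,\rho_+)$.
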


This lemma readily proves~\eqref{eq:Q-LDP}, since $\frac{a}{u}<\bar x_-$ and $\frac{a}{1-v}<\bar x_+$.
This concludes the proof of  Lemma~\ref{lem:checkZ}.
\end{proof}

\subsection{Proof of Theorem~\ref{th:leftrightpoints}}

First of all, let us note that in the supercritical case $\lambda > \lambda_c(a)$, we have $\psi(\lambda, a)>0$. Hence, thanks to Lemmas~\ref{lem:barZ}-\ref{lem:checkZ} we have $\lim_{N\to\infty} \bar Z_{N,\lambda}^a/\check Z_{N,\lambda}^a =0 $.
We therefore obtain that
\[
\lim_{N\to\infty} \bP_{N,\lambda}^a \big(H_N^a(S) >0 \big) =
\lim_{N\to\infty} \frac{\check Z_{N,\lambda}^a}{\bar Z_{N,\lambda}^a+\check Z_{N,\lambda}^a} =1 \,.
\]

Let us now show something slightly more general than needed: for any $\lambda> 0$ and $a<\bar a$, for any fixed $\gep >0$ we have
\begin{equation}
\label{eq:lemleftright}
\lim_{N\to\infty}\bP_{N,\lambda}^a\bigg( \mathrm{dist}\Big( \tfrac1N ( L_N, R_N ), \argmax \psi \Big) >\gep  \;\Big|\; H_N^a(S) >0 \bigg) =0 \,,
\end{equation}
where we used the shorthand notation $\argmax \psi = \{ (u,v) \in \mathcal{D}, g_{\lambda,a}(u,v) = \psi(\lambda,a)\}$ for the set of maximizers of $\psi (\lambda,a)$ in~\eqref{def:psi}; note that the supremum is attained, see Lemma~\ref{lem:psi} if $w_+^*+w_-^*\leq 1$ or~\eqref{infimumg+g-} if $w_+^*+w_-^*\geq 1$ (recall that $g_{\lambda,a}^-,g_{\lambda,a}^+$ are continuous).
Together with Lemma~\ref{lem:psi} (and Lemma~\ref{lem:w+w-}), this concludes the proof of~\eqref{eq:leftright} in the supercritical case $\lambda>\lambda_c(a)$.

The proof of~\eqref{eq:lemleftright} is easy, since the probability is the ratio
\[
\frac{1}{\check Z_{N,\lambda}^a}Z_{N,\lambda}^a\bigg( \mathrm{dist}\Big( \tfrac1N ( L_N, R_N ), \argmin \psi \Big) >\gep  \bigg) = \frac{1}{\check Z_{N,\lambda}^a}\sum_{\mathrm{dist}( ( \frac{\ell}{N}, \frac{r}{N} ), \argmin \psi ) >\gep} \check Z_{\ell, r} \,.
\]
Using the same upper bound as in~\eqref{eq:uppercheckZ}, we get that the sum is bounded by $ N^2 e^{N \psi_{\gep}(\lambda, a)}$,
with 
\[
\psi_{\gep}(\lambda, a) := \suptwo{(u,v)\in \mathcal{D}}{\mathrm{dist}( (u,v), \argmin \psi ) >\gep}  g_{\lambda,a}(u,v) \,.
\]
Together with to Lemma~\ref{lem:checkZ}, we get that
\[
\limsup_{N\to\infty} \frac{1}{N} \log \bP_{N,\lambda}^a\bigg(  \mathrm{dist}\Big( \tfrac1N ( L_N, R_N ), \argmin \psi\Big) >\gep \;\Big|\; H_N^a(S) >0 \bigg)
\leq \psi_{\gep}(\lambda, a) - \psi(\lambda, a) <0 \,,
\]
where we have used the continuity of $(u,v) \mapsto g_{\lambda,a}(u,v)$ to obtain the strict inequality at the end.
This concludes the proof of~\eqref{eq:lemleftright} and of \eqref{eq:leftright} in Theorem~\ref{th:leftrightpoints}.

\smallskip
As far as the subcritical case $\lambda < \lambda_c(a)$ is concerned, let us distinguish two possibilities.
First, if either one of $\I_+,\I_-$ is not identically equal to~$0$. Then $\lambda \mapsto\psi(\lambda, a)$ is strictly negative on $[0,\lambda_c(0)]$ where $\f(\lambda)=0$ and then strictly increasing: we therefore get that $\psi(\lambda, a)<0$ for any $\lambda <\lambda_c(a)$ since $\psi(\lambda_c(a), a)=0$ by continuity. 
From Lemmas~\ref{lem:barZ}-\ref{lem:checkZ}, we get that
$\lim_{N\to\infty} \check Z_{N,\lambda}^a / \bar Z_{N,\lambda}^a =0$, which readily implies that
\[
\lim_{N\to\infty} \bP_{N,\lambda}^a \big(H_N^a(S)=0 \big) =
\lim_{N\to\infty} \frac{\bar Z_{N,\lambda}^a}{\bar Z_{N,\lambda}^a+\check Z_{N,\lambda}^a} =1 \,.
\]

Second, if we have that $\I_+,\I_-$ are both identically equal to $0$. 
Then we have $\psi(\lambda, a)= \f(\lambda)$ for any $\lambda >0$, hence $\lambda_c(a)=\lambda_c$ for any $a>0$.
We show below that for any $\lambda<\lambda_c =\lambda_c(a)$, there is a constant $C>0$ such that
\begin{equation}
\label{eq:checkZbound}
\check Z_{N,\lambda}^a \leq C  \bP\big( S_N=0, \min_{1\leq k \leq N} S_k \leq -\lfloor aN \rfloor \big) \,,
\end{equation}
so we get that $\check Z_{N,\lambda}^a = o(1/a_N)$ thanks to~\eqref{eq:LLTnegligible}. 
Recalling Lemma~\ref{lem:barZ}-\eqref{asympZbar}, we get that $\lim_{N\to\infty} \check Z_{N,\lambda}^a/\bar Z_{N,\lambda}^a =0$, which allows us to conclude as above.

To prove~\eqref{eq:checkZbound}, we use that for any $\lambda<\lambda_c$, there is a constant $C_{\lambda}$ such that $Z_{n,\lambda}\sim C_{\lambda} K(n)$ as $n\to\infty$, see \cite[Thm.~2.2]{Giac07}, where $K(n)= \kappa^{-1} f_n^+(0)$ is defined in~\eqref{def:K1}.
Therefore, using also Lemma~\ref{lem:kappa} and the local central limit theorem to get that $f_n^+(0) \leq C/a_n \leq  \bP(S_n=0)$, 
we get that $Z_{n,\lambda} \leq C' \bP(S_n=0)$ for all $n\geq 0$, for some constant $C'$.
All together, we have that $\check Z_{N,\lambda}^a$ is bounded by a constant times
\[
 \sum_{1\leq \ell\leq r\leq N} Q_-(\ell , \lfloor a N\rfloor) \bP(S_{r-\ell} =0) Q_+(n-r,\lfloor a N\rfloor) 
\leq  \bP\big( S_N=0, \min_{1\leq k \leq N} S_k \leq -\lfloor aN \rfloor \big) \,,
\]
which is exactly what is claimed in~\eqref{eq:checkZbound}.
\qed

\section{Sharp asymptotic behavior of the partition function, fluctuations of $L_N$, $R_N$}
\label{sec:proofs2}

In this section, we prove Theorem~\ref{th:Gaussian}, \textit{i.e.}\ we provide, in Cram\'er's region, an exact asymptotic behavior of $\check Z_{N,\lambda}$ and a local central limit theorem for $L_N,R_N$.

\subsection{Local large deviation with a positivity constraint}

Let us start with the proof of a technical estimate which is key in our proof.
It is a precise estimate of $Q_{\pm}(n,x_n)$ inside Cram\'er's region; recall that $Q_{\pm}(n,x_n)$ is defined in~\eqref{eq:defq}.

\begin{proposition}
\label{prop:Qnx}
Let $x\in (0,\rho_+)$ and let $(\delta_n)_{n\geq 0}$ be a sequence such that $\lim_{n\to\infty}\delta_n =0$.
Then, there exists a constant $p_x \in (0,1)$ such that, uniformly for non-negative sequences $(x_n)_{n\geq 0}$ with $|\frac1n x_n - x|\leq \delta_n$, we have
\[
Q_+(n,x_n) \sim \frac{p_x \sqrt{\I_+''(x)}}{\sqrt{2\pi  n } } \, e^{ - n \I_+\big(\frac{x_n}{n}\big) }\,, \quad \text{ as } n\to\infty \,.
\]
A similar statement holds for $Q_-(n,x_n)$.
\end{proposition}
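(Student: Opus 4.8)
The plan is to combine exponential tilting with a ``conditioned local limit theorem'' for a random walk with positive drift; I treat $Q_+$, the statement for $Q_-$ being identical after replacing $X_1$ by $-X_1$. First I would tilt: writing $\mu_n:=x_n/n\in[x-\delta_n,x+\delta_n]$ (so $\mu_n$ stays in a fixed compact subinterval of $(0,\rho_+)$ for $n$ large) and $t_n:=(\Lambda_+')^{-1}(\mu_n)\in(0,t_0^+)$, under the tilted measure $\bP_{t_n}$ of~\eqref{def:tiltedPt0} the increments are i.i.d.\ with mean $\mu_n$ and variance $\sigma_n^2:=\Lambda_+''(t_n)$, and the walk is still aperiodic (mutual absolute continuity with $\bP$ on $\mathbb Z$). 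Since $\mu_n\to x$ we get $t_n\to t_x:=(\Lambda_+')^{-1}(x)$ and $\sigma_n^2\to\Lambda_+''(t_x)=1/\I_+''(x)$ (see~\eqref{formulasI+}), uniformly over admissible $(x_n)$. A change of measure in~\eqref{eq:defq}, using $n\Lambda_+(t_n)-t_n x_n=-n\I_+(\mu_n)$, yields the exact identity $Q_+(n,x_n)=e^{-n\I_+(\mu_n)}A_n$ with $A_n:=\bP_{t_n}(S_1>0,\dots,S_{n-1}>0,S_n=x_n)$, so it suffices to prove $A_n\sim p_x/\sqrt{2\pi\sigma_n^2 n}$, uniformly, for some $p_x\in(0,1)$.

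For the conditioned LLT I would fix an intermediate scale $m_n:=\lfloor n^{2/3}\rfloor$ (so that $\sqrt n\ll m_n\ll n$) and decompose $A_n$ at time $m_n$ by conditioning on $y=S_{m_n}$ via the Markov property. On the typical window $y\in G_n:=\mu_n m_n+[-n^{1/3}\log n,\,n^{1/3}\log n]$ one has $x_n-y=\mu_n(n-m_n)+O(n^{1/3}\log n)$, so a local limit theorem uniform over the compact family $\{\bP_t:|t-t_x|\le\gep\}$ gives $\bP_{t_n}(S_{n-m_n}=x_n-y)=(1+o(1))/\sqrt{2\pi\sigma_n^2 n}$; and since such a $y$ satisfies $y\ge cn^{2/3}\gg\sqrt n$ while the drift $\mu_n$ is bounded below, a Chernoff bound shows a walk started at $y$ stays positive over $\le n$ steps with probability $1-O(ne^{-c'n^{1/3}})$, so the positivity constraint on $[m_n,n]$ is asymptotically free. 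The contribution of $y\notin G_n$ is $O\big(\tfrac1{\sqrt n}\bP_{t_n}(|S_{m_n}-\mu_n m_n|>n^{1/3}\log n)\big)=O(\tfrac1{\sqrt n}e^{-c\log^2 n})$, negligible (sub-Gaussian tails, valid since $n^{1/3}\log n\ll m_n$ and Cramér's condition holds). This gives $A_n=(q_n+o(1))/\sqrt{2\pi\sigma_n^2 n}$ with $q_n:=\bP_{t_n}(S_1>0,\dots,S_{m_n-1}>0)$.

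It remains to show $q_n$ converges to a limit $p_x\in(0,1)$. One has $0\le q_n-\bP_{t_n}(S_k>0\ \forall k\ge1)\le\sum_{k\ge m_n}\bP_{t_n}(S_k\le0)=O(e^{-cm_n})\to0$, while the Sparre--Andersen/Wiener--Hopf identity gives $\bP_{t_n}(S_k>0\ \forall k\ge1)=\exp\big(-\sum_{k\ge1}\tfrac1k\bP_{t_n}(S_k\le0)\big)$; since each term $\tfrac1k\bP_t(S_k\le0)$ is continuous in $t$ and dominated by $\tfrac1k e^{-ck}$ on $\{|t-t_x|\le\gep\}$, dominated convergence gives $q_n\to p_x:=\exp\big(-\sum_{k\ge1}\tfrac1k\bP_{t_x}(S_k\le0)\big)=\bP_{t_x}(S_k>0\ \forall k\ge1)$, uniformly. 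Finiteness of the sum gives $p_x>0$, and $p_x\le\bP_{t_x}(S_1>0)<1$ because $\bP_{t_x}(X_1<0)=\bP(X_1<0)>0$ (the increments are centered and non-degenerate under Assumption~\ref{hyp:1}). Combining this with $\sigma_n^2\to1/\I_+''(x)$ and the identity of Step~1 completes the proof.

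The main obstacle is the uniformity: because $(\delta_n)$ may tend to $0$ arbitrarily slowly, one cannot replace $\bP_{t_n}$ by the fixed limit law $\bP_{t_x}$ over $O(n)$ steps (the Radon--Nikodym factor $e^{(t_n-t_x)(S_n-n\mu_n)}$ is not controllable), so the local limit theorem, the Chernoff barrier estimates and the renewal identity must all be run directly under $\bP_{t_n}$ with constants depending only on the compact ranges of $\mu_n$ and $t_n$; and the intermediate scale $m_n$ must be chosen to do three things at once — make the endpoint conditioning irrelevant near time $0$ ($m_n=o(n)$), push the walk to a height where positivity is essentially automatic ($m_n\gg\sqrt n$), and keep the sub-Gaussian control of $S_{m_n}$ effective.
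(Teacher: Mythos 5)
Your proof is correct, and its skeleton (exponential tilting so that $Q_+(n,x_n)=e^{-n\I_+(x_n/n)}\bP_{t_n}(A_n^+,S_n=x_n)$, then a decomposition at an intermediate time followed by a uniform local limit theorem for the bulk and a separate treatment of the positive prefix) is the same as the paper's. The one genuinely different ingredient is how you handle the prefix probability $\bP_{t_n}(S_1>0,\dots,S_{m_n}>0)\to p_x$. The paper proves this (Lemma~\ref{lem_prob}) by a H\"older-inequality comparison between $\bP_{t_n}$ and $\bP_{t_x}$, which only works under the extra constraint $k_n(t_n-t_x)^2\to 0$; this is why the paper's intermediate scale $k_n$ is chosen adaptively with $k_n\delta_n^2\to0$. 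Your fixed choice $m_n=n^{2/3}$ would violate that constraint when $\delta_n$ decays slowly, so you genuinely need your alternative argument: the Sparre--Andersen/Wiener--Hopf identity $\bP_t(S_k>0\ \forall k)=\exp(-\sum_k\frac1k\bP_t(S_k\le0))$ together with continuity in $t$ of each summand and a uniform Chernoff domination on a compact neighborhood of $t_x$. This is a clean trade: it decouples the choice of intermediate scale from the rate of $\delta_n$ and yields the uniformity directly from continuity of an explicit function of $t$, at the price of invoking the fluctuation-theory identity (the paper's H\"older argument is more self-contained). The rest of your estimates — the $O(ne^{-cn^{2/3}})$ barrier bound making the positivity constraint free on the good window, the $e^{-c\log^2n}$ moderate-deviation bound off the window (note that the required two-sided exponential moment under $\bP_{t_n}$ comes for free from $t_n$ being bounded away from $0$ and $t_0^+$, so you do not actually need the full Assumption~\ref{hyp:Cramer}) — all check out; the only slip is the assertion $\bP_{t_x}(X_1<0)=\bP(X_1<0)$, which should be an equivalence of positivity rather than an equality, and does not affect the conclusion $p_x<1$.
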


This result is a version of a local large deviation estimate with the additional positivity constraint.
Our proof is quite standard, using a change of measure argument, but we also have to deal with the positivity constraint, which makes things slightly more technical.

\begin{proof}
For $t \in [0,t_0^+)$, we define the tilted law of $X_1$:
\begin{equation}
\label{def:tiltedPt}
\frac{\dd \bP_t}{\dd \bP} (x) = \frac{e^{tx}}{\bE[e^{tX_1}]}  = e^{tx-\Lambda_+(x)} \,.
\end{equation}
With a slight abuse of notation, we will also write $\bP_t$ for the law of i.i.d.\ copies.
Let us denote, for $t<t_0^+$,
\begin{equation}
m(t) := \bE_t[X_1] = \Lambda'_+(t)\,,
\qquad
\sigma^2(t) := \mathbf{Var}_t(X_1) = \Lambda''(t)\,.
\end{equation}
Notice that $m=\Lambda'$ is strictly increasing and continuous from $[0,t_0^+)$ to $[0,\rho_+)$, so we may define $t_{x} := m^{-1}(x)$ for any $x\in [0,\rho_+)$, in such a way that $\bE_{t_x}(X_1)=x$.
Note that we have $\Lambda''(t_x) = \I_+(x)$, see Section~\ref{sec:LD}; we also stress that for any $x \in [0,\rho_+)$, the supremum in $\sup_{t\geq 0} \{tx - \Lambda_+(t)\}$ is attained at $t=t_x$, so $\I_+(x)= xt_x - \Lambda_+(t_x)$.

For simplicity, let us denote $t_n = t_{x_n/n}$ and $\sigma_n:=\sigma^2(t_n)$ so in particular $x_nt_n - n\Lambda_{+}(t_n) = \I_+(\frac{x_n}{n})$.
Let also
\begin{equation}
\label{def:Ak}
A_k^+ :=\{S_1>0, \ldots, S_k>0\} \qquad \text{ for } k\geq 1\,.
\end{equation}
Then, using the definition~\eqref{def:tiltedPt} of $\bP_t$, we may rewrite
\begin{align*}
Q_+(n,x_n) := \bP\big(  A_n^+ , S_n=x_n  \big)
  &= \bE_{t_n} \Big[ e^{n \Lambda_+(t_n)- t_n S_n } \ind_{\{ A_n^+ , S_n=x_n \}}\Big] \\
  &= e^{-n \I_+(\frac{x_n}{n})} \bP_{t_n}\big(A_n^+ , S_n=x_n  \big) \,.
\end{align*}

We now let $(k_n)_{n\geq 1}$ be a sequence of integers such that $\lim_{n\to\infty} k_n =+\infty$ and  $\lim_{n\to\infty}\frac1n k_n =0$.
We also take $k_n$ such that $\lim_{n\to\infty} k_n \delta_n^2 =0$, which ensures in particular that $\lim_{n\to\infty} k_n (t_n-t_x)^2 =0$ (as will be used below).
Then, we need to control
\begin{equation}
\label{eq:decompQnx}
\bP_{t_n}\big(A_n^+ , S_n=x_n  \big) = \sum_{y\geq 0} \bP_{t_n}(A_{k_n}^+, S_{k_n}=y) \bP_{t_n}\big( S_{n-k_n} =x_n-y , S_1>-y , \ldots, S_{n-k_n} >-y \big) \,.
\end{equation}
We split our proof in an upper and a lower bound.

\smallskip
\noindent
\textit{Upper bound on~\eqref{eq:decompQnx}. }
As an upper bound, we remove the condition $S_i>-y$ for $1\leq i\leq n-k_n$ in the last probability, to obtain the following:
\[
 \bP_{t_n}\big(A_n^+ , S_n=x_n  \big) \leq \sum_{y\geq 0} \bP_{t_n}(A_{k_n}^+, S_{k_n}=y) \bP_{t_n}\big( S_{n-k_n} =x_n-y \big)  \,.
\]
At this point, we need a local central limit theorem for a family of probability distributions: we refer to Theorem~8.7.1A in \cite{Bor13}, that we now state for completeness.

\begin{theorem}[Thm.~8.7.3A, \cite{Bor13}] 
\label{th:limloc}
Let $(\bP_{t})_{t \in [0,t_0]}$ be a family of distributions indexed by a parameter $t\in [0,t_0]$; denote also $\bP_t$ the law of i.i.d.\ random variables $(X_i)_{i\geq 1}$ with law $\bP_t$.
We assume that for any $t\in [0,t_0]$, under $\bP_t$ the $X_i$'s are $\mathbb Z$-valued and $S_n=\sum_{i=1}^n X_i$ is aperiodic.
Let $m(t):= \bE_t(X_1)$ and $\sigma^2(t):= \mathbf{V}\mathrm{ar}_t(X_1)$.
Assume that
\[
 0< \inf_{t\in [0,t_0]} \sigma^2(t)\leq \sup_{t\in [0,t_0]} \sigma^2(t) <+\infty\,,
 \quad \text{ and } \quad  \sup_{t\in [0,t_0]}  \sup_{\theta \in [\gep, \frac\pi2]} |\bE_t[e^{i\theta X}]| <1 \text{ for any }\gep>0 \,.
\]
(The latter corresponds to having some ``uniform aperiodicity''.)
Then we have a local limit theorem which is \emph{uniform} in $t$:
\[
\sup_{t\in [0,t_0]}\sup_{x\in \mathbb Z} \bigg| \sqrt{n} \bP_{t}(S_n - n m(t) = x) - g_{\sigma(t)} \Big( \frac{x}{\sqrt{n}}\Big)  \bigg|  \xrightarrow{n\to\infty} 0 \,,
\]
where $g_{\sigma}(x) = \frac{1}{\sigma\sqrt{2\pi}} e^{-\frac{x^2}{2\sigma^2}}$ is the density of the Gaussian $\mathcal{N}(0,\sigma^2)$.
\end{theorem}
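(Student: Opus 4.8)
The plan is to prove this uniform local limit theorem by the classical Fourier‑inversion method, the only real work being to track uniformity in the parameter $t$ throughout. Writing $\varphi_t(\theta) := \bE_t[e^{i\theta X_1}]$ for the characteristic function under $\bP_t$, inversion gives, for each $t\in[0,t_0]$ and each $x\in\mathbb Z$,
\[
\sqrt n\,\bP_t\big(S_n - nm(t) = x\big) = \frac{1}{2\pi}\int_{-\pi\sqrt n}^{\pi\sqrt n} e^{-iux/\sqrt n}\,\psi_{n,t}(u)\,\dd u, \qquad \psi_{n,t}(u) := \Big(\varphi_t(u/\sqrt n)\,e^{-i m(t)u/\sqrt n}\Big)^{n},
\]
while $g_{\sigma(t)}(x/\sqrt n) = \tfrac{1}{2\pi}\int_{\mathbb R} e^{-iux/\sqrt n}\,e^{-\sigma^2(t)u^2/2}\,\dd u$. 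The crucial observation is that subtracting these two representations bounds the error \emph{uniformly in $x$} by an $x$‑free quantity:
\[
\sup_{x\in\mathbb Z}\Big| \sqrt n\,\bP_t(S_n - nm(t)=x) - g_{\sigma(t)}(x/\sqrt n)\Big| \le \frac{1}{2\pi}\int_{-\pi\sqrt n}^{\pi\sqrt n}\big|\psi_{n,t}(u) - e^{-\sigma^2(t)u^2/2}\big|\,\dd u + \frac{1}{2\pi}\int_{|u|>\pi\sqrt n} e^{-\sigma^2(t)u^2/2}\,\dd u,
\]
so the whole task reduces to showing that the right‑hand side tends to $0$ uniformly over $t\in[0,t_0]$; the second term is harmless since $\inf_t\sigma^2(t)>0$ yields a uniform Gaussian tail bound.

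For the first integral I would split the frequency range $[-\pi\sqrt n,\pi\sqrt n]$ at $|u|=\gep\sqrt n$ for a small $\gep>0$ chosen below. On the outer piece, i.e.\ $\gep\le|\theta|\le\pi$, the uniform aperiodicity hypothesis (together with continuity of $(t,\theta)\mapsto|\varphi_t(\theta)|$ on the compact set $[0,t_0]\times[\gep,\pi]$ and genuine aperiodicity of the $\mathbb Z$‑valued law) provides a uniform spectral gap $q:=\sup_t\sup_{\gep\le|\theta|\le\pi}|\varphi_t(\theta)|<1$, whence $|\psi_{n,t}(u)|\le q^{n}$ and this part contributes at most $\pi\sqrt n\,q^{n}$ plus a Gaussian tail, vanishing uniformly in $t$. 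On the inner piece I would use the Taylor expansion $\log\varphi_t(\theta) = i m(t)\theta - \tfrac12\sigma^2(t)\theta^2 + R_t(\theta)$ with $|R_t(\theta)|\le\eta(\gep)\theta^2$ for $|\theta|\le\gep$, where $\eta(\gep)\to0$ as $\gep\to0$ uniformly in $t$; this is where the uniform moment control enters, since $\Lambda_+$ is analytic on a neighbourhood of the compact interval $[0,t_0]$, so $\sup_{t\le t_0}\bE_t[|X_1|^3]<\infty$ and $X_1^2$ is uniformly integrable under $(\bP_t)$. Choosing $\gep$ so small that $\eta(\gep)\le\tfrac14\inf_t\sigma^2(t)$ gives the $t$‑ and $n$‑uniform domination $|\psi_{n,t}(u)|\le e^{-cu^2}$ on $|u|\le\gep\sqrt n$, while $\psi_{n,t}(u)=\exp(-\tfrac12\sigma^2(t)u^2+nR_t(u/\sqrt n))\to e^{-\sigma^2(t)u^2/2}$ as $n\to\infty$, uniformly for $t\in[0,t_0]$ and $u$ in compact sets. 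Dominated convergence with the integrable, $t$‑independent majorant $2e^{-cu^2}$ then forces $\int_{|u|\le\gep\sqrt n}|\psi_{n,t}(u)-e^{-\sigma^2(t)u^2/2}|\,\dd u\to0$ uniformly in $t$, completing the argument.

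The step I expect to require the most care is establishing the two uniformity statements for the characteristic functions: the uniform smallness of the remainder $R_t(\theta)$ near $\theta=0$ — equivalently, uniform integrability of $X_1^2$ under the family $(\bP_t)$ — and the uniform spectral gap $q<1$ away from the origin, which is essentially the stated hypothesis but still needs the compactness/continuity remark above to extend from $[\gep,\pi/2]$ to $[\gep,\pi]$. Everything else is routine: in particular, the usual difficulty of upgrading convergence at fixed $x$ to convergence uniform in $x$ does not arise here, because the $L^1$‑type majorant displayed above is already $x$‑free.
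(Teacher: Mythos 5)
You should first be aware that the paper does not prove this statement at all: it is quoted from Borovkov \cite{Bor13} and used as a black box, so your Fourier-analytic argument is by necessity a different route. The skeleton you propose is the standard proof of such local limit theorems and its reduction steps are sound: the inversion formula, the observation that the error is bounded by an $x$-free $L^1$ quantity, the splitting of frequencies at $|u|=\gep\sqrt n$, the spectral-gap bound $\pi\sqrt n\,q^n$ outside, the uniform second-order expansion plus a $t$-independent Gaussian majorant inside, and the uniform Gaussian tail for the remaining piece.

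The genuine gap is that the two uniformity inputs your proof hinges on are not consequences of the hypotheses listed in the statement; you justify them by importing structure from the paper's application. (a) The bound $|R_t(\theta)|\le\eta(\gep)\theta^2$ with $\eta(\gep)\to0$ uniformly in $t$ amounts to uniform integrability of $X_1^2$ under the family $(\bP_t)$. You derive it from ``$\Lambda_+$ is analytic on a neighbourhood of $[0,t_0]$, hence $\sup_t\bE_t[|X_1|^3]<+\infty$'' --- but $\Lambda_+$ and exponential tilting appear nowhere in the theorem's hypotheses, which only assume two-sided variance bounds and uniform aperiodicity. These assumptions do \emph{not} imply uniform integrability of $X_1^2$, and without it the conclusion can fail: mix a fixed aperiodic mean-zero law of variance $\tfrac12$ with mass $\alpha_t=1/(2M_t^2)$ at $\pm M_t$, $M_t\to\infty$ along the family; then $\sigma^2(t)\approx 1$ and uniform aperiodicity holds, yet for the parameter with $M_t=n$ the walk coincides, up to an event of probability $O(1/n)$, with the base walk of variance $\tfrac12$, so $\sqrt n\,\bP_t(S_n=x)$ tracks $g_{1/\sqrt2}$ rather than $g_{\sigma(t)}\approx g_1$ and the supremum over $x$ stays bounded away from $0$. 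So a uniform-moment (or uniform-integrability) hypothesis must either be added to the statement or extracted from the tilted structure --- which is what you implicitly do, and which is only legitimate if the theorem is stated for the family $\bP_t(\dd x)\propto e^{tx}\,\bP(\dd x)$ with $t$ in a compact subinterval of the interior of the domain of $\Lambda_+$, i.e.\ the setting in which the paper actually applies it. (b) A smaller instance of the same issue: extending the spectral gap from $[\gep,\pi/2]$ to $[\gep,\pi]$ via joint continuity of $(t,\theta)\mapsto|\varphi_t(\theta)|$ uses continuity in $t$, which is not assumed for an abstract family (a vanishing mass at $1$ added to a law supported on $\{0,2\}$ keeps each member aperiodic and the sup over $[\gep,\pi/2]$ below $1$, while $|\varphi_t(\pi)|\to1$ and the uniform LLT fails at odd sites); for the tilted family continuity is automatic. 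In short: your argument correctly proves the uniform LLT in the form the paper needs it (Cramér tilts over a compact parameter set with uniformly bounded exponential moments), but it is not a derivation of the literal statement from the listed hypotheses; either add the missing uniform integrability (and the full range $[\gep,\pi]$ in the aperiodicity condition) or formulate and prove the theorem directly for the tilted family.
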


We may apply this theorem for parameters $t \in [\gep, t_0^+-\gep]$ for some $\gep>0$ fixed, since then we will have that $\sigma^2(t) \in [\sigma^2(\gep), \sigma^2(t_0^+-\gep)]$ with $\sigma^2(\gep)>0$ and $\sigma^2(t_0^+-\gep)<+\infty$; the second condition on the uniform aperiodicity follows similarly.  
Since $m(t_n) = \frac1n x_n$ and $\lim_{n\to\infty} t_n=t_x \in (0,t_0^+)$, we can apply this result to obtain that, uniformly on $y\in \mathbb Z$,
\[
\bP_{t_n}\big( S_{n-k_n} = x_n-y \big)  \leq \frac{1}{\sqrt{2\pi (n-k_n)} \sigma_n^2} e^{ -\frac{(k_n m(t_n) -y)^2}{(n-k_n)^2 \sigma_n^2}} + o\Big(\frac{1}{\sqrt{n}} \Big) \leq \frac{1+o(1)}{ \sqrt{2\pi n} \sigma(x)}\,,
\]
where the $o(1)$ is uniform in $|\frac1n x_n -x|\leq \delta_n$.

We therefore obtain that 
\[
\bP_{t_n}\big(A_n^+ , S_n=x_n  \big) \leq \frac{1+o(1)}{ \sqrt{2\pi n} \sigma(x)} \, \bP_{t_n} \big( A_{k_n}^+ \big) \,.
\]
We now use the following lemma, which concludes the upper bound; we postpone its proof to the end of the section.
\begin{lemma}
 \label{lem_prob}
Let $(t_n)_{n\geq 0}$ be a sequence such that $\lim_{n\to\infty} t_n =t_x \in (0,t_0^+)$.
Then, for any sequence $(k_n)_{n\geq 1}$ such that $\lim_{n\to\infty} k_n =+\infty$ and $\lim_{n\to\infty}  k_n (t_n-t_x)^2 =0$, we have
\begin{equation}
      \lim_{n\to\infty}\bP_{t_n}\left(S_1>0, \ldots, S_{k_n}>0\right) = \bP_{t_x}( S_i>0 \ \text{ for all }i\geq 1) =: p_x \in (0,1) \,.
\end{equation}
The  rate of convergence depends only on $k_n(t_n-t_x)^2$.
\end{lemma}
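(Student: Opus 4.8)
The statement splits into two essentially independent parts, and I would treat them in turn. The claim $p_x\in(0,1)$ concerns only the \emph{fixed} law $\bP_{t_x}$: since $t_x\in(0,t_0^+)$, where $\Lambda_+$ is analytic, under $\bP_{t_x}$ the increment $X_1$ has mean $\Lambda_+'(t_x)=x>0$ (recall $\Lambda_+'$ maps $[0,t_0^+)$ onto $[0,\rho_+)$) and finite positive variance $\Lambda_+''(t_x)=\sigma^2(t_x)\in(0,+\infty)$; in particular $S_n/n\to x$ almost surely. To obtain $p_x>0$ the plan is the usual ladder-epoch argument: if the first weak descending ladder epoch $\sigma_1:=\inf\{n\geq1:S_n\leq0\}$ were finite $\bP_{t_x}$-almost surely, then iterating through the strong Markov property would produce times $\sigma_1<\sigma_2<\cdots\to\infty$ with $S_{\sigma_k}\leq0$ for every $k$, contradicting $S_n\to+\infty$; hence $\bP_{t_x}\big(S_i>0\text{ for all }i\geq1\big)=\bP_{t_x}(\sigma_1=\infty)>0$. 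For $p_x<1$ it suffices that $\bP_{t_x}(X_1<0)>0$, which holds because exponential tilting preserves the support of $X_1$ and $\bP(X_1<0)>0$ (a walk in the domain of attraction of a stable law with $\varrho\in(0,1)$ cannot have non-negative increments).

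For the convergence $\bP_{t_n}(S_1>0,\dots,S_{k_n}>0)\to p_x$ the idea is to compare $\bP_{t_n}$ with $\bP_{t_x}$ on the first $k_n$ coordinates via the Radon--Nikodym derivative
\[
R_n:=\exp\Big((t_n-t_x)S_{k_n}-k_n\big(\Lambda_+(t_n)-\Lambda_+(t_x)\big)\Big)\,,
\]
which is well defined for $n$ large (since $t_n\to t_x<t_0^+$), satisfies $\bE_{t_x}[R_n]=1$, and gives $\bP_{t_n}(A_{k_n}^+)=\bE_{t_x}\big[R_n\ind_{A_{k_n}^+}\big]$, with $A_k^+$ as in~\eqref{def:Ak}. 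The one real obstacle is that $S_{k_n}$ is typically of order $k_nx$, so $(t_n-t_x)S_{k_n}$ need not be small on its own; the point is that its deterministic part $k_n\Lambda_+'(t_x)(t_n-t_x)=k_nx(t_n-t_x)$ is cancelled, to first order, by the subtracted term, leaving only a second-order remainder. Quantitatively, I would compute
\[
\bE_{t_x}[R_n^2]=\exp\Big(k_n\big[\Lambda_+(2t_n-t_x)-2\Lambda_+(t_n)+\Lambda_+(t_x)\big]\Big)\,,
\]
and observe that the bracket is a second difference of the $C^2$ function $\Lambda_+$ near $t_x$, hence $O((t_n-t_x)^2)$, so that $\bE_{t_x}[R_n^2]=\exp\big(O(k_n(t_n-t_x)^2)\big)\to1$ by the hypothesis $k_n(t_n-t_x)^2\to0$. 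Then $\bE_{t_x}|R_n-1|\leq(\bE_{t_x}[R_n^2]-1)^{1/2}=O\big(\sqrt{k_n}\,|t_n-t_x|\big)\to0$, i.e.\ $R_n\to1$ in $L^1(\bP_{t_x})$.

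It then remains to combine this with the fixed-law convergence: since $\ind_{A_k^+}$ is non-increasing in $k$ with pointwise limit $\ind_{A^+}$, where $A^+:=\{S_i>0\ \forall i\geq1\}$, for any sequence $k_n\to\infty$ (not necessarily monotone) we have $\ind_{A_{k_n}^+}\to\ind_{A^+}$ pointwise, hence $\bP_{t_x}(A_{k_n}^+)\to p_x$ by dominated convergence. Using $A^+\subset A_{k_n}^+$, I would conclude with the bound
\[
\big|\bP_{t_n}(A_{k_n}^+)-p_x\big|\leq\bE_{t_x}|R_n-1|+\big(\bP_{t_x}(A_{k_n}^+)-p_x\big)\xrightarrow[n\to\infty]{}0\,.
\]
The first term on the right-hand side depends on the sequence $(t_n)$ only through $\sqrt{k_n(t_n-t_x)^2}$, which yields the stated uniformity of the rate. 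Apart from the first-order cancellation in $R_n$, every step here is routine.
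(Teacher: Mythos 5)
Your argument is correct, and it reaches the conclusion by a slightly different technical route than the paper. Both proofs start from the same change of measure, writing $\bP_{t_n}(A_{k_n}^+)=\bE_{t_x}\big[R_n\ind_{A_{k_n}^+}\big]$ with the Radon--Nikodym weight $R_n=e^{(t_n-t_x)S_{k_n}-k_n(\Lambda_+(t_n)-\Lambda_+(t_x))}$, and both exploit the same cancellation: the first-order term $k_n\Lambda_+'(t_x)(t_n-t_x)$ drops out, leaving a quantity controlled by $k_n(t_n-t_x)^2$. Where the paper applies H\"older's inequality with exponents $(1-\gep,\gep)$ in both directions, obtaining $\limsup_n\bP_{t_n}(A_{k_n}^+)\leq p_x^{1-\gep}$ and a matching lower bound, and then lets $\gep\downarrow0$, you instead bound $\bE_{t_x}\lvert R_n-1\rvert$ by Cauchy--Schwarz via the second moment $\bE_{t_x}[R_n^2]=\exp\big(k_n[\Lambda_+(2t_n-t_x)-2\Lambda_+(t_n)+\Lambda_+(t_x)]\big)$, whose exponent is a second difference of a $C^2$ function and hence $O(k_n(t_n-t_x)^2)$ (well defined for large $n$ since $2t_n-t_x\to t_x<t_0^+$). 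This yields the cleaner estimate $\lvert\bP_{t_n}(A_{k_n}^+)-\bP_{t_x}(A_{k_n}^+)\rvert\leq\bE_{t_x}\lvert R_n-1\rvert=O\big(\sqrt{k_n(t_n-t_x)^2}\big)$, which is actually a sharper and more explicit version of the paper's closing remark that the rate of comparison between the two tilted laws depends only on $k_n(t_n-t_x)^2$; the paper's H\"older route avoids second moments but requires the extra limit in $\gep$ and does not produce an explicit rate. Your additional justification of $p_x\in(0,1)$ (via the ladder-epoch argument for positivity and preservation of support under tilting for $p_x<1$) is more detailed than the paper, which simply invokes $\bE_{t_x}[X_1]=x>0$. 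No gaps.
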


\smallskip
\noindent
\textit{Lower bound on~\eqref{eq:decompQnx}.}
For a lower bound, we restrict the sum to $y \in [\frac12 m(t_n) k_n, \frac32 m(t_n) k_n]$. Let us introduce, for $y\geq 0$,
\begin{equation}
\label{eq:Bn}
B_n(y) := \bP_{t_n}\big( S_{n-k_n} =x_n-y , S_1>-y , \ldots, S_{n-k_n} >-y \big) \,,
\end{equation}
so we obtain the lower bound
\[
\bP_{t_n}\big(A_n^+ , S_n=x_n  \big) \geq  \bP\Big(A_{k_n}^+, S_{k_n} \in [\tfrac12 m(t_n) k_n, \tfrac32 m(t_n) k_n] \Big) \inf_{y\in  [\frac12 m(t_n) k_n, \frac32 m(t_n) k_n]} B_n(y)\,.
\]
For the first probability, we write
\[
\bP\Big(A_{k_n}^+, S_n \in [\tfrac12 m(t_n) k_n, \tfrac32 m(t_n) k_n] \Big) \geq  \bP\big( A_{k_n}^+ \big) - \bP\Big(S_{k_n} \notin [\tfrac12 m(t_n) k_n, \tfrac32 m(t_n) k_n] \Big) \,.
\]
The first term converges to $p_x$ thanks to Lemma~\ref{lem_prob}, and it remains to see that the second one goes to $0$.
But since $\bE_{t_n}[X_i]= \frac1n x_n = m(t_n)$, we get by Chebyshev's inequality that 
\[
\bP\Big(S_{k_n} \notin [\tfrac12 m(t_n) k_n, \tfrac32 m(t_n) k_n] \Big) \leq \frac{k_n \sigma^2_n}{ \frac14m(t_n) k_n^2} \, \xrightarrow{n\to\infty} 0 
\]
since $\lim_{n\to\infty} k_n =+\infty$,
using also that $\lim_{n\to\infty} \sigma_n^2= \sigma^2(x)$ and $\lim_{n\to\infty} m(t_n) = x >0$.

It therefore remains to show that 
\begin{equation}
\inf_{y\in  [\frac12 m(t_n) k_n, \frac32 m(t_n) k_n]}B_n(y) \geq  \frac{1+o(1)}{\sqrt{2\pi n} \sigma(x)} \,.
\end{equation}
Recalling the definition~\eqref{eq:Bn}, we write
$B_n(y)\geq B_n^{(1)}(y) - B_n^{(2)}(y)$, with
\[
B_n^{(1)}(y) := \bP_{t_n}\big( S_{n-k_n} =x_n-y \big) \,,
\quad
 B_n^{(2)}(y):= \bP_{t_n}\big( S_{n-k_n} =x_n-y , \min_{1\leq i\leq n-k_n} S_i \leq -\tfrac12 m(t_n) k_n\big) \,.
\]
The first term is again controlled thanks to Theorem~\ref{th:limloc}: we have that 
\[
B_n^{(1)}(y) \geq \frac{1}{\sqrt{2\pi (n-k_n)} \sigma_n^2} e^{ -\frac{(k_n m(t_n) -y)^2}{(n-k_n)^2 \sigma_n^2}} - o\Big(\frac{1}{\sqrt{n}} \Big) \geq  \frac{1+o(1)}{ \sqrt{2\pi n} \sigma^2(x)} \,,
\]
where all inequalities are uniform in $y \in [\frac12 m(t_n) k_n, \frac32 m(t_n) k_n]$.
It only remains to show that $B_n^{(2)}(y)=o(1/\sqrt{n})$ uniformly in $y \in [\frac12 m(t_n) k_n, \frac32 m(t_n) k_n]$.
Let $T_{k_n} := \min\{i\geq 1, S_i \leq -\frac{1}{2} m(t_n) k_n \}$, so we can write that
\[
B_n^{(2)}(y) =\sum_{j=1}^{n-k_n} \sum_{z\leq - \frac12 m(t_n) k_n} \bP_{t_n}\big(T_{k_n} = j, S_{j} =z \big)  \bP_{t_n}\big( S_{n-k_n-j} = x_n-y-z \big)  \,.
\]
We show just below that there is a constant $C>0$ such that 
\begin{equation}
\label{eq:cestlafin}
\sup_{1\leq j\leq n-k_n} \sup_{w\leq m(t_n) k_n} \bP_{t_n}\big( S_{n-k_n-j} = x_n - w \big)  \leq \frac{C}{\sqrt{n}} \,.
\end{equation}
From the above display, we therefore deduce that
\[
\sup_{y \in [\frac12 m(t_n) k_n, \frac32 m(t_n) k_n]} B_n^{(2)}(y) \leq \frac{C}{\sqrt{n}} \bP_{t_n}\big( T_{k_n} \leq n-k_n \big) = \frac{C}{\sqrt{n}} \bP_{t_n}\big( \min_{1\leq i \leq n-k_n}   S_i \leq -\tfrac12m(t_n) k_n \big)  \,,
\]
with the last probability going to $0$ as $n\to\infty$, since $\bE_{t_n}[X_i] =  m(t_n) \geq \frac12 x >0$ (at least for $n$ sufficiently large).

We are left with showing~\eqref{eq:cestlafin}.
First of all, if $j\leq n/2-k_n$ (so $n-k_n-j \geq n/2$), the local limit theorem from Theorem~\ref{th:limloc} gives that
\[
\bP_{t_n}\big( S_{n-k_n-j} = w' \big) \leq \frac{C}{\sqrt{n}} \,,
\]
uniformly in $w' \in \mathbb Z$.
If on the other hand $j\leq n/2-k_n$, we bound
\[
\sup_{k\leq n/2} \sup_{w\leq m(t_n) k_n} \bP_{t_n}\big( S_{k} = x_n -w \big)
\leq \sup_{k\geq n/2} \bP_{t_n}\big( S_{k} \geq  x_n - m(t_n) k_n \big) \leq \sup_{k\leq n/2} \frac{k \sigma_n^2 }{(x_n - m(t_n) (k_n+k))^2} \,,
\]
where we have used Chebyshev's inequality for the last bound.
Since we have $m(t_n) k \leq x_n/2$, $\lim_{n\to\infty} m(t_n) k_n =0$, $\lim_{n\to\infty} \frac1n x_n =x>0$ and also $\lim_{n\to\infty} \sigma_n^2= \sigma^2(x) <+\infty$, we get that  this is bounded by $\frac{4\sigma^2(x)}{n x}$ for $n$ large enough. 
This concludes the proof of~\eqref{eq:cestlafin}, hence of the upper bound.
\end{proof}

\begin{proof}[Proof of Lemma~\ref{lem_prob}]
Recall that we denoted $A_{k_n}^+:=\{S_1>0, \ldots, S_{k_n}>0 \}$.
Recalling also the definition~\eqref{def:tiltedPt} of $\bP_{t_n}$, we have
\begin{align*}
\bP_{t_n}\big(A_{k_n}^+\big)  &= \bE_{t_x} \Big[\ind_{A_{k_n}^+} e^{(t_n - t_x) S_{k_n}} \Big] \, e^{ (\Lambda(t_x)- \Lambda(t_n))k_n}.
\end{align*}
Letting $\gep>0$ be fixed, we can apply H\"older inequality, to get 
\begin{equation*}
\begin{split}
\bP_{t_n}\big(A_{k_n}^+\big) & \leq \bP_{t_x} \big(A_{k_n}^+\big)^{1-\gep}  \bE_{t_x} \Big[ e^{ \gep^{-1}(t_n - t_x) S_{k_n}} \Big]^{\gep} e^{(\Lambda(t_x)-\Lambda(t_n))k_n} \\
& \leq \bP_{t_x} \big(A_{k_n}^+\big)^{1-\gep} \exp\Big( k_n \Big( \gep \big( \Lambda\big(t_x + \gep^{-1}(t_n-t_x) \big) - \Lambda(t_x)  \big) - \big(\Lambda(t_n) - \Lambda(t_x) \big)\Big) \Big)\,.
\end{split}
\end{equation*}
Now, by a Taylor expansion in $t_n-t_x$, we have that
\[
\gep \big( \Lambda\big(t_x + \gep^{-1}(t_n-t_x) \big) - \Lambda(t_x)  \big)+\Lambda(t_x) - \Lambda(t_n) = (1+o(1)) \frac12 (\gep^{-1}-1) \Lambda''(t_x) (t_n-t_x)^2 \,.
\]
Hence, since $\lim_{n\to\infty} k_n(t_n-t_x)^2 =0$, we get that
\[
\limsup_{n\to\infty} \bP_{t_n}\big(A_{k_n}^+\big)  \leq \limsup_{n\to\infty} \bP_{t_x} \big(A_{k_n}^+\big)^{1-\gep} = (p_x)^{1-\gep}\,,
\]
with $p_x:= \bP_{t_x}(S_i>0 \text{ for all }i\geq 1) \in (0,1)$ (note that $p_x>0$ since $\bE_{t_x}[X_1]=x>0$).
We stress that the rate of convergence depends on $k_n(t_n-t_x)^2$.

On the other hand, we also have a lower bound on $\bP_{t_n}(A_{k_n}^+)$ using H\"older's inequality the other way around:
\begin{align*}
\bP_{t_x} \big(A_{k_n}^+\big) 
& = \bE_{t_n} \Big[\ind_{A_{k_n}^+} e^{(t_x-t_n) S_{k_n}} \Big] \, e^{ (\Lambda(t_n)- \Lambda(t_x))k_n} \\
& \leq  \bP_{t_n} \big(A_{k_n}^+\big)^{1-\gep}  \bE_{t_n} \Big[ e^{ \gep^{-1}(t_x - t_n) S_{k_n}} \Big]^{\gep} e^{(\Lambda(t_n)-\Lambda(t_x))k_n} \,. 
\end{align*}
Similarly as above, we get that
\[
p_x = \lim_{n\to\infty} \bP_{t_x} \big(A_{k_n}^+\big) \leq \liminf_{n\to\infty} \bP_{t_n} \big(A_{k_n}^+\big)^{1-\gep} \,.
\]
Since $\gep>0$ is arbitrary, this concludes the proof.
\end{proof}

\subsection{Asymptotic behavior of $\check Z_{\ell,r}$}

From Proposition~\ref{prop:Qnx}, we are able to obtain the following result on the partition function $\check Z_{\ell,r} := Z_{N,\lambda}^{a} (L_N=\ell, R_N =r)$.

\begin{proposition}
\label{prop:checkZlr}
Let $\lambda>\lambda_c$, let $\frac{a}{\rho_+}<u < v< 1- \frac{a}{\rho_+}$ and let $(\delta_N)_{N\geq 0}$ be a vanishing sequence.
Then uniformly for $(\ell, r)$ with and $|\frac1N \ell -u| \leq \delta_N$ and $|\frac1N r -v| \leq \delta_N$, we have
\[
\check Z_{\ell,r} \sim \frac{p_{x_u} p_{x_{1-v}} \sqrt{\I_-''(x_u) \I_+''(x_{1-v})}}{2\pi N m_{\lambda}\sqrt{u(1-v)}}
\exp\bigg( N \Big( \big(\tfrac{r}{N}-\tfrac{\ell}{N}\big) \f(\lambda) - \tfrac{\ell}{N} \I_-\big( \tfrac{\lfloor aN \rfloor}{\ell} \big)- \big(1-\tfrac{r}{N}\big) \I_-\big( \tfrac{\lfloor aN \rfloor}{N-r} \big) \Big)  \bigg) \,,
\]
with $x_s = a/s$ and $m_{\lambda}$ from~\eqref{eq:asympZ}.
\end{proposition}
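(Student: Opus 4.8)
The plan is to combine the factorization~\eqref{eq:decompcheckZ}, namely
\[
\check Z_{\ell,r} = Q_-(\ell,\lfloor aN \rfloor)\, Z_{r-\ell,\lambda}\, Q_+(N-r,\lfloor aN \rfloor)\,,
\]
with the sharp boundary estimate of Proposition~\ref{prop:Qnx} applied to the two outer factors, and with the renewal asymptotic~\eqref{eq:asympZ} applied to the inner factor, and then to multiply the three resulting equivalents. Set $x_u := a/u$ and $x_{1-v}:=a/(1-v)$; under the assumptions on $u,v$ (as in~\eqref{eq:formulauv*}) one has $x_u\in(0,\rho_-)$ and $x_{1-v}\in(0,\rho_+)$, so that Proposition~\ref{prop:Qnx} applies to both $Q_-$ and $Q_+$.

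First I would treat the two excursion terms. For $(\ell,r)$ with $|\tfrac1N\ell-u|\le\delta_N$ and $|\tfrac1N r-v|\le\delta_N$ one has $\ell\to\infty$, $N-r\to\infty$, and, since $\lfloor aN\rfloor/N\to a$, also $|\lfloor aN\rfloor/\ell-x_u|\le\delta_N'$ and $|\lfloor aN\rfloor/(N-r)-x_{1-v}|\le\delta_N'$ for a suitable vanishing sequence $(\delta_N')$. Proposition~\ref{prop:Qnx} then gives
\[
Q_-(\ell,\lfloor aN\rfloor)\sim \frac{p_{x_u}\sqrt{\I_-''(x_u)}}{\sqrt{2\pi\,\ell}}\,e^{-\ell\,\I_-(\lfloor aN\rfloor/\ell)}\,,
\qquad
Q_+(N-r,\lfloor aN\rfloor)\sim \frac{p_{x_{1-v}}\sqrt{\I_+''(x_{1-v})}}{\sqrt{2\pi\,(N-r)}}\,e^{-(N-r)\,\I_+(\lfloor aN\rfloor/(N-r))}\,,
\]
and in the two prefactors one may replace $\sqrt{\ell}$ by $\sqrt{uN}$ and $\sqrt{N-r}$ by $\sqrt{(1-v)N}$ (legitimate since $\ell/N\to u$ and $(N-r)/N\to 1-v$). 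One must, however, not expand the arguments of $\I_\pm$ inside the exponentials, since there the approximation error is multiplied by $\ell$ (resp.\ $N-r$), a quantity of order $N$; this is precisely why the statement keeps $\I_-(\lfloor aN\rfloor/\ell)$ and $\I_+(\lfloor aN\rfloor/(N-r))$ unexpanded. For the middle factor, since $\lambda>\lambda_c$ and $r-\ell\ge(v-u-2\delta_N)N\to\infty$, the renewal asymptotic~\eqref{eq:asympZ} gives $Z_{r-\ell,\lambda}\sim m_\lambda^{-1}e^{\f(\lambda)(r-\ell)}$.

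Multiplying the three equivalents and regrouping then yields exactly the claimed formula: in the exponent one uses
\[
-\ell\,\I_-\Big(\tfrac{\lfloor aN\rfloor}{\ell}\Big)+(r-\ell)\f(\lambda)-(N-r)\,\I_+\Big(\tfrac{\lfloor aN\rfloor}{N-r}\Big)
= N\Big( \big(\tfrac{r}{N}-\tfrac{\ell}{N}\big)\f(\lambda)-\tfrac{\ell}{N}\,\I_-\big(\tfrac{\lfloor aN\rfloor}{\ell}\big)-\big(1-\tfrac{r}{N}\big)\,\I_+\big(\tfrac{\lfloor aN\rfloor}{N-r}\big)\Big)\,,
\]
while the three prefactors multiply to $\dfrac{p_{x_u}\,p_{x_{1-v}}\sqrt{\I_-''(x_u)\,\I_+''(x_{1-v})}}{2\pi N\,m_\lambda\sqrt{u(1-v)}}$.

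The step I expect to require the most care is the passage from uniformity over a single sequence (as in the statement of Proposition~\ref{prop:Qnx}) to uniformity over the two-parameter family $(\ell,r)$. I would obtain it by a routine subsequence extraction: were the asserted equivalence to fail uniformly, one could pick $N_j\to\infty$ and admissible $(\ell_j,r_j)$ along which $\check Z_{\ell_j,r_j}$ stays bounded away from the target; passing to a further subsequence so that $(\ell_j)$ and $(N_j-r_j)$ are strictly increasing, one then applies Proposition~\ref{prop:Qnx} to the target sequences $x_{\ell_j}:=\lfloor aN_j\rfloor$ (defined arbitrarily, but consistently with the constraint, on the remaining indices), which satisfy $x_{\ell_j}/\ell_j\to x_u$, and likewise for $Q_+$, together with~\eqref{eq:asympZ} for $Z_{r_j-\ell_j,\lambda}$, reaching a contradiction. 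Note that $Z_{n,\lambda}\,m_\lambda\,e^{-\f(\lambda)n}\to1$ already gives uniformity over any range of lengths tending to infinity, so the middle factor needs no extra argument.
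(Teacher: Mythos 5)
Your proposal is correct and follows essentially the same route as the paper: the factorization~\eqref{eq:decompcheckZ}, Proposition~\ref{prop:Qnx} for the two boundary excursion terms, and the renewal asymptotic~\eqref{eq:asympZ} for the middle wetting partition function, multiplied together. The extra care you take about keeping the arguments of $\I_\pm$ unexpanded in the exponent and about upgrading the single-sequence uniformity of Proposition~\ref{prop:Qnx} to uniformity over the family $(\ell,r)$ is sound and in fact more detailed than the paper's own (very brief) argument.
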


\begin{proof}
We simply need to use~\eqref{eq:decompcheckZ}, that is:
\[
\check Z_{\ell,r} =  Q_{-}(\ell, \lfloor aN \rfloor) Z_{r-\ell, \lambda} Q_+(N-r, \lfloor a N\rfloor) \,.
\]
Then, using Proposition~\ref{prop:Qnx} we get that
\[
\begin{split}
Q_{-}(\ell, \lfloor aN \rfloor)& \sim  \frac{p_{x_u}  \sqrt{\I_-''(x_u) }}{\sqrt{2\pi N u}}  \exp\Big( - \ell \I_-\big( \tfrac{\lfloor aN \rfloor}{\ell} \big) \Big) \,,\\ 
Q_{+}(N-r, \lfloor aN \rfloor) & \sim  \frac{p_{x_{1-v}}  \sqrt{\I_+''(x_{1-v}) }}{\sqrt{2\pi N (1-v)}}  \exp\Big( - (N-r) \I_+\big( \tfrac{\lfloor aN \rfloor}{N-r} \big) \Big) \,.
\end{split}
\]
Combined with the asymptotic~\eqref{eq:asympZ} for $Z_{r-\ell,\lambda}$, this concludes the proof
\end{proof}

\subsection{Proof Theorem~\ref{th:Gaussian}, part (i)}

We prove part (i) of Theorem~\ref{th:Gaussian}, namely the sharp asymptotic~\eqref{eq:asympZN} for $Z_{N,\lambda}^a$.
Let $\lambda\geq \lambda_c(a)>\lambda_c$.
Recall that we assume that Cram\'er's condition holds, so in particular $X_i$ have a finite variance: we can choose the normalizing sequence $a_N = \sigma \sqrt{N}$, and we get that $\bar Z_{N,\lambda}^a \sim \frac{1}{\sigma\sqrt{2\pi N}}$ from Lemma~\ref{lem:barZ}.

We can therefore focus on $\check Z_{N,\lambda}^a$.
Moreover, from~\eqref{eq:lemleftright}, we get that there is a sequence $(\gep_n)_{n\geq 0}$ with $\lim_{n\to\infty} \gep_n =0$, such that we have
\[
\lim_{n\to\infty} \frac{1}{\check Z_{N,\lambda}^a}  Z_{N,\lambda}^a\Big( \Big| \frac1N ( L_N, R_N )- (u^*,v^*)\Big| <\gep_n  \Big) =1 \,,
\]
where we recall that $(u^*,v^*)$ is (in Cram\'er's region) the unique maximizer of $\psi(\lambda,a)$ in~\eqref{def:psi}, see~\eqref{eq:formulauv*}.
In other words, we have that
\[
\check Z_{N,\lambda}^a \sim \sum_{ |\frac1N\ell-u^*| \leq \gep_n} \sum_{|\frac1N r -v^*| \leq \gep_n}  \check Z_{\ell,r} \,.
\]
Therefore, from Proposition~\ref{prop:checkZlr} and since $\frac{a}{\rho_+}<u^*<v^*<1-\frac{a}{\rho_+}$ (see~\eqref{eq:formulauv*}), we get that there is some explicit constant $C_{\lambda, a}$, such that
\begin{equation}
\label{eq:decomp}
\check Z_{N,\lambda}^a \sim  \frac{C_{\lambda,a}}{2\pi N} \sum_{ |\frac1N\ell-u^*| \leq \gep_n} \sum_{|\frac1N r -v^*| \leq \gep_n}  \exp\Big( N  g_{\lambda, \frac{\lfloor aN \rfloor}{N} } \Big(\frac{\ell}{N} , \frac{r}{N}\Big) \Big) \,,
\end{equation}
where we used the definition~\eqref{def:g} of $g_{\lambda,a}$ to simplify notation.
Let us stress that the constant $C_{\lambda,a}$ is explicit, $C_{\lambda,a}:= \frac{1}{m_{\lambda}} p_{x_{u^*}} p_{x_{\bar v^*}} \frac{a^2 u^* \bar v^*}{\sigma_1 \sigma_2}$,
with $\sigma_1,\sigma_2$ defined below in~\eqref{def:sigmas} (and $\bar v^*:=1-v^*$).

Then, we will use Taylor expansions inside $g_{\lambda, \frac{\lfloor aN\rfloor}{N}}$.
Let us denote, for $\ell,r$ in the range of the sum in~\eqref{eq:decomp}, 
\[
\gep_{a}:= 1- \frac{\lfloor aN \rfloor}{aN} = \frac{\{aN\}}{aN}= O(N^{-1})\,,
\quad
\gep_{\ell}:= \frac{\ell}{u^*N} - 1 = O(\gep_N)\,,
\quad
\gep_r := \frac{N-r}{\bar v^*N} -1 =O(\gep_N)\,,
 \]
with $\bar v^*=1-v^*$.
Then, thanks to a Taylor expansion of $\I_-$ around $\frac{a}{u^*}$, we get
\[
\begin{split}
\frac{\ell}{N} \I_-&\Big(\frac{\lfloor aN \rfloor}{\ell}\Big)  = (1+\gep_\ell) u^* \I_-\Big(\frac{a}{u^*} \frac{1-\gep_a}{1+\gep_{\ell}}\Big) \\
& =(1+\gep_\ell) u^* \I_-\Big(\frac{a}{u^*}\Big)
- a (\gep_\ell +\gep_a) \I_-'\Big(\frac{a}{u^*} \Big) +  \frac{a^2}{2 u^*} \frac{(\gep_\ell +\gep_a)^2}{1+\gep_{\ell}}\I_-''\Big(\frac{a}{u^*}\Big) + (\gep_\ell +\gep_a)^3 O(1)  \,,
\end{split}
\]
with the $O(1)$ uniform in $|\frac{1}{N}\ell-u^*|\leq \gep_N$.
Hence we can write $\frac{\ell}{N} \I_-\big(\frac{\lfloor aN \rfloor}{\ell}\big)$ as
\[
u^* \I_-\Big(\frac{a}{u^*}\Big) +\gep_{\ell} u^* \Big( \I_-\Big(\frac{a}{u^*}\Big) - \frac{a}{u^*} \I_-'\Big(\frac{a}{u^*}\Big) \Big) +  \frac{a^2}{2 u^*} \gep_\ell^2 \I_-''\Big(\frac{a}{u^*}\Big) - a\gep_a \I_-\Big(\frac{a}{u^*}\Big) + O(\gep_\ell^3) + o\Big( \frac{1}{N} \Big) \,.
\]
Using Lemma~\ref{lem:maximizer} and denoting $\sigma_1^2 := \frac{(u^*)^3}{a^2 \I_-''(a/u^*)}$, this can be rewritten as
\[
\frac{\ell}{N} \I_-\Big(\frac{\lfloor aN \rfloor}{\ell}\Big)  = u^* \I_-\Big(\frac{a}{u^*}\Big) - \gep_{\ell} u^* \f(\lambda) + \frac{(u^*\gep_\ell)^2}{2\sigma_1^2} - a\gep_a \I_-\Big(\frac{a}{u^*}\Big) + O(\gep_\ell^3) + o\Big( \frac{1}{N} \Big) \,.
\]
Similarly, denoting $\sigma_2^2 :=\frac{(\bar v^*)^3}{a^2 \I_+''(a/\bar v^*)}$ (and also $\bar v^*=1-v^*$ to simplify notation), we have
\[
\frac{N-r}{N} \I_+\Big(\frac{\lfloor aN \rfloor}{N-r}\Big) =  \bar v^* \I_+\Big(\frac{a}{\bar v^*}\Big)  - \gep_{r} \bar v^* \f(\lambda) + \frac{(\bar v^*\gep_r)^2}{2\sigma_2^2} - a\gep_a \I_+\Big(\frac{a}{\bar v^*}\Big) +O(\gep_{r}^3) + o\Big( \frac{1}{N} \Big) \,.
\]
All together, recalling the definition~\eqref{def:psi} of $g_{\lambda,a}$, we obtain
\begin{multline*}
g_{\lambda, \frac{\lfloor aN \rfloor}{N} } \Big(\frac{\ell}{N} , \frac{r}{N}\Big) 
= \f(\lambda) \Big(\frac{r}{N}-\frac{\ell}{N} + \gep_{\ell} u^* +  \gep_{r} \bar v^*\Big) 
- u^* \I_-\Big(\frac{a}{u^*}\Big) - \bar v^* \I_+\Big(\frac{a}{\bar v^*}\Big) \\
-  \frac{(u^*\gep_\ell)^2}{2\sigma_1^2} -  \frac{(\bar v^*\gep_r)^2}{2\sigma_2^2}
+ c_0 a\gep_a + O(\gep_\ell^3+\gep_{r}^3) + o\Big( \frac{1}{N} \Big)\,,
\end{multline*}
with $c_0 := \I_-\big(\frac{a}{u^*}\big)+ \I_+\big(\frac{a}{\bar v^*}\big)$.
Recalling the definition of $\gep_{\ell},\gep_r$, we have that $\frac{\ell}{N} - \gep_{\ell} u^* =u^*$ and $\frac{r}{N}+  \gep_{r} \bar v^*$; also, $a\gep_a = \frac1N \{aN\}$.
Hence, since we have $\psi(\lambda,a)= (v^*-u^*) \f(\lambda) - u^* \I_-\big(\frac{a}{u^*}\big) - \bar v^* \I_+\big(\frac{a}{\bar v^*}\big)$, we end up with
\begin{equation}
\label{eq:DLglambda}
g_{\lambda, \frac{\lfloor aN \rfloor}{N} } \Big(\frac{\ell}{N} , \frac{r}{N}\Big)  = \psi(\lambda,a) - \frac{(u^*\gep_\ell)^2}{2\sigma_1^2}  - \frac{ (\bar v^* \gep_r)^2  }{2\sigma_2^2}  + c_0 \frac{1}{N} \{aN\}+  (\gep_\ell^3+\gep_r^3)O(1) + o\Big(\frac1N\Big)\,,
\end{equation}
where the constants are
\begin{equation}
\label{def:sigmas}
\sigma_1^2 := \frac{(u^*)^3}{a^2 \I_-''(a/u^*)},\qquad  \sigma_2^2 :=\frac{(\bar v^*)^3}{a^2 \I_+''(a/\bar v^*)} ,\qquad
 c_0 := \I_-\Big(\frac{a}{u^*}\Big)+ \I_+\Big(\frac{a}{\bar v^*}\Big)\,.
\end{equation}
Note that, in~\eqref{eq:DLglambda}, the $O(1)$ and $o(\frac1N)$ are uniform (depending only on $\gep_N$).
Going back to~\eqref{eq:decomp}, we obtain that
\[
\check Z_{N,\lambda}^a =(1+o(1))  \frac{C_{\lambda,a}}{2\pi N} e^{N\psi(\lambda, a) + c_0 \{aN\}} \sum_{ |\frac1N\ell-u^*| \leq \gep_N}  e^{  -  (1+o(1))  \frac{(u^*\gep_\ell)^2 N}{2\sigma_1^2} } \sum_{|\frac1N r -v^*| \leq \gep_N} e^{-(1+o(1)) \frac{(\bar v^*\gep_r)^2 N }{2\sigma_2^2} } \,.
\]
Recalling that $u^*\gep_{\ell} = \frac{\ell}{N}-1$, we get that
\[
\sum_{ |\frac1N\ell-u^*| \leq \gep_N} \frac{1}{\sqrt{N}} e^{  -  (1+o(1))  \frac{(u^*\gep_\ell)^2 }{2\sigma_1^2} N}
 = \sum_{ |j|\leq N\gep_N} \frac{1}{\sqrt{N}} e^{  -  (1+o(1))  \frac{ j^2}{2\sigma_1^2 N} } \xrightarrow{N\to\infty} \int_{\mathbb R} e^{- \frac{s^2}{2\sigma_1^2} } \dd s  = \sqrt{2\pi}\sigma_1\,,
\]
by a Riemann sum approximation. A similar convergence holds for the other sum.
We therefore end up with 
\[
\check Z_{N,\lambda}^a \sim \sigma_1\sigma_2 C_{\lambda,a} e^{N\psi(\lambda, a) + c_0 \{aN\}}\,.
\]

Since we have seen above that $\bar Z_{N,\lambda}^a = O(1/\sqrt{N})$, we get that whenever $\psi(\lambda,a) \geq 0$, \textit{i.e.} when $\lambda\geq \lambda_c(a)$, which in particular entails that $\f(\lambda,a)=\psi(\lambda, a)$, we get
\begin{equation}
\label{eq:asympZfinal}
Z_{N,\lambda}^a = \bar Z_{N,lambda}^a+\check Z_{N,\lambda}^a = (1+o(1)) \check Z_{N,\lambda}^a \sim \sigma_1\sigma_12 C_{\lambda,a} e^{N\psi(\lambda, a) + c_0 \{aN\}} \,.
\end{equation}
This concludes the proof of~\eqref{eq:asympZN}.
Note also that in view of the definition of $C_{\lambda, a}$ above, we have $c_1 := \frac{1}{m_{\lambda}} a^2 u^* \bar v^*  p_{a/u^*}p_{a/\bar v^*}$ in~\eqref{eq:asympZN}.
\qed

\subsection{Proof Theorem~\ref{th:Gaussian}, part (ii)}

Let us now turn to the proof of the local central limit theorem, that is~\eqref{eq:TCLlocalZ}.
Notice that we have
\[
\bP_{N,\lambda}^a\big( L_N = \ell, R_N =r \big)
= \frac{1}{Z_{N,\lambda}} \check Z_{\ell,r} \,.
\]
From the first part of Theorem~\ref{th:Gaussian} (see in particular~\eqref{eq:asympZfinal}) and thanks to Proposition~\ref{prop:checkZlr}, we obtain, similarly to~\eqref{eq:decomp},
 \[
 \bP_{N,\lambda}^a\big( L_N = \ell, R_N =r \big)
 =  (1+o(1)) \frac{1}{2\pi N \sigma_1 \sigma_2} \exp\Big( N  g_{\lambda, \frac{\lfloor aN \rfloor}{N} } \Big(\frac{\ell}{N} , \frac{r}{N}\Big) - N\psi(\lambda, a) -c_0 \{aN\}\Big) \,,
 \]
with the $o(1)$ uniform over $|\frac1N\ell-u^*|\leq \gep_N$, $|\frac1Nr-u^*|\leq \gep_N$.
Then, applying the Taylor expansion of~\eqref{eq:DLglambda} and recalling that $\gep_\ell = \frac{\ell}{u^*N}-1$ and $\gep_{r}= \frac{N-r}{N\bar v^*} -1$, we get  
 \[
 N  g_{\lambda, \frac{\lfloor aN \rfloor}{N} } \Big(\frac{\ell}{N} , \frac{r}{N}\Big) - N\psi(\lambda, a) - c_0 \{aN\}
 = -\frac{(\ell- u^*N)^2}{2\sigma_1^2 N} (1+\gep_\ell O(1))
 -\frac{(r- v^*N)^2}{2\sigma_1^2 N} (1+\gep_r O(1)) \,,
 \]
with the $O(1)$ uniform. This concludes the proof of~\eqref{eq:TCLlocalZ}.

To obtain the convergence in distribution, we simply notice that  for any $z_1<z_2$ and $z_1'<z_2'$, we can write
 \[
 \begin{split}
 \bP_{N,\lambda}^a&\big( z_1\sqrt{N} \leq L_N -u^* N \leq z_2 \sqrt{N},  z_1'\sqrt{N}\leq  R_N -v^* \leq z_2'\sqrt{N} \big) \\
& \qquad \qquad = \sum_{z_1\sqrt{N}  \leq \ell -u^*N \leq z_2\sqrt{N}} \sum_{z_1'\sqrt{N} \leq r -v^*N \leq z_2'\sqrt{N}}
\bP_{N,\lambda}^a\big( L_N = \ell, R_N =r \big) \,.
\end{split}
 \]
Then, using the local central limit theorem from~\eqref{eq:TCLlocalZ} and a Riemann sum approximation, we get that the above probability converges to
\[
\int_{z_1}^{z_2} \frac{1}{\sqrt{2\pi} \sigma_1} e^{-\frac{s_1^{2}}{2\sigma_1^2}} \dd s_1  \int_{z_1'}^{z_2'} \frac{1}{\sqrt{2\pi} \sigma_2} e^{-\frac{s_2^{2}}{2\sigma_1^2}} \dd s_2 \,,
\]
which concludes the proof.
\qed

\subsection{Proof of Corollary~\ref{cor:HN}}

Let us prove a more general result, \textit{i.e.}\ a joint local central limit theorem for $L_N,R_N,H_N$.
First, let us observe that 
$\bP_{N,\lambda}^a(H_N=k \, \big| \,L_N=\ell, R_N=r ) = \bP_{r-\ell,\lambda}( H_{r-\ell}=k )$, where $\bP_{n,\lambda}$ is the (standard) wetting measure~\eqref{eq:PNB2}.
Then, and using the local limit theorem of Proposition~\ref{prop:Hlocal}, we get that, as $r-\ell\to\infty$
\[
\bP_{N,\lambda}^a\big(H_N=k \, \big| \,L_N=\ell, R_N=r \big) =  \frac{m_{\lambda}}{\sqrt{2\pi} \sigma_{\lambda}} e^{- \frac{ (r-\ell -m_{\lambda} k)^2}{2 \sigma_{\lambda}^2 (r-\ell)}} + o\Big( \frac{1}{\sqrt{r-\ell}} \Big) \,.
\]
Let us now set 
\[
\Delta_{\ell}:=|\ell-u^*N|,\quad \Delta_r:=|r-v^*N|, \quad \Delta_k:=|k-m_{\lambda}^{-1}(u^*-v^*) N| \,.
\]
Then, using the local central limit theorem of Theorem~\ref{th:Gaussian}-(ii), see~\eqref{eq:TCLlocalZ}, we obtain that for any $A>0$, uniformly for $\Delta_{\ell},\Delta_r,\Delta_k \leq A\sqrt{N}$ (in particular $|(r-\ell)- (v^*-u^*)N|\leq 2A\sqrt{N}$), we have
\[
\bP_{N,\lambda}^a\big(L_N=\ell ,R_N=r,H_N=k \big)
=  \frac{(1+o(1)) m_{\lambda}}{(2\pi N)^{3/2} \sqrt{v^*-u^*} \sigma_1\sigma_2 \sigma_{\lambda} } 
e^{- \frac{\Delta_\ell^2}{2 \sigma_1^2 N}}
e^{- \frac{\Delta_r^2}{2 \sigma_1^2 N}}
e^{- \frac{(\Delta_r-\Delta_n- m_{\lambda} \Delta_k)^2}{2\sigma_{\lambda}^2 (v^*-u^*)N} } \,.
\]
In other words, letting $\sigma_3:= \frac{\sqrt{v^*-u^*}}{m_{\lambda}} \sigma_{\lambda}$, we have
\begin{equation}
\label{eq:localLRH}
\bP_{N,\lambda}^a\big(L_N=\ell ,R_N=r,H_N=k \big)
= \frac{(1+o(1))}{(2\pi N)^{3/2} \sigma_1\sigma_2 \sigma_{3} } 
e^{- \frac{1}{2} Q\big(\frac{\Delta_\ell}{\sqrt{N}},\frac{\Delta_r}{\sqrt{N}},\frac{\Delta_k}{\sqrt{N}} \big) } \,,
\end{equation}
where $Q(z_1,z_2,z_3)= \frac{z_1^2}{\sigma_1^2} + \frac{z_2^2}{\sigma_2^2} + \frac{(z_3- m_{\lambda}^{-1}(z_2-z_1))^2}{\sigma_{3}^2}$ as defined below~\eqref{eq:localLRH}; note that the $o(1)$ is uniform in $\Delta_{\ell},\Delta_r,\Delta_k \leq A\sqrt{N}$.
The convergence in distribution stated in~\eqref{eq:convLRH} then follows directly from~\eqref{eq:localLRH} by a Riemann sum approximation.
\qed


\begin{appendix}

\section{A few examples of integrable wetting models}
\label{sec:examples}

To complement the study of the standard wetting model, we collect a few examples for which the free energy $\f(\lambda)$ (or the critical point) of the  admits an explicit formula.
We start with discrete examples, that have been more studied in the literature, before turning to some continuous cases.

We then compute explicitly the free energy $\f(\lambda,a)$ and the critical curve $a_c(\lambda)$ of the wetting model with elevated boundary condition.
In all examples, the underlying random walk is symmetric, so we denote $\Lambda(t):=\Lambda_+(t) =\Lambda_-(t)$.
Recall that, by Theorem~\ref{th:formuleG} we have that the free energy and the critical curve are given by
\[
\f(\lambda, a) = \Big( \f(\lambda)- 2a \Lambda^{-1}(\f(\lambda))\Big)_+ \,,\qquad a_c(\lambda) = \frac{\f(\lambda)}{2 \Lambda^{-1}(\f(\lambda))} \,.
\]

\subsection{Symmetric lazy random walk}
\label{ex:lazyRW}

Let $\gamma \in (0,\frac{1}{2})$ and consider $(X_i)_{i\geq 1}$ i.i.d.\ random variables, with symmetric distribution given by $\PP(X_i = 1) = \PP(X_i = -1) = \gamma$ and $\PP(X_i = 0) = 1 - 2\gamma>0$; the case $\gamma\uparrow\frac12$ corresponds to the simple symmetric random walk.

This model has been studied in details, see e.g.~\cite{Fish84,IY01}.
For the critical point, using Lemma~\ref{lem:ladder} we have $ \lambda_c = \kappa^{-1}=\bP(\bar H_1=0)^{-1}$. Since the steps are only $\pm1$ or $0$, the (weak) ladder height is always zero except if $X_1=-1$: we therefore get
$\lambda_c = \frac{1}{\bP(\bar H_1 =0)} = \frac{1}{1-\gamma}$.
It turns out that the free energy can also be computed explicitly, see e.g.~\cite[Eq.~(1.7)]{IY01}: 
\[
\f(\lambda) = \ln x_{\lambda}  \quad \text{ for } \lambda\geq \lambda_c = \frac{1}{1-\gamma}\,,
\]
where $x_{\lambda}$ is the positive solution of $(\lambda-1)x^2 - \lambda(\lambda-1) (1-2\gamma) x  -\lambda^2 \gamma^2=0$; one can easily check that $x_{\lambda}\geq 1$ if $\lambda\geq \lambda_c$.
As far as the critical behavior is concerned, we leave as an exercise to check that $\f(\lambda_c +u) \sim \gamma(3-4\gamma) u^2$ as $u\downarrow 0$.

Note that in the limit $\gamma\uparrow\frac12$, we find the free energy of the wetting model for the simple random walk found in~\cite[Eq.~(1.6)]{LT15} (see also~\cite[Ch.~7]{dH07}): $\f(\lambda)= \ln\big(\frac{\lambda}{2\sqrt{\lambda-1}}\big) \ind_{\{\lambda>2\}}$.
Another remarkable value is in the case $\gamma=\frac14$, we find:
$\f(\lambda) = \ln\big( \frac{\lambda}{4\sqrt{\lambda-1}} (\sqrt{\lambda-1} +\sqrt{\lambda}) \big)\ind_{\{\lambda>\frac43\}}$.

\subsubsection*{With elevated boundary conditions}
We use that $\Lambda(t) = \ln \big( 1+2\gamma (\cosh(t)-1) \big)$ for any $t\in \mathbb R_+$ so $\Lambda^{-1}(x) = \cosh^{-1}\big(1+ \frac{1}{2\gamma}(e^x-1) \big)$.
Together with the formula $\f(\lambda) = \ln x_{\lambda}$, we get that
$\Lambda^{-1}(\f(\lambda)) = \cosh^{-1}\big(1+ \frac{1}{2\gamma} (x_{\lambda}-1) \big)$, so we end up with
\[
\f(\lambda,a) = \Big( \ln x_{\lambda} - 2a \cosh^{-1}\big(1+ \tfrac{1}{2\gamma} (x_{\lambda}-1) \big) \Big)_+ \,,
\qquad a_c(\lambda) = \frac{\ln x_{\lambda}}{2 \cosh^{-1}\big(1+ \frac{1}{2\gamma} (x_{\lambda}-1) \big)} \,,
\]
where $x_{\lambda}$ is the positive solution of $(\lambda-1)x^2 - \lambda(\lambda-1)(1-2\gamma) x- \lambda^2 \gamma^2 =0$.
Note that in the case $\gamma \uparrow\frac12$, we obtain a free energy $\f(\lambda,a) = \big( \ln (\frac{\lambda}{2\sqrt{\lambda-1}}) - 2a \cosh^{-1}(\frac{\lambda}{2\sqrt{\lambda-1}})  \big)_+$.

\subsection{Symmetric geometric random walk, one-dimensional (discrete) SOS}
\label{ex:geomRW}
Let $\gamma \in (0,1)$ and consider $(X_i)_{i\geq 1}$ i.i.d.\ random variables, with symmetric distribution given by $\bP(X_i=k) = c_{\gamma}\gamma^{|k|}$, with $c_{\gamma} = \frac{1-\gamma}{1+\gamma}$.
This geometric random walk arises naturally in the context of the (discrete) Solid-On-Solid (SOS) model, which is a gradient interface model with potential $V(x) = -\log f(x) = |x|$, used is an effective model for interfaces in the Ising model, see~\cite{Vel06,IV18} for reviews.
It also appears in the context of Interacting Partially Directed Self-avoiding Walk, see~\cite{CNPT18} for a review.

As far as the critical point is concerned, we also use Lemma~\ref{lem:ladder} to get that $\lambda_c = \bP(\bar H_1=0)^{-1}$.
Here, thanks to the memoryless property of the geometric distribution, one easily gets that the ladder height $\bar H_1$ has distribution $\bP(\bar H_1 =k) = (1-\gamma)\gamma^k$ for $k\geq 0$.
We therefore get that
$\lambda_c = \frac{1}{\bP(\bar H_1 =0)} = \frac{1}{1-\gamma}$.
The computation of the Laplace transform~\eqref{def:LaplaceK} of $K(\cdot)$ (hence of the free energy) has been made in~\cite[Prop.~A.1]{LP22}: we have
\[
\f(\lambda)= \ln \Big( \frac{\lambda(\lambda-1) (1-\gamma)^2}{\lambda(1-\gamma^2)-1} \Big) \qquad \text{ for } \lambda\geq \lambda_c= \frac{1}{1-\gamma}\,.
\]
As far as the critical behavior is concerned, we find that $\f(\lambda_c +u) \sim  \frac{(1-\gamma)^2}{\gamma} u^2$ as $u\downarrow 0$.

\subsubsection*{With elevated boundary conditions} We use that  $\Lambda(t) = -\ln \big( 1 -\frac{2\gamma}{(1-\gamma)^2} (\cosh(t)-1) \big) $ for any $|t| < \ln \frac{1}{\gamma}$, so $\Lambda^{-1}(x) = \cosh^{-1}\big(1+  \frac{(1-\gamma)^2}{2\gamma}(1-e^{-x}) \big)$.
Then, using the formula above for $\f(\lambda)$, we get that $\Lambda^{-1}(\f(\lambda)) = \cosh^{-1}\big( \frac{(\lambda-1)^2 + \gamma^2 \lambda^2 }{2\gamma \lambda (\lambda-1)} \big)$, so we end up with
\[
\f(\lambda,a) =  \bigg( \ln \Big(\frac{\lambda(\lambda-1) (1-\gamma)^2}{\lambda(1-\gamma^2) -1} \Big) - 2 a \cosh^{-1}\Big( \frac{(\lambda-1)^2 + \gamma^2 \lambda^2 }{2\gamma \lambda (\lambda-1)}\Big)  \bigg)_+\,,
\]
and the value of $a_c(\lambda)$ can be read from the above.

\subsection{Symmetric Laplace random walk, one-dimensional (continuous) SOS}
\label{ex:exponential}
Let $\gamma>0$ and consider $(X_i)_{i\geq 1}$  i.i.d.\ random variables with symmetric Laplace distribution of parameter $\gamma$, that is with density $f(x) = \frac12 \gamma e^{-\gamma |x|}$; this is a symmetrized $\mathrm{Exp}(\gamma)$ distribution.
The measure~\eqref{eq:PNB} then corresponds to the ($\delta$-pinning) wetting of the continuous Solid-On-Solid (SOS) model in dimension $d=1$, see~\cite{CV00}.
As for the geometric random walk, the memoryless property of the exponential gives that $\bar H_1$ has an $\mathrm{Exp}(\gamma)$ distribution: we get that $\bar H_1$ has density $f_{\bar H_1}(x) = \gamma e^{-\gamma x} \ind_{\{x\geq 0\}}$.
Using again Lemma~\ref{lem:ladder}, we obtain
$\lambda_c = \frac{1}{f_{\bar H_1}(0)} = \frac{1}{\gamma}$.
As far as the free energy is concerned, we can also compute the Laplace transform $\mathcal{K}(\vartheta)$ explicitly (this is done in Appendix~\ref{app:Laplace} and matches the formula~\cite[Eq.~(3.6)]{dCDH11}): we obtain
\[
\f(\lambda)=  \ln\Big( \frac{\gamma^2 \lambda^2}{2\gamma \lambda -1} \Big) \qquad \text{ for } \lambda\geq \lambda_c  =\frac{1}{\gamma}\,.
\]
As far as the critical behavior is concerned, we find that $\f(\lambda_c +u) \sim  \gamma^2 u^2$ as $u\downarrow 0$.

\subsubsection*{With elevated boundary conditions}
We use that $\Lambda(t) = -\ln (1-t^2/\gamma^2)$ for any $|t| < \gamma$, so we obtain $\Lambda^{-1}(x) = \gamma \sqrt{1-e^{-x}}$.
Using the formula above for $\f(\lambda)$, we get that $\Lambda^{-1}(\f(\lambda)) = \frac{\gamma \lambda-1}{\lambda}$, so we end up with
\[
\f(\lambda,a) = \bigg( \ln \Big(\frac{\gamma^2 \lambda^2}{2\gamma\lambda-1} \Big) -   \frac{2a( \gamma\lambda-1)}{\lambda}  \bigg)_+\,,
\qquad a_c(\lambda) =\frac{ \lambda }{ 2(\gamma  \lambda-1)}  \ln \Big(\frac{\gamma^2\lambda^2}{2\gamma\lambda-1} \Big)\,.
\]

\subsection{Strictly $\alpha$-stable random walk, one-dimensional Gaussian free field}
\label{ex:stable}

Consider $(X_i)_{i\geq 1}$ i.i.d.\ random variables with strictly $\alpha$-stable distribution, $\alpha\in (0,2]$, \textit{i.e.}\ such that $n^{-1/\alpha} S_n$ has the same law as $X_1$.
In other words, we have $X_i\sim \mathcal Z$ and $a_n=n^{1/\alpha}$ in Assumption~\ref{hyp:1}; examples include standard Cauchy and Normal distributions.
In the Gaussian case, the measure~\eqref{eq:PNB} corresponds to the ($\delta$-pinning) wetting of the massless Gaussian free field in dimension $d=1$,  see~\cite{CV00}.

This class of wetting models is not completely integrable, in the sense that there is no closed formula for the free energy, but still, the critical point is explicit.
Indeed, relying on the relation~\eqref{rel:Kconti} and the strict stability, we obtain that $f_n^+(0) = f_{\alpha}(0) n^{-(1+\frac1\alpha)}$ for all $n\geq 1$, where $f_{\alpha}$ is the density of $X_1\sim \mathcal Z$; note that an explicit expression for $f_{\alpha}(0)$ can be found in~\cite[Cor.~3.1]{Nolan20}.
Thanks to Theorem~\ref{prop:pinning}, we therefore get
\[
\lambda_c = \frac{1}{\kappa} := \frac{1}{\sum_{n=1}^{\infty} \kappa_n} = \frac{1}{f_{\alpha}(0)\zeta(1+\frac1\alpha)}   \,.
\]
For instance: we get $\lambda_c= \sqrt{2\pi} /\zeta(3/2)$ if $X_i\sim\mathcal{N}(0,1)$; we get $\lambda_c= \pi/ \zeta(2)=6/\pi$ if $X_i\sim\mathcal{C}(0,1)$.

In view of the formula for $f^+_n(0)$, we obtain that $K(n)= n^{-s} \zeta(s)^{-1}$, so the renewal $\tau$ has a $\mathrm{zeta}(s)$ inter-arrival distribution, with $s:=1+\frac{1}{\alpha}$. Hence, the Laplace transform~\eqref{def:LaplaceK} is $\mathcal{K}(\vartheta) =\zeta(s)^{-1}\mathrm{Li}_s(e^{-\vartheta})$ where $\mathrm{Li}_s(z) := \sum_{n\geq 1} z^n n^{-s}$ is the so-called polylogarithmic function.
The free energy is then given by the relation~\eqref{def:freeenergy}, \textit{i.e.}\ $\mathrm{Li}_s(\f(\lambda)) = \zeta(s) (\kappa \lambda)^{-1} = (f_{\alpha}(0) \lambda)^{-1}$, which cannot be explicitly inverted.
On the other hand, after some calculation, one gets that $\f(\lambda_c+u)\sim c_{\alpha} u^{\alpha}$ as $u\downarrow 0$, with $c_{\alpha}:= (f_{\alpha}(0)/\alpha\Gamma(1/\alpha))^{\alpha}$; see also Proposition~\ref{prop:criticF}.

\begin{remark}
\label{rem:explicit}
In the continuous case, the relation~\eqref{rel:Kconti} gives that $f_n^+(0) = \frac1n f_n(0)$, where $f_n$ is the density of $S_n$. Therefore, if the density of $S_n$ at $0$ is explicit, one obtains an explicit formula for $f_n^+(0)$ and the model could also be (at least partially) integrable. 
\end{remark}

\subsubsection*{With elevated boundary conditions} Let us focus on the (standard) Gaussian random walk. 
We have that $\Lambda(t)=\frac12 t^2$, so $\Lambda^{-1}(x) = \sqrt{2 x}$.
We therefore get that
\[
\f(\lambda,a) = \Big(  \f(\lambda) -  2 a \sqrt{2 \f(\lambda)}  \Big)_+\,,
\qquad a_c(\lambda) = \frac{1}{2\sqrt{2}} \sqrt{\f(\lambda)} \,,
\]
but with no explicit expression for the free energy $\f(\lambda)$.

\subsection{Generalized Laplace distribution}
\label{ex:gLaplace}

In view of Remark~\ref{rem:explicit}, we include in the list of examples the case of (symmetric) generalized Laplace distributions, also known as \emph{variance-gamma} or \emph{Bessel function} distributions; we refer to~\cite[Sec.~4.1]{KKP01} for an overview.
The density of a symmetric generalized Laplace distribution of parameter $(\nu,\sigma)$ (we write $X\sim \textrm{g-Laplace}(\nu,\sigma)$ for short) has an explicit distribution, see \cite[\S4.1.4.2]{KKP01}, given by
$f_{\nu,\sigma}(x) = \frac{\sigma\sqrt{2}}{\Gamma(\nu) \sqrt{\pi}} \Big( \frac{|x|}{\sigma\sqrt{2}}\Big)^{\nu-1/2} \mathrm{K}_{\nu-1/2} \big(\sqrt{2} |x|/\sigma \big) $,
where $\mathrm{K}_{\nu-1/2}$ is the modified Bessel function of the third kind.
Additionally, the behavior of the density at $x=0$ is known: it goes to infinity if $\nu\leq \frac12$ and 
$f_{\nu,\sigma}(0)=\frac{1}{\sigma\sqrt{2\pi}} \frac{\Gamma(\nu-\frac12)}{\Gamma(\nu)}$ if $\nu>\frac12$.

A random variable $X\sim\textrm{g-Laplace}(\nu,\sigma)$ can be represented as a mixture of centered normal distribution with random variance $\sigma^2 W$, with $W\sim \mathrm{Gamma}(\nu,1)$; alternatively, it can be represented as the difference of two independent $\mathrm{Gamma}(\nu,\gamma)$ random variables with $\gamma = \sqrt{2}/\sigma$.
Additionally, such distributions are stable under convolution: the sum of two independent g-Laplace random variables with respective parameters $(\nu,\sigma)$, $(\nu',\sigma)$ has itself a g-Laplace distribution, with parameters $(\nu+\nu',\sigma)$.

Therefore, if $(X_i)_{i\geq 1}$ are i.i.d.\ with g-Laplace$(\nu,\sigma)$ distribution (with $\sigma=\sqrt{2}/\gamma$ if one wants to match the parametrization of Section~\ref{ex:exponential}), then $S_n\sim\textrm{g-Laplace}(n\nu,\sigma)$.
Therefore, taking $\nu>\frac12$ so that $f_n(0)<+\infty$ for all $n$ and using the relation~\eqref{rel:Kconti} together with the value for $f_{n\nu,1}(0)$ above, we get that
\[
f_n^+(0) = \frac1n f_n(0) = \frac{1}{\sigma\sqrt{2\pi}} \frac{\Gamma(n\nu-\frac12)}{\Gamma(n\nu+1)} \,. 
\]
Hence, the critical point is ``explicit'', since $\lambda_c = \kappa^{-1}$ with $\kappa = \sum_{n=1}^{\infty} f_n^+(0)$. 
It turns out that the Laplace transform (or generating function) of $f_n^+(0)$ has a simple form if $\nu=1$ (this is the example of Section~\ref{ex:exponential}), but also if $\nu=2$ (it also has a closed form if $\nu=3$):
for all $x\in [-1,1]$,
\[
\sum_{n=1}^{\infty}  x^n f_n^+(0) =
\begin{cases}
\frac{\sqrt{2}}{\sigma} (1-\sqrt{1-x}) &\quad \text{ if } \nu=1\,,\\ 
\frac{\sqrt{2}}{\sigma} (1 - \frac{1}{\sqrt{2}}\sqrt{1+\sqrt{1-x}}) &\quad \text{ if } \nu=2\,.
\end{cases}
\]
Inverting the formula of the Laplace transform   gives the expression of the free energy (and of the critical point), thanks to \eqref{def:freeenergy}. When $\nu=2$, setting $\gamma = \sqrt{2}/\sigma$, we get
\[
\f(\lambda) = - \ln \Big( \frac{4 (\gamma \lambda-1)^2 (2\gamma\lambda-1)}{(\gamma\lambda)^4}  \Big) \qquad \text{ for } \lambda\geq  \lambda_c = \frac{1}{\gamma} \frac{\sqrt{2}}{\sqrt{2} -1} \,.
\]
To conclude, let us mention that when $\nu$ is an integer, the generating function of $f_n^+(0)$ is equal to the generalized hypergeometric function ${}_{\nu+1}F_\nu( \frac{-1}{2\nu}, \frac{1}{2\nu},\ldots, \frac{2\nu-3}{2\nu};\frac{1}{\nu},\ldots,\frac{\nu-1}{\nu};x)$, up to centering (by $1$) and normalizing (by $\frac{1}{\sigma \sqrt{2\pi}}$).

\subsubsection*{With elevated boundary conditions}
We use that $\Lambda(t) = -\nu\ln( 1-  \frac12 \sigma^2 t^2)$, so that $\Lambda^{-1}(x) = \frac{\sqrt{2}}{\sigma} \sqrt{1-e^{-x/\nu}}$; for instance using the representation as a difference of independent $\Gamma(\nu,\gamma)$ random variables with $\gamma= \sqrt{2}/\sigma$.
Similarly to the Gaussian case, we obtain that the critical line is $a_c(\lambda) = \frac{\sigma}{2\sqrt{2}} \f(\lambda) (1-e^{-\f(\lambda)/\nu})^{-1/2}$, with in general no explicit expression for $\f(\lambda)$.

\section{A few results related to the (standard) wetting model}

\subsection{Proof of Lemmas~\ref{lem:ladder} and~\ref{lem:kappa}}
\label{app:renewal}

We start with the proof of Lemma~\ref{lem:ladder}, which uses a simple rewritting of $f_n^+(x)$ in terms of (weak) ladder epochs and heights.

\begin{proof}[Proof of Lemma~\ref{lem:ladder}]
Let us introduce the first (weak) ladder epoch and height 
\[
\bar T_1 := \min\{ n\geq 1 , S_n \leq 0\} \quad \text{ and } \quad \bar H_1 = -S_{\bar T_1} \,,
\]
and the following density with respect to $\mu$:
\[
f_n^+(x) = \frac{1}{\mu(\dd x)}\bP( \bar T_1= n, \bar H_1 \in \dd x)   \quad \text{ for } x\geq 0\,.
\]
Hence, summing over $n$ we get that $\kappa =\kappa(0)$ where $\kappa(x)=\sum_{n=1}^{\infty} f_n^+(x) = f_{\bar H_1}(x)$, with $f_{\bar H_1}$ the density of $\bar H_1$ with respect to $\mu$.
In particular, $\kappa =\bP(\bar H_1=0)$ in the discrete case and $\kappa = f_{\bar H_1}(0)$ in the continuous case.
This concludes the proof of Lemma~\ref{lem:ladder}.
\end{proof}

We now turn to the proof of Lemma~\ref{lem:kappa}: the result is given in~\cite{CC13} (Proposition~4.1-(4.5) for the discrete case and Theorem~5.1-(5.2) for the continuous case); we give here a simpler proof for the sake of completeness since we do not aim for the level of generality in~\cite{CC13}.

\begin{proof}[Proof of Lemma~\ref{lem:kappa}, continuous case]
In the continuous case, by using \cite[App.~A.2]{CGZ06}, we obtain that 
\begin{equation}
\label{rel:Kconti}
f_n^+(0):= \frac{1}{n} f_{n}(0)
\end{equation}
where $f_n$ is the density of $S_n$ w.r.t.\ the Lebesgue measure; see also~\cite[Eq.~(3)]{AD99} for a more general statement\footnote{Note that \cite[Eq.~(3)]{AD99} considers \emph{strong} rather than \emph{weak} ladder epochs and heights: in our context, it reads $\bP(\bar T_1=n, \bar H_1\in \dd x) = \frac{1}{n} \bP(S_n \in \dd x, \bar H_1>x)$ for any $n\geq 1$ and $x\geq 0$.}.

Then the local limit theorem for densities~\cite[Thm.~4.3.1]{IL71}
gives that if $f_n$ is bounded for some $n\geq 1$ (which is given by Assumption~\ref{hyp:1}), then
$f_{n}(0) \sim \frac{1}{a_n} f_{\alpha}(0)$.
where $f_{\alpha}$ is the density of the limiting  $\alpha$-stable distribution.
This shows Lemma~\ref{lem:kappa} in the continuous case, recalling that $\bP(\bar H_1 >0)=1$.
\end{proof}

\begin{proof}[Proof of Lemma~\ref{lem:kappa}, discrete case]
In the discrete case, we start from the relation~\cite[Eq.~(3)]{AD99}, which reads
\begin{equation}
\label{eq:AD99}
f_n^+(0) = \bP(\bar T_1 =n, \bar H_1 =0) = \frac{1}{n} \bP(S_n=0, \bar H_1 >0)  \,.
\end{equation}
(The strict inequality $>$ in place of $\geq$ is due to the fact that we are considering \emph{weak}  rather than \emph{strong} ladder heights.)
The reasoning in~\cite[Prop.~6]{AD99} cannot be reproduced identically to obtain that $f_n^+(0) \sim  \frac{1}{n} \bP(S_n=0)\bP( \bar H_1 >0)$ but we can easily adapt the proof; we simply need to update the use of Iglehart's lemma.

Our goal is to show that $\lim_{n\to\infty}\bP(\bar H_1 >0 \mid S_n =0) = \bP(\bar H_1 >0)$ or equivalently 
\begin{equation}
\label{eq:PH1cond}
\lim_{n\to\infty}\bP(\bar H_1 =0 \mid S_n =0) = \bP(\bar H_1 =0) \,.
\end{equation}
Since $\bP(S_n =0) \sim \frac{1}{a_n} f_{\alpha}(0)$ as $n\to\infty$ by the local limit theorem, see e.g.\ \cite[Thm.~4.2.1]{IL71}, combined with~\eqref{eq:AD99}, this would end the proof of Lemma~\ref{lem:kappa}.

To obtain~\eqref{eq:PH1cond}, let us start by writing
\[
\bP(\bar H_1 =0 \mid S_n =0) = \sum_{k=1}^{n} \bP(\bar T_1 = k, S_k=0 \mid S_n =0) =\sum_{k=1}^{n} \bP(\bar T_1 = k , S_k=0 ) \frac{\bP(S_{n-k}=0)}{\bP(S_n =0)} \,.
\]
To use similar notation as in~\cite{AD99}, denote $d_k := \bP(\bar T_1 = k , S_k=0) = \bP(\bar T_1 = k , \bar H_1=0) $ and $c_j:=\bP(S_j=0)$ so we need to show that $\lim_{n\to\infty} \sum_{k=1}^{n} d_k \frac{c_{n-k}}{c_n} = \sum_{k=1}^{\infty} d_k$
since it is clear that $\sum_{k=1}^{\infty}=\bP(\bar H_1=0)$.

Note that $c_{n} \sim f_{\alpha}(0)/a_n$  by the local limit theorem, with $a_n$ regularly varying with exponent $1/\alpha$.
Therefore, we have $c_{n-k}/c_n \to 1$ for any fixed $k$ and $c_{n-k}/c_n \leq C$ uniformly for $k\geq n/2$: applying the dominated convergence theorem we get that
\[
\lim_{n\to\infty} \sum_{k=1}^{n/2} d_k \frac{c_{n-k}}{c_n} = \sum_{k=1}^{\infty} d_k = \bP(\bar H_1 =0) \,.
\]
For the remaining term, using that $d_k =f_k^+(0) = \frac{1}{k} \bP( S_k=0, \bar H_1 >0)$ by~\eqref{eq:AD99}, we have $d_k\leq \frac1k c_k$ for all $k\geq 1$, so
\[
\sum_{k=n/2}^{n} d_k \frac{c_{n-k}}{c_n} \leq \sum_{k=n/2}^{n} \frac1k \frac{c_k}{c_{n}} c_{n-k} \leq \frac{C'}{n} \sum_{j=0}^{n/2} c_j \,, 
\]
where we have used that $ \frac1k \frac{c_k}{c_n} \leq C' \frac{1}{n}$ uniformly for $k\geq n/2$, again thanks to the regular variation of $c_n$.
Now, we can simply use that $c_j\to 0$ as $j\to\infty$ to get that the Ces\`aro mean $\frac1n \sum_{j=0}^{n/2} c_j$ goes to $0$.
This ends the proof of~\eqref{eq:PH1cond} and hence of Lemma~\ref{lem:kappa}.
\end{proof}

\subsection{About the critical behavior of the free energy of the wetting model}
\label{app:criticF}

Our goal here is to prove the asymptotic for the free energy that have been collected in Proposition~\ref{prop:criticF}.
First of all, let us observe that Assumption~\ref{hyp:1} is equivalent to having the following properties on the distribution of $(X_i)_{i\geq 1}$; we refer to \cite[IX.8]{FellerII}.

\smallskip
$\ast$ When $\alpha \in(0,2)$, there is some slowly varying function $\gp(\cdot)$ and constants $p,q > 0$ (take $p+q=1$ for normalization purposes) such that, as $x\to\infty$,
\begin{equation}
\label{hyp:XY}
\bP(X_i >x) \sim   p \gp(x) x^{-\alpha}  \qquad \text{and} \qquad  \bP(X_i<-x) \sim   q \gp(x) x^{-\ga} \, .
\end{equation}
This is equivalent to the fact that there exist sequences $(a_n)_{n\geq 0}$, $(b_n)_{n\geq 0}$ such that $\frac{1}{a_n}(S_n-b_n)$ converges in distribution to some $\alpha$-stable random variable. In order to have $b_n \equiv 0$, one must have $\bE[X_i]=0$ in the case $\alpha \in (1,2)$ and $p=q=\frac12$ in the case $\alpha\in (0,1]$; in the case $\alpha=1$, one also needs to have that $L(x)^{-1}\bE[X_1 \ind_{\{|X_1| \leq x\}}]$ converges as $x\to\infty$.

\smallskip
$\ast$ For $\ga=2$, then setting
\begin{equation}
\label{def:sigmax}
\sigma^2(x):=\bE \big[ \big(X_i\big)^2 \ind_{\{| X_i | \leq x\}} \big] \, ,
\end{equation}
we have that Assumption~\ref{hyp:1} holds if and only if $\bE[X_i]=0$ and $\sigma^2$ is slowly varying at $+\infty$.

\smallskip
We may define $(a_n)_{n\geq 0}$ up to asymptotic equivalence by the following relations
\begin{equation}
\label{def:an}
\bP(|X_1|>a_n)\sim \gp(a_n) a_n^{-\alpha} \sim \frac{1}{n}   \quad \text{ if } \alpha\in (0,2) \,,\qquad
\sigma^2(a_n) a_n^{-2} \sim \frac{1}{n}  \quad \text{ if } \alpha=2 \,.
\end{equation}
Then, we have that $\frac{1}{a_n S_n}$ converges in distribution to an $\alpha$-stable random variable $\mathcal{Z}$.

\subsubsection*{Asymptotic of $\mathcal K(\vartheta)$}

We now use Lemma~\ref{lem:kappa} (or~\eqref{def:K}) to obtain the asymptotic behavior of $\mathcal{K}(\vartheta)$ as $\vartheta \downarrow 0$.
The proof is standard and follows the lines of \cite[Thm.~2.1]{Giac07}, but we provide it for the sake of completeness.

\begin{lemma}
\label{lem:mathcalK}
Recall that $\mathcal{K}(\vartheta) = \sum_{n=1}^{\infty} e^{-\vartheta n} K(n)$, with $K(n) = \frac1\kappa f_n^+(0) = L(n) n^{- (1+\frac{1}{\alpha})}$.
Then we have, as $\vartheta \downarrow 0$
\[
1- \mathcal{K}(\vartheta) \sim 
\begin{cases}
  \vartheta \, m_{1/\vartheta} & \quad \text{ if }\alpha \in(0,1] \,,\\
 \alpha \Gamma(\frac{\alpha-1}{\alpha}) \vartheta K(1/\vartheta) & \quad \text{ if } \alpha \in (1,2] \,,
\end{cases}
\]
where $m_x = \sum\limits_{n=1}^{x} n K(n)$ converges if $\alpha<1$ and is slowly varying if $\alpha=1$.
\end{lemma}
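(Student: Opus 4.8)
The plan is to reduce the statement to a Karamata--Tauberian computation for the tail sequence $\bar K(m):=\mathbf P(\tau_1>m)=\sum_{k>m}K(k)$, via the elementary summation-by-parts identity
\[
1-\mathcal K(\vartheta)=\sum_{n\ge1}\big(1-e^{-\vartheta n}\big)K(n)=\big(1-e^{-\vartheta}\big)\sum_{m\ge0}e^{-\vartheta m}\,\bar K(m)\,,
\]
valid for every $\vartheta>0$ (use $K(n)=\bar K(n-1)-\bar K(n)$ and $\bar K(0)=1$, recalling the definition~\eqref{def:LaplaceK} of $\mathcal K$). Since $1-e^{-\vartheta}\sim\vartheta$ as $\vartheta\downarrow0$, the whole problem reduces to the behaviour of $G(\vartheta):=\sum_{m\ge0}e^{-\vartheta m}\,\bar K(m)$ as $\vartheta\downarrow0$; this is where the regular variation $K(n)=L(n)\,n^{-(1+1/\alpha)}$ from~\eqref{def:K} enters, through Karamata's theorem on partial sums of regularly varying sequences, which gives $\bar K(n)\sim\alpha\,L(n)\,n^{-1/\alpha}$ (equivalently $\bar K(n)\sim\alpha\,nK(n)$), together with the exact identity $\sum_{m\le x}\bar K(m)=m_x+x\,\bar K(x)$ where $m_x=\sum_{n\le x}nK(n)$.

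First I would treat the case $\alpha\in(0,1)$ (and $\alpha=1$ with $\sum_nL(n)/n<+\infty$): here $\mathbf E[\tau_1]=\sum_m\bar K(m)=m_\infty<+\infty$, so dominated convergence gives $G(\vartheta)\to m_\infty$, hence $1-\mathcal K(\vartheta)\sim\vartheta\,m_\infty=\vartheta\,m_{1/\vartheta}(1+o(1))$ since $m_{1/\vartheta}\to m_\infty$, which is the claim. Next, for $\alpha=1$ with $\sum_nL(n)/n=+\infty$ one uses that $m_x$ is slowly varying and tends to $+\infty$ (the standard behaviour of $\sum_{n\le x}L(n)/n$, using $nK(n)\sim L(n)/n$); since then $x\,\bar K(x)\sim L(x)=o(m_x)$, we get $\sum_{m\le x}\bar K(m)\sim m_x$, and Karamata's Tauberian theorem (with index $0$) yields $G(\vartheta)\sim m_{1/\vartheta}$, so that $1-\mathcal K(\vartheta)\sim\vartheta\,m_{1/\vartheta}$.

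Finally, for $\alpha\in(1,2]$ we have $\bar K(m)\sim\alpha\,L(m)\,m^{-1/\alpha}$ with exponent $1/\alpha\in(1/2,1)\subset(0,1)$, so the Abelian theorem for generating functions of sequences of the form $m^{-\beta}\ell(m)$ with $\beta\in(0,1)$ gives $G(\vartheta)\sim\alpha\,\Gamma\big(1-\tfrac1\alpha\big)\,\vartheta^{\,1/\alpha-1}\,L(1/\vartheta)$; multiplying by $1-e^{-\vartheta}\sim\vartheta$ produces $1-\mathcal K(\vartheta)\sim\alpha\,\Gamma\big(\tfrac{\alpha-1}{\alpha}\big)\,\vartheta^{1/\alpha}L(1/\vartheta)$, which is the announced asymptotics once $\vartheta^{1/\alpha}L(1/\vartheta)$ is rewritten in terms of $K(1/\vartheta)$. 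This last step is the same Tauberian argument as in the proof of \cite[Thm.~2.1]{Giac07}, which I would quote rather than reprove.

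The only genuinely delicate point --- the step I expect to cost the most care --- is the boundary regime $\alpha=1$: one must cleanly separate the finite- and infinite-mean subcases, verify that $m_x$ is slowly varying with $m_x\to+\infty$ in the divergent case, and check that the boundary contribution $x\,\bar K(x)$ in $\sum_{m\le x}\bar K(m)=m_x+x\,\bar K(x)$ is asymptotically negligible against $m_x$; everything else is a routine application of Karamata's theorem and its Tauberian/Abelian counterparts.
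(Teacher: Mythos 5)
Your proof is correct, and it reaches exactly the asymptotics established in the paper's own argument ($1-\mathcal{K}(\vartheta)\sim \alpha\Gamma(1-\tfrac1\alpha)\,\vartheta^{1/\alpha}L(1/\vartheta)$ for $\alpha\in(1,2]$; note in passing that the displayed $\vartheta K(1/\vartheta)$ in the lemma must be read as $\vartheta^{-1}K(1/\vartheta)=\vartheta^{1/\alpha}L(1/\vartheta)$ — taken literally it has the wrong power of $\vartheta$ — so your final "rewriting" step is just the algebraic identity $K(1/\vartheta)=L(1/\vartheta)\vartheta^{1+1/\alpha}$, not a Tauberian argument). The route is genuinely different from the paper's. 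The paper works directly on $\sum_n(1-e^{-\vartheta n})K(n)$ and treats the three regimes by three separate devices: dominated convergence applied to $(1-e^{-\vartheta n})/\vartheta\le n$ when $\sum_n nK(n)<\infty$; explicit two-sided bounds obtained by splitting the sum at $n=1/\vartheta$ and using $x-cx^2\le 1-e^{-x}\le \min(x,1)$ in the infinite-mean $\alpha=1$ case; and a Riemann-sum approximation to $\int_0^\infty(1-e^{-u})u^{-(1+1/\alpha)}\,\dd u=\alpha\Gamma(1-\tfrac1\alpha)$ for $\alpha\in(1,2]$. You instead perform summation by parts once, reducing everything to the Laplace transform of the monotone, regularly varying tail $\bar K(m)=\sum_{k>m}K(k)\sim\alpha L(m)m^{-1/\alpha}$, and then invoke the standard Abelian statements from Karamata theory uniformly in all three regimes; the only case-specific work is checking $x\bar K(x)\sim L(x)=o(m_x)$ when $\alpha=1$ with infinite mean, which you do correctly via \cite[Prop.~1.5.9a]{BGT87}. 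What your version buys is uniformity and the replacement of the hand-crafted two-sided bounds at $\alpha=1$ by a single citation of the index-$0$ Karamata theorem; what the paper's version buys is a more self-contained, elementary computation on the original sum (though it too ends up citing \cite[Thm.~1.7.1]{BGT87} for the $\alpha\in(1,2]$ case). Two cosmetic points: for $\alpha=2$ the exponent $1/\alpha$ equals $1/2$, so your interval should be $[\tfrac12,1)$ rather than $(\tfrac12,1)$; and the identity $\sum_{m\le x}\bar K(m)=m_x+x\bar K(x)$ holds only up to an index shift of one, which is of course asymptotically harmless.
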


\begin{proof}
Since $\sum_{n\geq 1} K(n)=1$, we simply write 
\[
1- \mathcal{K}(\vartheta) = \sum_{n=1}^{\infty} (1-e^{-\vartheta n}) K(n) = \sum_{n=1}^{\infty} (1-e^{-\vartheta n}) L(n) n^{- (1+\frac1\alpha)} \,.
\]

If $\sum_{n=1}^{\infty} n K(n)<+\infty$, which contains the case $\alpha<1$ in view of the fact that $K(n) =L(n) n^{- (1+\frac{1}{\alpha})}$, we get by dominated convergence that 
\[
\lim_{\vartheta \downarrow 0}\sum_{n=1}^{\infty} \frac{1-e^{-\vartheta n}}{\vartheta} K(n) =\sum_{n=1}^{\infty} n K(n) = \kappa^{-1} \sum_{n=1}^{\infty} n f_n^+(0) = \lim_{\vartheta \downarrow 0} \, m_{1/\vartheta} \,.
\]

If $\sum_{n=1}^{\infty} n f_n^+(0) =+\infty$ and $\alpha=1$, then $m_{x} := \sum\limits_{n=1}^{x}L(n)n^{-1}$ is slowly varying at $+\infty$ and verifies $m_x/L(x) \to\infty$, see~\cite[Prop.~1.5.9a]{BGT87}.
Then, we get that
\[
1- \mathcal{K}(\vartheta)\geq \sum_{n=1}^{1/\vartheta} (1-e^{-\vartheta n}) K(n) \geq  \vartheta \sum_{n=1}^{1/\vartheta} L(n)n^{-1} - c \vartheta^2\sum_{n=1}^{1/\vartheta} L(n) = (1+o(1)) \vartheta \, m_{1/\vartheta} \,,
\]
where we have used that $\sum_{n=1}^{1/\vartheta} L(n) \sim  \vartheta^{-1} L(1/\vartheta)$, with $L(1/\vartheta) = o(m_{1/\vartheta})$.
On the other hand,
\[
1- \mathcal{K}(\vartheta)\leq \sum_{n=1}^{1/\vartheta} (1-e^{-\vartheta n}) K(n)  + \sum_{n>1/\vartheta} K(n) 
\leq \vartheta \sum_{n=1}^{1/\vartheta} L(n)n^{-1} + \sum_{n>1/\vartheta} L(n)n^{-2} = (1+o(1)) \vartheta \, m_{1/\vartheta}  \,,
\]
where we have used that $\sum_{n>1/\vartheta} L(n)n^{-2} \sim   L(1/\vartheta) \vartheta$ with $L(1/\vartheta) = o(m_{1/\vartheta})$.

It remains to treat the case $\alpha\in (1,2]$. This time, we use a Riemann sum approximation (see also~\cite[Thm.~1.7.1 and Cor.~8.1.7]{BGT87}) to get that
\[
1- \mathcal{K}(\vartheta) = L(1/\vartheta) \vartheta^{\frac{1}{\alpha}} \sum_{n=1}^{\infty}  \vartheta \frac{1-e^{-\vartheta n}}{ (\vartheta n)^{1+\frac1\alpha}}  \frac{L(n)}{L(1/\vartheta )}
 \sim L(1/\vartheta) \vartheta^{\frac{1}{\alpha}}  \int_0^{\infty} \frac{1-e^{-u}}{u^{1+\frac{1}{\alpha}}} \dd u \,,
\]
with $\int_0^{\infty} \frac{1-e^{-u}}{u^{1+\frac{1}{\alpha}}} \dd u = \alpha \Gamma(1-\frac1\alpha)$ by some integration by parts.
\end{proof}

\subsubsection*{Asymptotic of the free energy: proof of Proposition~\ref{prop:criticF}}

Now we are ready to prove Proposition~\ref{prop:criticF}.
From~\eqref{def:freeenergy}, we get that for $\lambda>\lambda_c := 1/\kappa$,
\[
1- \mathcal{K}(\f(\lambda)) = 1 - (\kappa\lambda)^{-1} = \frac{\lambda -\lambda_c}{\lambda} \,.
\]
Since we know that $\f(\lambda)\downarrow 0$ as $\lambda\downarrow \lambda_c$, we may use Lemma~\ref{lem:mathcalK} to obtain the following.
To simplify notation, we write $\f_u := \f(\lambda_c+u)$, which goes to $0$ as  $u\downarrow 0$.

\smallskip
$\ast$ If $\sum_{n=1}^{\infty} nK(n) <+\infty$, or equivalently (by Lemma~\ref{lem:kappa}) if $\sum_{n=1}^{\infty} \frac{1}{a_n}<+\infty$ that is if $(S_n)_{n\geq 0}$ is transient, then, as $u\downarrow 0$,
\[
\f_u \sum_{n=1}^{\infty} nK(n)  \sim  \frac{u}{\lambda_c} = \kappa u\,.
\]
Since $K(n) = \kappa^{-1} f_n^+(0)$ and $\lambda_c = \kappa^{-1}$, we get~\eqref{eq:criticF2}.

\smallskip
$\ast$ If $\alpha \in (1,2]$, then as $u\downarrow 0$,
\[
 \alpha \Gamma(\tfrac{\alpha-1}{\alpha})\, \f_u \, K(1/\f_u) \sim \kappa^{-1} c_0 \alpha \Gamma(\tfrac{\alpha-1}{\alpha}) \frac{1}{a_{1/\f_u}} \sim  \kappa u\,,
\]
where we also have used Lemma~\ref{lem:kappa} to get that $nK(n) \sim \kappa^{-1} c_0/a_n$ as $n\to\infty$.
Using the relation~\eqref{def:an} that defines $(a_n)_{n\geq 0}$, we get that:
\[
\begin{split}
\f_u \sim \bP(|X_1|>\tfrac{c_2}{u}) &  \quad \text{ if } \alpha \in (1,2)\,, \\
\f_u \sim \sigma^{2}(\tfrac{c_2}{u})  \big(\tfrac{c_2}{u}\big)^{-2} &  \quad \text{ if } \alpha =2 \,, \\
\end{split}
\]
with $c_2:=c_0\alpha \Gamma(\frac{\alpha-1}{\alpha})/\kappa^2$.
This gives~\eqref{eq:criticF4}-\eqref{eq:criticF5}, using that $c_0 = \frac{1}{\sqrt{2\pi}} \bP(\bar H_1>0)$ in the case $\alpha=2$, so that $c_2=\sqrt{2} \bP(\bar H_1>0)/\kappa^2$.

\smallskip
$\ast$ If $\alpha =1$ and $\sum_{n=1}^{\infty} nK(n) = \sum_{n=1}^{\infty} L(n)n^{-1}  =+\infty$, we obtain that
$\f_u m_{1/\f_u} \sim \kappa u$, and we have to invert this relation.
Let us introduce $v_t$ such that $L(v_t) v_t^{-1} \sim t^{-1}$ and $w_t := t m_{v_t}$: then by~\cite[Lem.~4.3]{Ber19a} we get that $m_{w_t}\sim m_{v_t}$ as $t\to\infty$, so that we have $m_{w_t} v_t^{-1} \sim t^{-1}$ as $t\to\infty$.
Using this relation, we get that
\[
1/\f_u  \sim w_{1/\kappa u} \qquad \text{ so } \qquad  \f_u \sim \frac{\kappa u}{ m_{v_{1/u}}} \,,
\]
using also that $m$ is slowly varying and that $v$ is regularly varying.
Now, we can use Lemma~\ref{lem:kappa} to get that
\[
m_{v_{1/u}} \sim \kappa^{-1} c_0 \sum_{n=1}^{v_{1/u}} \frac{1}{a_n}  \sim \kappa^{-1} c_0 \int_1^{v_{1/u}} \frac{1}{a_t} \dd t \sim \kappa^{-1} c_0 \int_1^{a_{v_{1/u}}} \frac{1}{s\gp(s)} \dd s \,,
\]
where we have used a change of variable $s=a_t$ (so $t \sim s/\gp(s)$, see~\eqref{def:an}), in the spirit of~\cite[p.~36]{Ber19a}.
Now, notice that by Lemma~\ref{lem:kappa} we have $a_n \sim \kappa c_0 /nK(n) \sim \kappa c_0 n/L(n)$, so by definition of $v_t$ we get that $a_{v_{1/u}} \sim \kappa c_0/u$. Since $u\mapsto \int_1^{1/u} \frac{1}{s\gp(s)} \dd s$ is slowly varying, we get that
\[
m_{v_{1/u}} \sim \kappa^{-1} c_0 \int_1^{1/u} \frac{1}{s\gp(s)} \dd s \sim \kappa^{-1} c_0 \int_1^{1/u} \frac{ \dd s}{s^2 \bP(|X_1|>s)} \,.
\]
This concludes the proof of~\eqref{eq:criticF3}.\qed

%
%

\subsection{About the SOS (or Laplace) wetting model}
\label{app:Laplace}

In this section, we derive the free energy for the wetting model of Section~\ref{ex:exponential}.
We only deal with the exponential random walk (or SOS model) since the proof does not appear clearly in the literature, but it could also be adapted to the lazy and geometric random walks of Sections~\ref{ex:lazyRW}-\ref{ex:geomRW}, only with more tedious calculations.

%
%

First of all, let us notice that, by the memoryless property of the exponential random variable, we have that $\bar T_1$ and $\bar H_1$ are independent; and $\bar H_1$ follows an exponential distribution with $f_{\bar H_1}(0)=\gamma$.
Therefore, we have that
\[
f_n^+(0)  = \gamma \bP(\bar T_1=n) \,,\qquad K(n) = \bP(\bar T_1 =n) \,.
\]
Let us compute the Laplace transform $\mathcal{K}(\vartheta) = \sum_{n=1}^{\infty} e^{-\vartheta n} K(n)$.
For $\vartheta> 0$ fixed, we introduce the martingale $(M_n)_{n\geq 0}$ defined by 
\[
M_n := e^{-\theta S_n-\vartheta n} \,,
\]
where $\theta = \gamma \sqrt{1-e^{-\vartheta}} <\gamma$ is chosen so that\footnote{To compute $\bE[e^{-\theta X_1}]$, we can use the representation $X_1=Y-Y'$ with $Y,Y'$ independent $ \mathrm{Exp}(\gamma)$ random variables, so $\bE[e^{uY}]= \frac{1}{1-u/\gamma}$ for any $|u|<\gamma$.} 
$\bE[e^{-\theta X_1}] = \frac{1}{1-\theta^2/\gamma^2} = e^{-\vartheta}$.

Applying the stopping time theorem, together with dominated convergence (since $S_{n\wedge \bar T_1} \geq S_{\bar T_1} $, with $-S_{\bar T_1}=\bar H_1$ an exponential distribution), we obtain that
\[
1 =\lim_{n\to\infty}\bE[M_{n\wedge \bar T_1}] = \bE\big[ e^{-\theta S_{\bar T_1}-\vartheta \bar T_1} \big] = \bE\big[e^{\theta \bar H_1} \big] \bE\big[ e^{-\vartheta \bar T_1} \big]\,, 
\]
where we have used again that $\bar H_1=-S_{\bar T_1}$ is independent of $\bar T_1$ in the last identity. 
We therefore end up with
\[
\mathcal{K}(\vartheta) = \bE\big[e^{\theta \bar H_1} \big]^{-1} = 1- \frac{\theta}{\gamma} =1- \sqrt{1-e^{-\vartheta}} \,.
\]
The relation $\mathcal{K}( \f(\lambda)) = (\kappa\lambda)^{-1}$ from Theorem~\ref{prop:pinning} (with $\kappa=\gamma$) therefore gives
\[
\f(\lambda) = \ln\Big( \frac{(\gamma \lambda)^2}{2\gamma \lambda -1} \Big)   \,.
\]

\section{Local large deviations: proof of Lemmas~\ref{lem:LDPequal} and~\ref{lem:Q-LDP}}
\label{sec:LDPequal}

\begin{proof}[Proof of Lemma~\ref{lem:LDPequal}]
Let us focus on the upper local large deviation; the lower counterpart is proven in an identical manner. We need to show that 
\[
\liminf_{n\to\infty} \frac1n \log \bP(S_n = x_n) \geq - \I_+(x) \,,
\]
the other inequality being given by~\eqref{eq:boundLDP} and the fact that $\I_+$ is continuous at $x$.

First of all, if $t_0^+ =0$, \textit{i.e.}\ if $\Lambda_+(t)=+\infty$ for all $t>0$, then $\I_+(x)=0$ for all $x\geq 0$.
In that case, we get that $\limsup_{k\to\infty} \frac{1}{k} \log \bP(X_1=k) = 0$ (one can easily check that otherwise $t_0^+ >0$), so we can choose some $k_0$ such that $\bP(X_1=k_0)\geq e^{- \varepsilon k_0}$ and $k_0 \geq  3x$.
Then, setting $n_0=\lfloor x_n /k_0\rfloor$, we have that 
\begin{equation}
\label{eq:Sn=xn-lower1}
\bP\big( S_n =x_n \big) 
\geq \bP\big( X_i =k_0 \text{ for all } 1\leq i\leq n_0 \big)
 \bP\big( S_{n-n_0} = x_n - k_0 n_0 \big)\,.
\end{equation}
Notice that $n_0 \leq n/2$ (provided that $n$ is large enough) and that $x_n - k_0 n_0 \in \{0,\ldots, k_0\}$.
Using the local central limit theorem, see e.g.~\cite[Ch.~9, \S50]{GK54}, we therefore get that 
\[
\bP\big( S_n = x_n \big) 
\geq  \big( e^{- \varepsilon k_0} \big)^{n_0}  \frac{c}{a_n} \geq \frac{c}{a_n} e^{- \varepsilon x_n } \,, 
\]
so $\liminf_{n\to\infty} \frac1n \log \bP(S_n =x_n) \geq - \gep x$, recalling that $\lim_{n\to\infty} \frac1n x_n =x$. Since $\gep$ is arbitrary, this concludes the proof in the case $t_0^+=0$.

Let us now turn to the case where $t_0^+>0$. Then, fixing $\gep$ small enough so that $\frac{x}{1-\gep} < \bar x_+$, we have that
\[
\begin{split}
\bP\big( S_{(1-\gep)n} \in  [(1-\gep^2) x_n, x_n ] \big) 
 &= \bP\big( S_{(1-\gep)n} \geq (1-\gep^2) x_n\big)
-\bP\big( S_{(1-\gep)n} > x_n \big) \\
& = e^{- (1+o(1))\I_+( (1+\gep) x) (1-\gep)n } - e^{- (1+o(1))\I_+(\frac{x}{1-\gep}) (1-\gep)n} \,,
\end{split} 
\]
where we have used that $\lim_{n\to\infty} \frac1n x_n =x$ and that $\I_+$ is continuous at $ (1+\gep)x$ and $x/(1-\gep)$.
Then the second term is negligible compared to the first one since $\I_+(\frac{x}{1-\gep}) > \I_+((1+\gep)x)$, because~$\I_+$ is increasing on $[0,\bar x_+)$. We therefore get that
\[
\liminf_{n\to\infty} \frac1n \log \bP\big( S_{(1-\gep)n} \in  [(1-\gep^2) x_n, x_n ] \big)  =  - (1-\gep) \I_+((1+\gep) x) \,.
\]
We now write
\[
\begin{split}
\bP\big( S_n =x_n \big) 
& \geq \sum_{y=(1-\gep^2) x_n}^{x_n} \bP\big( S_{(1-\gep)n} =y \big) \bP(S_{\gep n} = x_n- y) \\
&\geq \bP\big( S_{(1-\gep)n} \in  [(1-\gep^2) x_n, x_n ) \big) 
\times \inf_{z\in [0,\gep^2 x_n]} \bP(S_{\gep n} = z) \,. 
\end{split}
\]
It remains to get a lower bound on the last term.
Let $k'_0:=\inf\{k\geq 1, \bP(X_1=k)>0\}$. 
Then, similarly as in~\eqref{eq:Sn=xn-lower1}, setting $n_0=n_0(z) = \lfloor z/k'_0\rfloor$, we get that 
\[
\bP(S_{\gep n} = z) \geq \bP\big(X_i = k'_0 \text{ for all } 1\leq i\leq n_0\big) \bP(S_{\gep n - n_0} = z- k'_0 n_0) \,.
\]
Again, $n_0 \leq z \leq \gep^2 x_n$, so $\gep n - n_0 \geq \gep n \left(1-\gep \frac{x_n}{n}\right)$. For $n$ big enough, we have $\frac{x_n}{n} \le 2x$, then if $\gep \le \frac{1}{4x}$ we have $\gep n - n_0 \ge \gep \frac{n}{2} $. Since $z-k'_0 n_0 \in \{0,\ldots, k'_0\}$ we can use the local central limit theorem to bound the last term by a constant times $a_{\gep n}^{-1}$, uniformly in $z\in [0,\gep^2 x_n]$. We therefore end up with
\[
\inf_{z\in [0,\gep^2 x_n]} \bP(S_{\gep n} = z) \geq  \frac{c}{a_{\gep n}} \bP(X_1=k'_0)^{\gep^2 n}  \,.
\]
All together, we get that 
\[
\liminf_{n\to\infty} \frac1n \log \bP\big( S_n =x_n \big) \geq - (1-\gep)\I_{+}\big( (1+\gep) x \big) + \gep^2 \log \bP(X_1=k'_0)\,.
\]
Since $\gep$ is arbitrary and $\I_+$ is continuous on $[0,\bar x_+]$, this concludes the proof.
\end{proof}

\begin{proof}[Proof of Lemma~\ref{lem:Q-LDP}]
The proof follows the same lines as that of Lemma~\ref{lem:LDPequal}.

First of all, let us treat the case $t_0^+ =0$. 
Then, as in~\eqref{eq:Sn=xn-lower1}, with the same definition for $k_0$ and with $n_0 = \lfloor \frac{x_n}{k_0} \rfloor$,  we have that
\[
Q_+(n, x_n) \geq \bP\big( X_i =k_0 \big)^{n_0}
 \bP\big( S_{n-n_0} = x_n - k_0 n_0 , S_i > 0  \text{ for all } 1\leq i\leq n-n_0-1\big)\,.
\]
Now, let $k_1>0$ be such that $\bP(S_1>0, \ldots, S_{k_1-1}>0, S_{k_1} = y) >0$ for all $y\in \{0,\ldots, k_0\}$, which exists by aperiodicity and let $c = \inf_{0\leq y\leq k_0} \bP(S_1>0, \ldots, S_{k_1-1}>0, S_{k_1} = y) >0$, which depends only on $k_0$.
Recalling that $x_n-k_0n_0 \in \{0,\ldots, k_0\}$, we therefore get that
\[
 \bP\big( S_{n-n_0} = x_n - k_0 n_0 , S_i > 0  \text{ for all } 1\leq i\leq n-n_0-1\big) \geq c f_{n-n_0-k_1}^+(0)  \,,
\]
with $f_n^+(0)= \bP(S_1>0, \ldots, S_{n-1}>0, S_n=0)$ defined in~\eqref{def:kappan}.
Now, thanks to Lemma~\ref{lem:kappa}, and since $n-n_0-k_1\geq n/4$ for $n$ large enough, we get that
\[
Q_+(n, x_n) \geq \frac{c'}{n a_{n}} e^{- \gep k_0 n_0} \geq \frac{c'}{n a_{n}} e^{- \gep x_n}\,.
\]
We therefore end up with $\liminf_{n\to\infty} \frac1n \log Q_+(n, \lfloor xn \rfloor) \geq - \gep x$, which gives the result since $\gep$ is arbitrary, recalling that $\I_+(x)=0$.

\smallskip
Let us now turn to the case where $t_0^+>0$.
We focus first on the case $\lim_{n\to\infty} \frac1n x_n = x \in (0,\bar x_+)$.
Denoting again $k'_0=\min\{k\geq 1, \bP(X_1=k)>0\}$, we have that
\[
Q_+(n, x_n)
\geq \bP(X_1 =k'_0)^{\gep n} \bP\big( S_{(1-\gep)n} = x_n - \gep n k'_0 , S_{i} > - \gep n k'_0 \text{ for all } i\leq (1-\gep) n \big) \,.
\]
Then, we write
\begin{equation}
\label{eq:twotermsQnx}
\begin{split}
\bP\big( S_{(1-\gep)n} &= x_n - \gep n k'_0 , S_{i} > - \gep n k'_0 \text{ for all } i\leq (1-\gep n) \big) \\
& = \bP\big( S_{(1-\gep)n} = x_n - \gep n k'_0 \big) - \bP\big( S_{(1-\gep)n} = x_n - \gep n k'_0 , \min_{1\leq i\leq (1-\gep)n} S_{i} \leq  - \gep n k'_0\big) \,.
\end{split}
\end{equation}
For the first term, we can use Lemma~\ref{lem:LDPequal} to get that it is $\exp(- (1+o(1)) (1-\gep)n\I_+(\frac{x- \gep k'_0}{1-\gep}) n)$.
For the other term, decomposing over the first time where $S_i\leq -\gep n k'_0$, we easily get by the strong Markov property that
\[
\begin{split}
\bP\big( S_{(1-\gep)n} = x_n - \gep n k'_0 , \min_{1\leq i\leq (1-\gep)n} S_{i} \leq  - \gep n k'_0\big)
& \leq \sup_{ 1\leq j\leq (1-\gep) n, y\geq 0}  \bP\big( S_{j} = x_n + y \big) \,.
\end{split}
\]
Then, using~\eqref{eq:boundLDP} and the fact that $\I_+$ is non-decreasing, we get that this is bounded by
\[
  \sup_{1\leq j\leq (1-\gep)n} e^{- j \I_+ ( \frac{x_n}{j})} \leq  e^{- (1-\gep) n \I_+ ( \frac{x_n}{(1-\gep)_n} )} \leq e^{- (1-\gep) n \I_+ ( \frac{x-\gep^2}{1-\gep} )} \,,
\]
where the first identity holds by convexity of $\I_+$, and the second one provided that $n$ is large enough so that $\frac1n x_n \geq  x -\gep^2$.

Since $(1-\gep) \I_+ ( \frac{x-\gep^2}{1-\gep} ) > (1-\gep) \I_+ ( \frac{x-\gep k'_0}{1-\gep}) $ because $\I_+$ is increasing, we get that the second term in~\eqref{eq:twotermsQnx} is negligible, so that
\[
\lim_{n\to\infty} \frac1n \log \bP\big( S_{(1-\gep)n}  = x_n - \gep n k'_0 ,\min_{1\leq i\leq (1-\gep)n} S_{i} \leq  - \gep n k'_0 \big) =  (1-\gep)\I_+\Big(\frac{x- \gep k'_0}{1-\gep}\Big)  \,.
\]
All together, this gives that
\[
\liminf_{n\to\infty} \frac 1n \log Q_+(n,x_n)
\geq \gep \log \bP(X_1 =k'_0) - (1-\gep)\I_+\Big(\frac{x- \gep k'_0}{1-\gep}\Big) \,,
\]
which concludes the proof since $\gep$ is arbitrary and $\I_+$ is continuous at $x <\bar x_+$.

For the case $\lim_{n\to\infty} \frac1n x_n =0$, we fix $\gep>0$ and we use the following lower bound (omitting the integer parts for simplicity):
\[
Q_+(n,x_n) \geq \bP(S_{n/2}=\gep n, S_n=x_n, S_1>0, \ldots, S_{n-1}>0 ) \geq Q_+(n/2, \gep n) Q_-(n/2,\gep n -x_n)\,,
\] 
where we have used Markov's property and the duality property
\begin{multline*}
\bP(S_{n/2}>x_n, \ldots, S_{n-1}>x_n , S_n=x_n  \mid S_{n/2} =\gep n) \\
= \bP(S_1<0,\ldots, S_{n/2}  <0, S_{n/2} = \gep n-x_n) = Q_-(n/2, \gep n-x_n).
\end{multline*}
Therefore, from the case $x>0$ above we get that $\liminf_{n\to\infty} \frac1n \log Q_+(n,x_n) \geq - \I_+(\gep/2) - \I_-(\gep/2)$.
This concludes the proof, by taking $\gep\downarrow 0$, since $\I_+$, $\I_-$ are continuous at $0$.
\end{proof}

\end{appendix}

\subsection*{Acknowledgment}
The authors would like to warmly thank Francesco Caravenna for interesting discussions on the wetting model and Yvan Velenik and Giambattista Giacomin for pointing out several relevant references.
QB acknoweledges the support of grant ANR-22-CE40-0012.

\bibliographystyle{alphaabbr}
\bibliography{biblio.bib}

\begin{thebibliography}{CNPT18}

\bibitem[AB18]{AB18a}
K.~S. Alexander and Q.~Berger.
\newblock Pinning of a renewal on a quenched renewal.
\newblock {\em Electronic Journal of Probability}, 23:1--48, 2018.

\bibitem[Abr80]{Abr80}
D.~Abraham.
\newblock Solvable model with a roughening transition for a planar ising
  ferromagnet.
\newblock {\em Physical Review Letters}, 44(18):1165, 1980.

\bibitem[Abr81]{Abr81}
D.~Abraham.
\newblock Binding of a domain wall in the planar ising ferromagnet.
\newblock {\em Journal of Physics A: Mathematical and General}, 14(9):L369,
  1981.

\bibitem[AD99]{AD99}
L.~Alili and R.~Doney.
\newblock Wiener--hopf factorization revisited and some applications.
\newblock {\em Stochastics and Stochastic Reports}, 66(1-2):87--102, 1999.

\bibitem[AdCD89]{AdCD89}
D.~Abraham, J.~{d}e Coninck, and F.~Dunlop.
\newblock Contact angle for two-dimensional {I}sing ferromagnets.
\newblock {\em Physical Review B}, 39(7):4708, 1989.

\bibitem[BCK16]{BCK16}
E.~Bolthausen, A.~Cipriani, and N.~Kurt.
\newblock Fast decay of covariances under $\delta-$pinning in the critical and
  supercritical membrane model.
\newblock {\em arXiv preprint arXiv:1601.01513}, 2016.

\bibitem[BDZ00]{BDZ00}
E.~Bolthausen, J.~D. Deuschel, and O.~Zeitouni.
\newblock Absence of a wetting transition for a pinned harmonic crystal in
  dimensions three and larger.
\newblock {\em Journal of Mathematical Physics}, 41(3):1211--1223, 2000.

\bibitem[Ber19]{Ber19a}
Q.~Berger.
\newblock Notes on random walks in the {C}auchy domain of attraction.
\newblock {\em Probab. Theory Relat. Fields}, 175(1):1--44, 2019.

\bibitem[BFO09]{BFO09}
E.~Bolthausen, T.~Funaki, and T.~Otobe.
\newblock Concentration under scaling limits for weakly pinned gaussian random
  walks.
\newblock {\em Probability theory and related fields}, 143(3-4):441--480, 2009.

\bibitem[BGdH11]{BGdH11}
M.~Birkner, A.~Greven, and F.~den Hollander.
\newblock Collision local time of transient random walks and intermediate
  phases in interacting stochastic systems.
\newblock {\em Electronic Journal of Probability}, 16:35, 2011.

\bibitem[BGT87]{BGT87}
N.~H. Bingham, C.~M. Goldie, and J.~L. Teugels.
\newblock {\em Regular Variation}.
\newblock Encyclopedia of Mathematics and its Applications. Cambridge
  University Press, 1987.

\bibitem[BI97]{BI97}
E.~Bolthausen and D.~Ioffe.
\newblock Harmonic crystal on the wall: a microscopic approach.
\newblock {\em Communications in Mathematical Physics}, 187:523--566, 1997.

\bibitem[BL18]{BL18}
Q.~Berger and H.~Lacoin.
\newblock Pinning on a defect line: characterization of marginal disorder
  relevance and sharp asymptotics for the critical point shift.
\newblock {\em Journal of the Institute of Mathematics of Jussieu},
  17(2):305--346, 2018.

\bibitem[Bor13]{Bor13}
A.~A. Borovkov.
\newblock {\em Probability theory}.
\newblock Universitext. Springer, London, 2013.
\newblock Translated from the 2009 Russian fifth edition by O. B. Borovkova and
  P. S. Ruzankin, Edited by K. A. Borovkov.

\bibitem[BS10]{BS10}
M.~Birkner and R.~Sun.
\newblock Annealed vs quenched critical points for a random walk pinning model.
\newblock {\em Annales de l'Institut Henri Poincar{\'e}, Probabilit{\'e}s et
  Statistiques}, 46(2):414--441, 2010.

\bibitem[BS11]{BS11}
M.~Birkner and R.~Sun.
\newblock Disorder relevance for the random walk pinning model in dimension 3.
\newblock {\em Annales de l'Institut Henri Poincar{\'e}, Probabilit{\'e}s et
  Statistiques}, 47(1):259--293, 2011.

\bibitem[BT10]{BT10}
Q.~Berger and F.~L. Toninelli.
\newblock On the critical point of the random walk pinning model in dimension
  {$d=3$}.
\newblock {\em Electron. J. Probab.}, 15(21):no. 21, 654--683, 2010.

\bibitem[CC13]{CC13}
F.~Caravenna and L.~Chaumont.
\newblock An invariance principle for random walk bridges conditioned to stay
  positive.
\newblock {\em Electron. J. Probab.}, 18:32 pp., 2013.

\bibitem[CD08]{CD08}
F.~Caravenna and J.-D. Deuschel.
\newblock Pinning and wetting transition for (1+ 1)-dimensional fields with
  laplacian interaction.
\newblock {\em Ann. Probab.}, 36:2388--2433, 2008.

\bibitem[CGZ06]{CGZ06}
F.~Caravenna, G.~Giacomin, and L.~Zambotti.
\newblock Sharp asymptotic behavior for wetting models in (1+ 1)-dimension.
\newblock {\em Electron. J. Probab.}, 11:345--362, 2006.

\bibitem[CNPT18]{CNPT18}
P.~Carmona, G.~B. Nguyen, N.~P{\'{e}}tr{\'{e}}lis, and N.~Torri.
\newblock Interacting partially directed self-avoiding walk: a probabilistic
  perspective.
\newblock {\em Journal of Physics A: Mathematical and Theoretical},
  51(15):153001, 2018.

\bibitem[CV00]{CV00}
P.~Caputo and Y.~Velenik.
\newblock A note on wetting transition for gradient fields.
\newblock {\em Stochastic processes and their applications}, 87(1):107--113,
  2000.

\bibitem[dCD87]{dCD87}
J.~{d}e Coninck and F.~Dunlop.
\newblock Wetting transitions and contact angles.
\newblock {\em Europhysics Letters}, 4(11):1291, 1987.

\bibitem[dCDH11]{dCDH11}
J.~{d}e Coninck, F.~Dunlop, and T.~Huillet.
\newblock Metastable wetting.
\newblock {\em Journal of Statistical Mechanics: Theory and Experiment},
  2011(06):P06013, 2011.

\bibitem[DGI00]{DGI00}
J.-D. Deuschel, G.~Giacomin, and D.~Ioffe.
\newblock Large deviations and concentration properties for $\nabla\varphi$
  interface models.
\newblock {\em Probability Theory and Related Fields}, 117(1):49--111, 2000.

\bibitem[DGZ05]{DGZ05}
J.-D. Deuschel, G.~Giacomin, and L.~Zambotti.
\newblock Scaling limits of equilibrium wetting models in (1+ 1)--dimension.
\newblock {\em Probability theory and related fields}, 132(4):471--500, 2005.

\bibitem[DZ09]{DZ09}
A.~Dembo and O.~Zeitouni.
\newblock {\em Large deviations techniques and applications}, volume~38.
\newblock Springer Science \& Business Media, 2009.

\bibitem[Fel71]{FellerII}
W.~Feller.
\newblock {\em An Introduction to Probability Theory and its Applications,
  Vol.~{II}}.
\newblock John Wiley \& Sons. Inc., New York-London-Sydney, 1971.

\bibitem[Fis84]{Fish84}
M.~E. Fisher.
\newblock Walks, walls, wetting, and melting.
\newblock {\em Journal of Statistical Physics}, 34:667--729, 1984.

\bibitem[FO10]{FO10}
T.~Funaki and T.~Otobe.
\newblock Scaling limits for weakly pinned random walks with two large
  deviation minimizers.
\newblock {\em Journal of the Mathematical Society of Japan}, 62(3):1005--1041,
  2010.

\bibitem[Fun05]{Fun05}
T.~Funaki.
\newblock Stochastic interface models.
\newblock {\em Lectures on probability theory and statistics}, 1869:103--274,
  2005.

\bibitem[Gia07]{Giac07}
G.~Giacomin.
\newblock {\em Random Polymer Models}.
\newblock Imperial College Press, World Scientific, 2007.

\bibitem[Gia10]{Giac10}
G.~Giacomin.
\newblock {\em Disorder and Critical Phenomena Through Basic Probability
  Models}, volume 2025 of {\em Ecole d'Et{\'e} de probabilit{\'e}s de
  {S}aint-{F}lour}.
\newblock Springer-Verlag Berlin Heidelberg, 2010.

\bibitem[GK54]{GK54}
B.~Gnedenko and A.~Kolmogorov.
\newblock {\em Limit distributions for sums of independent}, volume 105.
\newblock Am. J. Math, 1954.

\bibitem[GL18]{GL18}
G.~Giacomin and H.~Lacoin.
\newblock Disorder and wetting transition: the pinned harmonic crystal in
  dimension three or larger.
\newblock {\em The Annals of Applied Probability}, 28(1):577--606, 2018.

\bibitem[GLT10]{GLT10}
G.~Giacomin, H.~Lacoin, and F.~L. Toninelli.
\newblock Marginal relevance of disorder for pinning models.
\newblock {\em Communications on Pure and Applied Mathematics: A Journal Issued
  by the Courant Institute of Mathematical Sciences}, 63(2):233--265, 2010.

\bibitem[Har74]{H74}
A.~B. Harris.
\newblock Effect of random defects on the critical behaviour of ising models.
\newblock {\em J. Phys.}, pages 1671--1692, 1974.

\bibitem[Hol07]{dH07}
F.~d. Hollander.
\newblock {\em Random polymers}, volume 1974 of {\em Lecture Notes in
  Mathematics}.
\newblock Springer-Verlag, Berlin, 2007.
\newblock Lectures from the 37th Probability Summer School held in Saint-Flour,
  2007.

\bibitem[IL71]{IL71}
I.~Ibragimov and Y.~V. Linnik.
\newblock {\em Independent and stationary sequences of random
  variables,(1971)}.
\newblock Walters-Noordhoff, Gr{\"o}ningen, 1971.

\bibitem[IV18]{IV18}
D.~Ioffe and Y.~Velenik.
\newblock Low temperature interfaces: Prewetting, layering, faceting and
  ferrari-spohn diffusions.
\newblock {\em Mark. Proc. Relat. Fields}, 24(1):487--537, 2018.

\bibitem[IY01]{IY01}
Y.~Isozaki and N.~Yoshida.
\newblock Weakly pinned random walk on the wall: pathwise descriptions of the
  phase transition.
\newblock {\em Stochastic processes and their applications}, 96(2):261--284,
  2001.

\bibitem[KKP01]{KKP01}
S.~Kotz, T.~Kozubowski, and K.~Podg{\'o}rski.
\newblock {\em The Laplace distribution and generalizations: a revisit with
  applications to communications, economics, engineering, and finance}.
\newblock 183. Springer Science \& Business Media, 2001.

\bibitem[LP22]{LP22}
A.~Legrand and N.~P{\'e}tr{\'e}lis.
\newblock Surface transition in the collapsed phase of a self-interacting walk
  adsorbed along a hard wall.
\newblock {\em The Annals of Probability}, 50(4):1538--1588, 2022.

\bibitem[LT15]{LT15}
H.~Lacoin and A.~Teixeira.
\newblock {A mathematical perspective on metastable wetting}.
\newblock {\em Electronic Journal of Probability}, 20(17):1--23, 2015.

\bibitem[Nol20]{Nolan20}
J.~P. Nolan.
\newblock {\em Univariate stable distributions}, volume~10.
\newblock Springer, 2020.

\bibitem[Pat98]{Pat98}
A.~E. Patrick.
\newblock The influence of boundary conditions on {S}olid-{O}n-{S}olid models.
\newblock {\em Journal of Statistical Physics}, 90:389--433, 1998.

\bibitem[PV99]{PV99}
C.-E. Pfister and Y.~Velenik.
\newblock Interface, surface tension and reentrant pinning transition in the 2d
  ising model.
\newblock {\em Communications in Mathematical Physics}, 204:269--312, 1999.

\bibitem[Sch22]{Schweiger22}
F.~Schweiger.
\newblock Pinning for the critical and supercritical membrane model.
\newblock {\em Probability and Mathematical Physics}, 2(4):745--820, 2022.

\bibitem[Vel06]{Vel06}
Y.~Velenik.
\newblock Localization and delocalization of random interfaces.
\newblock {\em Probability Surveys}, 3:112 -- 169, 2006.

\end{thebibliography}

\end{document}